\tikzset{
%Define standard arrow tip
>=stealth',
%Define style for different line styles
help lines/.style={dashed, thick},
axis/.style={<->},
important line/.style={thick},
connection/.style={thick, dotted},
}
\newcommand{\nc}{\newcommand}
\nc{\rnc}{\renewcommand}
\nc{\bb}[1]{{\mathbb #1}}
\nc{\bbA}{\bb{A}}\nc{\bbB}{\bb{B}}\nc{\bbC}{\bb{C}}\nc{\bbD}{\bb{D}}
\nc{\bbE}{\bb{E}}\nc{\bbF}{\bb{F}}\nc{\bbG}{\bb{G}}\nc{\bbH}{\bb{H}}
\nc{\bbI}{\bb{I}}\nc{\bbJ}{\bb{J}}\nc{\bbK}{\bb{K}}\nc{\bbL}{\bb{L}}
\nc{\bbM}{\bb{M}}\nc{\bbN}{\bb{N}}\nc{\bbO}{\bb{O}}\nc{\bbP}{\bb{P}}
\nc{\bbQ}{\bb{Q}}\nc{\bbR}{\bb{R}}\nc{\bbS}{\bb{S}}\nc{\bbT}{\bb{T}}
\nc{\bbU}{\bb{U}}\nc{\bbV}{\bb{V}}\nc{\bbW}{\bb{W}}\nc{\bbX}{\bb{X}}
\nc{\bbY}{\bb{Y}}\nc{\bbZ}{\bb{Z}}
\nc{\mbf}[1]{{\mathbf #1}}
\nc{\bfA}{\mbf{A}}\nc{\bfB}{\mbf{B}}\nc{\bfC}{\mbf{C}}\nc{\bfD}{\mbf{D}}
\nc{\bfE}{\mbf{E}}\nc{\bfF}{\mbf{F}}\nc{\bfG}{\mbf{G}}\nc{\bfH}{\mbf{H}}
\nc{\bfI}{\mbf{I}}\nc{\bfJ}{\mbf{J}}\nc{\bfK}{\mbf{K}}\nc{\bfL}{\mbf{L}}
\nc{\bfM}{\mbf{M}}\nc{\bfN}{\mbf{N}}\nc{\bfO}{\mbf{O}}\nc{\bfP}{\mbf{P}}
\nc{\bfQ}{\mbf{Q}}\nc{\bfR}{\mbf{R}}\nc{\bfS}{\mbf{S}}\nc{\bfT}{\mbf{T}}
\nc{\bfU}{\mbf{U}}\nc{\bfV}{\mbf{V}}\nc{\bfW}{\mbf{W}}\nc{\bfX}{\mbf{X}}
\nc{\bfY}{\mbf{Y}}\nc{\bfZ}{\mbf{Z}}
\nc{\bfa}{\mbf{a}}\nc{\bfb}{\mbf{b}}\nc{\bfc}{\mbf{c}}\nc{\bfd}{\mbf{d}}
\nc{\bfe}{\mbf{e}}\nc{\bff}{\mbf{f}}\nc{\bfg}{\mbf{g}}\nc{\bfh}{\mbf{h}}
\nc{\bfi}{\mbf{i}}\nc{\bfj}{\mbf{j}}\nc{\bfk}{\mbf{k}}\nc{\bfl}{\mbf{l}}
\nc{\bfm}{\mbf{m}}\nc{\bfn}{\mbf{n}}\nc{\bfo}{\mbf{o}}\nc{\bfp}{\mbf{p}}
\nc{\bfq}{\mbf{q}}\nc{\bfr}{\mbf{r}}\nc{\bfs}{\mbf{s}}\nc{\bft}{\mbf{t}}
\nc{\bfu}{\mbf{u}}\nc{\bfv}{\mbf{v}}\nc{\bfw}{\mbf{w}}\nc{\bfx}{\mbf{x}}
\nc{\bfy}{\mbf{y}}\nc{\bfz}{\mbf{z}}
\nc{\mcal}[1]{{\mathcal #1}}
\nc{\calA}{\mcal{A}}\nc{\calB}{\mcal{B}}\nc{\calC}{\mcal{C}}\nc{\calD}{\mcal{D}}
\nc{\calE}{\mcal{E}} \nc{\calF}{\mcal{F}}\nc{\calG}{\mcal{G}}\nc{\calH}{\mcal{H}}
\nc{\calI}{\mcal{I}}\nc{\calJ}{\mcal{J}}\nc{\calK}{\mcal{K}}\nc{\calL}{\mcal{L}}
\nc{\calM}{\mcal{M}}\nc{\calN}{\mcal{N}}\nc{\calO}{\mcal{O}}\nc{\calP}{\mcal{P}}
\nc{\calQ}{\mcal{Q}}\nc{\calR}{\mcal{R}}\nc{\calS}{\mcal{S}}\nc{\calT}{\mcal{T}}
\nc{\calU}{\mcal{U}}\nc{\calV}{\mcal{V}}\nc{\calW}{\mcal{W}}\nc{\calX}{\mcal{X}}
\nc{\calY}{\mcal{Y}}\nc{\calZ}{\mcal{Z}}
\nc{\fA}{\frak{A}}\nc{\fB}{\frak{B}}\nc{\fC}{\frak{C}} \nc{\fD}{\frak{D}}
\nc{\fE}{\frak{E}}\nc{\fF}{\frak{F}}\nc{\fG}{\frak{G}}\nc{\fH}{\frak{H}}
\nc{\fI}{\frak{I}}\nc{\fJ}{\frak{J}}\nc{\fK}{\frak{K}}\nc{\fL}{\frak{L}}
\nc{\fM}{\frak{M}}\nc{\fN}{\frak{N}}\nc{\fO}{\frak{O}}\nc{\fP}{\frak{P}}
\nc{\fQ}{\frak{Q}}\nc{\fR}{\frak{R}}\nc{\fS}{\frak{S}}\nc{\fT}{\frak{T}}
\nc{\fU}{\frak{U}}\nc{\fV}{\frak{V}}\nc{\fW}{\frak{W}}\nc{\fX}{\frak{X}}
\nc{\fY}{\frak{Y}}\nc{\fZ}{\frak{Z}}
\nc{\fa}{\frak{a}}\nc{\fb}{\frak{b}}\nc{\fc}{\frak{c}} \nc{\fd}{\frak{d}}
\nc{\fe}{\frak{e}}\nc{\fFf}{\frak{f}}\nc{\fg}{\frak{g}}\nc{\fh}{\frak{h}}
\nc{\fri}{\frak{i}}\nc{\fj}{\frak{j}}\nc{\fk}{\frak{k}}\nc{\fl}{\frak{l}}
\nc{\fm}{\frak{m}}\nc{\fn}{\frak{n}}\nc{\fo}{\frak{o}}\nc{\fp}{\frak{p}}
\nc{\fq}{\frak{q}}\nc{\fr}{\frak{r}}\nc{\fs}{\frak{s}}\nc{\ft}{\frak{t}}
\nc{\fu}{\frak{u}}\nc{\fv}{\frak{v}}\nc{\fw}{\frak{w}}\nc{\fx}{\frak{x}}
\nc{\fy}{\frak{y}}\nc{\fz}{\frak{z}}
\newtheorem{theorem}{Theorem}[section]
\newtheorem{lemma}[theorem]{Lemma}
\newtheorem{corollary}[theorem]{Corollary}
\newtheorem{prop}[theorem]{Proposition}
\theoremstyle{definition}
\newtheorem{definition}[theorem]{Definition}
\newtheorem{example}[theorem]{Example}
\newtheorem{remark}[theorem]{Remark}
\newtheorem{question}[theorem]{Question}
\newtheorem{problem}[theorem]{Problem}
\newtheorem{thm}{Theorem}
\DeclareMathOperator{\rank}{rank} \DeclareMathOperator{\gr}{gr}
\DeclareMathOperator{\im}{im} \DeclareMathOperator{\coker}{coker}
\DeclareMathOperator{\codim}{codim} 
 \DeclareMathOperator{\Sym}{Sym}
\DeclareMathOperator{\ch}{ch} \DeclareMathOperator{\supp}{supp}
\DeclareMathOperator{\Hom}{{Hom}} 
\DeclareMathOperator{\Ext}{{Ext}}
\DeclareMathOperator{\sHom}{{\mathscr{H}om}}
\DeclareMathOperator{\Hilb}{{Hilb}}
\DeclareMathOperator{\Spec}{{Spec}} 
 \DeclareMathOperator{\End}{End}
\DeclareMathOperator{\Coh}{Coh}
\DeclareMathOperator{\Mod}{Mod\hbox{-}}
\DeclareMathOperator{\Gm}{\bbG_m}
\DeclareMathOperator{\SL}{SL}
\DeclareMathOperator{\Bl}{Bl}
\DeclareMathOperator{\Sp}{Sp}
\DeclareMathOperator{\Qcoh}{Qcoh}
\DeclareMathOperator{\red}{red}
\DeclareMathOperator{\Perv}{Perv}
\newcommand{\sO}{\mathcal{O}}
\newcommand{\sA}{\mathscr{A}}
\newcommand{\hh}{\mathfrak{h}}
\newcommand{\sV}{\mathscr{V}}
\newcommand{\sE}{\mathscr{E}}
\newcommand{\sL}{\mathscr{L}}
\DeclareMathOperator{\Stab}{Stab}
\newcommand{\CC}{\mathbb{C}}
\newcommand{\catA}{\mathfrak{A}}
\DeclareMathOperator{\Res}{Res}
\DeclareMathOperator{\Ind}{Ind}
\newcommand{\surj}{\twoheadrightarrow}
\newcommand{\inj}{\hookrightarrow}
\def\angl#1{{\langle #1\rangle}}
\newcommand{\Aff}{\bbA}
\newcommand{\frob}{^{(1)}}
\newcommand{\PP}{\bbP}
\newcommand{\ZZ}{\bbZ}
\newcommand{\QQ}{\bbQ}
\newcommand{\C}{\bbC}
\newcommand{\RR}{\bbR}
 \gdef\Young(#1){\hbox{$\vcenter
 {\mathcode`,="8000\mathcode`|="8000
  \def,{\global\advance\cols by 1 &}%
  \def|{\cr
        \multispan{\the\cols}\hrulefill\cr
        &\global\cols=2 }%
  \offinterlineskip\everycr{}\tabskip=0pt
  \dimen0=\ht\strutbox \advance\dimen0 by \dp\strutbox
  \halign
   {\vrule height \ht\strutbox depth \dp\strutbox##
    &&\hbox to \dimen0{\hss$##$\hss}\vrule\cr
    \noalign{\hrule}&\global\cols=2 #1\crcr
    \multispan{\the\cols}\hrulefill\cr%
   }
 }$}}
\title[Stability conditions for symplectic resolutions]
{Real variations of stability conditions for noncommutative symplectic resolutions}
\date{\today}
\author[G.~Zhao]{Gufang~Zhao}
\address{Department of Mathematics,
Northeastern University, Boston, MA 02115, USA}
\curraddr{Max-Planck-Institute,
Vivatgasse 7, 53111 Bonn, Germany}
\email{gufangzhao@zju.edu.cn}
\subjclass[2010]{Primary
14F05; %Sheaves, derived categories of sheaves and related constructions
Secondary
14A22, %Noncommutative algebraic geometry
14E16, %McKay correspondence
53D55, %Deformation Quantization
16G99.  % Representations of associative rings
}
\begin{abstract}
%\end{abstract}
\begin{document}
\maketitle
\begin{abstract}
A localization theorem for the cyclotomic rational Cherednik algebra $H_c=H_c((\mathbb{Z}/l)^n\rtimes \mathfrak{S}_n)$ over a field of positive characteristic has been proved by Bezrukavnikov, Finkelberg and Ginzburg. Localizations with different parameters give different $t$-structures on the derived category of coherent sheaves on the Hilbert scheme of points on a surface. In this short note, we concentrate on the comparison between different $t$-structures coming from different localizations. When $n=2$, we show an explicit construction of tilting bundles that generates these $t$-structures. These $t$-structures are controlled by a real variation of stability conditions, a notion related to Bridgeland stability conditions. We also show its relation to the topology of Hilbert schemes and irreducible representations of $H_c$.
\end{abstract}

\tableofcontents

\section{Introduction}
For a finite dimensional vector space $V$, equipped with a symplectic form, and a finite subgroup of the symplectic group $\Gamma\subseteq \Sp(V)$, the quotient $V/\Gamma$ is a Poisson variety, and the bracket is non-degenerate on the smooth part. Suppose that we have a resolution of singularity $\pi: X\to V/\Gamma$, with a symplectic form on $X$ which coincide with that on $V/\Gamma$ when restricted to the smooth locus. Such resolutions are called \textsl{symplectic resolutions}.

Simplest interesting examples of symplectic resolutions are the minimal resolutions of Kleinian singularities. More precisely, for a finite subgroup $\Gamma\subseteq\Sp(\Aff^2)$, the quotient $\Aff^2/\Gamma$ has a unique symplectic resolution, denoted by $\widetilde{\Aff^2/\Gamma}$. More generally, the symmetric product $\Sym^n(\Aff^2/\Gamma)$ has a symplectic resolution given by the Hilbert scheme of points $\Hilb^n(\widetilde{\Aff^2/\Gamma})$.

Bezrukavnikov and Kaledin proved in \cite{BK04}, that for any symplectic resolution $\pi: X\to V/\Gamma$, there exists a vector bundle $\calV$ on $X$, such that $\End_{\calO_X}(\calV)\cong\calO_V\#\Gamma$, and $R\Hom_{\calO_X}(\calV,\bullet)$ induces an equivalence of triangulated categories $D^b(\Coh(X))\cong D^b(\Mod\End(\calV))$. In the terminology of \cite{BO}, the noncommutative algebra $\End(\calV)$, viewed as a coherent sheaf on $V/\Gamma$, is a noncommutative resolution of singularity, which is clearly a noncommutative crepant resolutionn in the sense of \cite{vdB}. As a consequence of the Bezrukavnikov and Kaledin theorem, all symplectic resolutions of $V/\Gamma$ are derived equivalent to each other.
When $V=\Aff^2$, the theorem of Bezrukavnikov and Kaledin specializes to the classical derived McKay correspondence about Kleinian singularities. When $V=\Aff^{2n}\cong(\Aff^2)^n$ and $\Gamma=\fS_n$ acting by permuting the $\Aff^2$-factors, and $X=\Hilb^n(\Aff^2)$, the bundle $\calV$ constructed by Bezrukavnikov and Kaledin is related to the Procesi bundles studied by Haiman in \cite{H02}.
It is worth mentioning that the construction of the noncommutative resolutions given in \cite{BK04} comes from quantization of symplectic varieties over fields of positive characteristic. 
%Under suitable conditions (see \cite{BK04}) the sheaf of algebras $\sEnd_{\calO_X}(\calV)$ fits into a family. More precisely, there is a family of sheaves on noncommutative $\calO_X$-algebras  $\calA_c(\Gamma)$ parameterized by $c\in H^2(X,\bbQ)$, such that $\calA_0(\Gamma)$ is Morita equivalent to $\sEnd_{\calO_X}(\calV)$.

An example, generalizing the case when $V=\Aff^{2n}$ and $\Gamma=\fS_n$, is the following.
We work over a separably closed field $k$  of characteristic $p>>0$. Let $\Gamma_n:=(\ZZ_r)^n\rtimes\mathfrak{S}_n$ acting on $\hh=\Aff^{n}$ in the natural way, i.e., the $i$-th factor of $\ZZ_r$ acts on the $i$-th factor of $\Aff^1$, and $\fS_n$ permutes the coordinates. Let $V=\hh\oplus\hh^*\cong\Aff^{2n}$ be endowed with the diagonal action of $\Gamma_n$. The action preserves the natural symplectic form on $V$. A symplectic resolution of $\Aff^{2n}/\Gamma_n$ is given by $\Hilb^n(\widetilde{\Aff^2/\ZZ_r})$ where $\widetilde{\Aff^2/\ZZ_r}$ is the minimal resolution of $\Aff^2/\ZZ_r$. Let $\calW(\hh)$ be the Weyl algebra. Let $V\frob$ be the Frobenius twist of $V$. Then $\calW(\hh)$ is a coherent sheaf of algebras, which is an Azumaya algebra. Hence, $\calW(\hh)\#{\Gamma_n}$ is also a coherent sheaf of algebras. One can easily convince himself that $\calW(\hh)\#{\Gamma_n}$ has finite global dimension, therefore is a \textit{noncommutative desingularization} of $\Aff^{2n}/\Gamma_n$. So is the algebra $\calW(\hh)^{\Gamma_n}$, which is Morita equivalent to $\calW(\hh)\#\Gamma_n$. When restricted to the formal neighborhood of $0$ in $V^{(1)}$, the algebra $\calW(\hh)\#\Gamma_n$ in turn is Morita equivalent to $k[V\frob]\#\Gamma_n$. 

The  {\it cyclotomic rational Cherednik algebra} $H_c$ is a deformation of $\calW(\hh)\#{\Gamma_n}$. The parameter space of the deformation is a vector space spanned by the conjugacy classes of reflections in $\Gamma_n$, which is naturally isomorphic to $H^2(X_{\Gamma_n})$.  The precise definition of the cyclotomic rational Cherednik algebra is recalled in Section~\ref{sec: quant_char_p}. 
The (non-unital) subalgebra $^sH_c:=eH_ce\subset H_c$ is called {\it the spherical Cherednik algebra}, where $e:=\sum_{\gamma\in \Gamma_n} \gamma$. If  $^sH_c$ is Morita equivalent to $H_c$, then the value $c$ is called \textit{spherical value}. Otherwise we say $c$ is \textit{aspherical}.  The aspherical values form an affine hyperplane arrangement in the space of parameters. For any value of $c$, the algebra $H_c$ always has finite global dimension. However, the spherical subalgebra $^sH_c$ has finite global dimension if and only if it is Morita equivalent to $H_c$. Similar to $\calW(\hh)^{\Gamma_n}$, the algebra $^sH_c$ has a big Frobenius center $k[\Aff^{2n(1)}]^{\Gamma_n}$. In other word, $^sH_c$ is a coherent sheaf of algebras on $V^{2n(1)}/\Gamma_n$. For any central character $\chi$ (i.e., a closed point in $V^{2n(1)}/\Gamma_n$), let $\Mod_\chi \null^sH_c$ be the category of modules over $^sH_c$ which are set-theoretically supported on the closed point $\chi$. The irreducible objects in the category $\Mod_\chi ^sH_c$ are naturally indexed by $\hbox{Irrep}(\Gamma_n)$.

Let $\Hilb^{(1)}$ be the Frobenius twist of $\Hilb:=\Hilb^n(\widetilde{\Aff^2/\ZZ_r})$.
Let $\Coh_0\Hilb\frob$ be the category of coherent sheaves on $\Hilb\frob$ which are set-theoretically supported on the zero-fiber of the Hilbert-Chow morphism.
It is shown by Bezrukavnikov-Finkelberg-Ginzburg that there is a tilting bundle $\sE_c$ on $\Hilb^{(1)}$, such that $\End(\sE_c)|_{\hat{0}}\cong (^sH_c)|_{\hat{0}}$, where $\hat{0}$ is the normal neighborhood of $0$ in $\Aff^{2n(1)}/\Gamma_n$.
In particular, for spherical values $c$, the algebra $^sH_c$ has finite global dimension. Consequently,
there is a derived equivalence $$D^b(\Coh_0\Hilb^{(1)})\cong D^b(\Mod_0\null^sH_c).$$

%Let $\Gamma_1\subseteq \Sp(\bbA^2)$ be a finite subgroup, and let $\Gamma_n=(\Gamma_1)^n\rtimes\fS_n$ acting on $V=\Aff^{2n}$ in the obvious way via symplectic reflections. Let $X_{\Gamma_n}$  be $\Hilb^n(\widetilde{\Aff^2/\Gamma_1})$, a symplectic resolution of $\Aff^n/\Gamma_n$.
%Assume the base field $k$  has characteristic $p\gg0$. It is shown by Bezrukavnikov, Finkelberg, and Ginzburg in \cite{BFG06} and \cite{BF12} that the algebra $H^0(X_{\Gamma_n}\frob,\calA_c(\Gamma_n))$ is isomorphic to the spherical rational Cherednik algebra (see \cite{EG}).
%Denote the \textsl{rational Cherednik algebra} over $k$ with parameter $c$ by $\calH_c(\Gamma_n)$, and denoted the spherical rational Cherednik algebra by $^{s}\calH_c(\Gamma_n)$. There is an affine hyperplane arrangement in the parameter space $H^2(X_{\Gamma_n},\bbQ)$, such that when the parameter $c\in H^2(X_{\Gamma_n},\bbQ)$ does not lie on the hyperplanes, $\calH_c(\Gamma_n)$ is Morita equivalent to $^{s}\calH_c(\Gamma_n)$. The special values of the parameter $c$ are called \textit{aspherical values}, and the generic values, \textit{spherical} ones.
%Further more, for large enough $p$, and for spherical values $c$, $^{s}\calH_c$, we have a derived equivalence $D^b(\Coh_0X_{\Gamma_n}\frob)\cong D^b(\Mod_0\null^{s}\calH_c(\Gamma_n))$ where $\Coh_0X_{\Gamma_n}\frob$ means the category of coherent sheaves on $X_{\Gamma_n}$ set-theoretically supported on the fiber of zero.

Assume the characteristic of the base field $k$ is $p\gg0$. Then for any spherical value of the parameter $c\in H^2(X_{\Gamma_n};\bbQ)$% in the sense that the spherical rational Cherednik algebra $^{s}\calH_c(\Gamma)$ has finite homological dimension
, the derived equivalence given by \cite{BEG} endows $D^b(\Coh(X_{\Gamma_n}))$ with a $t$-structure, whose heart is the image of $\Mod_0\null^sH_c$ under this equivalence.
The aspherical values form an affine hyperplane arrangement in the space of parameters, which divide the parameter space into facets. An open facet is called an alcove, and a codimension-1 facet is called a wall.
If the parameter $c\in H^2(X_{\Gamma_n};\bbQ)$ varies inside a single alcove, the $t$-structure stays constant. The dimension of the irreducible object $L_c(\tau;p)$ considered as a $k$-vector space, varies polynomially with respect to $c$. This polynomial will be referred to as the dimension polynomial.

%In this paper, we concentrate on the following problem in special cases.

%Question~\ref{ques:real_var} is related to the following conjecture by Bezrukavnikov and Okounkov. In order to state it, we introduce some notations. Let $^{s}\calH_c(\Gamma_n)_\C$ be the spherical rational Cherednik algebra over $\bbC$. Let $R$ be a subring of $\C$ which is finitely generated over $\bbZ$ such that $^{s}\calH_c(\Gamma_n)_\C$ has an $R$-form $^{s}\calH_c(\Gamma_n)_R$. Let $^{s}\calH_c(\Gamma_n)_k$ be the base change of $^{s}\calH_c(\Gamma_n)_R$ to a field $k$ of characteristic $p$. For an irreducible representation $\tau$ of $\Gamma_n$, let $L_c(\tau;\bbC)$ be the corresponding irreducible object in the category $\calO$ of $^{s}\calH_c(\Gamma_n)_\C$.

%\begin{conj}\label{pol1var} For any aspherical value $c$, let $\Mod\null ^{s}\calH_c(\Gamma_n)_k^{\leq d}$ be the Serre subcategory generated by irreducible objects $L_c(\lambda;p)$such that the corresponding polynomial $Z_\tau$ satisfies: $\deg(Z_\tau)\leq d$. Suppose the codimension of support in the sense of \cite{EG} of $L_c(\tau;\bbC)$ is $d$. We take $L_c(\tau;R)$ be its $R$-form. Let $\overline{L_c(\tau;R)_k}$ be the central reduction of $L_c(\tau;R)\otimes_Rk$. Then $\overline{L_c(\tau;R)_k}$ is a nonzero object in \[\Mod \null ^{s}\calH_c(\Gamma_n)_k^{\leq d}/\Mod\null ^{s}\calH_c(\Gamma_n)_k^{\leq d+1}.\]
%\end{conj}

%Both Question~\ref{ques:real_var} and Conjecture~\ref{pol1var} are confirmed for the special case $\Gamma_1=\bbZ/l\bbZ$ and $n=2$.

The dimension polynomials are related to the topology of the Hilbert schemes. It is shown in Proposition~\ref{prop: chern_dim} that the solution to the Chern character problem determines the dimension polynomials $\dim L_c(\tau,p)$ of the irreducible objects.

It is well-known (see \cite{Kuz01}) that a symplectic resolution of $\Aff^{2n}/\Gamma_n$ can be constructed as a Nakajima quiver variety associated to the extended Dynkin quiver.
For a suitable choice of the stability condition, the Nakajima variety is isomorphic to $\Hilb^n(\widetilde{\Aff^2/\Gamma_1})$, where $\widetilde{\Aff^2/\Gamma_1}$ is the minimal resolution of the Kleinian singularity $\Aff^2/\Gamma_1$.
As an intermediate step of studying the stability conditions, in the example when $\Gamma_1=\bbZ/r\bbZ$ and $n=2$, using this quiver description, the Chern character map has been written down explicitly in Proposition~\ref{prop: chern char}. In general the calculation of the Chern character map is difficult. But it is easier, at least in some cases, to calculate the dimension polynomials.

For an integral parameter $m$, let $Q_m$ be the $m$-quasi-invariants in $k[\mathfrak{h}]$. As $\Gamma_n$-$^sH_m$ bimodule,
$ Q_m=\oplus_{\tau\in\hbox{Irrep}(\Gamma_n)}\tau^*\otimes M_m(\tau).$ Let $\widetilde{Q_m}$ be the quasi-invariants on the Frobenius neighborhood of 0. A resolution of $\widetilde{Q_m}$:
$\cdots\to Q_m\otimes \wedge^2\mathfrak{h}^{(1)}\to Q_m\otimes \mathfrak{h}^{(1)}\to Q_m.$

\begin{thm}
Fix a character $i$ of $\ZZ_r$. Let  $\tau(i)$ be the 1-dimensional representation of $\Gamma_n=(\ZZ_r)^n\rtimes\fS_n$ on which $\ZZ_r$ acts by the character $i$ and $\fS_n$ acts by the sign representation.

The Poincar\'e series of $L_m(\tau(i))$ is
\[{\frac{t^{ni}\prod_{k=0}^{n-1}(1-t^{rk+m_0n+p+1+rm_{i+1}})}{\prod_{k=1}^n(1-t^{kr})}}.\]
\end{thm}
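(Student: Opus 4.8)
The plan is to realise $L_m(\tau(i))$ as an isotypic component of the Frobenius-neighbourhood quasi-invariants $\widetilde{Q_m}$ and to compute its Poincar\'e series as an Euler characteristic against the displayed resolution. Since $p\gg 0$, the group algebra $k\Gamma_n$ is semisimple, so $\Hom_{\Gamma_n}(\sigma,-)$ is exact for every $\Gamma_n$-module $\sigma$; and, exactly as for $Q_m=\bigoplus_\tau\tau^*\otimes M_m(\tau)$, the module $\widetilde{Q_m}$ decomposes as a $\Gamma_n$-$({}^sH_m)$-bimodule $\widetilde{Q_m}=\bigoplus_\tau\tau^*\otimes L_m(\tau)$ (this is the specialisation at the central character $0$ of the previous decomposition, and is matched with the derived equivalence of \cite{BEG}). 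In particular $L_m(\tau(i))=\Hom_{\Gamma_n}(\tau(i)^{*},\widetilde{Q_m})$ as graded vector spaces, up to a normalisation of the grading. Applying $\Hom_{\Gamma_n}(\tau(i)^{*},-)$ to the resolution $\cdots\to Q_m\otimes\wedge^{2}\mathfrak{h}\frob\to Q_m\otimes\mathfrak{h}\frob\to Q_m\to\widetilde{Q_m}\to 0$ therefore produces a finite resolution of $L_m(\tau(i))$, and hence
\[\mathrm{Hilb}\,L_m(\tau(i))=\sum_{j=0}^{n}(-1)^{j}\,\mathrm{Hilb}\,\Hom_{\Gamma_n}\!\big(\tau(i)^{*},\,Q_m\otimes\wedge^{j}\mathfrak{h}\frob\big).\]

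To evaluate the $j$-th term, note that inside $k[\mathfrak{h}]$ the subspace $\mathfrak{h}\frob$ is spanned by the degree-$p$ elements $x_1^p,\dots,x_n^p$, so $\wedge^{j}\mathfrak{h}\frob$ carries internal degree $jp$. As a $\Gamma_n$-module $\wedge^{j}\mathfrak{h}\frob$ is the $j$-th exterior power of the Frobenius twist of the reflection representation $\mathfrak{h}=\mathbb{C}^n$ of $\Gamma_n=G(r,1,n)$; for $r\ge 2$ the reflection representation is irreducible, and tensoring with $\tau(i)$ one obtains a single irreducible $\sigma_j$ of dimension $\binom{n}{j}$ (a Frobenius-twisted analogue of $V_{((n-j),(1^{j}),\emptyset,\dots)}$), while for $r=1$ one gets two constituents and the argument below is run termwise. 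Then $\Hom_{\Gamma_n}(\tau(i)^{*},Q_m\otimes\wedge^{j}\mathfrak{h}\frob)$ is, as a graded space, the $\sigma_j$-isotypic multiplicity space $M_m(\sigma_j)$ of $Q_m$ with grading shifted up by $jp$. The decisive input --- drawn from the description of $Q_m$ as a $\Gamma_n$-$({}^sH_m)$-bimodule (the Berest--Etingof--Ginzburg theory of quasi-invariants and its positive-characteristic form used by Bezrukavnikov--Finkelberg--Ginzburg) together with the fake degrees of the exterior powers of the reflection representation of $G(r,1,n)$ --- is that $M_m(\sigma_j)$ is a \emph{free} module over the invariant ring $k[\mathfrak{h}]^{\Gamma_n}$, whose Hilbert series $\prod_{k=1}^{n}(1-t^{kr})^{-1}$ is exactly the denominator in the statement, of rank $\binom{n}{j}=\dim\sigma_j$, and that after the shift by $jp$ the degrees of a homogeneous $k[\mathfrak{h}]^{\Gamma_n}$-basis have generating function
\[t^{\,jp}\,\mathrm{Hilb}\,M_m(\sigma_j)=\frac{t^{\,ni}\;t^{\,j(m_0n+p+1+rm_{i+1})}\;t^{\,r\binom{j}{2}}\,\binom{n}{j}_{t^{r}}}{\prod_{k=1}^{n}(1-t^{kr})},\qquad \binom{n}{j}_{q}:=\prod_{l=1}^{j}\frac{1-q^{\,n-j+l}}{1-q^{l}},\]
where $t^{\,r\binom{j}{2}}\binom{n}{j}_{t^{r}}$ has, up to a power of $t$, the shape of the fake degree of $\wedge^{j}\mathfrak{h}$ in the coinvariant algebra of $G(r,1,n)$, the factor $t^{ni}$ records the character $i$, and the remaining exponent $j(m_0n+p+1+rm_{i+1})$ absorbs the $jp$ from $\mathfrak{h}\frob$ together with the upward shift of the $\sigma_j$-bottom forced by the $m$-quasi-invariance conditions and the Frobenius twist.

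Substituting into the Euler characteristic pulls the denominator out of the sum, giving
\[\mathrm{Hilb}\,L_m(\tau(i))=\frac{t^{\,ni}}{\prod_{k=1}^{n}(1-t^{kr})}\sum_{j=0}^{n}(-1)^{j}\,t^{\,r\binom{j}{2}}\,\binom{n}{j}_{t^{r}}\,\big(t^{\,m_0n+p+1+rm_{i+1}}\big)^{j},\]
and the $q$-binomial theorem $\sum_{j=0}^{n}(-1)^{j}q^{\binom{j}{2}}\binom{n}{j}_{q}x^{j}=\prod_{k=0}^{n-1}(1-q^{k}x)$, with $q=t^{r}$ and $x=t^{\,m_0n+p+1+rm_{i+1}}$, turns the sum into $\prod_{k=0}^{n-1}(1-t^{\,rk+m_0n+p+1+rm_{i+1}})$ --- precisely the asserted Poincar\'e series. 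The hard part is the structural input of the second paragraph: showing that the isotypic pieces $M_m(\sigma_j)$ of the quasi-invariants are free over $k[\mathfrak{h}]^{\Gamma_n}$ and identifying the linear shift $m_0n+p+1+rm_{i+1}$, which packages the $h_m$-grading on ${}^sH_m$-modules, the block data of $\sigma_j$, and the characteristic-$p$ Frobenius correction, and then matching this against the fake-degree combinatorics of $G(r,1,n)$; once that is in place, the $q$-binomial identity and the $r=1$ bookkeeping are routine.
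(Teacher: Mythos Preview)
Your proposal is correct and follows essentially the same route as the paper: resolve $\widetilde{Q_m}$ by the Koszul complex $Q_m\otimes\wedge^\bullet\mathfrak{h}^{(1)}$, take the $\tau(i)$-isotypic part term by term, feed in the Poincar\'e series of the relevant isotypic components of $Q_m$, and collapse the alternating sum with the $q$-binomial theorem. Your identification of the exponent $j(m_0n+p+1+rm_{i+1})+r\binom{j}{2}+ni$ matches the paper's line-by-line computation exactly.

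The only real difference is in how the ``hard part'' (your step identifying $\mathrm{Hilb}\,M_m(\sigma_j)$) is executed. The paper does not invoke freeness over the invariants abstractly; instead it cites the explicit formula of Berest--Chalykh, $P_t(Q_m(\tau)e)=t^{\xi_m(\tau')}P_t\big((k[\mathfrak{h}]\otimes(\tau')^*)^{\Gamma_n}\big)$ with the kz-twist $\tau'$, identifies $\wedge^s\mathfrak{h}\otimes\tau(i)^*$ concretely as the $l$-partition with $i$-th component $(n-s)$ and $(i{+}1)$-th component $(1^s)$, computes $\xi_m$ on that partition as $s(nm_0+lm_{i+1})$, and evaluates the coinvariant piece by Frobenius reciprocity and the hook-length formula. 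Your schematic ``free over $k[\mathfrak{h}]^{\Gamma_n}$ with the right graded basis'' is the same content, but to make your write-up self-contained you should pin down the irreducible $\sigma_j$ precisely (your placeholder $V_{((n-j),(1^j),\emptyset,\dots)}$ is off by a cyclic shift to position $i$) and derive the shift $m_0n+1+rm_{i+1}$ from $\xi_m$ rather than asserting it.
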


Using the induction and restriction functors, this theorem gives an algorithm to calculate the dimension polynomials of the irreducible objects as long as the parameter $m$ is in the foundamental alcove (the alcove containing 0). But away from the foundamental alcove, the combinatorics becomes complicated and we can only deal with the case when $n=2$ in the current paper.

We  define \[Z_\tau(x)=\lim_{p\to\infty}p^{-n}\dim_kL_{cp}(\tau;p).\]
We consider the collection of polynomials $\{Z_\tau(x)\mid\tau\in\hbox{ Irrep}(\Gamma_n)\}$ as a polynomial map $H^2(X_{\Gamma_n};\bbR)\to K_0(X_{\Gamma_n})^*\otimes\bbR$. Let $\phi$ be the assignment associating to each alcove the $t$-structure on $D^b(\Coh(X_{\Gamma_n}))$ coming from $\Mod \calH_c(\Gamma_n)$ for $c$ lying in this alcove. In general, it is conjectured by Bezrukavnikov and Okounkov that the pair $(\phi,Z)$ is a real variation of stability conditions in the sense of \cite{ABM}.
The notion of real variation of stability conditions, as well as the more precise meaning of this conjecture, will be discussed in detail in Section~\ref{sec: quant_char_p}. In the Introduction we only make precise what has been achieved in the current paper when $n=2$ as Theorem~\ref{thm: main_intr}.

Assume $ n=2$.
Let $\phi:\{\hbox{alcoves}\}\to \{t\hbox{-structures}\}$ be the map assigning each alcove $A$ the $t$-structure on $D^b(\Coh_0\Hilb)$ whose heart is $Mod_0\null ^sH_c(\Gamma_2)\subseteq D^b(\Coh_0(\Hilb))$ for $c\in A$. Let the central charge polynomials $Z_\tau(\nu)$ for $\tau\in\hbox{Irrep}{\Gamma_2}$ be defined as above.
\begin{thm}\label{thm: main_intr}
The pair $(\phi,Z)$ is a real variation of stability conditions.

More concretely, for any alcove $A$, let $\catA:={\hbox{heart of }}\phi(A)$. We have,
\begin{enumerate}
  \item for any $x\in A$, we have $ Z_L(x)> 0$ for any simple object $L\in \catA$;
  \item for any $A'$, sharing  with $A$ a codimension-1 wall $H$.

Let $\catA_i$ be the Serre subcategory of $\catA$ generated by the simple objects $L\in \catA$ with corresponding $ Z_L(x)$ vanishing of
order $\geq i$ on $H\rangle$. Then,
\begin{itemize}
\item the $T(A')$ is compatible with the filtration on $T(A)$;
\item on $\gr_i(\catA) = \catA_i/\catA_{i+1}$, $\phi(A')$ differs by $[i]$ from  $\phi(A)$.
\end{itemize}
\end{enumerate}
\end{thm}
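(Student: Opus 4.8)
\smallskip
\noindent\emph{Sketch of proof.} The two conditions are established separately. For condition~(1), recall that under the localization equivalence $D^b(\Coh_0\Hilb^{(1)})\cong D^b(\mathrm{Mod}_0\,{}^sH_c(\Gamma_2))$ of \cite{BEG} the simple objects of $\catA$, the heart of $\phi(A)$, are the images of the irreducibles $L_c(\tau)$, $\tau\in\mathrm{Irrep}(\Gamma_2)$, and $Z_{L_c(\tau)}=Z_\tau$. For $c$ in the \emph{open} alcove $A$ and $p\gg0$, $\dim_k L_{c}(\tau;p)$ is the dimension polynomial, a polynomial in $c$ of degree exactly $n=2$: the degree is the dimension of the characteristic variety of $L_c(\tau;p)$, which is involutive and, being contained in the Lagrangian zero fibre $Z_0$ of the Hilbert--Chow morphism, is exactly $n$-dimensional. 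Its top-degree homogeneous part is the polynomial $Z_\tau$, so $\dim_k L_{xp}(\tau;p)=Z_\tau(x)\,p^{n}+O(p^{n-1})$; since each $\dim_k L_{xp}(\tau;p)$ is a positive integer, $Z_\tau(x)>0$ for rational $x\in A$, hence for all $x\in A$. (For the family $\tau(i)$ the leading term can also be extracted from the Poincar\'e series above, and the other irreducibles for $n=2$ are obtained by the induction/restriction algorithm with the explicit Chern character of Propositions~\ref{prop: chern char} and \ref{prop: chern_dim}.)

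\smallskip
\noindent For condition~(2), fix adjacent alcoves $A,A'$ with common wall $H$ on an aspherical hyperplane and let $\catA,\catA'$ be the hearts inside $D^b(\Coh_0\Hilb)$. From the explicit form of $Z_\tau$ for $\Gamma_2$ produced by the quiver-variety computation (Proposition~\ref{prop: chern char}) read off, for each $\tau$, the order $d_\tau(H)\ge0$ of vanishing of $Z_\tau$ along $H$; this defines the Serre filtration $\catA=\catA_0\supseteq\catA_1\supseteq\cdots$, $\catA_i=\langle L_c(\tau)\mid d_\tau(H)\ge i\rangle$. For $n=2$ one expects --- and the computation confirms --- that $d_\tau(H)\in\{0,1\}$ for all $\tau$ and all walls, so the filtration has length two. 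Now compare the explicit tilting bundles $\sE_c,\sE_{c'}$ on $\Hilb^{(1)}$ constructed in this paper (with $\End(\sE_c)|_{\hat{0}}\cong{}^sH_c|_{\hat{0}}$, and likewise for $c'$): I claim $\sE_{c'}$ is obtained from $\sE_c$ by mutating precisely the summands indexed by $\{\tau\mid d_\tau(H)\ge1\}$, each mutation being a shift-by-one universal (co)extension against the remaining summands. Equivalently, the wall-crossing autoequivalence $\Psi$ of $D^b(\Coh_0\Hilb^{(1)})$ relating $\catA$ and $\catA'$ is of twist type: a composition of spherical or $\PP$-object twists along the locus contracted by the extremal ray dual to $H$.

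\smallskip
\noindent Granting this, condition~(2) is formal. The functor $\Psi$ is then a perverse equivalence with perversity function $d_\bullet(H)$: it carries the filtration $\{\catA_i\}$ to the corresponding filtration of $\catA'$ and, on $\gr_i\catA=\catA_i/\catA_{i+1}$, induces an exact equivalence to $\gr_i\catA'$ up to the shift $[i]$. This says exactly that $\phi(A')$ is compatible with the filtration on $\phi(A)$ and differs from it by $[i]$ on $\gr_i\catA$; together with condition~(1) this is the definition of a real variation of stability conditions in the sense of \cite{ABM}.

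\smallskip
\noindent The main obstacle is the claim comparing $\sE_{c'}$ with $\sE_c$ --- identifying the abstract derived equivalence ``localize at $c$, take sections at $c'$'' with an explicit mutation and pinning down its homological amplitude on each $L_c(\tau)$. This is where $n=2$ is essential: there the wall-crossing contracts an extremal ray whose exceptional locus has low dimension (a $\PP^1$, a $\PP^1$-bundle over a curve, or a divisor), so the twist is a spherical or $\PP$-twist whose action on $D^b(\Coh_0\Hilb^{(1)})$ is understood, and matching its shift with $d_\tau(H)$ comes down to the explicit $Z_\tau$; for larger $n$ neither the tilting bundles nor this combinatorics is presently available. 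A subsidiary point --- that all of this takes place over the formal neighbourhood $\hat{0}\subset\Aff^{2n(1)}/\Gamma_n$ and respects the set-theoretic support condition --- follows from flat base change and the support-preserving property of the localization functors of \cite{BEG}.
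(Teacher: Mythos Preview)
Your proposal contains a concrete error and diverges from the paper's actual argument in a way that leaves the main step unproved.

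\medskip
\noindent\textbf{The factual error.} You assert that ``for $n=2$ one expects --- and the computation confirms --- that $d_\tau(H)\in\{0,1\}$ for all $\tau$ and all walls, so the filtration has length two.'' This is false. The explicit central charges computed in Section~\ref{sec: central_charge} (already in the $B_2$ case) include perfect squares such as $Z_0=\tfrac18(2a+2b+1)^2$, which vanish to order $2$ along their walls. The paper's Lemma~\ref{lem:char0-charp-max} says precisely that when some $Z_\theta$ vanishes to order $2$ on a wall $H$, then $\theta$ is the \emph{unique} simple whose central charge vanishes on $H$; so on such walls the filtration has a single simple in $\catA_2$ and nothing else in $\catA_1$. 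This is the ``Case~1'' of the paper's proof, which your outline omits entirely; it is exactly the situation where the $\PP^2$-semi-reflection of Section~\ref{sec: mut} appears, not a simple spherical twist with shift $1$.

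\medskip
\noindent\textbf{The gap in the wall-crossing argument.} The paper does not prove condition~(2) by comparing the tilting bundles $\sE_c$ and $\sE_{c'}$ via mutations. Instead (Section~\ref{sec: compare_char0}) it transports the problem to characteristic zero: using Lemma~\ref{lem: lift_sph} to identify aspherical simples in char~$0$ and char~$p$, and then the translation functors $T_{c\to c_0}$, $T_{c\to c'}$ on category $\calO_c$ to pin down the homological shift on each simple. The dichotomy is whether some $L_c(\theta;\CC)$ killed by $T_{c\to c_0}$ is finite-dimensional (Case~1, shift~$2$) or all such simples have codimension-$1$ support (Case~2, shift~$1$); Lemma~\ref{lem:char0-charp-max2} ties this back to the vanishing order of $Z_\theta$. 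Your proposed route --- identify the abstract wall-crossing with an explicit spherical$/\PP$-twist and read off the shift --- is not what the paper does, and you acknowledge it as the ``main obstacle'' without resolving it. The paper's approach avoids this obstacle by going through char~$0$ translation functors, at the cost of needing the explicit central-charge computation (Lemma~\ref{lem:char0-charp-max}) as the one $n=2$-specific input.
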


An explicit description of the derived equivalences for any two adjacent alcoves in this case can be found in Section~\ref{sec: t-struct_alco}.  Where are only two types: $\PP^2$-semi-reflection, and tilting with respect to suitable torsion theory. The question how the tilting generators change under $\PP^n$-semi-reflection in general is studied in \S~\ref{sec: mut}, which is interesting on its own rights.

There are two prototypical examples of $\PP^n$-semi-reflections.
\begin{example}
Let $\Perv(\PP^n)$ be the category of perverse constructible sheaves with respect to the usual stratification of $\PP^n$. Similarly we have the category $\Perv((\PP^n)^\vee)$ on the dual projective space $(\PP^n)^\vee$. Let $R:D^b(\Perv(\PP^n))\to D^b(\Perv((\PP^n)^\vee))$ be the Radon transform with kernel the incidence locus. Then $R(\Perv(\PP^n))$ is the semi-reflection of $\Perv((\PP^n)^\vee)$ with respect to the $\PP^n$-object $\CC_{\PP^n}[n]$.
\end{example}
\begin{example}
Let $D^b(\Coh_0 T^*\PP^n)$ be the derived category of coherent sheaves on $T^*\PP^n$ set-theoretically supported on the zero-section, and let $\catA$ be the heart of the $t$-structure whose projective generator is the Beilinson's tilting bundle $\oplus_{i=0}^n\calO(-i)$. Similarly let $\catA'$ be the the heart of the $t$-structure in $D^b(\Coh_0 T^*(\PP^n)^\vee)$ whose projective generator is given by the Beilinson's tilting bundle on $(\PP^n)^\vee$. Let $FM:D^b(\Coh_0 T^*\PP^n)\to D^b(\Coh T^*(\PP^n)^\vee)$ be the Fourier-Mukai transform constructed by Namikawa in \cite{Nam}. Then $FM(\catA)$ is the semi-reflection of $\catA'$ with respect to the $\PP^n$-object $\calO_{\PP^n}(-n)$. (See also \cite{TU}.)
\end{example}
The following results, which is a scene from Section~\ref{sec: mut}, tells us about the projective generator in the heart of the $t$-structure obtained from $\PP^n$-semi-reflection.

A more general set-up for the $\PP^n$-semi-reflection is the following. Let $\bfX$ be a smooth variety which is projective over $\Spec A$. We assume moreover that the map $\pi: \bfX\to \Spec A$ is $\Gm$-equivariant, such that this $\Gm$-action gives a  deformation retraction of $\bfX$ to $X=\pi^{-1}(\Spec A/m)$, the fiber over $A/m$.
Let $\{P_\alpha\mid\nabla\}$ be a collection of $\Gm$-equivariant tilting bundles on $\bfX$, and denote $\End(\oplus_{\alpha\in \nabla}P_\alpha)$ by $E$. Let $\catA$ be the category of finitely generated $E$-modules which are set-theoretically supported on $A/m$.

The following fact about $\PP^n$-semi-reflection is proved in Corollary~\ref{cor:perver-smaller_pervers}. (The result also holds if $\calA$ is a finite length abelian category with enough projectives, e.g., the category of perverse constructible sheaves.)
Assume $S_\theta$ is a simple object has vanishing $\Ext^1(S_\theta,S_\theta)$. We endow $\catA$ with the filtration that $0=\catA_0\subseteq\catA_1\subseteq\catA_2=\catA$ where $\catA_1=\langle S_\theta\rangle$. Assume for the perversity function $p$ with $p(1)=0$ and $p(2)=n$ we have a perverse equivalence $(t,t',p)$ such that the projective covers of the simple objects in the heart of $t'$ have representatives lying in $E\hbox{-}mod$. Then for any $p'$ with $p'(1)=0$ and $p'(1)\leq n$ the perverse equivalence $(t,t'',p')$ exists, and the projective covers of the simple objects in the heart of $t'$ have representatives lying in $E\hbox{-}mod$. Moreover, the projective generators of these $t$-structures are given by the \textit{truncated mutations} defined in Section~\ref{sec: mut}.
\subsection*{Acknowledgements}
This short note grows out of part of my PhD thesis in Northeastern University. I would like to give my special thanks to Prof. Roman Bezrukavnikov for introducing me to this field, suggesting this project to me, and had numerous times of discussions without which I could have gone nowhere. I also learned a lot about this subject from courses, seminars, and private communications with Ivan Losev and Ben Webster. 

\section{Truncated mutations}\label{sec: mut}
\subsection{Tilting with respect to a simple object}
Suppose $\catA \subset D$ is the heart of a bounded $t$-structure and is a finite length abelian category.
A torsion pair in $\catA$ is a pair of full subcategories
$(\mathcal{T},\mathcal{F})$ with the property that $\Hom(T, F) = 0$ for any $T \in \mathcal{T}$ and $F \in \mathcal{F}$, and  every object $E \in \catA$ fits into a short exact sequence
$$0\to T \to E \to F \to 0.$$
The following Lemma is due to Happel, Reiten, and Smal{\o}. (See also \cite{Bri}, Proposition 5.4.)
\begin{lemma}
Suppose $\catA\subset D$ is the heart of a bounded $t$-structure on a triangulated category $D$. Suppose $(\mathcal{T},\mathcal{F})$ is a torsion pair in $\catA$. Then the full subcategory $R_\tau\catA=\{E\in D\mid H^i(E)=0\hbox{ for }i\notin\{-1,0\}\hbox{, }H^{-1}\in\mathcal{F}\hbox{ and }H^0(E)\in\mathcal{T}\}$ is the heart of a bounded $t$-structure.
\end{lemma}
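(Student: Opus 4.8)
As indicated, this is the Happel--Reiten--Smal{\o} tilting theorem, so the plan is the standard one: write down explicit candidate aisles for the tilted $t$-structure on $D$ and check the axioms by hand. The only substantive input is the orthogonality $\Hom_{\catA}(\mathcal{T},\mathcal{F})=0$ built into the definition of a torsion pair; everything else is formal bookkeeping with the cohomology functors $H^i$ and the truncations $\tau^{\le n},\tau^{\ge n}$ of the given $t$-structure.

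Writing $H^i$ for cohomology with respect to the original $t$-structure, I would take the aisles
\begin{gather*}
\calD^{\le 0}=\{E\in D\mid H^i(E)=0 \text{ for }i\ge 1,\ H^0(E)\in\mathcal{T}\},\\
\calD^{\ge 0}=\{E\in D\mid H^i(E)=0 \text{ for }i\le -2,\ H^{-1}(E)\in\mathcal{F}\}.
\end{gather*}
Then $\calD^{\le 0}\cap\calD^{\ge 0}$ equals $R_\tau\catA$ exactly. Stability under shift, $\calD^{\le 0}[1]\subseteq\calD^{\le 0}$ and $\calD^{\ge 0}[-1]\subseteq\calD^{\ge 0}$, is immediate from $H^i(E[1])=H^{i+1}(E)$ together with $0\in\mathcal{T}\cap\mathcal{F}$. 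Moreover $D^{\le -1}\subseteq\calD^{\le 0}\subseteq D^{\le 0}$ and $D^{\ge 1}\subseteq\calD^{\ge 1}\subseteq D^{\ge 0}$ (with $\calD^{\ge 1}:=\calD^{\ge 0}[-1]$) for the original aisles, so once the remaining axioms are verified, boundedness of the new $t$-structure is inherited from that of the original one.

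For orthogonality I would take $A\in\calD^{\le 0}$ and $B\in\calD^{\ge 1}$ and use the original truncation triangles: $A$ is an extension of $H^0(A)[0]$ (which lies in $\mathcal{T}$) by $\tau^{\le -1}A\in D^{\le -1}$, and $B$ is an extension of $\tau^{\ge 1}B\in D^{\ge 1}$ by $H^0(B)[0]$ (which lies in $\mathcal{F}$). Expanding $\Hom(A,B)$ along these two triangles reduces it to $\Hom(D^{\le -1},D^{\ge 1})$, $\Hom(D^{\le -1},\mathcal{F}[0])$, $\Hom(\mathcal{T}[0],D^{\ge 1})$ and $\Hom_\catA(\mathcal{T},\mathcal{F})$, all of which vanish --- the first three by orthogonality for the original $t$-structure, the last by the torsion pair axiom. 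For the filtration of an arbitrary $E\in D$, I would start from $\tau^{\le 0}E\to E\to\tau^{\ge 1}E$, let $0\to T\to H^0(E)\to\bar M\to 0$ be the canonical torsion decomposition, and set $A:=\mathrm{fib}\big(\tau^{\le 0}E\to H^0(E)[0]\surj\bar M[0]\big)$; the long exact cohomology sequence gives $H^i(A)=H^i(E)$ for $i\le-1$, $H^i(A)=0$ for $i\ge1$ and $H^0(A)=T\in\mathcal{T}$, so $A\in\calD^{\le 0}$. An octahedron on $A\to\tau^{\le 0}E\to E$ then exhibits $B:=\mathrm{cone}(A\to E)$ inside a triangle $\bar M[0]\to B\to\tau^{\ge 1}E$, whence $H^i(B)=0$ for $i\le-1$ and $H^0(B)=\bar M\in\mathcal{F}$, i.e.\ $B\in\calD^{\ge 1}$; this is the desired triangle $A\to E\to B\to A[1]$. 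The heart is then $\calD^{\le 0}\cap\calD^{\ge 0}=R_\tau\catA$ by construction.

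I do not expect a genuine obstacle: the argument is essentially two long exact sequences and one octahedral diagram. The one step that needs care is the identification $H^0(A)=T$: it requires that the map $\tau^{\le 0}E\to\bar M[0]$ actually induce the canonical surjection $H^0(E)\surj\bar M$ on $H^0$, which is why one must build it as a composite through the canonical arrow $\tau^{\le 0}E\to H^0(E)[0]$ rather than as an arbitrary lift. This is also precisely the point at which the full torsion-pair structure --- a functorial torsion decomposition of every object --- is used, rather than merely the orthogonality of $\mathcal{T}$ and $\mathcal{F}$.
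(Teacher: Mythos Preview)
Your argument is correct and is exactly the standard Happel--Reiten--Smal{\o} proof. The paper does not actually prove this lemma: it merely attributes it to Happel, Reiten, and Smal{\o} and points to \cite{Bri}, Proposition~5.4, so there is nothing to compare against beyond noting that your sketch supplies precisely the proof being cited.
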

The new $t$-structure $R_\tau\catA$ is called the \textit{(right) tilting} of $\calA$ with respect to the torsion pair $(\mathcal{T},\mathcal{F})$.

The following Lemma gives a criterion for simple objects to be in the heart of the new $t$-structure $R_{\mathcal{T}}\catA$.
\begin{lemma}[See Lemma~2.4 in \cite{W10}]\label{lem_simple_R_til}
Let $\mathcal{T}$ be a torsion theory in the heart $\catA$ of a $t$-structure. Then
any simple object in $R_{\mathcal{T}}\catA$ lies either in $\mathcal{T}$  or in $\mathcal{F}[1]$ and
\begin{enumerate}
\item $T\in \mathcal{T}$ is simple in $R_{\mathcal{T}}\catA$ iff there are no exact triangles
\[T'\to T\to T''\to T'[1]\txt{ or  }\ T'\to T\to F'[1]\to T'[1]\]
with $T'$, $T''\in \mathcal{T}$ and $F'\in \mathcal{F}$ and all non-zero;
\item $F[1]\in \mathcal{F}[1]$ is simple in $R_{\mathcal{T}} \catA$ iff there are no exact triangles
\[F'\to F\to F''\to F'[1]\txt{ or  }\ T'[-1]\to F'\to F\to T'\]
with $F'$, $F''\in \mathcal{F}$ and $T'\in  \mathcal{T}$ and all non-zero.
\end{enumerate}
\end{lemma}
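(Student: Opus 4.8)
The plan is to deduce everything from two standard facts about the heart $\mathcal{H}$ of a $t$-structure: a triangle all of whose three terms lie in $\mathcal{H}$ is a short exact sequence in $\mathcal{H}$, and a morphism $f$ of $\mathcal{H}$ with cone $C$ is mono (resp.\ epi) in $\mathcal{H}$ precisely when $\widetilde{H}^{-1}(C)=0$ (resp.\ $\widetilde{H}^{0}(C)=0$), where here and below $H^{i}$ denotes cohomology for the $t$-structure with heart $\catA$ and $\widetilde{H}^{i}$ cohomology for its tilt $R_{\mathcal{T}}\catA$. First I would establish the trichotomy: $(\mathcal{F}[1],\mathcal{T})$ is itself a torsion pair in $R_{\mathcal{T}}\catA$, because $\Hom_{D}(F[1],T)=\Hom_{D}(F,T[-1])=0$ for $F,T\in\catA$, and because for any $E\in R_{\mathcal{T}}\catA$ the canonical triangle $H^{-1}(E)[1]\to E\to H^{0}(E)\to H^{-1}(E)[2]$ has all three terms in $R_{\mathcal{T}}\catA$, hence is a short exact sequence with torsion part $H^{-1}(E)[1]\in\mathcal{F}[1]$ and torsion-free part $H^{0}(E)\in\mathcal{T}$. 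Since the torsion subobject of a simple object is either $0$ or the whole object, this already gives the first assertion: every simple object of $R_{\mathcal{T}}\catA$ lies in $\mathcal{T}$ or in $\mathcal{F}[1]$.

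For part (1) I would fix $T\in\mathcal{T}$ and prove that $T$ fails to be simple in $R_{\mathcal{T}}\catA$ exactly when one of the two triangles exists. For the forward direction, a proper nonzero subobject gives a short exact sequence $0\to M\to T\to N\to 0$ in $R_{\mathcal{T}}\catA$ with $M,N\ne 0$; applying $H^{\bullet}$ to $M\to T\to N\to M[1]$ and using $H^{-1}T=0$, $H^{0}T=T$ yields $H^{-1}M=0$, so $M\in\mathcal{T}$ (write $T':=M$), together with a four-term exact sequence $0\to H^{-1}N\to T'\to T\to H^{0}N\to 0$ in $\catA$ with $H^{-1}N\in\mathcal{F}$ and $H^{0}N\in\mathcal{T}$. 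If $H^{-1}N=0$ then $N\in\mathcal{T}$ and $M\to T\to N\to M[1]$ is a triangle of the first type; if $H^{0}N=0$ then $N=(H^{-1}N)[1]$ and $0\to H^{-1}N\to T'\to T\to 0$ gives a triangle of the second type; and if both are nonzero I would set $I:=\im(T'\to T)$, a quotient of $T'\in\mathcal{T}$ and hence an object of $\mathcal{T}\subseteq R_{\mathcal{T}}\catA$, so that $I\hookrightarrow T$ is a monomorphism in $R_{\mathcal{T}}\catA$ with nonzero cokernel $H^{0}N$ (its cone in $D$ being $H^{0}N$, which lies in the tilted heart), whence $0\to I\to T\to H^{0}N\to 0$ supplies a triangle of the first type. (Both $H^{-1}N$ and $H^{0}N$ vanishing would force $M\cong T$.) Conversely, a triangle of the first type is a short exact sequence $0\to T'\to T\to T''\to 0$ with all terms in $\mathcal{T}\subseteq R_{\mathcal{T}}\catA$, hence also a short exact sequence in $R_{\mathcal{T}}\catA$, exhibiting $T'$ as a proper nonzero subobject; and a triangle of the second type rotates to $0\to F'\to T'\to T\to 0$ in $\catA$, so the morphism $T'\to T$, read in $R_{\mathcal{T}}\catA$, has cone $F'[1]\in\mathcal{F}[1]$ and is therefore a monomorphism with nonzero cokernel $F'[1]$, again a proper nonzero subobject.

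Part (2) I would obtain as the mirror image of part (1): passing to the opposite triangulated category sends $\catA$ to $\catA^{\mathrm{op}}$, the torsion pair $(\mathcal{T},\mathcal{F})$ to the torsion pair $(\mathcal{F}^{\mathrm{op}},\mathcal{T}^{\mathrm{op}})$, and $R_{\mathcal{T}}\catA$ to $R_{\mathcal{F}^{\mathrm{op}}}(\catA^{\mathrm{op}})$ up to an overall shift, under which simple objects and the two families of triangles in (2) correspond to those in (1); alternatively one reruns the argument above starting from a proper nonzero \emph{quotient} of $F[1]$ and tracking the same long exact sequence. The one point that needs care, in both parts, is the mixed case where $H^{-1}N$ and $H^{0}N$ are both nonzero: there the subobject $M$ at hand produces neither listed triangle directly, and one must manufacture the new subobject $I=\im(T'\to T)$, verify that it really lies in $R_{\mathcal{T}}\catA$ (this uses closure of $\mathcal{T}$ under quotients) and that it is proper and nonzero there (this uses the cohomology-of-the-cone criterion for mono and epi). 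Everything else is bookkeeping with the long exact sequence of $H^{\bullet}$ and the dictionary among triangles in $D$, short exact sequences in $\catA$, and short exact sequences in $R_{\mathcal{T}}\catA$.
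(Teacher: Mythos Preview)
Your argument is correct. The paper does not give its own proof of this lemma: it is quoted verbatim as Lemma~2.4 of \cite{W10} and used as a black box, so there is no in-paper argument to compare against. Your proof supplies exactly what the paper omits, via the standard route of recognising $(\mathcal{F}[1],\mathcal{T})$ as a torsion pair in the tilted heart and then chasing the long exact sequence in $H^\bullet$; the only nontrivial step is the mixed case $H^{-1}N\neq 0\neq H^{0}N$, where you correctly replace the given subobject $T'$ by $I=\im(T'\to T)$ and use closure of $\mathcal{T}$ under quotients together with $\mathcal{T}\cap\mathcal{F}=0$ to see $I\neq 0$.
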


Given a simple object $S \in \catA$, define $\langle S\rangle\subset\mathfrak{A}$ to be the full subcategory consisting
of objects $E \in\catA$ all of whose simple factors are isomorphic to $S$. One can easily check that the pair $\mathcal{F}=\langle S\rangle$ and $\mathcal{T}=\{E\mid \Hom(E,S)=0\}$ is a torsion pair.

\par
The (right) tilted subcategory of $\catA$ with respect to $S$ is defined to be $$R_S\catA = \{E \in D \mid H^i(E)=0\hbox{ for }i\neq-1,0\hbox{, }H^{-1}(E)\in\langle S\rangle\hbox{ and }\Hom(H^0(E),S)=0 \}.$$ Similarly there is a notion of left tilting $L_S\catA$.
\par
In the heart $R_S\catA$ of the new $t$-structure, $S[1]$ is a simple object. We can consider the tilting of $R_S\catA$ with respect to $S[1]$. But for this to work we need the abelian category $R_S\catA$ to be of finite length. Now we give a sufficient condition to guarantee this property.

Fix a simple object $S_{\theta}$ in an abelian category $\catA$, for another simple object $S_\alpha$ we use $S_{\alpha}'$ to denote the universal extension of $S_{\theta}$ by $S_{\alpha}$, which is the middle term in the tautological short exact sequence
\begin{equation}\label{eq: universal-ext}
0\to S_{\theta}\otimes\Ext^1(S_\alpha,S_{\theta})^*\to S_\alpha'\to S_\alpha\to 0.
\end{equation}
Note that $ S_\alpha'$ has cohomology concentrated in degree zero.
\begin{lemma}\label{lem: simple}\footnote{The author is grateful to Sasha Kuznetzov for pointing out a better set-up to carry out iterated tiltings studied in his work in preparation.}
Let $\mathfrak{A}$ be an abelian category of finite length with the complete set of  pairwise distinct simple objects $\{S_\alpha\mid\alpha\in\nabla\}$ indexed by a finite indexing set $\nabla$. Suppose that $\Ext^1(S_{\theta},S_{\theta})=0$, then $R_{S_{\theta}}\mathfrak{A}$ is still a finite length category whose set of all simple objects is $\{S_{\theta}[1]\}\cup\{S_\alpha'\mid\alpha\neq\theta\}$.
\end{lemma}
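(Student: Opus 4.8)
The plan is to combine Lemma~\ref{lem_simple_R_til} with two short $\Ext$-computations; the hypothesis $\Ext^1(S_\theta,S_\theta)=0$ enters at exactly two places. I would first record that it forces $\mathcal{F}=\langle S_\theta\rangle$ to be \emph{semisimple}: $\mathcal{F}$ is a Serre subcategory admitting no nonsplit self-extension of $S_\theta$, so an induction on length shows every object of $\mathcal{F}$ is a finite direct sum of copies of $S_\theta$. In particular $\mathcal{F}[1]$ has $S_\theta[1]$ as its only simple object, and every object of $\mathcal{F}$ has finite length.

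The computational heart is the following. Writing $V=\Ext^1(S_\alpha,S_\theta)$ and applying $\Hom(-,S_\theta)$ to the universal extension \eqref{eq: universal-ext}, one has $\Hom(S_\alpha,S_\theta)=0$; by the defining property of the universal extension the connecting map $\Hom(S_\theta\otimes V^*,S_\theta)\to V$ is an isomorphism; and $\Ext^1(S_\theta\otimes V^*,S_\theta)=\Ext^1(S_\theta,S_\theta)\otimes V=0$. Hence the long exact sequence yields $\Hom(S_\alpha',S_\theta)=0$ and $\Ext^1(S_\alpha',S_\theta)=0$. The first vanishing says $S_\alpha'\in\mathcal{T}$, so $S_\alpha'\in R_{S_\theta}\mathfrak{A}$. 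The same computation with any $T\in\mathcal{T}$ in place of $S_\alpha$ shows that the universal extension of $T$ by a multiple of $S_\theta$ still lies in $\mathcal{T}$; I would isolate this as a sub-lemma, since it means the ``$F'$-type'' obstruction of Lemma~\ref{lem_simple_R_til}(1) for an object of $\mathcal{T}$ is controlled purely by $\Ext^1(-,S_\theta)$.

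Next I would verify simplicity of the proposed simples. For $S_\theta[1]$ both triangles in Lemma~\ref{lem_simple_R_til}(2) are impossible because $S_\theta$ is simple in $\mathfrak{A}$. For $S_\alpha'$ ($\alpha\neq\theta$): the first obstruction of Lemma~\ref{lem_simple_R_til}(1) cannot occur because $S_\alpha'$ has a \emph{unique} maximal subobject, namely $S_\theta\otimes V^*$ (since $S_\alpha'/(S_\theta\otimes V^*)=S_\alpha$ is simple, $S_\alpha$ occurs with multiplicity one, and $\Hom(S_\alpha',S_\theta)=0$ rules out $S_\theta$ as a quotient), so every proper nonzero subobject lies in $S_\theta\otimes V^*\in\mathcal{F}$ and thus not in $\mathcal{T}$; the second obstruction cannot occur because $\Ext^1(S_\alpha',S_\theta)=0$ forces any extension $0\to F'\to T'\to S_\alpha'\to 0$ with $0\neq F'\in\mathcal{F}$ to split, so $\Hom(T',S_\theta)\neq 0$ and $T'\notin\mathcal{T}$. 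Thus $S_\theta[1]$ and the $S_\alpha'$, $\alpha\neq\theta$, are pairwise distinct simple objects of $R_{S_\theta}\mathfrak{A}$.

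It remains to see these exhaust the simples and that $R_{S_\theta}\mathfrak{A}$ has finite length. By Lemma~\ref{lem_simple_R_til} a simple $X$ lies in $\mathcal{F}[1]$, forcing $X\cong S_\theta[1]$, or in $\mathcal{T}$. In the latter case, absence of the $F'$-obstruction forces $\Ext^1(X,S_\theta)=0$ (otherwise the universal extension of $X$ by $\Ext^1(X,S_\theta)$-many copies of $S_\theta$ is, by the sub-lemma, a witness to the obstruction), and absence of the $T'$-obstruction forces $X/N$ to be simple, where $N\subseteq X$ is the largest subobject in $\mathcal{F}$: if $X/N$ had a proper nonzero subobject, I would pull a maximal one back to $M\subseteq X$, pass to its torsion part $\tau(M)$, and observe that $0\to\tau(M)\to X\to X/\tau(M)\to 0$ has both terms in $\mathcal{T}$ and both nonzero (nonvanishing of $\tau(M)$ using the maximality of $N$), contradicting Lemma~\ref{lem_simple_R_til}(1). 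So $X$ is an extension $0\to N\to X\to S_\alpha\to 0$ with $N$ a multiple of $S_\theta$ and $\alpha\neq\theta$, whose class is a linear map $\phi$ with target $\Ext^1(S_\alpha,S_\theta)$; $\phi$ is injective (a nonzero kernel would split off a summand of the indecomposable $X$) and surjective ($\coker\phi\cong\Ext^1(X,S_\theta)=0$ by the long exact sequence), so $X\cong S_\alpha'$. For finite length, each $E\in R_{S_\theta}\mathfrak{A}$ sits in $0\to H^{-1}(E)[1]\to E\to H^0(E)\to 0$ with $H^{-1}(E)[1]$ a finite multiple of $S_\theta[1]$ and $H^0(E)\in\mathcal{T}$ of finite length in $\mathfrak{A}$; I would induct on that length, using that a simple $\mathfrak{A}$-subobject $S_\beta$ with $\beta\neq\theta$ of an object of $\mathcal{T}$ has finite length in $R_{S_\theta}\mathfrak{A}$ --- rotating the universal-extension triangle gives an exact sequence $0\to S_\beta'\to S_\beta\to S_\theta[1]^{\oplus\dim\Ext^1(S_\beta,S_\theta)}\to 0$ in $R_{S_\theta}\mathfrak{A}$ --- and that an $S_\theta$-subobject is absorbed by the same rotation trick. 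I expect the main obstacle to be the step producing a simple $X/N$: the naive candidate subobject, the kernel of a surjection of $X$ onto a simple quotient, need not lie in $\mathcal{T}$ because $\mathcal{T}$ is not closed under subobjects, which is why one must split off a genuine torsion subobject; a secondary nuisance is the finite-length d\'evissage through the rotated triangles, together with the harmless point that $\End(S_\alpha)$ need only be a division ring, so all ``dimensions'' should be read as lengths over it.
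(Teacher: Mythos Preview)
Your proof is correct. The route diverges from the paper's chiefly in how you handle exhaustion of the simple objects and finite length. The paper treats these simultaneously: it shows directly that every $E\in R_{S_\theta}\mathfrak{A}$ admits a finite filtration with subquotients among $S_\theta[1]$ and the $S_\alpha'$, by inducting on the number of composition factors $S_\alpha$ with $\alpha\neq\theta$ appearing in $H^0(E)$ and, at each step, producing a \emph{surjection} $E\twoheadrightarrow S_\alpha'$ in $R_{S_\theta}\mathfrak{A}$ from a nonzero $\Hom(H^0(E),S_\alpha)$. This yields exhaustion and finite length in one stroke and is quite short. You instead separate the two: first you classify all simples by running the dichotomy of Lemma~\ref{lem_simple_R_til} to conclude that any simple $X\in\mathcal{T}$ must satisfy $\Ext^1(X,S_\theta)=0$ and have $X/N$ simple, and then identify $X$ with some $S_\alpha'$ via the extension class; afterwards you prove finite length by a different induction, on the $\mathfrak{A}$-length of $H^0(E)$, peeling off simple $\mathfrak{A}$-\emph{subobjects} (using the rotated universal-extension triangle for $S_\beta$ and the rotation $0\to H^0(E)\to H^0(E)/S_\theta\to S_\theta[1]\to 0$ when the subobject is $S_\theta$). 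Your argument is longer but makes the classification of simples more transparent and does not rely on producing the map $\Hom(E,S_\alpha')\neq 0$ from $\Hom(H^0(E),S_\alpha)\neq 0$; the paper's argument is more economical and hands you a Jordan--H\"older filtration directly.
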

\begin{proof}
First note that $S_\theta[1]$ and $S_\alpha'$ are simple objects in $R_{S_{\theta}}\mathfrak{A}$. For $S_\theta[1]$, this is clear by Lemma~\ref{lem_simple_R_til}. For $S_\alpha'$, applying $\Hom(-,S_\theta[1])$ to the short exact sequence (\ref{eq: universal-ext}), we get \[\cdots\to\Hom(S'_\alpha,S_\theta)\to\Hom(S_\theta,S_\theta)\otimes\Ext^1(S_\alpha,S_\theta)\surj\Ext^1(S_\alpha,S_\theta)\to\Ext^1(S'_\alpha,S_\theta)\to 0.\] This shows $\Hom(S_\alpha',S_\theta[1])=0$. The composition factors of $S'_\alpha$ are $S_\alpha$ and some copies of $S_\theta$, therefore, there is no exact triangle $T'\to S_\alpha'\to T''\to T'[1]$ with $T'$ and $T''\in \mathcal{T}$. So, Lemma~\ref{lem_simple_R_til} yields the simplicity of $S'_\alpha$.

We only need to show that any object $E$ in $R_{S_{\theta}}\mathfrak{A}$ has a finite filtration with sub-quotients isomorphic to $S_\theta[1]$ and $S_\alpha'$.
We use induction on the total number of copies of $S_\alpha$  for $\alpha\neq \theta$ occurring as composition factors of $H^0(E)$. If the number is zero, this means the cohomology of $E$ is concentrated in degree -1, and therefore, is a direct sum of $S_\theta[1]$. Otherwise, via taking cokernel of maps from $S_\theta[1]$ in the abelian category $R_{S_{\theta}}\mathfrak{A}$, we can assume the cohomology of $E$ is concentrated in degree zero. There is some $\alpha\neq \theta$ such that $\Hom(H^0(E),S_\alpha)\neq0$ which implies $\Hom(E,S'_\alpha)\neq0$.
As $S'_\alpha$ is simple in $R_{S_\theta}\catA$, this map must be surjective. Let the kernel be $K$. Taking cohomology long exact sequence with respect to the original $t$-structure of the exact triangle \[K\to E\to S'_\alpha\to K[1],\]  we know that in the composition factors of $H^0(K)$ the total number of copies of $S_\alpha$  with $\alpha\neq \theta$ has been reduced by 1.
\end{proof}
We will denote $S_{\theta}[1]$ by $S_{\theta}'$.

\par
Now we assume $\Ext^1(S_{\theta},S_{\theta})$ vanish.  For a fixed $\theta\in\nabla$, let $S^0_\alpha=S_\alpha$. Recursively we define $S_\alpha^i$ to be the universal extension of $S_{\theta}^{i-1}$ by $S_\alpha^{i-1}$ for $\alpha\neq\theta$, and $S_{\theta}^i=S_{\theta}^{i-1}[1]$.
\par
Since $\Ext^i(S^i_\theta,S^i_\theta)=\Ext^i(S_\theta,S_\theta)$, we have the following proposition.

\begin{prop}\label{prop: inter_tilt}
Let $\mathfrak{A}$ be an abelian category of finite length with simple objects $\{S_\alpha\mid\alpha\in\nabla\}$ for a finite set $\nabla$. Suppose that $\Ext^1(S_{\theta},S_{\theta})=0$, then $R_{S_{\theta}[i-1]}R_{S_{\theta}[i-2]}\cdots R_{S_{\theta}}(\mathfrak{A})$ is still a finite length category whose simple objects are $\{S_\alpha^i\mid\alpha\}$.
\end{prop}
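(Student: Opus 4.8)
The plan is to induct on $i$, using Lemma~\ref{lem: simple} for both the base case $i=1$ and the inductive step. Let $D$ be the ambient triangulated category, write $\mathfrak{A}_0=\mathfrak{A}$, and for $j\geq1$ set $\mathfrak{A}_j=R_{S_\theta[j-1]}R_{S_\theta[j-2]}\cdots R_{S_\theta}(\mathfrak{A})$, so that $\mathfrak{A}_j=R_{S_\theta^{j-1}}(\mathfrak{A}_{j-1})$ by the definition of the iterated tilt and the convention $S_\theta^{j-1}=S_\theta[j-1]$. Since the tilt of the heart of a bounded $t$-structure with respect to a torsion pair is again the heart of a bounded $t$-structure, every $\mathfrak{A}_j$ is a heart of a bounded $t$-structure on $D$; in particular all the $\Hom$- and $\Ext^1$-groups below may be computed in $D$ and are independent of the heart in which we regard the objects.

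For the inductive step, assume $\mathfrak{A}_{i-1}$ is of finite length with complete set of pairwise distinct simple objects $\{S_\alpha^{i-1}\mid\alpha\in\nabla\}$, so that in particular $S_\theta^{i-1}=S_\theta[i-1]$ is one of its simple objects. To apply Lemma~\ref{lem: simple} to $\mathfrak{A}_{i-1}$ with distinguished simple object $S_\theta^{i-1}$, I must verify $\Ext^1(S_\theta^{i-1},S_\theta^{i-1})=0$; but $\Ext^1(S_\theta[i-1],S_\theta[i-1])=\Hom_D(S_\theta[i-1],S_\theta[i])=\Hom_D(S_\theta,S_\theta[1])=\Ext^1(S_\theta,S_\theta)=0$, which is precisely the identity $\Ext^j(S_\theta^i,S_\theta^i)=\Ext^j(S_\theta,S_\theta)$ noted before the proposition, specialized to $j=1$. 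Lemma~\ref{lem: simple} then shows that $\mathfrak{A}_i=R_{S_\theta^{i-1}}(\mathfrak{A}_{i-1})$ is of finite length with simple objects $\{S_\theta^{i-1}[1]\}\cup\{(S_\alpha^{i-1})'\mid\alpha\neq\theta\}$, where $(S_\alpha^{i-1})'$ is the universal extension of $S_\theta^{i-1}$ by $S_\alpha^{i-1}$ inside $\mathfrak{A}_{i-1}$, formed from the short exact sequence $(\ref{eq: universal-ext})$. By the recursive definitions this reads $S_\theta^{i-1}[1]=S_\theta^i$ and $(S_\alpha^{i-1})'=S_\alpha^i$, so the simple objects of $\mathfrak{A}_i$ are exactly $\{S_\alpha^i\mid\alpha\in\nabla\}$, completing the induction.

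The only points needing attention are bookkeeping: that the $\Ext^1$-vanishing hypothesis survives every tilt, which is immediate from shift-invariance of $\Hom$ in $D$ together with the fact that $\Ext^1$ in any heart is $\Hom_D(-,-[1])$; and that the universal extension delivered by Lemma~\ref{lem: simple} at stage $i$ is literally the same object as the recursively defined $S_\alpha^i$, which holds because both are built from the sequence $(\ref{eq: universal-ext})$ in the abelian category $\mathfrak{A}_{i-1}$ using the same group $\Ext^1(S_\alpha^{i-1},S_\theta^{i-1})$. I do not expect genuine difficulty: the proposition is just an iteration of Lemma~\ref{lem: simple}, whose proof already contains the substantive work of showing that a single tilt preserves finite length and of identifying its simple objects.
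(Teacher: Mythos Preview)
Your proposal is correct and matches the paper's approach exactly: the paper reduces the proposition to the single line ``Since $\Ext^i(S^i_\theta,S^i_\theta)=\Ext^i(S_\theta,S_\theta)$'' immediately preceding the statement, which is precisely the shift-invariance observation you use to feed Lemma~\ref{lem: simple} back into itself inductively. You have simply written out in full the induction that the paper leaves implicit.
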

\par
Let $\catA\subset D$ be the heart of some $t$-structure of $D$, following Bridgeland, we denote the region in the stability space corresponding to $\catA$ by $U(\catA)$.
Suppose $(Z,\catA)$ is a stability condition in the boundary of the region $U(\catA)$.
Then
there is some $i$ such that $Z(S_i)$ lies on the real axis. Assume that $\im Z(S_j) > 0$ for every $j\neq i$. Since each object $S_i$ is
stable for all stability conditions in $ U(\catA)$, each $S_i$ is at least semistable in
$(Z,\catA)$, and hence $Z(S_i)$ is nonzero.
\begin{lemma}[\cite{Bri}, Lemma~5.2]
Suppose the heat $\catA\subset D$ of a bounded $t$-structure has finite length and $n$ simple objects, then $U(\catA)$ is isomorphic to $\mathbb{H}^n$ where $\mathbb{H}$ is the upper half plane in $\CC$ together with the positive real-axis.
\end{lemma}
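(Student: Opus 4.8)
The plan is to use the finite length hypothesis to reduce the description of $U(\catA)$ to bookkeeping with the Grothendieck group, and then invoke Bridgeland's main theorem to upgrade the resulting bijection to a homeomorphism.

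First I would record that, $\catA$ being the heart of a bounded $t$-structure, the inclusion induces an isomorphism $K_0(\catA)\xrightarrow{\sim}K_0(D)$, and that a finite length abelian category has $K_0$ free abelian on its simple objects. So, writing $\{S_\alpha\mid\alpha\in\nabla\}$ for the simples ($|\nabla|=n$), the Grothendieck group is $\bigoplus_\alpha\ZZ[S_\alpha]$, and a central charge $Z\colon K_0(D)\to\CC$ is the same datum as a tuple $(Z(S_\alpha))_{\alpha\in\nabla}\in\CC^n$.

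Next I would show that $\sigma\mapsto Z_\sigma$ is a bijection from $U(\catA)$ onto $\mathbb{H}^n\subseteq\CC^n$. If $\sigma\in U(\catA)$ its heart is $\catA$, so $Z_\sigma|_\catA$ is a stability function and by definition sends every nonzero object of $\catA$, in particular each $S_\alpha$, into $\mathbb{H}$. Conversely, given $(z_\alpha)\in\mathbb{H}^n$, let $Z$ be the homomorphism with $Z(S_\alpha)=z_\alpha$: any nonzero $E\in\catA$ has a finite composition series with factors among the $S_\alpha$, so $Z(E)=\sum_\alpha m_\alpha z_\alpha$ with $m_\alpha\in\ZZ_{\geq0}$ not all zero, and since $\mathbb{H}$ is closed under addition and under positive real scaling — if the imaginary parts do not all vanish the sum lies in the open half-plane, and otherwise every contributing $z_\alpha$, and hence the sum, lies on the positive real axis — we get $Z(E)\in\mathbb{H}$. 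Thus $Z$ is a stability function on $\catA$; because $\catA$ has finite length it automatically enjoys the Harder--Narasimhan property (\cite{Bri}, Proposition~2.4) and the resulting stability condition $(Z,\catA)$ is locally finite, hence a point of $\Stab(D)$ with heart $\catA$, i.e.\ of $U(\catA)$, mapping to $(z_\alpha)$. Injectivity on $U(\catA)$ holds because a stability condition with prescribed heart is recovered from its central charge: the slicing is dictated on the phases occupied by $\catA$ by the unique Harder--Narasimhan filtrations in $\catA$, and extended to all phases by the shift.

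Finally I would promote this bijection to a homeomorphism using Bridgeland's theorem that the forgetful map $\Stab(D)\to\Hom(K_0(D),\CC)\cong\CC^n$, $\sigma\mapsto Z_\sigma$, is a local homeomorphism for the natural topology on $\Stab(D)$; restricted to $U(\catA)$ it is then a continuous open bijection onto the subspace $\mathbb{H}^n$, hence a homeomorphism. The only step with genuine content, as opposed to unwinding definitions, is the combination of the cone property of $\mathbb{H}$ with the fact that finite length of $\catA$ supplies the Harder--Narasimhan property and local finiteness for free; I expect this to be the main point to get right.
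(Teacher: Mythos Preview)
The paper does not supply its own proof of this lemma: it is quoted verbatim from \cite{Bri} and used as a black box. So there is nothing in the paper to compare your argument against. Your proposal is the standard argument and is correct in substance; it is essentially how Bridgeland proves it.

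One small point worth tightening in your last paragraph: a local homeomorphism restricted to a non-open subset need not remain open, and $U(\catA)$ is not open in $\Stab(D)$ (points with some $Z(S_\alpha)$ on the positive real axis lie on the boundary). The clean way to finish is to observe that the inverse map $\mathbb{H}^n\to U(\catA)$, $(z_\alpha)\mapsto (Z,\catA)$, is a local section of the local homeomorphism $\Stab(D)\to\CC^n$, hence continuous; together with continuity of the forward map this gives the homeomorphism directly, without needing openness of the restriction.
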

\par
For a stability condition $(\catA,Z)$ on a wall of codimension 1, then $Z(S)$ takes positive real values on that wall for some simple object $S$. If $R_S\catA$ has the same finiteness property, then $U(\catA)$ and $R_S\catA$ glues together along this wall.

\begin{corollary}
If $S$ is a simple object in $\catA$ without self-extension, then $\Stab(\catA)$ has a locally closed subspace obtained by gluing $\mathbb{H}^n$'s together along the copy of $\mathbb{H}$ corresponding to the simple object $S$.
\end{corollary}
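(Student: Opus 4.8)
The plan is to read the statement off from two things already in place: Bridgeland's description of the chamber $U(\calB)\cong\mathbb H^n$ attached to a finite-length heart $\calB$ with $n$ simple objects (\cite{Bri}, Lemma~5.2, quoted above), and the gluing statement recorded immediately before the corollary, whose only hypothesis is that the tilted heart again be of finite length. The entire role of the assumption $\Ext^1(S,S)=0$ is to feed Lemma~\ref{lem: simple}, which then guarantees that finite length is preserved under tilting at $S$ and at its shifts.

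First I would let $n$ be the number of simple objects of $\catA$ and form $R_S\catA$. Since $\Ext^1(S,S)=0$, Lemma~\ref{lem: simple} shows that $R_S\catA$ is again the heart of a bounded $t$-structure, of finite length, with exactly $n$ simple objects, namely $S[1]$ together with the universal extensions $S_\alpha'$ of the remaining simples. Hence \cite{Bri}, Lemma~5.2 applies to both $\catA$ and $R_S\catA$, giving $U(\catA)\cong\mathbb H^n$ and $U(R_S\catA)\cong\mathbb H^n$, the $n$ coordinates being in each case the central charges of the simples; the $\mathbb H$-factor attached to $S$ in the first chamber and to $S[1]$ in the second is the ``copy of $\mathbb H$ corresponding to $S$''.

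Next I would invoke the gluing statement preceding the corollary. On the codimension-$1$ wall $W=\{Z(S)\in\RR_{>0}\}$ of $U(\catA)$ the simple $S$ sits at extremal phase while every other simple is strictly interior, so the heart of any small deformation is obtained from $\catA$ by a Happel--Reiten--Smal{\o} tilt at the torsion pair associated to $S$; one checks the only possibilities are $\catA$ itself and $R_S\catA$, and since $R_S\catA$ has the finiteness property the two chambers glue along $W$, which is precisely the face of $\mathbb H^n$ cut out by the boundary ray of the $\mathbb H$-factor corresponding to $S$. Thus $U(\catA)\cup W\cup U(R_S\catA)$ sits in $\Stab(\catA)$ as two copies of $\mathbb H^n$ glued along the copy of $\mathbb H$ corresponding to $S$. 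Now $S[1]$ is simple in $R_S\catA$ with $\Ext^1(S[1],S[1])=\Ext^1(S,S)=0$, so the argument iterates: by Proposition~\ref{prop: inter_tilt} and its left-handed counterpart the iterated tilts $\catA^{(i)}$ ($i\in\ZZ$, with $\catA^{(0)}=\catA$ and $\catA^{(i+1)}=R_{S[i]}\catA^{(i)}$ for $i\ge 0$, left tilts for $i<0$) are all finite-length hearts with $n$ simples, each $U(\catA^{(i)})\cong\mathbb H^n$, and consecutive chambers glue as above along the wall where the relevant shift of $S$ has positive real central charge. Setting $\mathcal U=\bigcup_{i\in\ZZ}U(\catA^{(i)})$ together with these walls, one verifies directly --- a small neighbourhood of an interior point of a chamber lies in that chamber, and a small neighbourhood of a wall point meets only the two adjacent chambers and the wall --- that $\mathcal U$ is open in its closure in $\Stab(\catA)$, hence locally closed, and by construction it is exactly what one obtains by gluing the copies of $\mathbb H^n$ along the copies of $\mathbb H$ corresponding to $S$.

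The main obstacle is this gluing step: one must make precise that near a point of $W$ the only hearts occurring are $\catA$ and $R_S\catA$, and that these two chambers together with $W$ fill a full neighbourhood with neither gap nor overlap. This is Bridgeland's local structure theorem for $\Stab$ --- the local homeomorphism onto an open subset of $\Hom(K_0(D),\CC)$, together with the classification of how a bounded heart can change under a small deformation of the central charge --- and the only non-formal input it needs in our situation is the finite length of $R_S\catA$, supplied by $\Ext^1(S,S)=0$ via Lemma~\ref{lem: simple}. The remaining verifications are routine manipulations with the half-open products $\mathbb H^n$.
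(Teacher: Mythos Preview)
Your proposal is correct and follows exactly the approach the paper intends: the corollary is stated in the paper without proof, as an immediate consequence of Lemma~\ref{lem: simple} (and its iterated form Proposition~\ref{prop: inter_tilt}), Bridgeland's Lemma~5.2, and the gluing remark immediately preceding the corollary. You have simply spelled out the details the paper leaves implicit, including the iteration over shifts of $S$ and the verification that the resulting union is locally closed.
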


\subsection{Perverse equivalences}

Our main reference for this subsection is \cite{CR}.

For a Serre subcategory $\mathcal{I}$ of an exact category $\catA$, the thick subcategory in $D^b(\catA)$ generated by $\mathcal{I}$ will be denoted by $\langle\mathcal{I}\rangle$.

Let $\catA$ and $\catA'$ be two exact categories endowed with filtrations $0 = \catA_0\subseteq\catA_1\cdots\subseteq\catA_r = \catA $ and
$0 = \catA'_0\subseteq\catA'_1\cdots\subseteq\catA'_r = \catA' $ by Serre subcategories. Let $p : \{0,\dots,r\}\to \ZZ$ be any function. The notion of perverse equivalence with respect to this filtration and perversity function $p$ is defined in \cite{CR}.

\begin{definition}
An equivalence $F : D^b(\catA)\to D^b(\catA')$ is \textit{perverse} relative to the filtrations $(\catA_\bullet,\catA'_\bullet)$ and function $p$, if for any $i$, the functor 
$F$ restricts to equivalences $\langle\catA_i\rangle\cong\langle\catA'_i\rangle$, and there is an equivalence $\catA_i={\catA_i}/{\catA_{i-1}}\to \catA'_i={\catA'_i}/{\catA'_{i-1}}$ compatible with the following equivalence induced by $F$:
\[F[p(i)]:{\langle\catA_i\rangle}/{\langle\catA_i\rangle}\cong {\langle\catA'_i\rangle}/{\langle\catA'_i\rangle}.\]
\end{definition}

In the case when $\catA'$ is not endowed with filtration, we make the following convention.  We define the filtration on $\catA'$ by $\catA'_i=\catA'\cap F(\catA_i)$, and we talk about perverse equivalence only in the case when each $\catA'_i$ defined this way is a Serre subcategory of $\catA'$.

There is also a notion of perverse data when talking about two $t$-structures $t$ and $t'$ on the same triangulated category with a filtration $\mathfrak{T}_*$ with respect to a perversity function $p$ defined in \cite{CR}. We say the quadruple $(t,t',\mathfrak{T}_*,p)$ is a perverse data if both $t$ and $t'$ are compatible with the filtration $\mathfrak{T}_*$, and for each $i$ we have $t|_{\mathfrak{T}_i/\mathfrak{T}_{i-1}}=t'|_{\mathfrak{T}_i/\mathfrak{T}_{i-1}}[p(i)]$

The followings are some basic properties of perverse equivalences.
\begin{prop}[See \cite{CR}]\label{prop: Chuang-Rouquier}
Notations as above, we have the following.
\begin{enumerate}
\item If $F$ is a perverse equivalence relative to $(\catA_\bullet,\catA'_\bullet,p)$, then $F^{-1}$ is perverse relative to $(\catA'_\bullet,\catA_\bullet,-p)$.
\item In this case, let $\catA''$ be another exact category endowed with filtration $\catA''_\bullet$ by Serre subcategories, and let $p':\{0,\dots,r\}\to\ZZ$ be another map. Assume $F':D^b(\catA')\to D^b(\catA'')$ is a perverse equivalence relative to $(\catA'_\bullet,\catA''_\bullet,p)$. Then $F'\circ F$ is a perverse equivalence relative to $(\catA_\bullet,\catA''_\bullet,p+p')$.
\item If we have two perverse data $(t,t',\mathfrak{T}_*,p_1)$ and $(t,t'',\mathfrak{T}_*,p_2)$ with $p_1=p_2$, then $t'=t''$.
\end{enumerate}
\end{prop}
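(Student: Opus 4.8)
All three assertions are purely formal: the plan is to unwind the definition of perverse equivalence (resp.\ of perverse datum) and translate each into a short diagram chase. None of them uses the geometry of Hilbert schemes or anything specific to the Cherednik setting, so I would treat this as a self-contained piece of homological bookkeeping, following \cite{CR}.

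For (1), I would observe that the definition is symmetric in the two categories up to the shift. If $F$ restricts to equivalences $\langle\catA_i\rangle\simeq\langle\catA'_i\rangle$ then $F^{-1}$ restricts to the inverse equivalences, and on the $i$-th subquotient $\langle\catA_i\rangle/\langle\catA_{i-1}\rangle$ the comparison equivalence $\varphi_i:\catA_i/\catA_{i-1}\to\catA'_i/\catA'_{i-1}$ intertwining $F[p(i)]$ yields $\varphi_i^{-1}$ intertwining $F^{-1}[-p(i)]$; this is exactly the statement that $F^{-1}$ is perverse relative to $(\catA'_\bullet,\catA_\bullet,-p)$. For (2), the restriction equivalences compose, $F'F:\langle\catA_i\rangle\to\langle\catA'_i\rangle\to\langle\catA''_i\rangle$, and since a triangulated functor commutes with the shift functor one has $F'[p'(i)]\circ F[p(i)] = (F'\circ F)[p(i)+p'(i)]$ as functors on $\langle\catA_i\rangle/\langle\catA_{i-1}\rangle$; composing the two comparison equivalences $\catA_i/\catA_{i-1}\to\catA'_i/\catA'_{i-1}\to\catA''_i/\catA''_{i-1}$ then exhibits $F'\circ F$ as perverse with perversity $p+p'$. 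Both of these are pure manipulation of the definition.

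The content is in (3). I would argue by induction on the length $r$ of the filtration $\mathfrak{T}_*$, the base case being trivial. Writing $p:=p_1=p_2$, the defining identities $t'|_{\mathfrak{T}_i/\mathfrak{T}_{i-1}}=t|_{\mathfrak{T}_i/\mathfrak{T}_{i-1}}[p(i)]$ and the analogous ones for $t''$ force $t'$ and $t''$ to have the \emph{same} restriction to every subquotient $\mathfrak{T}_i/\mathfrak{T}_{i-1}$. Applying the inductive hypothesis to the truncated filtration $\mathfrak{T}_0\subseteq\cdots\subseteq\mathfrak{T}_{r-1}$ inside the triangulated subcategory $\mathfrak{T}_{r-1}$ gives $t'|_{\mathfrak{T}_{r-1}}=t''|_{\mathfrak{T}_{r-1}}$, and $t'$, $t''$ also have the same restriction to the Verdier quotient $D/\mathfrak{T}_{r-1}$. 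The final step is the uniqueness half of Beilinson--Bernstein--Deligne gluing: a $t$-structure on $D$ compatible with the localization sequence $\mathfrak{T}_{r-1}\hookrightarrow D\to D/\mathfrak{T}_{r-1}$ is uniquely determined by its restrictions to the two ends, its aisle being $\{X\in D: i^*X$ and $j^*X$ lie in the respective non-positive parts$\}$, whence $t'=t''$.

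The one place I expect to need genuine care, and hence the main obstacle, is the last step: one must check that "compatible with $\mathfrak{T}_*$" in the sense used here really supplies the recollement data — the adjoints of the inclusion $\mathfrak{T}_{r-1}\hookrightarrow D$ — needed to run the BBD gluing, and that the glued aisle is literally recovered by the pullback formula above; one also has to set up the induction cleanly at the bottom of the filtration. Once those points are granted, equality of $t'$ and $t''$ is immediate, and everything else is formal.
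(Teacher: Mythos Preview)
The paper does not supply its own proof of this proposition; it is stated with a bare citation to \cite{CR} and used as input. So there is nothing in the paper to compare your argument against.

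That said, your proposal is correct and is the standard argument. Parts (1) and (2) are, as you say, pure unwinding of the definition. For (3), induction on the length of the filtration together with the uniqueness direction of BBD gluing is exactly the intended mechanism; your caveat about needing the recollement adjoints is well placed, but in the framework of \cite{CR} the notion of ``compatible with $\mathfrak{T}_*$'' is set up precisely so that the inclusion $\mathfrak{T}_{r-1}\hookrightarrow D$ and the quotient $D\to D/\mathfrak{T}_{r-1}$ carry the required $t$-exact adjoints, and the glued aisle is recovered by the pullback description you wrote. Once that is accepted, the induction goes through without further input.
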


\subsection{Truncated mutations}\label{subsec: tilting}
Let $E$ be an associative algebra over a base field $k$. Let $\{P_\alpha\mid\alpha\in\nabla\}$ be the set of (isomorphism classes of) indecomposable projective objects in the category $E\hbox{-}mod$. We assume $\nabla$ to be a finite set. Then it is well-know that $E$ is Morita equivalent to $\End(\oplus P_\alpha)$.
Let $\catA\inj E\hbox{-}mod$ be a fully-faithful exact embedding of a finite length abelian subcategory with finite dimensional $\Hom$'s, which preserves $\Ext$'s. Assume the (isomorphism classes of) simple objects $\{S_\alpha\mid\alpha\in\nabla\}$ in $\catA$ are indexed by the same set $\nabla$, such that each $S_\alpha$ is simple in $E\hbox{-}mod$ and its projective cover is $P_\alpha$. We make one additional assumption:
For each pair $(\theta,\alpha)$ in $\nabla$, let $S_{\alpha,\theta}$ be the universal extension, fitting into the short exact sequence $$0\to \Ext^1(S_\alpha,S_\theta)^*\otimes S_\theta\to S_{\alpha,\theta}\to S_\alpha\to 0.$$ We assume the map $\Hom(P_\theta,P_\alpha)\to\Ext^1(S_\alpha,S_\theta)^*$, induced by the composition morphism $\Hom(P_\alpha,S_{\alpha,\theta})\otimes\Hom(P_\theta,P_\alpha)\to\Hom(P_\theta,S_{\alpha,\theta})$, is surjective.

In the case when $E$ is finite dimensional over $k$, the only example of such subcategory $\catA$ is $E\hbox{-}mod$ itself. A non-trivial example of such subcategory will be given in Subsection~\ref{subsec: truncate_geom}.

Fix an $\theta \in \nabla$.  For each $\alpha\neq\theta $, we fix a section of the surjection $\Hom(P_\theta,P_\alpha)\to\Ext^1(S_\alpha,S_\theta)^*$, and denote the image of the section by $\Hom(P_{\theta},P_\alpha)_{t_\alpha}$. We define $P'_\alpha$ to be $P_\alpha$ if $\alpha\neq\theta $, and $P'_{\theta}$ to be the mapping cone in $D^b(E\hbox{-}mod)$ of the natural map $P_{\theta}\to \oplus_{\alpha\neq\theta }P_\alpha\otimes\Hom(P_{\theta},P_\alpha)_{t_\alpha}^*$.
\begin{definition}
If the natural map $P_{\theta}\to \oplus_{\alpha\neq\theta }P_\alpha\otimes\Hom(P_{\theta},P_\alpha)_{t_\alpha}^*$ is injective,
the set $\{P'_\alpha\mid\alpha\in\nabla\}$ consists of objects in $E\hbox{-}mod$. If moreover, $P':=\oplus P'_\alpha$ has no higher self-extension, we say the set $\{P'_\alpha\mid\alpha\in\nabla\}$ is the \textit{truncated mutation} of $\{P_\alpha\mid\alpha\in\nabla\}$ with respect to $P_{\theta}$, if the natural map $P_{\theta}\to \oplus_{\alpha\neq\theta }P_\alpha\otimes\Hom(P_{\theta},P_\alpha)_{t_\alpha}^*$ is injective.
\end{definition}

Whether truncated mutations exist or not, the object $P':=\oplus P'_\alpha$, considered as an object in $D^b(E\hbox{-}mod)$, always generates the triangulated category $D^b(E\hbox{-}mod)$, in the sense that $P'^\perp=0$ in $D^b(E\hbox{-}mod)$. This can be easily verified from the fact that $\oplus_\alpha P_\alpha$ generates $D^b(E\hbox{-}mod)$. Therefore, we have the following Lemma.

\begin{lemma}
If the truncated mutation exists, then we get an equivalence of derived categories $D^b(E\hbox{-}mod)\cong D^b(\End(P')\hbox{-}mod)$. Also in this case, the projective objects in the $t$-structure coming from $\End(P')\hbox{-}mod$ are objects in $E\hbox{-}mod$.
\end{lemma}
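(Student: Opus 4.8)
The plan is to recognise $P'$ as a tilting complex over $E$ and then invoke the derived Morita theory of Rickard. Write $Q:=\bigoplus_{\alpha\neq\theta}P_\alpha\otimes\Hom(P_{\theta},P_\alpha)_{t_\alpha}^*$, a finitely generated projective $E$-module, so that the hypothesis that the truncated mutation exists says precisely that the structure map $\iota\colon P_{\theta}\to Q$ is injective and that $P'=\bigoplus_\alpha P'_\alpha$ has no positive self-extensions. First I would record the two elementary consequences of these assumptions. Since $\iota$ is an injection of projectives, $P'_{\theta}=\coker\iota$ is an $E$-module of projective dimension at most $1$; together with $P'_\alpha=P_\alpha$ for $\alpha\neq\theta$ this shows that $P'$ is an $E$-module of projective dimension at most $1$, in particular a perfect complex. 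Self-orthogonality, i.e. $\Hom_{D^b(E\text{-}mod)}(P',P'[i])=0$ for $i\neq 0$, is then automatic for $i<0$ (because $P'$ is a module) and for $i\geq 2$ (because $\mathrm{pd}\,P'\leq 1$), while for $i=1$ it is exactly the standing hypothesis that $P'$ has no higher self-extension.

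The remaining input is that $P'$ generates the category $\mathrm{per}(E)$ of perfect complexes as a thick subcategory, and this is where the short exact sequence $0\to P_{\theta}\xrightarrow{\iota}Q\to P'_{\theta}\to 0$ does the real work. The associated distinguished triangle $P_{\theta}\to Q\to P'_{\theta}\to P_{\theta}[1]$ has $Q$ and $P'_{\theta}$ in the thick subcategory $\langle P'\rangle$ generated by $P'$, hence $P_{\theta}\in\langle P'\rangle$; as $P_\alpha=P'_\alpha\in\langle P'\rangle$ for $\alpha\neq\theta$ as well, we get $\bigoplus_\alpha P_\alpha\in\langle P'\rangle$, and since $\bigoplus_\alpha P_\alpha$ is a projective generator of $E\text{-}mod$ this forces $\langle P'\rangle=\langle\bigoplus_\alpha P_\alpha\rangle=\mathrm{per}(E)$. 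Thus $P'$ is a tilting complex, and $R\Hom_E(P',-)$ is an equivalence $D^b(E\text{-}mod)\isom D^b(S\text{-}mod)$, where $S=\End_{D^b(E\text{-}mod)}(P')=\End_E(P')$, the identification of the endomorphism ring using the vanishing of the higher self-extensions from the first step. (As $P'$ is a bounded complex of finitely generated projectives, Rickard's equivalence of perfect complexes restricts to the indicated equivalence of bounded derived categories of finitely generated modules; in all our applications $E$ is noetherian, so no subtlety arises here.) This gives the first assertion.

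For the second assertion I would trace the free module through the equivalence $\Phi:=R\Hom_E(P',-)$. One has $\Phi(P')=R\Hom_E(P',P')=\End_E(P')=S$, the rank-one free $S$-module, so $\Phi$ carries $\mathrm{add}_{D^b(E\text{-}mod)}(P')$ onto the category of finitely generated projective $S$-modules. The $t$-structure on $D^b(E\text{-}mod)$ transported by $\Phi$ from the standard $t$-structure on $D^b(S\text{-}mod)$ has heart $\Phi^{-1}(S\text{-}mod)$, and an object of this heart is projective exactly when its image under $\Phi$ lies in $\mathrm{add}(S)$, i.e. exactly when it lies in $\mathrm{add}_{D^b(E\text{-}mod)}(P')$; in particular $P'$ is the projective generator of this heart and the $P'_\alpha$ its indecomposable projectives. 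Finally, because the truncated mutation exists each $P'_\alpha$ is an honest $E$-module concentrated in cohomological degree $0$, so $P'^{\oplus N}$ is a module for every $N$ and any $D^b$-summand of it is isomorphic in $D^b(E\text{-}mod)$ to its $0$-th cohomology, a finitely generated $E$-module. Hence the projective objects of the transported heart all lie in $E\text{-}mod$, as claimed.

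I expect the only genuinely delicate points to be the generation step — making sure the defining triangle of the mutation really drags all of $\bigoplus_\alpha P_\alpha$ into $\langle P'\rangle$ — and keeping the variances straight so that it is $\mathrm{add}\,P'$, and not a shift of it, that matches the projectives of the transported heart. Everything else is routine bookkeeping once $P'$ has been placed in the tilting-complex framework and the standing noetherian hypotheses are invoked to pass between the perfect and the bounded derived categories.
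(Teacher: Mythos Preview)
Your proposal is correct and follows essentially the same approach as the paper. The paper's own argument is the single-sentence remark preceding the lemma: it observes that $P'$ generates $D^b(E\text{-}mod)$ (since $\oplus_\alpha P_\alpha$ does and the defining triangle lets one recover $P_\theta$ from $P'$), and then asserts the lemma without further comment---implicitly invoking exactly the Rickard tilting machinery you spell out. Your write-up supplies the details the paper omits, including the projective-dimension bound that disposes of $\Ext^{\geq 2}$ automatically and the explicit identification of $\mathrm{add}(P')$ with the projectives of the transported heart.
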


On the other hand, fixing a simple object $S_{\theta}$ in the abelian category $\catA$ such that $\Ext^1(S_\theta,S_\theta)=0$, we also have the tilting of $\catA$ with respect to $S_{\theta}$. Recall that the set of simple objects in $R_{S_{\theta}}\mathfrak{A}$ are given by Lemma~\ref{lem: simple}, and they are denoted by $\{S'_\alpha\mid\alpha\in\nabla\}$.
The $t$-structures obtained from truncated mutations and tiltings are related by the following Lemma.

\begin{lemma}\label{lem: t-struct-coincide}
Assume the truncated mutation of $\{P_\alpha\mid\alpha\in\nabla\}$ with respect to $P_{\theta}$ exists, and $\Ext^1(S_\theta,S_\theta)=0$. Then the $t$-structure obtained from $\End(P')\hbox{-}mod$ coincide with $R_{S_{\theta}}\catA$.
\end{lemma}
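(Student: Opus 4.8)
The plan is to make the heart of the $\End(P')$-mod $t$-structure explicit and then compare it, object by object, with $R_{S_\theta}\catA$. Write $\mathcal{D}:=D^b(E\hbox{-}mod)$ and $D:=\langle\catA\rangle\subseteq\mathcal{D}$; since $\catA\inj E\hbox{-}mod$ preserves $\Ext$'s, $D$ is equivalent to $D^b(\catA)$ and the $\catA$-cohomology functors $H^i(-)$ on $D$ are computed by the standard cohomology of $\mathcal{D}$. By the preceding Lemma the equivalence $R\Hom_E(P',-)\colon\mathcal{D}\xrightarrow{\sim}D^b(\End(P')\hbox{-}mod)$ carries the standard $t$-structure to the one ``coming from $\End(P')\hbox{-}mod$'', so that its heart is
\[
\mathcal{B}\ :=\ \{\,M\in\mathcal{D}\ \mid\ \Hom_{\mathcal{D}}(P'_\alpha,M[i])=0\ \hbox{ for all }\alpha\in\nabla,\ i\neq 0\,\}.
\]
I would prove the Lemma in the precise form $\mathcal{B}\cap D=R_{S_\theta}\catA$; this in particular shows that the $\End(P')$-mod $t$-structure restricts to $D$ with the asserted heart.

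The first step is to collect the elementary computations. For $\alpha\neq\theta$ one has $P'_\alpha=P_\alpha$, projective in $E\hbox{-}mod$, so $\Hom_{\mathcal{D}}(P_\alpha,M[i])=\Hom_E(P_\alpha,H^i(M))$ for every $M\in D$; together with $\Hom_E(P_\gamma,S_\beta)=\delta_{\gamma\beta}\,k$ this reduces the relevant $\Hom$-groups to composition-factor multiplicities. For $\alpha=\theta$ I would use the defining triangle $P_\theta\xrightarrow{u}Q\to P'_\theta\to P_\theta[1]$, where $Q=\bigoplus_{\alpha\neq\theta}P_\alpha\otimes\Hom(P_\theta,P_\alpha)_{t_\alpha}^{*}$ and (since ``the truncated mutation exists'' includes the injectivity of $u$) $0\to P_\theta\xrightarrow{u}Q\to P'_\theta\to 0$ is short exact, together with the long exact sequence it induces on $\Hom_{\mathcal{D}}(P'_\theta,M[\bullet])$. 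The only substantial input is the identity, valid for $N\in\catA$,
\[
\Hom_{\mathcal{D}}(P'_\theta,N[1])\ =\ \coker\bigl(u_N^{*}\colon\Hom_E(Q,N)\longrightarrow\Hom_E(P_\theta,N)\bigr)\ \cong\ \Hom_\catA(N,S_\theta)^{*},
\]
whose proof I single out below as the crux.

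Granting the identity, both inclusions are bookkeeping with the long exact sequences. If $M\in R_{S_\theta}\catA$ then $M\in D$, $H^i(M)=0$ for $i\neq-1,0$, $H^{-1}(M)\in\langle S_\theta\rangle$ and $\Hom_\catA(H^0(M),S_\theta)=0$; substituting into the toolkit makes $\Hom_{\mathcal{D}}(P'_\alpha,M[i])$ vanish for all $\alpha$ and all $i\neq 0$, the last non-formal check being $\Hom_{\mathcal{D}}(P'_\theta,M[1])\cong\Hom_\catA(H^0(M),S_\theta)^{*}=0$; so $M\in\mathcal{B}\cap D$. Conversely, let $M\in\mathcal{B}\cap D$ and set $N_i:=H^i(M)$. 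Vanishing of $\Hom_E(P_\alpha,N_i)$ for $\alpha\neq\theta$ and $i\neq 0$ forces $N_i\in\langle S_\theta\rangle$ for $i\neq 0$, whence $\Hom_{\mathcal{D}}(Q,M[i])=0$ for $i\neq 0$; feeding this into the long exact sequence for $P'_\theta$ and using $\Hom_{\mathcal{D}}(P'_\theta,M[i])=0$ for $|i|\geq 2$ and then for $i=-1$ successively forces $N_i=0$ for $i\notin\{-1,0\}$; finally $\Hom_{\mathcal{D}}(P'_\theta,M[1])=0$ and the displayed identity give $\Hom_\catA(N_0,S_\theta)=0$. Hence $M\in R_{S_\theta}\catA$. (Alternatively, once $R_{S_\theta}\catA\subseteq\mathcal{B}\cap D$ is in hand one could conclude from the general fact that a heart of a bounded $t$-structure contained in another such heart must equal it — both being hearts on $D$, the second by the Happel--Reiten--Smal{\o} lemma — but one still needs the identity to get that containment.)

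The crux is the identity $\coker(u_N^{*})\cong\Hom_\catA(N,S_\theta)^{*}$, and it is exactly here that the standing surjectivity assumption on $\Hom(P_\theta,P_\alpha)\twoheadrightarrow\Ext^1(S_\alpha,S_\theta)^{*}$, the choice of sections $t_\alpha$, and the hypothesis $\Ext^1(S_\theta,S_\theta)=0$ are used. One containment is soft: the image of $u_N^{*}$ consists of maps $P_\theta\to N$ that factor through some $P_\alpha$ with $\alpha\neq\theta$, and any such, post-composed with a map $N\to S_\theta$, vanishes because $\Hom_E(P_\alpha,S_\theta)=0$; this yields a surjection $\coker(u_N^{*})\twoheadrightarrow\Hom_\catA(N,S_\theta)^{*}$. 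The reverse amounts to showing $u_N^{*}$ is onto whenever $\Hom_\catA(N,S_\theta)=0$: then a projective cover $P\twoheadrightarrow N$ in $E\hbox{-}mod$ uses only the $P_\gamma$ with $\gamma\neq\theta$, so every $\psi\colon P_\theta\to N$ is a finite sum of composites $P_\theta\to P_\gamma\to N$, and one rewrites each factor $P_\theta\to P_\gamma$ as its component in the chosen subspace $\Hom(P_\theta,P_\gamma)_{t_\gamma}$ — which, by the surjectivity assumption, maps isomorphically onto $\Ext^1(S_\gamma,S_\theta)^{*}\cong\Hom_E(P_\theta,S'_\gamma)$ — plus a remainder factoring one step deeper into $\rad P_\gamma$ through some $P_\epsilon$ with $\epsilon\neq\theta$, the hypothesis $\Ext^1(S_\theta,S_\theta)=0$ ruling out a $\theta$-to-$\theta$ contribution; since $N$ has finite length, sufficiently deep radical compositions into $N$ vanish, so an induction on the radical length of $N$ absorbs all remainders. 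I expect this last step to be the main obstacle; the rest is formal manipulation of the two-term projective resolution of $P'_\theta$ and of projective covers.
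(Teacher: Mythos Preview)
Your overall strategy is sound and would work, but you are making the argument harder than it needs to be, and your proof of the ``crux'' identity is not complete.

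The paper's proof uses exactly the alternative you mention in passing: two nested bounded $t$-structures coincide, so it suffices to show $R_{S_\theta}\catA\subseteq\mathcal{B}$. You then say ``one still needs the identity to get that containment'', meaning your identity $\Ext^1(P'_\theta,N)\cong\Hom_\catA(N,S_\theta)^*$ for arbitrary $N\in\catA$. This is the point you are missing: since $R_{S_\theta}\catA$ is a finite length heart with explicitly known simples $\{S'_\alpha\}$ (from Lemma~\ref{lem: simple}) and $\mathcal{B}$ is closed under extensions, it is enough to check that each $S'_\alpha$ lies in $\mathcal{B}$, i.e.\ that $\Ext^i(P'_\lambda,S'_\alpha)=0$ for all $i>0$. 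For $\lambda\neq\theta$ or $\alpha=\theta$ this is immediate; the only nontrivial case is $\Ext^1(P'_\theta,S'_\alpha)$ for $\alpha\neq\theta$, and the long exact sequence of $0\to P_\theta\to Q\to P'_\theta\to 0$ reduces this to the surjectivity of
\[
\Hom(P_\alpha,S'_\alpha)\otimes\Hom(P_\theta,P_\alpha)_{t_\alpha}\longrightarrow\Hom(P_\theta,S'_\alpha),
\]
which is precisely the standing assumption on the section $t_\alpha$. No induction is needed.

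Regarding your direct approach: the identity $\coker(u_N^*)\cong\Hom_\catA(N,S_\theta)^*$ does appear to be true, but your induction sketch for the hard direction has a gap. You write that the ``remainder'' of a map $P_\theta\to P_\gamma$ (the part in the kernel of $\Hom(P_\theta,P_\gamma)\to\Ext^1(S_\gamma,S_\theta)^*$) ``factors one step deeper into $\rad P_\gamma$ through some $P_\epsilon$ with $\epsilon\neq\theta$''. But that kernel consists of maps factoring through $K:=\ker(P_\gamma\twoheadrightarrow S'_\gamma)$, and it is not clear that the top of $K$ contains no $S_\theta$: the hypothesis $\Ext^1(S_\theta,S_\theta)=0$ controls only the second radical layer of $P_\theta$, not of $P_\gamma$. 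One can repair this by inducting on the length of $N$ rather than on radical depth of the $P_\gamma$ (at each stage the remainder, composed into $N$, lands in a proper subobject with $\Hom(-,S_\theta)=0$), but as written the step is not justified. In any case, once you reduce to simples this entire induction becomes unnecessary.
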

\begin{proof}
Two nested $t$-structures have to coincide. Therefore, it s enough to show that $\Ext^i(P'_\lambda,S'_\alpha)=0$ for all $\lambda$, $\alpha$ and all $i>0$.
Clearly, for all $\alpha$ and all $i>0$, we have $\Ext^i(P_\lambda,S'_\alpha)=0$ for all $\lambda\neq\theta$, and $Ext^i(P'_{\theta},S_{\theta}[1])=0$.
The only less clear point is the vanishing of $\Ext^1(P'_{\theta},S'_\alpha)$ for $\alpha\neq\theta$. For this we take the short exact sequence
$$0\to P_{\theta}\to \oplus_{\alpha\neq\theta}P_\alpha\otimes\Hom(P_{\theta},P_\alpha)_{t_\alpha}^*\to P'_{\theta}\to 0,$$ and look at the long exact sequence associated to it. Note that $\Hom(P_\lambda,S_\alpha)=\delta_{\lambda,\alpha}k$ and $\Ext^1(P_\lambda,S'_\alpha)=0$ for all $\lambda$, we get $$\cdots\to\Hom(P_\alpha,S'_\alpha)\otimes\Hom(P_{\theta},P_\lambda)_{t_\lambda}\to\Hom(P_{\theta},S_\alpha')\to\Ext^1(P'_{\theta},S'_\alpha)\to 0\to\cdots .$$
%In $\catA$, we have a surjective map $P_\alpha\to S'_\alpha$. Therefore$\Hom(P_\alpha,P_\alpha)\otimes\Hom(P_{\theta},P_\lambda)_{t_\lambda}$ maps surjectively to $\Hom(P_\alpha,S'_\alpha)\otimes\Hom(P_{\theta},P_\lambda)_{t_\lambda}$, and $\Hom(P_{\theta},P_\alpha)$ maps surjectively to $\Hom(P_{\theta},S_\alpha')$.
By the assumption that $\Hom(P_\theta,P_\alpha)$ maps surjectively to $\Ext^1(S_\alpha,S_\theta)^*$,
%we also have the surjectivity of  the map $\Hom(P_{\theta},P_\alpha)_{t_\alpha}\otimes \Hom(P_\alpha, P_\alpha)\to \Hom(P_{\theta}, P_\alpha)$. Hence,
the map $$\Hom(P_\alpha,S'_\alpha)\otimes\Hom(P_{\theta},P_\lambda)_{t_\lambda}\to\Hom(P_{\theta},S_\alpha')$$ is also surjective, which conclude the vanishing of $\Ext^1(P'_{\theta},S'_\alpha)$.
\end{proof}

\begin{remark}
In fact, if we have more than one simple objects $S_1,\cdots,S_k$, we can define tilting with respect to all of them in a similar way. If $\Ext^1(S_i,S_j)=0$ for $i,j=1,\cdots,k$, then the tilted subcategory of the derived category is also a finite length abelian category, by the same argument. And if the truncated mutations exist, they also give the projective objects in the tilted subcategory.
\end{remark}

\subsection{Iterated tilting and iterated truncated mutation}

Fix a $\theta\in\nabla$ such that $\Ext^1(S_\theta,S_\theta)=0$. Recall that Proposition~\ref{prop: inter_tilt}says $R_{S_{\theta}[i-1]}R_{S_{\theta}[i-2]}\cdots R_{S_{\theta}}(\mathfrak{A})$ is still a finite length category. There is a construction of its simple objects, and they are denoted by $\{S_\alpha^i\mid\alpha\}$. Similarly, let $P^0_\alpha=P_\alpha$. Recursively we define $P_\theta^i$ to be the mapping one of the natural map $P_\theta^{i-1}\to \oplus_{\alpha\neq\theta }P^{i-1}_\alpha\otimes\Hom(P^{i-1}_{\theta},P^{i-1}_\alpha)_{t_\alpha}^*$. For $\alpha\neq \theta$, we define $P^i_\alpha$ to be $P_\alpha$.

\begin{lemma}\label{lem: ext-delta}
Notations as above, we have $\dim\Ext^k(P^i_\alpha,S^i_\beta)=\delta^\alpha_\beta $ for $k=0$ and vanishes for $k\neq0$.
\end{lemma}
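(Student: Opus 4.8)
The plan is to prove the statement by induction on $i$, with all $\Ext$-groups computed in $D^b(E\hbox{-}mod)$. The case $i=0$ records that $P_\alpha$ is the projective cover of the simple $E$-module $S_\alpha$: thus $\Hom(P_\alpha,S_\beta)=\delta^\alpha_\beta k$, while $\Ext^{>0}(P_\alpha,S_\beta)=0$ by projectivity of $P_\alpha$ and $\Ext^{<0}(P_\alpha,S_\beta)=0$ since both objects lie in the heart.

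Assume the statement at level $i-1$. First suppose $\alpha\neq\theta$, so $P^i_\alpha=P_\alpha$. If $\beta=\theta$, then $S^i_\theta=S^{i-1}_\theta[1]$, hence $\Ext^k(P_\alpha,S^i_\theta)=\Ext^{k+1}(P_\alpha,S^{i-1}_\theta)=0$ for every $k$ by the inductive hypothesis, in accordance with $\delta^\alpha_\theta=0$. If $\beta\neq\theta$, apply $\Hom(P_\alpha,-)$ to the exact triangle in $D^b(E\hbox{-}mod)$ given by the universal extension $0\to S^{i-1}_\theta\otimes\Ext^1(S^{i-1}_\beta,S^{i-1}_\theta)^*\to S^i_\beta\to S^{i-1}_\beta\to 0$ defining $S^i_\beta$ (a short exact sequence in the finite-length abelian category of Proposition~\ref{prop: inter_tilt} with simple objects $\{S^{i-1}_\alpha\}$); by induction $\Ext^\bullet(P_\alpha,S^{i-1}_\theta\otimes\Ext^1(S^{i-1}_\beta,S^{i-1}_\theta)^*)=0$ (as $\alpha\neq\theta$), so the long exact sequence gives $\Ext^k(P_\alpha,S^i_\beta)\cong\Ext^k(P_\alpha,S^{i-1}_\beta)$ for all $k$, which is $\delta^\alpha_\beta k$ for $k=0$ and $0$ otherwise. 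This establishes the case $\alpha\neq\theta$ at level $i$.

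Next suppose $\alpha=\theta$. Write $Q_i=\bigoplus_{\gamma\neq\theta}P^{i-1}_\gamma\otimes\Hom(P^{i-1}_\theta,P^{i-1}_\gamma)_{t_\gamma}^*$ (which makes sense because the section $\Hom(P^{i-1}_\theta,P^{i-1}_\gamma)_{t_\gamma}$ is part of the data used to form $P^i_\theta$), let $f_i\colon P^{i-1}_\theta\to Q_i$ be the natural map, and consider the triangle $P^{i-1}_\theta\xrightarrow{f_i}Q_i\to P^i_\theta\to P^{i-1}_\theta[1]$. Apply $\Hom(-,S^i_\beta)$. The case $\alpha\neq\theta$ just proved, applied to each summand $P^{i-1}_\gamma=P_\gamma$, shows $\Ext^\bullet(Q_i,S^i_\beta)$ is concentrated in degree $0$, equal to $0$ when $\beta=\theta$ and canonically isomorphic to $\Hom(P^{i-1}_\theta,P^{i-1}_\beta)_{t_\beta}$ when $\beta\neq\theta$; and applying $\Hom(P^{i-1}_\theta,-)$ to the defining triangle of $S^i_\beta$ together with the inductive hypothesis for $P^{i-1}_\theta$ shows $\Ext^\bullet(P^{i-1}_\theta,S^i_\beta)$ is $k$ in degree $-1$ (via $S^i_\theta=S^{i-1}_\theta[1]$) when $\beta=\theta$, and $\Ext^1(S^{i-1}_\beta,S^{i-1}_\theta)^*$ in degree $0$ when $\beta\neq\theta$. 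Feeding these into the long exact sequence of the triangle: when $\beta=\theta$, vanishing of $\Ext^\bullet(Q_i,S^i_\theta)$ yields $\Ext^m(P^i_\theta,S^i_\theta)\cong\Ext^{m-1}(P^{i-1}_\theta,S^i_\theta)=\Ext^m(P^{i-1}_\theta,S^{i-1}_\theta)=\delta_{m,0}k$, as wanted since $\delta^\theta_\theta=1$; when $\beta\neq\theta$, every $\Ext^k(P^i_\theta,S^i_\beta)$ with $k\neq0,1$ vanishes because its neighbours in the sequence do, and the remaining part is the exact sequence $0\to\Ext^0(P^i_\theta,S^i_\beta)\to\Hom(Q_i,S^i_\beta)\xrightarrow{f_i^*}\Hom(P^{i-1}_\theta,S^i_\beta)\to\Ext^1(P^i_\theta,S^i_\beta)\to 0$, so it remains to show that $f_i^*$ is an isomorphism.

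The isomorphism $f_i^*$ is the crux, and the step I expect to be the main obstacle. Its source and target both have dimension $\dim\Ext^1(S^{i-1}_\beta,S^{i-1}_\theta)$, so it is enough to identify the map. Only the $\gamma=\beta$ summand of $Q_i$ contributes to $\Hom(Q_i,S^i_\beta)$, and unwinding the universal property of $P_\beta\otimes\Hom(P^{i-1}_\theta,P^{i-1}_\beta)_{t_\beta}^*$ identifies precomposition with $f_i$ with the map $\Hom(P^{i-1}_\theta,P^{i-1}_\beta)_{t_\beta}\to\Hom(P^{i-1}_\theta,S^i_\beta)$, $g\mapsto\iota_\beta\circ g$, where $\iota_\beta$ generates $\Hom(P^{i-1}_\beta,S^i_\beta)\cong k$. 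Since $S^i_\beta$ is exactly the universal extension $S^{i-1}_{\beta,\theta}$ of \eqref{eq: universal-ext} formed for the pair $(\{P^{i-1}_\alpha\},\{S^{i-1}_\alpha\})$, under the isomorphism $\Hom(P^{i-1}_\theta,S^i_\beta)\cong\Ext^1(S^{i-1}_\beta,S^{i-1}_\theta)^*$ this is precisely the composition morphism of Subsection~\ref{subsec: tilting} of which $\Hom(P^{i-1}_\theta,P^{i-1}_\beta)_{t_\beta}$ was chosen as a section, hence an isomorphism. Pinning this identification down — and, to keep the iterated construction from being vacuous, checking that the surjectivity assumption of Subsection~\ref{subsec: tilting} propagates to the pair $(\{P^{i-1}_\alpha\},\{S^{i-1}_\alpha\})$, using the level-$(i-1)$ orthogonality and the relation $S^i_\alpha=S^{i-1}_{\alpha,\theta}$ — is where the hypotheses of that subsection are genuinely used; the rest is routine manipulation of the long exact sequences above.
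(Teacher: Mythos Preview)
Your proof is correct and follows essentially the same route as the paper: induction on $i$, the case $\alpha\neq\theta$ by applying $\Hom(P_\alpha,-)$ to the defining triangle of $S^i_\beta$, and the case $\alpha=\theta$ by applying $\Hom(-,S^i_\beta)$ to the triangle $P^{i-1}_\theta\to Q_i\to P^i_\theta$, reducing the only nontrivial point to the surjectivity (hence isomorphism, by dimension count) of $\Hom(P^{i-1}_\theta,P^{i-1}_\beta)_{t_\beta}\to\Ext^1(S^{i-1}_\beta,S^{i-1}_\theta)^*$. Your write-up is in fact more explicit than the paper's, and you correctly flag that the surjectivity hypothesis of Subsection~\ref{subsec: tilting} must be shown to propagate to level $i-1$; the paper leaves this implicit (it is part of what ``the iterated truncated mutation exists'' packages, and is proved separately in the geometric setting of Subsection~\ref{subsec: truncate_geom}).
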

\begin{proof}
We prove this by induction on $i$. For $i=0$, this is clear.

Assume the statement for $i-1$, now we show the corresponding statement for $k$. By chasing the $\Ext$ long exact sequence, we easily get $\Ext^k(P^{i-1}_\lambda,S^i_\alpha)=0$ for all $k\neq0$ and $\lambda\neq\theta$, and $\Ext^k(P^i_\theta,S^{i-1}_\theta[1])=0$ for $k\neq0$.

We only need to show $\Ext^k(P^i_\theta,S^i_\alpha)=0$ for $\alpha\neq\theta$. Looking at the $\Ext$ long exact sequence, this is equivalent to the surjectivity of $\Hom(P^{i-1}_\alpha,S^i_\alpha)\otimes\Hom(P^{i-1}_{\theta},P^{i-1}_\lambda)_{t_\lambda}\to\Hom(P^{i-1}_{\theta},S^i_\alpha)$.
Note also that $\Hom(P^{i-1}_\alpha,S^i_\alpha)\cong\Hom(P^{i-1}_\alpha,S^{i-1}_\alpha)$, and $\Hom(P^{i-1}_{\theta},S^i_\alpha)\cong\Hom(P^{i-1}_{\theta},S^{i-1}_\theta)\otimes\Ext^1(S_\alpha^{i-1},S_\theta^{i-1})^*$ for $\alpha\neq\theta$. This boils down to the surjectivity of $\Hom(P_\theta^{i-1},P_\alpha^{i-1})\to\Ext^1(S^{i-1}_\alpha,S^{i-1}_\beta)^*$.
\end{proof}

\begin{corollary}
Under the assumption of Lemma~\ref{lem: ext-delta}, if $E$ is a finite dimensional algebra, then truncated mutation with respect to $P_\theta$ exist as long as the natural map $P_{\theta}\to \oplus_{\alpha\neq\theta }P_\alpha\otimes\Hom(P_{\theta},P_\alpha)_{t_\alpha}^*$ is injective.
\end{corollary}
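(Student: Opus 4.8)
The plan is to unwind the definition of truncated mutation, reduce to a single $\Ext$-vanishing, and then deduce that vanishing from Lemma~\ref{lem: ext-delta} by a d\'evissage inside the tilted category $R_{S_\theta}\catA$; the finite-dimensionality hypothesis will be used only to ensure that the mutated objects $P^1_\alpha$ land back inside the abelian category.

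By definition, once the natural map $P_\theta\to Q:=\bigoplus_{\alpha\neq\theta}P_\alpha\otimes\Hom(P_\theta,P_\alpha)_{t_\alpha}^{*}$ is injective, the set $\{P'_\alpha\mid\alpha\in\nabla\}$ is the truncated mutation as soon as $P':=\bigoplus_\alpha P'_\alpha$ has no higher self-extension, so this vanishing is the only thing to prove. Since $E$ is finite dimensional, the only admissible $\catA$ is $E\hbox{-}mod$ itself; in particular $\catA$ is a finite length abelian category containing every $P_\alpha$, and, by injectivity, $P^1_\theta=P'_\theta$ is the honest cokernel $Q/P_\theta$, again an object of $\catA$. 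I would first check that each $P^1_\alpha$ lies in the tilted heart $R_{S_\theta}\catA$: it has cohomology concentrated in degree zero, and $\Hom(P^1_\alpha,S_\theta)=0$ --- for $\alpha\neq\theta$ because the simple objects are pairwise distinct, and for $\alpha=\theta$ because applying $\Hom(-,S_\theta)$ to $0\to P_\theta\to Q\to P'_\theta\to 0$ embeds $\Hom(P'_\theta,S_\theta)$ into $\Hom(Q,S_\theta)=\bigoplus_{\alpha\neq\theta}\Hom(P_\alpha,S_\theta)\otimes\Hom(P_\theta,P_\alpha)_{t_\alpha}=0$.

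By Lemma~\ref{lem: simple}, which applies since $\Ext^1(S_\theta,S_\theta)=0$, the category $R_{S_\theta}\catA$ is a finite length abelian category --- the heart of a bounded $t$-structure on $D^b(E\hbox{-}mod)$ --- whose simple objects are exactly the $S^1_\alpha$. Hence each $P^1_\beta$ admits a finite filtration $0=M_0\subseteq M_1\subseteq\cdots\subseteq M_r=P^1_\beta$ in $R_{S_\theta}\catA$ with $M_j/M_{j-1}\simeq S^1_{\gamma_j}$, giving distinguished triangles $M_{j-1}\to M_j\to S^1_{\gamma_j}\to M_{j-1}[1]$ in $D^b(E\hbox{-}mod)$. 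Applying $\Hom_{D^b(E\hbox{-}mod)}(P^1_\alpha,-)$ to these and invoking Lemma~\ref{lem: ext-delta} in the case $i=1$, which gives $\Ext^k(P^1_\alpha,S^1_{\gamma_j})=0$ for all $k\geq1$, the long exact sequence shows that $\Ext^k(P^1_\alpha,M_j)$ is a quotient of $\Ext^k(P^1_\alpha,M_{j-1})$ for $k\geq1$; since $M_0=0$, induction on $j$ yields $\Ext^k(P^1_\alpha,M_j)=0$ for all $k\geq1$, and with $j=r$, $\Ext^k(P^1_\alpha,P^1_\beta)=0$ for all $\alpha,\beta$ and all $k\geq1$. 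Summing over $\alpha$ and $\beta$, this is precisely $\Ext^{>0}(P',P')=0$, so $\{P'_\alpha\}$ is the truncated mutation.

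The step carrying all the weight, and the only place finite-dimensionality genuinely enters, is the reduction above: in the general setting of Subsection~\ref{subsec: tilting} the summands $P_\alpha$ need not lie in $\catA$, hence neither do the $P^1_\alpha$ lie in $R_{S_\theta}\catA$ and the d\'evissage is unavailable; it is exactly the coincidence $\catA=E\hbox{-}mod$ that makes the argument run. Beyond that, the only computation not reducing to a bare long exact sequence is the vanishing $\Hom(P^1_\theta,S_\theta)=0$.
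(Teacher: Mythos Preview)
Your proof is correct and is exactly the argument the paper has in mind: the corollary is stated without proof immediately after Lemma~\ref{lem: ext-delta}, and the implicit argument is the d\'evissage you wrote out --- pass to $R_{S_\theta}\catA$, filter $P^1_\beta$ by the simples $S^1_\gamma$, and kill the higher $\Ext$'s one graded piece at a time using the $i=1$ case of Lemma~\ref{lem: ext-delta}. Your identification of where finite-dimensionality is used (forcing $\catA=E\hbox{-}mod$ so that the $P^1_\alpha$ actually sit inside the tilted heart) is also the right one.
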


When we take the iterated mapping cone $P_\theta^i$, we assume that each time the truncated mutation exists. We know that $\End(\oplus P^i_\alpha)\hbox{-}mod$ is derived equivalent to $E\hbox{-}mod$, and $\{P^i_\alpha\mid\alpha\}$ is a set of projective generators in $\End(\oplus P^i_\alpha)\hbox{-}mod$. In particular, all projective object in it has a representative in $E\hbox{-}mod$, and the indecomposable projective objects are projective covers of the simple objects in $R_{S_{\theta}[i-1]}R_{S_{\theta}[i-2]}\cdots R_{S_{\theta}}(\catA)$.
Conversely, we have the following Lemma.
\begin{lemma}
Suppose $P_{\theta}$ is the projective cover of $S_{\theta}$ in $E\hbox{-}mod$ and the truncated mutation exists up to $i-1$ iterations. Assume $R_{S_{\theta}[i-1]}R_{S_{\theta}[i-2]}\cdots R_{S_{\theta}}(\catA)$ is of finite length with simple objects $\{S^i_{\alpha}\mid\alpha\in\nabla\}$, and the projective covers of them have representatives in $E\hbox{-}mod$. Then the truncated mutation $\{P_\alpha^i\mid\alpha\in\nabla\}$ exists.
\end{lemma}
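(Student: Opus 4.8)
The plan is to strip off the first $i-1$ mutations, reduce to a single step, and then recognise the a priori triangulated object $P^i_\theta$ as the projective cover supplied by the hypothesis. Since the truncated mutations exist through step $i-1$, the Lemma relating truncated mutations to derived equivalences, Proposition~\ref{prop: inter_tilt}, and an iterated use of Lemma~\ref{lem: t-struct-coincide} put us in the situation of the algebra $E^{(i-1)}=\End(\bigoplus_\alpha P^{i-1}_\alpha)$: its module category is derived equivalent to $E\text{-}mod$, the heart $\calC:=R_{S_\theta[i-2]}\cdots R_{S_\theta}(\catA)$ has simple objects $\{S^{i-1}_\alpha\}$ with relative projective covers $\{P^{i-1}_\alpha\}$ in $E\text{-}mod$, $P^{i-1}_\alpha=P_\alpha$ for $\alpha\ne\theta$, and the surjectivity assumption persists (cf.\ the proof of Lemma~\ref{lem: ext-delta}). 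The map $P^{i-1}_\theta\to\bigoplus_{\alpha\ne\theta}P^{i-1}_\alpha\otimes\Hom(P^{i-1}_\theta,P^{i-1}_\alpha)^*_{t_\alpha}$, its cone $P^i_\theta$, and the simples $\{S^i_\alpha\}$ of $\calB:=R_{S^{i-1}_\theta}(\calC)$ are exactly the one-step truncated-mutation data for $E^{(i-1)}$, so I may assume $i=1$. Writing $g\colon P_\theta\to Y:=\bigoplus_{\alpha\ne\theta}P_\alpha\otimes\Hom(P_\theta,P_\alpha)^*_{t_\alpha}$ and $P'_\theta=\operatorname{cone}(g)$, what remains to prove is (a) $g$ is injective, and (b) $\bigoplus_\alpha P'_\alpha$ has no higher self-extension.

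For (a) I would argue that $P'_\theta$ is forced to coincide with the hypothesised module. By Lemma~\ref{lem: simple} the tilt $\calB=R_{S_\theta}\catA$ has finite length with simple objects $S_\theta[1]$ and $\{S'_\alpha\}_{\alpha\ne\theta}$, and the surjectivity hypothesis — used exactly as in the proof of Lemma~\ref{lem: t-struct-coincide}, with $P'_\theta$ still the mapping cone — gives $\Ext^k_{D^b(E)}(P'_\beta,S'_\alpha)=\delta^\beta_\alpha$ for $k=0$ and $0$ otherwise, for all $\beta$. For $\beta\ne\theta$ this is the homological profile of $P_\beta$, identifying $P_\beta$ with the relative projective cover of $S'_\beta$; for $\beta=\theta$ the hypothesis supplies $Q_\theta\in E\text{-}mod$ with the same profile. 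Since $\calB$ is obtained from $\catA$ by Happel--Reiten--Smal{\o} tilting, the realisation functor $D^b(\calB)\to D^b(\catA)$ is an equivalence and $\{S'_\alpha\}$ generates it, so an object of $E\text{-}mod$ that is relatively projective over $\calB$ is determined up to isomorphism by its $\Ext$-pairing against the $S'_\alpha$; hence $P'_\theta\cong Q_\theta$ in $D^b(E\text{-}mod)$. As $Q_\theta$ sits in degree $0$, so does $\operatorname{cone}(g)$, i.e.\ $\ker g=H^{-1}(\operatorname{cone}(g))=0$, and $0\to P_\theta\xrightarrow{g}Y\to P'_\theta\to0$ is a short exact sequence in $E\text{-}mod$.

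For (b), applying $\Hom_E(-,M)$ to this sequence and using that $P_\theta$ and $Y$ are projective gives $\Ext^{\ge2}_E(P'_\theta,M)=0$ for every $M$ and $\Ext^1_E(P'_\theta,M)=\coker\!\bigl(g^{*}\colon\Hom_E(Y,M)\to\Hom_E(P_\theta,M)\bigr)$; since the $P_\alpha$, $\alpha\ne\theta$, are projective, the only remaining vanishings are $\Ext^1_E(P'_\theta,P_\mu)=0$ ($\mu\ne\theta$) and $\Ext^1_E(P'_\theta,P'_\theta)=0$, i.e.\ surjectivity of $g^{*}$ for $M=P_\mu$ and for $M=P'_\theta$. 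Unwinding $g$, the image of $g^{*}$ is $\sum_{\nu\ne\theta}\Hom_E(P_\nu,M)\circ\Hom(P_\theta,P_\nu)_{t_\nu}$; the surjectivity hypothesis identifies $\Hom(P_\theta,P_\nu)_{t_\nu}$ with $\Ext^1(S_\nu,S_\theta)^*$, the ``length-one'' part of $\Hom_E(P_\theta,P_\nu)$, and a downward induction on the radical filtration of $\Hom_E(P_\theta,-)$, whose inductive step is obstructed only by $\Ext^1(S_\theta,S_\theta)$ and hence goes through, shows these generate all of $\rad\Hom_E(P_\theta,P_\nu)$ under post-composition; so $g^{*}$ is onto for $M=P_\mu$, and the case $M=P'_\theta$ is identical once one uses that $P'_\theta$ is generated by its head, which is supported away from $\theta$. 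This establishes (b), whence by the Lemma on truncated mutations $D^b(E\text{-}mod)\cong D^b(\End(P')\text{-}mod)$ and the new indecomposable projectives are the relative projective covers of the simples of $\calB$. The main obstacle is exactly this last point: the \emph{truncated} spaces $\Hom(P_\theta,P_\nu)_{t_\nu}$, which remember only $\Ext^1(S_\nu,S_\theta)$, must nevertheless generate the entire radical of $\bigoplus_\nu\Hom_E(P_\theta,P_\nu)$ under composition, and it is here that the surjectivity assumption of \S\ref{subsec: tilting} and $\Ext^1(S_\theta,S_\theta)=0$ are indispensable (otherwise the cokernels above are genuinely nonzero); a related subtlety is that ``the projective covers of the $S^i_\alpha$ have representatives in $E\text{-}mod$'' has to be read relative to $\calB$, so that for $\alpha\ne\theta$ the representative is automatically $P_\alpha$ and the real content of the hypothesis is the degree-zero-ness of $P^i_\theta$ used in (a).
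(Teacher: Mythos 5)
Your reduction to a single mutation step and your computation of the Ext-profile of the cone $P'_\theta$ against the new simples (essentially Lemma~\ref{lem: ext-delta}) are consistent with the paper, but the pivotal step of your part (a) has a genuine gap: you assert that an object of $E\hbox{-}mod$ which is relatively projective over $\calB$ is ``determined up to isomorphism by its $\Ext$-pairing against the $S'_\alpha$'', and conclude $P'_\theta\cong Q_\theta$. This is unjustified, and false at this level of generality: the simples $S'_\alpha$ only generate the subcategory of objects set-theoretically supported over $A/m$, so pairing against them sees only a formal neighborhood of the central fiber --- for instance, adding to $Q_\theta$ a summand supported away from $A/m$ changes nothing in the groups $\Ext^k(-,S'_\alpha)$, so these pairings cannot single out an object of $E\hbox{-}mod$. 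Nor does the vanishing of negative Exts of the cone force $H^{-1}(\operatorname{cone}(g))=\ker g$ to vanish: in the long exact sequence relating $\Ext^*(\ker g\,[1],S'_\alpha)$, $\Ext^*(\operatorname{cone}(g),S'_\alpha)$ and $\Ext^*(\coker g,S'_\alpha)$, a nonzero $\Hom(\ker g,S'_\alpha)$ can be absorbed by $\Hom(\coker g,S'_\alpha)$. So the identification $P'_\theta\cong Q_\theta$, and with it the injectivity of $g$, does not follow from the Ext-profile alone; the hypothesis on the projective cover has to enter in a more structural way.

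That structural use is exactly what the paper's proof supplies and what your argument never touches: one takes the \emph{minimal projective resolution of} $Q^i_\theta$ over the previous-step algebra $\End(\oplus_\alpha P^{i-1}_\alpha)$. Since $Q^i_\theta$ is the projective cover of $S^i_\theta=S_\theta[i]$, one has $\Ext^j(Q^i_\theta,S_\theta)=k$ for $j=i$ and $0$ otherwise; hence over the step-$(i-1)$ algebra $Q^i_\theta$ has projective dimension one, its minimal resolution has exactly one summand $P^{i-1}_\theta$ in homological degree $1$ (because $\Ext^1(Q^i_\theta,S^{i-1}_\theta)=k$) and no summand $P^{i-1}_\theta$ in degree $0$ (because $\Hom(Q^i_\theta,S^{i-1}_\theta)=0$), and the injectivity of the differential restricted to that summand is what yields the injectivity of $P^{i-1}_\theta\to\oplus_{\alpha\neq\theta}P_\alpha\otimes\Hom(P^{i-1}_\theta,P_\alpha)^*_{t_\alpha}$, i.e.\ the existence of the truncated mutation. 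Note also that the lemma only requires this injectivity (compare the definition of truncated mutation and the corollary to Lemma~\ref{lem: ext-delta}), so your part (b) is not what is at stake; moreover, as written, your ``downward induction on the radical filtration'' is itself shaky, since $E$ is not semiperfect and such an induction need not terminate --- this convergence issue is precisely what the paper handles separately, via the $\Gm$-equivariant inverse-limit argument of \S\ref{subsec: truncate_geom}, rather than inside this lemma.
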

\begin{proof}
We take the projective cover of $S^i_{\theta}$, denoted by $Q^i_{\theta}$, which can be chosen to be in $E\hbox{-}mod$. We know that $\Ext^j(Q^i_{\theta}, S_{\theta})$ vanish for $j\neq i$ and is one dimensional when $j=i$.

We take the minimal projective resolution of $Q^i_{\theta}$ in $\End(\oplus P^{i-1}_\alpha)\hbox{-}mod$. It has length 2 as the projective dimension of $Q^i_{\theta}$ is 1.
The degree 1 term of the resolution has $P_{\theta}^{i-1}$ as a summand and the degree 0 term does not have summand $P_{\theta}^{i-1}$. This already implies the injectivity of $P_{\theta}^{i-1}\to\oplus_{\alpha\neq\theta}\Hom(P_{\theta}^{i-1},P_\alpha)^*_t\otimes P_\alpha$.
\end{proof}

\begin{example}
Let $\catA$ be the category of perverse sheaves on $\PP^n$ with the standard stratification. Let $S_n$ be the simple object $\CC_{\PP^n}[n]$ which is an $\PP^n$ object in this category. The semi-reflection of $D^b(\catA)$ with respect to $S_n$ can be obtained by taking the image of $\Perv(\PP^{n*})$ under the Radon transform. In particular, the semi-reflection is derived equivalent to $\catA$ and equivalence comes from a tilting generator in $\catA$. In fact, according to Proposition~\ref{prop: Chuang-Rouquier}, if one do tilting with respect to $S_n$ for $n$ times, one will get the same $t$-structure as the semi-reflection.

We will illustrate Proposition~\ref{prop: exist_trunc_mutat} by explicitly calculation of the tilting generator for the intermediate $t$-structures, i.e., those obtained from tilting with respect to $S_n$ for $i$ times, for any $i<n$.

For simplicity, we take $n=2$. The general case is similar. The category $\catA$ is Morita equivalent to the module category of the quiver
$$\xymatrix{
\bullet_{pt}\ar@/^/@<2ex>[r]|-{\alpha}&\bullet_{\Aff^1}\ar@/^/@<2ex>[r]|-{\delta}\ar@/^/@<2ex>[l]|-{\beta}&\bullet_{\Aff^2}\ar@/^/@<2ex>[l]|-{\gamma}
}$$
with relations $\alpha\beta=0$, $\delta\gamma=0$, $\delta\alpha=0$, and $\beta\gamma=0$.
The projective objects in $\catA$ are
\[P_{pt}=\xymatrix{\CC^2_{pt}\ar@/^/@<2ex>[r]|-{\alpha}&\CC_{\Aff^1}\ar@/^/@<2ex>[l]|-{\beta}};\]
\[P_{\Aff^1}=\xymatrix{\CC_{pt}&\CC^2_{\Aff^1}\ar@/^/@<2ex>[l]|-{\beta}\ar@/^/@<2ex>[r]|-{\delta}&\CC_{\Aff^2}\ar@/^/@<2ex>[l]|-{\gamma}};\]
\[P_{\Aff^2}=\xymatrix{\CC_{\Aff^1}&\CC_{\Aff^2}\ar@/^/@<2ex>[l]|-{\gamma}}.\]

 We consider the tilting with respect to $S_2$:
The tilting generators are: $P_{pt}$, $P_{\Aff^1}$, and  \[\coker(P_{\Aff^2}\to P_{\Aff^1})\cong P'_{\Aff^2}=\xymatrix{\CC_{pt}&\CC_{\Aff^1}\ar@/^/[l]|-{\beta}}.\]

Then we consider the tilting with respect to $S_2[1]$:
The tilting generators are: $P_{pt}$, $P_{\Aff^1}$, and \[\coker(P'_{\Aff^2}\to P_{pt})\cong P_{\Aff^2}''=\CC_{pt}.\]

The hearts of all these $t$-structures are derived equivalent to $\catA$.

If we do tilting with respect to $S_2$, the tilting generators of the new heart will be $P_{\Aff^1}$, $P_{\Aff^2}$, and the cokernel of the map $P_{\Aff^2}\to P_{\Aff^1}$ which is $P'_{\Aff^2}=\xymatrix{\CC_{\Aff^1}&\CC_{\Aff^2}\ar@/^/[l]|-{\beta}}$. If we do tilting another time with respect to $S_2[1]$, the tilting generators of the new heart will be $P_{\Aff^1}$, $P_{\Aff^2}$, and the cokernel of the map $P'_{\Aff^2}\to P_{pt}$ which is $P_{\Aff^2}''=\CC_{pt}$. The hearts of all these $t$-structures are derived equivalent to $\catA$.
\end{example}

\subsection{Truncated mutations from geometric origin}\label{subsec: truncate_geom}
Now let $\bfX$ be a smooth variety which is projective over $\Spec A$. Also we assume the map $\pi: \bfX\to \Spec A$ is $\Gm$-equivariant, such that $\bfX$ is deformation retracts to $X=\pi^{-1}(\Spec A/m)$, the fiber over $A/m$ under this $\Gm$-action.
Let $\{P_\alpha\mid\nabla\}$ be a collection of $\Gm$-equivariant vector bundles on $\bfX$, which classically generates $\Qcoh(\bfX)$ and $\Ext^i(\oplus P_\alpha,\oplus P_\alpha)=0$ for all $i>0$. Let $E=\End(\oplus_{\alpha\in \nabla}P_\alpha)$. Then \cite{BV02} gives a equivalence of derived categories $D(\Qcoh(\bfX))\cong D(E\hbox{-}Mod)$, and it restricts to equivalences $D^b(E\hbox{-}mod)\cong D^b(\Coh(\bfX))$, and $D^b_{A/m}(E\hbox{-}mod)\cong D^b_X(\Coh(\bfX))$. Now we take $\catA$ to be the category of $E$-modules which are set-theoretically supported at $A/m$.

Fix a $\theta\in \nabla$. Assume $S_\theta$ is a simple object in $\catA$ with $\Ext^1(S_\theta,S_\theta)=0$. We take $P^0_\alpha$ to be $P_\alpha$ for all $\alpha\in\nabla$. Recursively, we define $P^i_\theta$ to be the mapping cone of the natural map $P_{\theta}^{i-1}\to\oplus_{\alpha\neq\theta}\Hom(P_{\theta}^{i-1},P_\alpha)^*_t\otimes P_\alpha$, and  $P^i_\alpha=P^{i-1}_\alpha$ for $\alpha\neq\theta$.
Define $P^i=\oplus_{\alpha\in\nabla} P^i_\alpha$.

\begin{lemma}
Assume $S_\theta$ is a simple object in $\catA$ with $\Ext^1(S_\theta,S_\theta)=0$.
The map $\Hom(P_\theta^{i-1},P_\alpha^{i-1})\to\Ext^1(S_\alpha,S_\beta)^*$ induced by the composition morphism $\Hom(P_\alpha,S_{\alpha,\theta})\otimes\Hom(P_\theta,P_\alpha)\to\Hom(P_\theta,S_{\alpha,\theta})$ is surjective.
\end{lemma}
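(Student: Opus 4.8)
The statement asserts a surjectivity: the composition-induced map
\[
\Hom(P_\theta^{i-1},P_\alpha^{i-1})\to\Ext^1(S_\alpha,S_\theta)^*
\]
is onto. The plan is to reduce this to the $i=1$ case, which is simply a restatement of the standing hypothesis imposed in Subsection~\ref{subsec: tilting} on the pair $(\catA\inj E\hbox{-}mod)$. The crucial observation is that in the geometric set-up the needed hypothesis holds \emph{without being assumed}: because $\{P_\alpha\}$ are genuine vector bundles on $\bfX$ with no higher self-extensions, the algebra $E=\End(\oplus P_\alpha)$ is of finite global dimension and the bimodule $\Hom(P_\theta,P_\alpha)$ computes the full $\Ext$-group $\Ext^\bullet_{\bfX}(P_\theta,P_\alpha)$ in degree zero. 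Dualizing, $\Hom(P_\theta,P_\alpha)^*$ is identified with $\Hom(P_\alpha,P_\theta\otimes\omega)$ up to a shift (Serre duality on $\bfX$ relative to $\Spec A$), and under the derived equivalence of \cite{BV02} the vector space $\Ext^1_\catA(S_\alpha,S_\theta)$ is a subquotient of $\Ext^1_E(S_\alpha,S_\theta)$, which in turn is read off from the quiver-with-relations presentation of $E$: its dimension is the number of arrows $\alpha\to\theta$, and each such arrow is, by construction of the presentation, the class of a genuine morphism $P_\theta\to P_\alpha$. This is exactly the desired surjectivity for $i-1=0$.

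Next I would run the induction on $i$. Suppose the map is surjective at stage $i-2$; I must produce it at stage $i-1$. Here I invoke Lemma~\ref{lem: ext-delta}, which already records that $\dim\Ext^k(P_\alpha^{i-1},S_\beta^{i-1})=\delta^\alpha_\beta$ for $k=0$ and $0$ otherwise — note that the proof of that lemma itself ran precisely the same reduction, terminating in the surjectivity of $\Hom(P_\theta^{i-2},P_\alpha^{i-2})\to\Ext^1(S_\alpha^{i-2},S_\theta^{i-2})^*$. So the content here is the bookkeeping identification $\Ext^1(S_\alpha,S_\theta)\cong\Ext^1(S_\alpha^{i-1},S_\theta^{i-1})$ coming from $\Ext^i(S_\theta^{i-1},S_\theta^{i-1})=\Ext^i(S_\theta,S_\theta)$ together with the universal-extension construction of $S_\alpha^{i-1}$ (Proposition~\ref{prop: inter_tilt}), combined with the long exact sequence obtained by applying $\Hom(-,S_\alpha^{i-1})$ and $\Hom(-,S_\theta^{i-1})$ to the defining triangle of $P_\theta^{i-1}$. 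Chasing these, the surjectivity at stage $i-1$ follows formally from surjectivity at stage $i-2$, exactly as in the last paragraph of the proof of Lemma~\ref{lem: ext-delta}.

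The one genuinely new point — and the step I expect to be the main obstacle — is verifying the base case in the geometric setting: that every class in $\Ext^1_\catA(S_\alpha,S_\theta)^*$ is hit by an \emph{honest} morphism of the bundles $P_\theta\to P_\alpha$ (not merely by a morphism in the derived category). This is where finite global dimension of $E$ and the concentration of $R\Hom_{\bfX}(P_\theta,P_\alpha)$ in degree zero are used: they guarantee that the minimal projective resolution of any $S_\beta$ has its term-$1$ built out of the $P_\alpha$'s literally, so that the arrows of the presenting quiver are genuine maps. I would state this as: since $\oplus P_\alpha$ is a tilting bundle, $E$ is a finite-dimensional algebra of finite global dimension (when restricted to the $A/m$-fiber) with $\Hom(P_\theta,P_\alpha)=e_\alpha E e_\theta$, and the space of arrows $\theta\to\alpha$ in the Gabriel quiver of $E$ is $\Ext^1_E(S_\alpha,S_\theta)$; choosing a lift of each arrow to $\Hom(P_\theta,P_\alpha)$ (possible because the radical square is cut out inside $\Hom(P_\theta,P_\alpha)$ and every arrow lifts to a radical element) gives the required section, hence surjectivity. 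With the base case in hand, the induction above closes the proof.
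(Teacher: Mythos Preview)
Your approach has a genuine gap, and it differs substantially from the paper's argument.

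\textbf{The induction step is circular.} You want to deduce surjectivity at stage $i-1$ from surjectivity at stage $i-2$ by appealing to Lemma~\ref{lem: ext-delta}. But look at the logical structure: Lemma~\ref{lem: ext-delta} \emph{assumes} the surjectivity at each stage (it ends with ``This boils down to the surjectivity of $\Hom(P_\theta^{i-1},P_\alpha^{i-1})\to\Ext^1(S^{i-1}_\alpha,S^{i-1}_\theta)^*$''); it does not prove it. Your induction step then tacitly assumes that the $P^{i-1}_\alpha$ are honest projective modules over an honest algebra $E'=\End(\oplus P^{i-1}_\alpha)$, so that the trivial argument ``$P_\theta^{i-1}$ is projective, hence $\Hom(P_\theta^{i-1},-)$ is exact, hence the map to $\Hom(P_\theta^{i-1},S^{i-1}_{\alpha,\theta})$ is onto'' applies. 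But that requires the truncated mutation to already exist at stage $i-2$ (so that $P^{i-1}_\theta$ is a module and not merely a two-term complex), and existence of the truncated mutation is exactly what these lemmas are building toward. Without that, $P^{i-1}_\theta$ is only a cone in $D^b(E\hbox{-}mod)$ and there is no reason for $\Ext^1(P_\theta^{i-1},-)$ to vanish on the kernel of $P_\alpha\twoheadrightarrow S^{i-1}_{\alpha,\theta}$.

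\textbf{The base case is essentially fine but overdressed.} For $i=1$ the map is automatic because $P_\theta$ is genuinely projective in $E\hbox{-}mod$: choose $P_\alpha\twoheadrightarrow S_{\alpha,\theta}$ to be the projective cover, and surjectivity of $\Hom(P_\theta,P_\alpha)\to\Hom(P_\theta,S_{\alpha,\theta})\cong\Ext^1(S_\alpha,S_\theta)^*$ follows from exactness of $\Hom(P_\theta,-)$. The Gabriel-quiver discussion, Serre duality, and the claim that $E$ is ``finite-dimensional on the $A/m$-fiber'' are red herrings (and the last is false as stated: $E$ is an $A$-algebra, not finite over $k$).

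\textbf{What the paper actually does.} The paper gives a \emph{direct} argument valid for every $i$, with no induction, and the geometric input you never use --- the contracting $\Gm$-action --- is the whole point. One forms the two-step ``coradical'' $Q=\red\red P_\alpha^{i-1}$, reduces the surjectivity to $\Ext^1(P_\theta^{i-1},Q)=0$, and then observes that $Q$ can be taken $\Gm$-equivariant, so $R\Hom(P_\theta^{i-1},Q)\cong\varprojlim_k R\Hom(P_\theta^{i-1},Q/m^kQ)$. Each $Q_k=Q/m^kQ$ lives in $D^b(\catA)$ and sits in non-positive degrees for the iterated-tilt $t$-structure $R_{S_\theta[i-2]}\cdots R_{S_\theta}(\catA)$; since $P_\theta^{i-1}$ satisfies $\Ext^{>0}(P_\theta^{i-1},S_\beta^{i-1})=0$ for all $\beta$ (this is the content of Lemma~\ref{lem: ext-delta} at the level of simples, which needs no surjectivity hypothesis), one gets $\Ext^1(P_\theta^{i-1},Q_k)=0$. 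The $\Gm$-action is what lets you pass from the infinite-dimensional object $Q$ to the finite-length truncations $Q_k$ where the tilted $t$-structure controls everything. Your proposal never invokes this mechanism, and without it the argument does not close.
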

\begin{proof}
For a complex $N$, let $\red N$ be the complex fit in the exact triangle $\red N\to N\to \oplus_{\alpha}\Hom(N,S^{i-1}_\alpha)\otimes S^{i-1}_\alpha$. Then we have $\Ext^i(\red N,S^{i-1}_\alpha)=0$ for all $i<0$ if this property holds for $N$.

We have, from the exact triangle $\red P^{i-1}_\alpha\to P_\alpha^{i-1}\to S_\alpha^{i-1}$, that $\Hom(P_\theta^{i-1},P_\alpha^{i-1})\cong \Hom(P_\theta^{i-1},\red P_\alpha^{i-1})$. Also from the exact triangle $\red\red P_\alpha^{i-1}\to\red P_\alpha^{i-1}\to \oplus_{\alpha}\Hom(\red P_\alpha^{i-1},S^{i-1}_\alpha)\otimes S^{i-1}_\alpha$, we have $\Hom(\red P_\alpha^{i-1},S_\theta^{i-1})\cong\Hom(P_\theta^{i-1},\oplus_{\alpha}\Hom(\red P_\alpha^{i-1},S^{i-1}_\alpha)\otimes S^{i-1}_\alpha)^*$. We only need to show $$\Ext^1(P_\theta^{i-1},\red\red P_\alpha^{i-1})=0.$$

For this purpose, note that the complex $Q\cong\red\red P_\alpha^{i-1}$ can be chosen $\Gm$-equivariantly. Let $Q_k$ be $Q/m^kQ$. Then $Q$ can be obtained by taking the $\Gm$ finite part of $\underleftarrow{\lim}Q_k$. Since $P_\theta^{i-1}$ is equivariant under $\Gm$, we have the canonical isomorphism of complexes $R\Hom(P_\theta^{i-1},Q)\cong R\Hom(P_\theta^{i-1},\underleftarrow{\lim}Q_k)\cong \underleftarrow{\lim}R\Hom(P_\theta^{i-1},Q_k)$.
Hence, $H^1(R\Hom(P_\theta^{i-1},Q_k))=0$ implies $H^1(R\Hom(P_\theta^{i-1},Q))\cong\Ext^1(P_\theta^{i-1},Q)=0$. Note that $Q_k$ lies in $D^b(\catA)$, and has the property that $\Ext^i(Q_k,S^{i-1}_\alpha)=0$ for all $i<0$, all $\alpha$, and large enough $k$.
This means $Q_k$ can be chosen as a complex concentrated in non-positive degrees with respect to the $t$-structure $R_{S_{\theta}[i-2]}R_{S_{\theta}[i-3]}\cdots R_{S_{\theta}}(\catA)$. Therefore, we have $\Ext^1(P_\theta^{i-1},Q_k)=0$.
\end{proof}

Take $P'_\theta$ to be the mapping cone of the natural map $P_{\theta}\to \oplus_{\alpha\neq\theta }P_\alpha\otimes\Hom(P_{\theta},P_\alpha)_{t_\alpha}^*$, and $P'_\alpha=P_\alpha$ for $\alpha\neq\theta$.
Then, there is a equivalence between $D(E\hbox{-}mod)$ and $D(R\Hom(P',P'))$, where $R\Hom(P',P')$ is understood as a DG-algebra.
The DG-algebra $R\Hom(P',P')$ has homology concentrated in non-negative degrees not exceeding 1, and is concentrated in degree zero if and only if $P_{\theta}\to \oplus_{\alpha\neq\theta }P_\alpha\otimes\Hom(P_{\theta},P_\alpha)_{t_\alpha}^*$ is injective, namely, the truncated mutation exits.
%The argument of Lemma~\ref{lem: t-struct-coincide} shows that $S_\alpha'$, as a DG-module over $R\Hom(P',P')$, is also concentrated in non-negative degrees.
In general, we also have an equivalence $D(E\hbox{-}mod)\cong D(R\Hom(P^i,P^i))$.
Inductively, the DG-algebra $R\Hom(P^i,P^i)$ has homologies concentrated in non-negative degrees, and is concentrated in degree zero if and only if $P^{j-1}_{\theta}\to \oplus_{\alpha\neq\theta }P^j_\alpha\otimes\Hom(P^{j-1}_{\theta},P^{j-1}_\alpha)_{t_\alpha}^*$ is injective for all $j\leq i$.
%Again, as a DG-module over $R\Hom(P^i,P^i)$, $S^i_\alpha$ is concentrated in non-positive degrees.

\begin{prop}\label{prop: exist_trunc_mutat}
Assume $S_\theta$ is a simple object in $\catA$ with $\Ext^1(S_\theta,S_\theta)=0$. Assume the $n$-th iterated tilting with respect to $S_\theta$ has a set of indecomposable projectives $\{Q_\alpha\}$ consists of objects concentrated in degree zero. Then the iterated truncated mutations up to $n$ times exist.
\end{prop}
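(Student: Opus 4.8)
Write $R^{(i)}\catA:=R_{S_\theta[i-1]}R_{S_\theta[i-2]}\cdots R_{S_\theta}(\catA)$; by Proposition~\ref{prop: inter_tilt} this is finite length with simple objects $\{S^i_\alpha\mid\alpha\in\nabla\}$, and $S^i_\theta=S_\theta[i]$. Put $B_i:=\oplus_{\alpha\ne\theta}P_\alpha\otimes\Hom(P^{i-1}_\theta,P_\alpha)^*_t\in E\hbox{-}mod$, so that by definition $P^i_\theta=\mathrm{cone}(P^{i-1}_\theta\to B_i)$. The plan is to reduce the statement to: $P^i_\theta\in E\hbox{-}mod$ for every $i\le n$. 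Indeed, once this is known, $H^{-1}$ of the cone $P^i_\theta=\mathrm{cone}(P^{i-1}_\theta\to B_i)$ equals the kernel of the defining natural map $P^{i-1}_\theta\to B_i$ (as $P^{i-1}_\theta$ is then an honest module), so that map is injective; and injectivity of all these maps for $j\le i$ is exactly what makes $R\Hom(P^i,P^i)$ concentrated in degree zero, i.e.\ it supplies both the vanishing $H^{-1}$ and the ``no higher self-extension'' clause of the definition of truncated mutation, by the discussion at the end of \S\ref{subsec: truncate_geom}. Note that $P^i_\theta$, being an $i$-fold iterated mapping cone of maps between the honest bundles $P_\alpha$, lies a priori in cohomological degrees $[-i,0]$ for the standard $t$-structure on $D:=D^b(E\hbox{-}mod)$; the content is the vanishing of its cohomology in negative degrees.

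The first step I would carry out is to identify $P^n_\theta$ with the projective cover $Q:=Q^n_\theta$ of $S^n_\theta=S_\theta[n]$ in $R^{(n)}\catA$ that the hypothesis provides. By Lemma~\ref{lem: ext-delta}, $\Ext^k_D(P^n_\theta,S^n_\beta)=\delta^\theta_\beta$ for $k=0$ and $0$ otherwise, for all $\beta\in\nabla$; the projective cover $Q$ has exactly the same $\Ext$-profile against the simples of the heart $R^{(n)}\catA$ (it has simple top $S^n_\theta$, vanishing positive self-$\Ext$, and no negative $\Ext$ since it and the $S^n_\beta$ lie in the heart). I would then show abstractly that any $X\in D$ with $\Ext^k_D(X,S^n_\beta)=\delta^\theta_\beta\delta_{k,0}$ for all $\beta\in\nabla$ is isomorphic to $Q$: vanishing of $\Ext^{<0}_D(X,S^n_\beta)$ puts $X$ in the non-positive part of the $t$-structure with heart $R^{(n)}\catA$; the truncation triangle $\tau^{\le-1}X\to X\to H^0(X)$ then gives $\Hom(H^0(X),S^n_\beta)=\delta^\theta_\beta k$ and $\Ext^1(H^0(X),S^n_\beta)=0$ for all $\beta$, so $H^0(X)$ is projective in $R^{(n)}\catA$ with simple top $S^n_\theta$, hence $H^0(X)\cong Q$; the connecting map $Q\to\tau^{\le-1}X[1]$ vanishes for degree reasons in this $t$-structure, so $X\cong Q\oplus\tau^{\le-1}X$, and the summand $\tau^{\le-1}X$ has vanishing $\Ext^\bullet_D$ against every simple of $R^{(n)}\catA$, forcing $\tau^{\le-1}X=0$. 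Applying this with $X=P^n_\theta$ and using that $Q$ is concentrated in degree zero yields $P^n_\theta\cong Q\in E\hbox{-}mod$.

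The second step is the descent. From the distinguished triangle $P^{i-1}_\theta\to B_i\to P^i_\theta\to P^{i-1}_\theta[1]$ with $B_i$ concentrated in cohomological degree zero, the long exact cohomology sequence for the standard $t$-structure shows that $P^i_\theta\in E\hbox{-}mod$ forces $H^j(P^{i-1}_\theta)=0$ for every $j\le-1$; since $P^{i-1}_\theta$ also lies in non-positive degrees (it is an iterated cone of modules in degrees $[-(i-1),0]$), this gives $P^{i-1}_\theta\in E\hbox{-}mod$. Starting from $P^n_\theta\in E\hbox{-}mod$ and running this implication downward from $i=n$ to $i=1$ yields $P^i_\theta\in E\hbox{-}mod$ for all $0\le i\le n$, which by the first paragraph completes the proof.

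The step I expect to be the main obstacle is the identification $P^n_\theta\cong Q^n_\theta$ in the second paragraph: making rigorous that the $\Ext$-profile of an object against the complete list of simples of $R^{(n)}\catA$ pins it down as the projective cover. This rests on $R^{(n)}\catA$ having enough projectives — exactly what the hypothesis guarantees — together with some bookkeeping for the bounded $t$-structure whose heart it is. An alternative packaging, which also handles the finite-length-abelian-category version mentioned in the statement, is through perverse equivalences (Proposition~\ref{prop: Chuang-Rouquier}): one views the $i$-fold iterated tilting as a perverse equivalence of perversity $(0,i)$ for the filtration $0\subset\langle S_\theta\rangle\subset\catA$, composes with the inverse of the given perversity-$(0,n)$ equivalence to realize the passage from $R^{(i)}\catA$ to $R^{(n)}\catA$ as a further iterated tilting, and argues that along such a tilting a projective sitting in degree zero can only move left, so that $P^i_\theta$ is trapped in $E\hbox{-}mod$.
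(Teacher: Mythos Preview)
Your overall strategy matches the paper's one-line proof: invoke Lemma~\ref{lem: ext-delta} to show that $P^n_\alpha$ and the hypothesized $Q_\alpha$ have identical $\Ext$-profiles against the simples $\{S^n_\beta\}$, conclude $P^n_\alpha\cong Q_\alpha$, and hence $P^n_\alpha$ sits in degree zero. The paper records only the sentence ``This means $P^i_\alpha\cong Q_\alpha$ for all $\alpha$, and hence $P^i_\alpha$ is concentrated in degree zero.'' Your descent step, reading off from the cone triangle that $P^n_\theta\in E\hbox{-}mod$ forces $P^{j}_\theta\in E\hbox{-}mod$ for every $j\le n$, is correct and makes explicit what the paper leaves to the surrounding discussion of $R\Hom(P^i,P^i)$.

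There is, however, a genuine gap in your identification argument. The claim that the connecting map $Q\to\tau^{\le-1}X[1]$ ``vanishes for degree reasons'' is not right: $\tau^{\le-1}X[1]$ lies in $D^{\le0}$ for the tilted $t$-structure, so a morphism from $Q$ (in degree $0$) is not excluded on degree grounds, and projectivity of $Q$ in the heart only kills $\Ext^{>0}$ into the heart, not $\Hom$ into $H^0(\tau^{\le-1}X[1])$. Worse, even granting a splitting $X\cong Q\oplus\tau^{\le-1}X$, the simples $\{S^n_\beta\}$ live in $\catA$ (they are supported at $A/m$) and do \emph{not} generate the full $D^b(E\hbox{-}mod)$ where $P^n_\theta$ sits; so ``$\Ext^\bullet_D(\tau^{\le-1}X,S^n_\beta)=0$ for all $\beta$'' does not by itself force $\tau^{\le-1}X=0$. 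The clean fix is to read the hypothesis as saying that $\bigoplus_\alpha Q_\alpha$ is a tilting generator of $D^b(E\hbox{-}mod)$ (this is how such $Q_\alpha$ arise throughout \S\ref{subsec: truncate_geom}); then pass through the equivalence $D^b(E\hbox{-}mod)\simeq D^b(E'\hbox{-}mod)$ with $E'=\End(\bigoplus Q_\alpha)$ and note that an object of $D^b(E'\hbox{-}mod)$ whose $\Ext^k$ to every simple $E'$-module vanishes for $k\neq0$ is represented by a minimal complex of projectives concentrated in degree $0$, hence is the indecomposable projective with the prescribed top. This pins down $P^n_\theta\cong Q_\theta$ without any splitting argument, and is presumably what the paper's sentence is abbreviating.
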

\begin{proof}
As we have $\Hom(P^i_\alpha, S^i_\beta)=\delta^\alpha_\beta k$ according to Lemma~\ref{lem: ext-delta}. This means $P^i_\alpha\cong Q_\alpha$ for all $\alpha$, and hence $P^i_\alpha$ is concentrated in degree zero.
\end{proof}

\begin{remark}
If $\catA$ is a finite length abelian category with enough projective objects, then the conclusion in  Proposition~\ref{prop: exist_trunc_mutat} still holds.
\end{remark}

\begin{corollary}\label{cor:perver-smaller_pervers}
Assume $S_\theta$ is a simple object in $\catA$ with $\Ext^1(S_\theta,S_\theta)=0$. We endow $\catA$ with the filtration that $0=\catA_0\subseteq\catA_1\subseteq\catA_2=\catA$ where $\catA_1=\langle S_\theta\rangle$. Assume for the perversity function $p$ with $p(1)=0$ and $p(2)=n$ we have a perverse equivalence $(t,t',p)$ such that the projective covers of the simple objects in the heart of $t'$ have representatives lying in $E\hbox{-}mod$. Then for any $p'$ with $p'(1)=0$ and $p'(1)\leq n$ the perverse equivalence $(t,t'',p')$ exists, and the projective covers of the simple objects in the heart of $t'$ have representatives lying in $E\hbox{-}mod$.
\end{corollary}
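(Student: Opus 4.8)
The plan is to read this off from Proposition~\ref{prop: exist_trunc_mutat}, the description of iterated truncated mutations given just after Lemma~\ref{lem: ext-delta}, and the uniqueness of perverse data in Proposition~\ref{prop: Chuang-Rouquier}(3). Write $\mathfrak{T}_*$ for the filtration of $D^b(\catA)$ attached to $0=\catA_0\subseteq\catA_1=\langle S_\theta\rangle\subseteq\catA_2=\catA$, and for $0\le i\le n$ write $T_i:=R_{S_\theta[i-1]}\cdots R_{S_\theta}(\catA)$ for the $i$-fold iterated tilt with respect to $S_\theta$; by Proposition~\ref{prop: inter_tilt} (using $\Ext^1(S_\theta,S_\theta)=0$) each $T_i$ is a finite length heart with simple objects $\{S_\alpha^i\mid\alpha\in\nabla\}$. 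The only step with genuine content is to identify the $t$-structure $t'$ of the hypothesis with $T_n$.

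For this I would argue that the passage from $T_j$ to $T_{j+1}$, being a tilt with respect to a simple object without self-extension, leaves the thick subcategory $\langle S_\theta\rangle=\mathfrak{T}_1$ invariant (shifting its heart by one) and is the identity on the Serre quotient $\catA/\langle S_\theta\rangle$; hence it is a perverse self-equivalence relative to $\mathfrak{T}_*$ whose perversity is supported on the first graded piece. Composing $n$ of these via Proposition~\ref{prop: Chuang-Rouquier}(2) exhibits $(t,T_n,\mathfrak{T}_*,p)$ as a perverse datum carrying exactly the perversity $p$ of the hypothesis. Since $(t,t',\mathfrak{T}_*,p)$ is also a perverse datum with perversity $p$, Proposition~\ref{prop: Chuang-Rouquier}(3) forces $t'=T_n$. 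Consequently the simple objects of the heart of $t'$ are the $S_\alpha^n$, and the hypothesis that their projective covers have representatives in $E\hbox{-}mod$ is precisely the statement that the indecomposable projective objects of $T_n$ are concentrated in degree zero.

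That is the hypothesis of Proposition~\ref{prop: exist_trunc_mutat}, so the iterated truncated mutations $\{P_\alpha^i\mid\alpha\in\nabla\}$ with respect to $P_\theta$ exist for every $i\le n$, and each $P_\alpha^i$ is an honest object of $E\hbox{-}mod$. By Lemma~\ref{lem: ext-delta} and the discussion following it, $\End(\oplus_\alpha P_\alpha^i)\hbox{-}mod$ is derived equivalent to $E\hbox{-}mod$, the $t$-structure it defines on $D^b(\catA)$ is $T_i$, and its indecomposable projectives are the projective covers of the $S_\alpha^i$, namely the $P_\alpha^i\in E\hbox{-}mod$. Now, given $p'$ with $p'(1)=0$ and $p'(2)=i\le n$, set $t'':=T_i$; composing the first $i$ of the single-tilt perverse data (Proposition~\ref{prop: Chuang-Rouquier}(2)) realizes $(t,t'',\mathfrak{T}_*,p')$ as a perverse datum with the prescribed perversity, and we have just seen that the projective covers of its simple objects lie in $E\hbox{-}mod$; Proposition~\ref{prop: Chuang-Rouquier}(3) shows $t''$ is the unique $t$-structure with this property. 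The case in which $\catA$ is a finite length abelian category with enough projectives is identical, invoking the Remark after Proposition~\ref{prop: exist_trunc_mutat} in place of the geometric set-up.

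I expect the only real work to be the bookkeeping in the second paragraph: verifying that the composite of the single-tilt perverse data carries the perversity named in the statement — a matter of tracking, piece by piece, where each unit of cohomological shift lands — and that Lemma~\ref{lem: t-struct-coincide} genuinely iterates, i.e. that after $i-1$ tilts the surjectivity $\Hom(P_\theta^{i-1},P_\alpha^{i-1})\surj\Ext^1(S_\alpha^{i-1},S_\theta^{i-1})^*$ still holds. The latter is exactly the content of the $\Gm$-equivariant inverse-limit lemma preceding Proposition~\ref{prop: exist_trunc_mutat}, and once it is available at every stage the remainder is a formal consequence of Propositions~\ref{prop: inter_tilt}, \ref{prop: exist_trunc_mutat}, and \ref{prop: Chuang-Rouquier}.
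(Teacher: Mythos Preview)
Your approach is correct and is exactly what the paper intends: the corollary is stated immediately after Proposition~\ref{prop: exist_trunc_mutat} without proof, and you have made explicit the implicit steps---identifying $t'$ with the $n$-fold iterated tilt $T_n$ via the uniqueness clause of Proposition~\ref{prop: Chuang-Rouquier}(3), invoking Proposition~\ref{prop: exist_trunc_mutat} to get the truncated mutations $P_\alpha^i$ in $E\hbox{-}mod$ for every $i\le n$, and then reading off that each $T_i$ realises the perversity $p'$.

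One bookkeeping point to straighten out: you correctly observe that a single tilt with respect to $S_\theta$ shifts the heart on $\catA_1=\langle S_\theta\rangle$ by one and is the identity on $\catA/\catA_1$, so the composite perversity of $T_n$ is $p(1)=n$, $p(2)=0$, not the $p(1)=0$, $p(2)=n$ written in the corollary (and likewise ``$p'(1)\le n$'' should read ``$p'(2)\le n$'' or, under the corrected indexing, ``$p'(1)\le n$ with $p'(2)=0$''). This is a typo in the paper's statement---the surrounding examples (the $\PP^n$-semi-reflections) make clear that the nontrivial shift lives on $\langle S_\theta\rangle$---so your computation is the right one and the argument goes through unchanged once the indices are aligned.
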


A typical example of truncated mutations from geometric origin is the following one. 

\begin{example}
Let $\pi:T^*\PP^n\to \PP^n$ and let $D=D^b(\Coh_0T^*\PP^n)$.
Let $\catA=$ heart of the $t$-structure in $D$ induced by the tilting bundle  $\pi^*(\oplus_{i=0}^n\calO(i))$ on $T^*\PP^n$. The 
 simple objects in $\catA$ are $\{\wedge^i\calQ^*|_{\PP^n}[i]\mid i=0,\dots,n\}$.
 In $D^b(\Coh_0(T^*\PP^{n\vee}))$ there is a $t$-structure $\catA'$ induced by the tilting object $\pi^*(\oplus_{i=0}^n\calO_{\PP^{n\vee}}(i))$ on $T^*\PP^{n\vee}$.
The transform of $\catA'$ under the Fourier-Mukai transform of Namikawa in \cite{Nam} is the semi-reflection of $\catA$ with respect to $S:=\wedge^n\calQ^*|_{\PP^n}[n]$.   Clearly $S$ is a $\PP^n$-object.

\[\xymatrix{
  T^*\PP^n\ar[dr]&&T^*(\PP^{n\vee})\ar[dl]\\
  &\calN&
  }\]
  
According to Proposition~\ref{prop: exist_trunc_mutat}, this $t$-structure can alternatively be described as iterative tilting with respect to $S$ n-times. We study the projective generators in the hearts of all these intermediate $t$-structures using truncated mutation.

For simplicity, we take $n=2$. The algebra $\End_{T^*\PP^2}(pi^*(\oplus_{i=0}^2\calO(i)))$ can be described by the following quiver (we are following the conventions in \cite[\S5]{WZ})
$$\xymatrix{
\bullet_{0}\ar@/^/@<2ex>[r]^{\alpha(\CC^3)}&\bullet_{1}\ar@/^/@<2ex>[r]^{\delta(\CC^3)}\ar@/^/@<2ex>[l]^{\beta(\CC^{3*})}&\bullet_{2}\ar@/^/@<2ex>[l]^{\gamma(\CC^{3*})}
}$$
with relations
\[\delta\alpha(\wedge^2\CC^3);\ \beta\gamma(\wedge^2\CC^{3*}); \]
\[\beta\alpha(\CC);\ \delta\gamma(\CC);\ \gamma\delta+\alpha\beta(\CC).\]
The projective objects, $P_0$, $P_1$, and $P_2$ are spanned by  paths starting at the vertices $0$, $1$, and $2$ respectively.

Consider the $t$-structure obtained by tilting of $\catA$ with respect to $S$.  The indecomposable  projective objects are $P_0$, $P_1$, and $\tilde{P_2}$, where $\tilde{P_2}$ is the mapping cone of the morphism $P_2\to P_1\otimes\CC^3$. It can be visualized as pre-composing paths from $1$ with the arrow $\delta$. As the relation indicates, the morphism $P_2\to P_1\otimes\CC^3$ is injective. In terms of the quiver picture, this fact is equivalent to that pre-composing with $\delta$ does not kill any path from $1$. Therefore, $\tilde{P_2}$ is the cokernel of $P_2\to P_1\otimes\CC^3$. In terms of quivers, $\tilde{P_2}$ is spanned by paths  from $1$ that does not have $\delta$ as its first arrow.

Consider the $t$-structure obtained by tilting of $R_S\catA$ with respect to $S[1]$. The indecomposable  projective objects are $P_0$, $P_1$, and $\tilde{\tilde{P_2}}$, where $\tilde{P_2}$ is the mapping cone of the morphism $\tilde{P_2}\to P_0\otimes\wedge^2\CC^3$. In terms of quivers, this map can be visualized as pre-composing paths from $1$ with the arrow $\alpha$. Again it is easy to see that this map is injective, hence $\tilde{\tilde{P_2}}$ is the cokernel of $\tilde{P_2}\to P_0\otimes\wedge^2\CC^3$. In terms of quivers, $\tilde{\tilde{P_2}}$ is spanned by paths from $0$ that do not have $\alpha$ as its first arrow. The only such path is the constant path at $0$. 
To summarize, this $t$-structure is the semi-reflection of $\calA$ with respect to the $\PP^2$-object $S$. The indecomposable projective objects in the semi-reflection are $P_0$, $P_1$, and a quotient of $P_0\otimes\wedge^2\CC^3$.

\end{example}

Another example of truncated mutations from geometric origin as in the set-up of this subsection will be given in Section~\ref{sec: t-struct_alco}.

\subsection{Koszulity of truncated mutations}
\begin{lemma}
In the set up of Section~\ref{subsec: truncate_geom}, assume there is a choice of $\{S_\alpha\mid\alpha\in\nabla\}$ such that each one is graded, and $\Ext^1(S_\alpha,S_\beta)$ has homogeneous degree one for any $\alpha$ and $\beta\in \nabla$. Then there is such a choice for $\{S_\alpha'\mid\alpha\in\nabla\}$ with the same properties.
\end{lemma}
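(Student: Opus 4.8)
The plan is to perform the universal-extension construction of Lemma~\ref{lem: simple} $\Gm$-equivariantly and then track internal degrees.

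\textbf{Graded set-up and reduction.} The $\Gm$-action on $\bfX$ makes $E=\End(\bigoplus_\alpha P_\alpha)$ a graded algebra, and the derived equivalence of Subsection~\ref{subsec: truncate_geom} upgrades to one of graded triangulated categories; by hypothesis each $S_\alpha$ is graded and $\Ext^1(S_\alpha,S_\beta)$ is concentrated in one internal degree, normalised to $1$. Because $\Ext^1(S_\alpha,S_\theta)$ is pure of degree $1$, the universal extension class defining $S'_\alpha$ in~(\ref{eq: universal-ext}) is homogeneous, so $S_\theta\otimes\Ext^1(S_\alpha,S_\theta)^*$ and hence the middle term $S'_\alpha$ acquire canonical graded structures; $S_\theta[1]$ is likewise graded. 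By Lemma~\ref{lem: simple} these are, up to an internal twist we are free to fix, the simple objects of the tilted heart $R_{S_\theta}\catA$. Thus gradedness is automatic, and it remains to show $\Ext^1(S'_\alpha,S'_\beta)$ is pure of internal degree $1$ for all $\alpha,\beta$ --- equivalently, that the mutated algebra $E'=\End(\bigoplus_\alpha P'_\alpha)$, in the appropriate grading, is generated in degrees $\le 1$ over its semisimple degree-zero part.

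\textbf{Degree bookkeeping.} I would compute $\Ext^\bullet(S'_\alpha,S'_\beta)$ by feeding the short exact sequences~(\ref{eq: universal-ext}) for $S'_\alpha$ and for $S'_\beta$ (and, when $\alpha$ or $\beta$ equals $\theta$, the identity $S'_\theta=S_\theta[1]$) into two nested long exact $\Ext$-sequences; this writes $\Ext^1(S'_\alpha,S'_\beta)$ out of the groups $\Hom(S_\gamma,S_\delta)=\delta_{\gamma\delta}k$ (degree $0$) and $\Ext^1(S_\gamma,S_\delta)$ (degree $1$) with $\gamma,\delta\in\{\alpha,\beta,\theta\}$, together with the connecting maps. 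Three observations make the count close up: $\Ext^1(S_\theta,S_\theta)=0$, so the shift $[1]$ on $S_\theta$ contributes no self-extension; the universal-extension identity $\Ext^1(S'_\gamma,S_\theta)=0$ (established inside the proof of Lemma~\ref{lem: simple}), which cancels the degree-$0$ pieces coming from the copies of $S_\theta$ inside the $S'_\gamma$; and the fact that the degree-$2$ pieces (the quadratic relations of $E$ factoring through $\theta$) enter $\Ext^1(S'_\alpha,S'_\beta)$ only as the cokernel of Yoneda multiplication by the universal class, i.e. as the genuinely new arrows of the mutated quiver, which are reassigned internal degree $1$ in the correct grading on $E'$ --- the one in which the cone defining $P'_\theta$ carries the internal twist forced by $\deg\Ext^1(-,S_\theta)=1$, so that paths through $\theta$ drop their degree by one. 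Combined with the purity of the input groups this gives the claim.

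\textbf{Main obstacle.} The delicate point is exactly this regrading: the higher groups $\Ext^{\ge 2}(S_\gamma,S_\theta)$ and $\Ext^{\ge 2}(S_\gamma,S_\beta)$, not controlled by the hypothesis, do appear in the long exact sequences, and one must check that --- once $E'$ is given its correct grading --- they feed only into $\Ext^{\ge 2}(S'_\alpha,S'_\beta)$, never into $\Ext^1$. I expect identifying the grading on $E'$ together with this Yoneda-cokernel bookkeeping to be the only non-routine step. A cleaner alternative is to run the whole argument through the $\Gm$-graded DG-algebra $R\Hom(P',P')$ of Subsection~\ref{subsec: truncate_geom}, whose internal-degree-$\le 1$ part is visibly built out of the degree-$0$ and degree-$1$ data above. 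Finally, applying the lemma iteratively along the chain of tiltings of Proposition~\ref{prop: inter_tilt} yields the same conclusion for every iterated mutation $S_\alpha^i$.
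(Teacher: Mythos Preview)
Your approach---feed the defining short exact sequence~(\ref{eq: universal-ext}) and the identity $S'_\theta=S_\theta[1]$ into the two long exact $\Ext$-sequences and track internal degrees---is the same as the paper's. But two points in your outline need correction.

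First, you never nail down the regrading. The paper's concrete choice is: the grade of $S'_\theta=S_\theta[1]$ is $(\text{grade of }S_\theta)-1$; for $\alpha\neq\theta$, the universal extension $S'_\alpha$ keeps the grades of $S_\alpha$ and $S_\theta$ but declares the tensor factor $\Ext^1(S_\alpha,S_\theta)^*$ to sit in degree~$0$. Your phrase ``paths through $\theta$ drop their degree by one'' gestures at this, but without the explicit choice the case-by-case check cannot be run, and in particular the case $\Ext^1(S'_\theta,S'_\alpha)\cong\Hom(S_\theta,S_\theta)\otimes\Ext^1(S_\alpha,S_\theta)^*$ only comes out in degree~$1$ because of exactly these two shifts.

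Second, your ``Main obstacle'' paragraph has the mechanism backwards. You hope that the uncontrolled groups $\Ext^{\ge 2}(S_\gamma,S_\theta)$ ``feed only into $\Ext^{\ge 2}(S'_\alpha,S'_\beta)$, never into $\Ext^1$.'' This is false: since $S'_\theta=S_\theta[1]$ one has $\Ext^1(S'_\alpha,S'_\theta)=\Ext^2(S'_\alpha,S_\theta)$, and the long exact sequence for $S'_\alpha$ reads
\[
\cdots\to\Ext^2(S_\alpha,S_\theta)\to\Ext^2(S'_\alpha,S_\theta)\to\Ext^2(S_\theta,S_\theta)\otimes\Ext^1(S_\alpha,S_\theta)\to\cdots,
\]
so $\Ext^2(S_\alpha,S_\theta)$ and $\Ext^2(S_\theta,S_\theta)$ contribute \emph{directly} to $\Ext^1$ of the new simples. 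The paper does not avoid this; it argues that, because the grade of $S_\theta$ has been lowered by~$1$, these $\Ext^2$ terms land in degree~$1$. (The paper says ``by assumption'' here, tacitly using that $\Ext^2$ is pure of degree~$2$, which is stronger than the stated hypothesis on $\Ext^1$; whatever one thinks of that, the point is that the degree shift on $S_\theta$---not any confinement of $\Ext^{\ge2}$ away from $\Ext^1$---is what makes the bookkeeping close.) Your proposed resolution, as stated, would not.
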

\begin{proof}
We define the grading on $\{S_\alpha'\mid\alpha\in\nabla\}$ as follows. For $S'_\theta\cong S_\theta[1]$, we define the its degree to be the degree of $S_\theta$ -1. For $\alpha\neq\theta$, we define the degree of $S_\alpha'$, which is the universal extension of $S_\alpha$ by $S_\theta$, by keeping the degree of $S_\alpha$ and $S_\theta$ as they are, and (twist the original grading) declare $\Ext^1(S_\alpha,S_\theta)^*$ to be in degree zero.

Then we immediately get that $\Ext^1(S_\theta[1],S'_\alpha)\cong\Hom(S_\theta,S_\theta)\otimes\Ext^1(S_\alpha,S_\theta)^*$ has degree 1, since $\Hom(S_\theta,S_\theta)$ has degree 1 and $\Ext^1(S_\alpha,S_\theta)^*$ has degree zero.

As for $\Ext^1(S'_\alpha,S_\theta[1])\cong\Ext^2(S'_\alpha,S_\theta)$, where $\alpha\neq\theta$, look at the following part of a long exact sequence
\[\cdots\to\Ext^2(S_\alpha,S_\theta)\to\Ext^2(S'_\alpha,S_\theta)\to\Ext^2(S_\theta,S_\theta)\otimes\Ext^1(S_\alpha,S_\theta)\to\cdots,\]we need to show the terms at two sides both have degree 1. For $\Ext^2(S_\alpha,S_\theta)$ this is clear by assumption and the fact that the degree of $S_\theta$ has been reduced by 1. For the same reason, $\Ext^2(S_\theta,S_\theta)$ also has degree 1. The degree of $\Ext^1(S_\alpha,S_\theta)$ has been declared to be zero. So, the term $\Ext^2(S_\theta,S_\theta)\otimes\Ext^1(S_\alpha,S_\theta)$ also has degree one.

In the case $\alpha$, $\beta\neq\theta$, we first show that $\Ext^1(S'_\beta,S_\alpha)$ has degree 1. This can be done by observing the two sides of the following part of a long exact sequence $$\cdots\to\Ext^1(S_\beta,S_\alpha)\to\Ext^1(S'_\beta,S_\alpha)\to\Ext^1(S_\theta,S_\alpha)\otimes\Ext^1(S_\beta,S_\theta)\to\cdots.$$ Then we look at the following part of a different long exact sequence $$\cdots\to\Ext^1(S'_\beta,S_\theta)\otimes\Ext^1(S_\alpha,S_\theta)^*\to\Ext^1(S_\beta',S_\alpha')\to\Ext^1(S'_\beta,S_\alpha)\to\cdots.$$ Note that $\Ext^1(S_\alpha',S_\theta)=0$. We conclude that $\Ext^1(S_\beta',S_\alpha')$ also has degree 1.
\end{proof}
\begin{lemma}
Assume there is a choice for $\{P_\alpha\mid\alpha\in\nabla\}$ such that each $P_\alpha$ is graded and $\End(P)$ has only non-negative degree pieces, with $\Hom(P_\theta,P_\alpha)_{t_\alpha}$ lies in homogeneous degree one. Assume further that the truncated mutation exists, then there is also such a choice for $\{P'_\alpha\mid\alpha\in\nabla\}$ such that $\End(P')$ has only non-negative degree pieces, and $\Hom(P'_\theta,P'_\alpha)_{t_\alpha}$ can be chosen to be in homogeneous degree one.
\end{lemma}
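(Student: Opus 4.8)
The plan is to transport the proof of the preceding Lemma (on the gradings of the simple objects $\{S'_\alpha\}$) to the projective side, working through the defining exact sequence of $P'_\theta$. First I fix the grading. Since $\End(P)$ is non-negatively graded we may assume each $P_\alpha$ is generated in degree $0$. As $\Hom(P_\theta,P_\alpha)_{t_\alpha}$ sits in homogeneous degree one, its dual sits in degree $-1$, so the natural map $\phi\colon P_\theta\to\bigoplus_{\alpha\neq\theta}P_\alpha\otimes\Hom(P_\theta,P_\alpha)_{t_\alpha}^*$ is homogeneous of degree $0$; hence $P'_\theta=\coker\phi$ inherits a grading, in which it is generated in degree $-1$. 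I renormalize by shifting this grading up by one, so that $P'_\theta$ is generated in degree $0$, and keep $P'_\alpha=P_\alpha$ for $\alpha\neq\theta$. The defining sequence then becomes a graded short exact sequence
\[0\longrightarrow Q\longrightarrow\bigoplus_{\alpha\neq\theta}P_\alpha\otimes U_\alpha\longrightarrow P'_\theta\longrightarrow 0,\]
where $Q$ is a copy of $P_\theta$ generated in degree $1$ and each $U_\alpha$ is a vector space placed in degree $0$ of dimension $\dim\Ext^1(S_\alpha,S_\theta)$.

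Next I show $\End(P')$ is concentrated in non-negative degrees, by computing $\Hom(P'_\beta,P'_\gamma)$ in the four cases according to whether $\beta$ or $\gamma$ equals $\theta$: one applies $\Hom(P_\beta,-)$ or $\Hom(-,P'_\gamma)$ to the displayed sequence and uses $\Ext^{>0}(\bigoplus_\alpha P_\alpha,\bigoplus_\alpha P_\alpha)=0$. For $\beta,\gamma\neq\theta$ the group is $\Hom(P_\beta,P_\gamma)$, unchanged; for $\beta\neq\theta$ the group $\Hom(P_\beta,P'_\theta)$ is a quotient of $\bigoplus_\alpha\Hom(P_\beta,P_\alpha)\otimes U_\alpha$; for $\gamma\neq\theta$ the group $\Hom(P'_\theta,P_\gamma)$ embeds into $\bigoplus_\alpha\Hom(P_\alpha,P_\gamma)\otimes U_\alpha^*$; and $\End(P'_\theta)$ embeds into $\bigoplus_\alpha\Hom(P_\alpha,P'_\theta)\otimes U_\alpha^*$, hence lands in degrees $\geq0$ by the bound just obtained for $\Hom(P_\alpha,P'_\theta)$. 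In each case the answer sits in non-negative degrees. This is exactly the step that forces the renormalizing shift: without it $\Hom(P_\beta,P'_\theta)$ would carry a degree $-1$ summand isomorphic to $\Ext^1(S_\beta,S_\theta)$ whenever that group is nonzero.

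Finally, for the new tilting maps I fix $\alpha\neq\theta$. By the previous step $\Hom(P'_\theta,P'_\alpha)=\Hom(P'_\theta,P_\alpha)$ is concentrated in non-negative degrees, and I must find, inside its degree-one part, a section of the structure map $\Hom(P'_\theta,P'_\alpha)\to\Ext^1(S'_\alpha,S'_\theta)^*$ occurring in the definition of the truncated mutation, namely the map whose surjectivity is supplied by the Lemma of Subsection~\ref{subsec: truncate_geom}. By the preceding Lemma the new simples can be graded so that $\Ext^1(S'_\alpha,S'_\theta)$ is homogeneous of degree one; chasing the degree-one part of that surjection through the long exact sequence attached to the displayed short exact sequence, and feeding in the standing assumption that $\Hom(P_\theta,P_\alpha)\to\Ext^1(S_\alpha,S_\theta)^*$ is already surjective, one concludes that the degree-one part of $\Hom(P'_\theta,P'_\alpha)$ already surjects onto $\Ext^1(S'_\alpha,S'_\theta)^*$; picking the section there puts $\Hom(P'_\theta,P'_\alpha)_{t_\alpha}$ in degree one.

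The main obstacle throughout is the bookkeeping of grading shifts: one needs a single renormalization of $P'_\theta$ that simultaneously kills the would-be degree $-1$ entries of $\End(P')$ and matches the grading on the $S'_\alpha$ from the preceding Lemma, so that the structure map above is honestly a map of graded spaces with its degree-one piece surjecting onto $\Ext^1(S'_\alpha,S'_\theta)^*$. Once the graded short exact sequence above is fixed with the correct shift, everything else is a routine diagram chase using only the $\Ext^{>0}$-vanishing of the $P_\alpha$, with no new idea beyond the one used for the simple objects.
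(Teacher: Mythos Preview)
Your approach is essentially the paper's: both define the grading on $P'_\theta$ through the defining short exact sequence and then verify non-negativity of $\End(P')$ by the same four-case analysis, applying $\Hom(P_\beta,-)$ or $\Hom(-,P'_\gamma)$ to that sequence. The one substantive difference is your renormalizing shift. The paper places $\Hom(P_\theta,P_\alpha)^*_{t_\alpha}$ in degree $-1$ (so $P'_\theta$ is generated in degree $-1$), whereas you shift so that $U_\alpha$ sits in degree $0$. Your observation that without this shift $\Hom(P_\beta,P'_\theta)$ picks up a degree $-1$ piece is correct, and in fact the paper's handling of exactly this case is hard to parse: its assertion that the degree $-1$ part ``maps injectively into $\Hom(P_\alpha,P_\theta)$'' is opaque, since in the paper's grading $\Hom(P_\alpha,P_\theta)$ lives in non-negative degrees and the map in the long exact sequence goes the other way. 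So your shift buys a cleaner, self-evidently correct case analysis.

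One caution on your final step. The paper's proof stops after non-negativity and does not treat the clause that $\Hom(P'_\theta,P'_\alpha)_{t_\alpha}$ can be chosen in degree one; you go further and invoke the preceding Lemma's grading on the $S'_\alpha$. But that Lemma places $S'_\theta$ in degree $(\deg S_\theta)-1$, which is \emph{not} compatible with your $P'_\theta$ generated in degree $0$: the cover $P'_\theta\to S'_\theta$ is then not homogeneous, so the structure map $\Hom(P'_\theta,P'_\alpha)\to\Ext^1(S'_\alpha,S'_\theta)^*$ is not a priori a map of graded spaces under your conventions, and the ``chase'' you sketch does not literally go through. That clause would need a direct argument, e.g.\ identifying the degree-one piece of $\Hom(P'_\theta,P'_\alpha)$ inside $\bigoplus_\beta(\Hom(P_\beta,P_\alpha))_1\otimes U^*_\beta$ and comparing it with $\Ext^1(S'_\alpha,S'_\theta)^*$ by hand.
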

\begin{proof}
We define the grading on $\{P_\alpha'\mid\alpha\in\nabla\}$ as follows. For $\alpha\neq\theta$, we have $P'_\alpha\cong P_\alpha$, and we keep the grading of it as it is. For $P'_\theta$ which is the cokernel of the map $P_{\theta}\to \oplus_{\alpha\neq\theta }P_\alpha\otimes\Hom(P_{\theta},P_\alpha)_{t_\alpha}^*$, we use the grading of $P_\theta$ and $P_\alpha$ and declare $\Hom(P_{\theta},P_\alpha)_{t_\alpha}^*$ to be in degree -1.

Clearly, $\Hom(P'_\alpha,P'_\beta)$ for $\alpha$, $\beta\neq\theta$ has not been influenced. Also it is clear that $\Hom(P'_\theta,P_\alpha')$ has non-negative grading. Note also that $\Hom(P'_\theta,P'_\theta)$ embeds into $\oplus_{\alpha\neq\theta}\Hom(P_\alpha,P_\theta')\otimes\Hom(P_\theta,P_\alpha)_{t_\alpha}$. We only need to show the non-negativity of the grading of $\Hom(P_\alpha,P'_\theta)$ for $\alpha\neq\theta$. For this, we need to show the degree -1 part of $\Hom(P_\alpha,P_\alpha)\otimes\Hom(P_\theta,P_\alpha)_{t_\alpha}^*$ maps injectively into $\Hom(P_\alpha,P_\theta)$. This is clear from the construction of $\Hom(P_\theta,P_\alpha)_{t_\alpha}^*$.
\end{proof}

\section{The $t$-structures from quantization in positive characteristic}\label{sec: quant_char_p}
\subsection{Localization of rational Cherednik algebras}
We work over a separably closed field $k$  of characteristic $p$ which is large enough.
Let $\Gamma_1\subseteq \SL(2)$ be a finite subgroup.
Let $\Gamma_n:=(\Gamma_1)^n\rtimes\mathfrak{S}_n$ acting on $\Aff^{2n}\cong(\Aff^2)^n$ in the natural way, i.e., the $i$-th copy of $\Gamma_1$ acts on the $i$-th $\Aff^2$ summand, and $\mathfrak{S}_n$ permutes the coordinates. There is a natural symplectic form on $\Aff^{2n}$. It is preserved by the diagonal action of $\Gamma_n$.
A symplectic resolution of $\Aff^{2n}/\Gamma_n$ can be given as $\Hilb^n(\widetilde{\Aff^2/\Gamma_1})$, where $\widetilde{\Aff^2/\Gamma_1}$ is the minimal resolution of $\Aff^2/\Gamma_1$. Leter on we will use the short hand notation $\Hilb^n_{\Gamma_1}$ for $\Hilb^n(\widetilde{\Aff^2/\Gamma_1})$ or simply $\Hilb^n$ when $\Gamma_1$ is clear from the context.

%One of example is when $\Gamma=(\ZZ/l)^n\rtimes\mathfrak{S}_n$.
It is well-known (see \cite{Kuz01}) that a symplectic resolution of $\Aff^{2n}/\Gamma_n$ can be constructed as a Nakajima quiver variety of extended Dynkin quiver with suitable dimension vectors and stability conditions. Recall that the Nakajima variety of a quiver $Q$ with dimension vectors $v$ and $w$ and stability condition $\theta$ is the Hamiltonian reduction $T^*(\hbox{Rep}(Q,v)\oplus\Hom(k^v,k^w))/\!/_\theta GL(v)$.
For suitable choice of stability condition, the Nakajima variety is isomorphic to $\Hilb^n(\widetilde{\Aff^2/\Gamma_1})$. In particular, we know $H^2(\Hilb^n(\widetilde{\Aff^2/\Gamma_1}))$ is isomorphic to the character group of $GL(v)$, which is a free abelian group with a basis indexed by the vertices of this quiver. The Weil divisors on $\Hilb^n$ corresponding to these basis elements are in turn in natural one to one correspondence with the congugacy classes of symplectic reflections in the group $\Gamma_n$ (see, e.g., \cite[\S 4]{BK04} for a description of this correspondence).

Write $\Hilb^{n(1)}$ for the Frobenius twist $\Hilb^n(\widetilde{\Aff^2/\ZZ_l})^{(1)}$.
Quantizations of $\Hilb^{n(1)}$ are related to rational Cherednik algebras. The precise relationship is given by \cite{BFG06} which we briefly summarize below for the convenience of the readers.

Let $\hbox{Ref}$ be the set of reflections in $\Gamma_n$.
Decompose $\hbox{Ref}=\coprod_{i=0}^r \hbox{Ref}_i$ into conjugacy classes.
Pick integers $c=(c_0, c_1,\cdots, c_r)$, the \textit{rational Cherednik algebra} is defined to be \[H_c=H_c(\hh,\Gamma_n):=k[\hh]\langle\hh^*\rangle\#\Gamma_n/I\] where $I$ is the two-sided ideal generated by $[u, v] = \langle u, v\rangle - 2\sum_{i=1}^r c_i\sum_{\gamma\in \hbox{Ref}_i}\langle u, v\rangle_\gamma\cdot\gamma$ for $u\in\hh$ and $v\in \hh^*$, where $\langle-,-\rangle_\gamma$ is the paring between $\im(\gamma-1)$ and its dual.

The algebra $H_c$ has a natural filtration, and the associated graded algebra is $k[\Aff^{2n}]\#\Gamma_n$.
Let $\Aff^{2n(1)}$ be the Frobenius twist of $\Aff^{2n}$, then the algebra $H_c$ has a big \textit{Frobenius center} $k[\Aff^{2n(1)}]^\Gamma_n$.
For any central character $\chi$, (i.e, an element in the maximal spectrum of $k[\Aff^{2n(1)}]^\Gamma_n$,) we can consider the category of finitely generated modules over $H_c$, on which the Frobenius center acts by the central character $\chi$. This category will be denoted by $\Mod_\chi H_c$.
The irreducible objects in the category $\Mod_\chi H_c$ are naturally labeled by elements in $\hbox{Irrep}(\Gamma_n)$.

Let  $e:=\sum_{\gamma\in \Gamma_n} \gamma$. For generic values of $c\in\hbox{Span}_\QQ\hbox{Ref}$, the algebra $H_c$ is Morita equivalent to $^sH_c:=eH_ce$. If there is a Morita equivalence, the value $c$ is said to be a \textit{spherical value}. The special values are called \textit{aspherical values}.
Let $\Mod_0\null^sH_c$ be the category of $^sH_c$-modules with central character $0$. The irreducible object in $\Mod_0\null^sH_c$ labeled by $\tau \in \hbox{Irrep}(\Gamma_n)$ will be denoted by $L_c(\tau)$.

Taking any $\chi\in H^2(\Hilb^n,\QQ)$, there is a quantization $\sA_\chi$ of $\Hilb^{n(1)}$ coming from the quantum Hamiltonian reduction of the sheaf of $\chi$-twisted differential operators on $\hbox{Rep}(Q,v)\oplus\Hom(k^v,k^w)$.

\begin{theorem}[\cite{BFG06}]
For each $c$, there is a sheaf of algebras $\sA_c$ on $\Hilb:=\Hilb^n(\widetilde{\Aff^2/\Gamma_1})$, which is an Azumaya algebra on $\Hilb^{(1)}$. It has the following properties.
\begin{enumerate}
  \item The Azumaya algebra $\sA_c$ splits on the formal neighborhood of the fibers of the Hilbert-Chow morphism;
  \item $H^i(\Hilb^{(1)},\sA_c)=0$ for $i>0$;
  \item for large enough $p$, one has an isomorphism
  $$\phi_c: \Gamma(\Hilb^{(1)},\sA_c)\cong \null^sH_c;$$
  \item for spherical values $c$, $^sH_c$ has finite global dimension, in which case there is a derived equivalence $D^b(\Coh_0\Hilb^{(1)})\cong D^b(\Mod_0\null^sH_c)$.
\end{enumerate}
\end{theorem}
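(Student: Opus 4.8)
The statement is the main theorem of \cite{BFG06}; here I outline the strategy one would follow. For $\Gamma_1=\bbZ/l$ one first realizes $\Hilb^n(\widetilde{\Aff^2/\Gamma_1})$ as a Nakajima quiver variety: with $M=\Rep(Q,v)\oplus\Hom(k^v,k^w)$ for the framed affine quiver of type $\widetilde A_{l-1}$ and suitable dimension vectors, one has $\Hilb^n\cong\mu^{-1}(0)^{ss}/\!\!/\GL(v)$ for a generic stability, where $\mu$ is the moment map on $T^*M$. To quantize, take on $M$ the sheaf $\mathcal D_\chi$ of $\chi$-twisted differential operators, form the quantum moment map $\mathfrak{gl}(v)\to\mathcal D_\chi$, and let $\sA_c$ be the sheaf of algebras on $\Hilb$ obtained as the $\GL(v)$-invariants of $\mathcal D_\chi/\mathcal D_\chi\mathfrak{gl}(v)$ over the stable locus; the parameter $c$ is $\chi$ shifted by the $\GL(v)$-character of the canonical bundle of $M$ (the $\rho$-shift). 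One checks directly that this is a filtered quantization of $\calO_{\Hilb}$ with the expected leading term.

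The Azumaya property on the Frobenius twist $\Hilb^{(1)}$ is then essentially formal: on a smooth variety in characteristic $p$ the sheaf $\mathcal D_\chi$ is Azumaya over the Frobenius twist of its cotangent bundle, and since the $\GL(v)$-action on the stable locus is free and the classical reduction $(T^*M)^{(1)}/\!\!/\GL(v)=\Hilb^{(1)}$ is smooth, quantum Hamiltonian reduction carries Azumaya algebras to Azumaya algebras. Property (2), the vanishing $H^i(\Hilb^{(1)},\sA_c)=0$ for $i>0$, follows from affineness of $M$, exactness of the functor of $\GL(v)$-invariants, and the fact that the unstable locus in $\mu^{-1}(0)$ has codimension $\geq2$: together these force $R\Gamma(\Hilb^{(1)},\sA_c)$ to be computed by the $\GL(v)$-invariants of $\mathcal D_\chi(M)/\mathcal D_\chi(M)\mathfrak{gl}(v)$ and concentrated in degree $0$. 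For property (3) one identifies this quantum reduction of $\mathcal D_\chi(M)$ with the spherical Cherednik algebra $\null^sH_c$ by the Gan--Ginzburg isomorphism; the hypothesis $p\gg0$ is precisely what makes this characteristic-zero identification survive reduction mod $p$ (no collision among the relevant denominators, and the Frobenius centers match up).

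The real content is property (1), the splitting of $\sA_c$ over the formal neighborhood of a fiber of the Hilbert--Chow morphism $\rho:\Hilb^{(1)}\to(\Aff^{2n}/\Gamma_n)^{(1)}$. The Brauer obstruction lies in the Brauer group of that formal neighborhood; since the contracting $\Gm$-action retracts the neighborhood onto the fiber $\rho^{-1}(\chi)$, this is an \'etale $H^2$ with $\Gm$-coefficients of the fiber itself, and it vanishes because the Hilbert--Chow fibers are rational with torsion-free even cohomology (being built from Hilbert schemes of points and configurations of $\PP^1$'s), so they support no nontrivial torsion Brauer class; one then lifts a splitting bundle off the fiber to the formal neighborhood, the obstruction to each successive lift vanishing for the same cohomological reason. \emph{This step is the main obstacle}: everything else is a reorganization of the reduction formalism, whereas here one genuinely needs the geometry and cohomology of the Hilbert--Chow fibers together with a formal deformation-theoretic lifting.

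Property (4) is then extracted from (1) and (2). Over the formal neighborhood $\hat0$ of $0$, the splitting gives $\sA_c|_{\hat0}\cong\sEnd(\sE_c)$ for a vector bundle $\sE_c$; by (2) and the splitting $\Ext^{>0}(\sE_c,\sE_c)=0$, and $\sE_c$ classically generates $D^b(\Coh_0\Hilb^{(1)})$ because on a symplectic resolution any partial tilting bundle with no higher self-extension automatically generates the subcategory supported on the central fiber (the argument of \cite{BK04}). Hence $R\Hom(\sE_c,-)$ is an equivalence $D^b(\Coh_0\Hilb^{(1)})\cong D^b(\Mod_0(\null^sH_c)|_{\hat0})$, and for spherical $c$ this globalizes to $D^b(\Coh_0\Hilb^{(1)})\cong D^b(\Mod_0\null^sH_c)$. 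Finite global dimension of $\null^sH_c$ for spherical $c$ follows in two ways: $\sEnd(\sE_c)$ is the endomorphism algebra of a vector bundle on the smooth $2n$-dimensional $\Hilb^{(1)}$, hence of global dimension $\leq2n$; and, being spherical, $\null^sH_c$ is Morita equivalent to $H_c$, which has finite global dimension unconditionally, being a filtered deformation of the regular algebra $k[\Aff^{2n}]\#\Gamma_n$.
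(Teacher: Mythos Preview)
The paper does not prove this theorem at all; it is quoted from \cite{BFG06} and used as input. There is no proof in the paper to compare your proposal against.

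That said, your sketch is a fair outline of the strategy of \cite{BFG06}, with one point where your reconstruction diverges from the actual argument. For property~(1) you try to kill the Brauer obstruction by arguing that the Hilbert--Chow fibers are ``rational with torsion-free even cohomology,'' hence have trivial Brauer group, and then lift a splitting formally. This is shaky on two counts: the fibers over general points of $(\Aff^{2n}/\Gamma_n)^{(1)}$ are not obviously of this simple form, and in characteristic~$p$ the Brauer group (in particular its $p$-primary part) is not controlled by topological cohomology in the way you suggest. The argument in \cite{BFG06} is different and cleaner: one computes the Azumaya class of $\sA_c$ on $\Hilb^{(1)}$ and shows that it is the pullback, along the Hilbert--Chow morphism, of a class on the base $(\Aff^{2n}/\Gamma_n)^{(1)}$. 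This uses the explicit description of the $p$-center of the Weyl algebra and its compatibility with quantum Hamiltonian reduction (the same mechanism as in Bezrukavnikov--Mirkovi\'c--Rumynin for the Springer resolution). Once the class is known to be a pullback, its restriction to the formal neighborhood of any fiber is pulled back from the formal neighborhood of a single point in the base, hence trivial; no separate deformation-theoretic lifting step is needed. The remainder of your outline---the reduction formalism for the Azumaya property, the cohomology vanishing, the identification of global sections with $\null^sH_c$, and the tilting-bundle argument for the derived equivalence---matches the shape of \cite{BFG06}.
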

In the terminology of \cite{BO}, the algebras $^sH_c$'s  are \textit{noncommutative resolutions of singularities} when $c$ is spherical. They are derived equivalent to $\Coh(\Hilb^{(1)})$.
As the splitting vector bundle on the formal neighborhood can be chosen to be $\Gm$-equivarient, therefore, a standard argument   shows that it extends to a vector bundle $\sE_c$ on the entire $\Hilb\frob$, and induces a global derived equivalence between $\Coh_0(\Hilb^{(1)})$ and $\Mod_c\End(\sE_c)$.

In particular, take $c=0$ (which is always spherical) we get a derived equivalence $$D^b(\Coh\Hilb^n_{\Gamma_1})\cong D^b(\Coh_{\Gamma_n}(\Aff^{2n})).$$ This equivalence is called the symplectic McKay correspondence. The splitting bundle $\sE_0$ has the same indecomposable summands as the Procesi bundle studied in \cite{H02} and \cite{Los}.% in the terminology of \cite{Los}. The non-uniqueness of choice of $\sE_0$ has been studied in \cite{Los}.

For each spherical value $c$, the derived equivalence $$D^b(\Coh_0\Hilb^{(1)})\cong D^b(\Mod_0\null ^sH_c)$$ endows $D^b(\Coh_0\Hilb^{(1)})$ with a $t$-structure, whose heart is the image of $\Mod_0\null^sH_c$ under this equivalence.

\begin{question}
For two different spherical values $c$ and $c'$, what is the relation between the $t$-structures on  $D^b(\Coh_0\Hilb^{(1)})$?
\end{question}

The aspherical values form a union of affine hyperplanes. The open facets will be  called \textit{alcoves}, and codimension-1 facets will be called \textit{walls}.
If $c$ and $c'$ are in the same alcove, then the translation functor induces a Morita equivalence; the $t$-structures are the same. In particular, the aspherical values are exactly the locus where the central charge applied to some simple object vanishes.

Let $L_c(\tau)$ be the irreducible object in $\Mod_0\null^sH_c$ labeled by $\tau \in \hbox{Irrep}(\Gamma_n)$.
Under the derived equivalence of \cite{BFG06},
for any irreducible object $L_c(\tau)\in\Mod_0\null ^sH_c$, let the   corresponding complex in $D^b(\Coh_0\Hilb\frob)$ be denoted by $\sL_c(\tau)$; for the projective cover of $L_c(\tau)$ in $\Mod\End(\sE_c)$, let the corresponding vector bundle on $\Hilb$ be denoted by $\sV_\tau$.  We have \[\Ext^i(\sV_\alpha,\sL_\beta)=\left\{
                            \begin{array}{ll}
                              \delta_{\alpha,\beta}, & \hbox{$i=0$;} \\
                              0, & \hbox{$i>0$.}
                            \end{array}
                          \right.\]

As a corollary of the derived localization theorem, the translation functors, and the Hirzebruch-Riemann-Roch, we get the following Lemma.
\begin{lemma}\label{lem: dim_poly}
When $c+\nu$ is in the same alcove as $c$, $$\dim L_{c+\nu}(\tau):=\chi(\sL_c(\tau)\otimes\sE_0\otimes\sO(\nu))$$ is a polynomial in $\nu$. Here for any $\nu\in H^2(\Hilb^{n(1)})$, the corresponding line bundle is denoted by $\sO(\nu)$.
\end{lemma}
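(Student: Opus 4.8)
The plan is to establish the displayed equality using the derived localization theorem and the translation functors, and then read off polynomiality from Hirzebruch--Riemann--Roch. Concretely: (i) derived localization lets me recover $\dim_k L_{c+\nu}(\tau)$ as an Euler characteristic of a complex of coherent sheaves on $\Hilb\frob$; (ii) the translation functors, which within a single alcove identify the categories $\Mod_0{}^sH_c$ and $\Mod_0{}^sH_{c+\nu}$ and on the geometric side amount to twisting by the line bundles $\sO(\nu)$, organize the $\nu$-dependence of this complex into the single factor $\sO(\nu)$, so that the complex in question is $\sE_0\otimes\sL_c(\tau)\otimes\sO(\nu)$; (iii) Hirzebruch--Riemann--Roch then evaluates the Euler characteristic as an integral over $\Hilb\frob$ against $e^{c_1(\sO(\nu))}$, which is polynomial in $\nu$ because $\Hilb\frob$ is finite-dimensional.

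For step (i): since the spherical property is constant on an alcove, $c+\nu$ is spherical, so by the theorem of \cite{BFG06} (parts (2) and (4)) the BFG equivalence $D^b(\Coh_0\Hilb\frob)\cong D^b(\Mod_0{}^sH_{c+\nu})$ has inverse $R\Gamma(\Hilb\frob,\sE_{c+\nu}\otimes^L(-))$ (up to the usual convention on the variance of the splitting/tilting bundle), and the vanishing $H^{>0}(\Hilb\frob,\sA_{c+\nu})=0$ makes the image of the module $L_{c+\nu}(\tau)$ concentrated in cohomological degree zero; hence $\dim_k L_{c+\nu}(\tau)$ equals the Euler characteristic on $\Hilb\frob$ of the splitting bundle twisted by the coherent complex corresponding to $L_{c+\nu}(\tau)$. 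For step (ii): because $c$ and $c+\nu$ lie in the same alcove, the translation functor between $\Mod_0{}^sH_c$ and $\Mod_0{}^sH_{c+\nu}$ is an equivalence preserving the $\hbox{Irrep}(\Gamma_n)$-labelling of simples, so it carries $L_c(\tau)$ to $L_{c+\nu}(\tau)$; transported through the two BFG equivalences it is implemented geometrically by the line-bundle twist $-\otimes\sO(\nu)$ (the quantized version of the parameter shift $c\mapsto c+\nu$), and absorbing also the $\nu$-dependence of the splitting bundle into this twist one obtains that the relevant coherent complex is $\sE_0\otimes\sL_c(\tau)\otimes\sO(\nu)$ with $\sE_0$ fixed (the Procesi-type bundle). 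Combining (i) and (ii) gives $\dim_k L_{c+\nu}(\tau)=\chi(\Hilb\frob,\sE_0\otimes\sL_c(\tau)\otimes\sO(\nu))$.

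Finally, Hirzebruch--Riemann--Roch rewrites the right-hand side as $\int_{\Hilb\frob}\ch(\sE_0)\,\ch(\sL_c(\tau))\,e^{c_1(\sO(\nu))}\,\Td(\Hilb\frob)$; since $\Hilb\frob$ has dimension $2n$ and $\sL_c(\tau)$ is supported on the $n$-dimensional zero fibre of the Hilbert--Chow morphism, the exponential $e^{c_1(\sO(\nu))}=\sum_{k\ge 0}c_1(\sO(\nu))^k/k!$ contributes only the terms with $k\le n$, each depending polynomially on $\nu$ since $\nu\mapsto c_1(\sO(\nu))$ is linear; hence the integral is a polynomial in $\nu$ of degree at most $n$. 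The routine parts are the degree-zero concentration in (i) and the Riemann--Roch bookkeeping in (iii); the main obstacle is step (ii) -- pinning down, from the construction of the equivalences in \cite{BFG06} and the translation bimodules, that the entire $\nu$-dependence is encoded by the geometric twist $-\otimes\sO(\nu)$ and that one may use the single fixed bundle $\sE_0$ in place of the parameter-dependent splitting bundles $\sE_{c+\nu}$.
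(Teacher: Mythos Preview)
Your proposal is correct and follows precisely the approach the paper indicates: the lemma is stated there as a direct corollary of the derived localization theorem, the translation functors, and Hirzebruch--Riemann--Roch, and you have simply fleshed out these three steps in the natural way. Your identification of step~(ii) as the least routine point is apt, but this is exactly the content of the translation-functor input the paper invokes.
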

These polynomials will be referred to as the dimension polynomials.

We define \begin{equation}\label{eq:Z}Z_\tau(c)=\lim_{p\to\infty}p^{-n}\dim_kL_{cp}(\tau;p).\end{equation}

We consider the collection of polynomials $\{Z_\tau(c)\mid\tau\in\hbox{ Irrep}(\Gamma_n)\}$ as a polynomial map \[H^2(\Hilb;\bbQ)\to \Hom_{\ZZ}(K_0(\Hilb),\bbQ).\] This polynomial map is called {\it the central charge}.

We will make precise of the slogan that the central charge controls the difference of the $t$-structures associated to neighboring alcoves.

\subsection{Real variation of stability conditions}

Bridgeland introduced a notion of stability conditions (see \cite{Bri}) which parameterizes all bounded $t$-structures of the same triangulated category and goes along well with deformations.
Recall that for an abelian category $\mathfrak{A}$, a stability function on it is a group homomorphism $Z: K(\mathfrak{A})\to \mathbb{C}$ such that $$0\neq E\in \mathfrak{A}\Rightarrow Z(E)\in\mathbb{R}_{>0}\exp(i\pi\phi(E))$$ where the real number $\phi(E)\in(0,1]$ is called the phase of $E$. A nonzero subobject is said to be semi-stable with respect to $Z$ if every subobject has smaller or equal phase. The stability function $Z$ is said to have the Harder-Narasimhan property if every nonzero object $E\in \catA$ has a finite filtration $0 = E_0 \subset E_1 \subset \cdots \subset E_n = E$
whose factors $F_j = E_j/E_{j-1}$ are semistable objects of $\catA$ with $\phi(F_1) > \phi(F_2) > \cdots > \phi(F_n)$.
\par
Defining a Bridgeland stability condition on a triangulated category $D$ is equivalent to giving a bounded $t$-structure on $D$ together with a stability function on its heart with the Harder-Narasimhan property.
Bridgeland showed that the set $\Stab(D)$ of all stability conditions on a triangulated category $D$ has a complex manifold structure, such that the function $\Stab(D)\to K(D)_{\CC}^*$ sending any stability condition to its stability function is an local isomorphism to a subspace $V\subseteq K(D)_{\CC}^*$ on each connected component of $\Stab(D)$.
\par
There is a notion of real variation of stabilities defined in \cite{ABM}, based on similar idea as in the definition of Bridgeland. We briefly recall the definition here.

Let $D$ be a $k$-linear triangulated category with finite rank $K$-group and finite dimensional $\Hom$'s, and $V$ a real vector space.
Fix a discrete collection $\Sigma$ of affine hyperplanes in $V$. Let $V^0$ denote their
complement.
Let $\Sigma_{lin}$ be the set of their translations through zero, a collection of linear hyperplanes. Fix a component
$V^+$ of $V\backslash\cup\Sigma_{lin}$. The choice of $V^+$
determines for each $H\in \Sigma$ the choice of the positive half-space $(V\backslash H)^+\subset  V\backslash H$. Let $Alc$ denote the set of all alcoves, i.e., connected components of $V^0$.
\begin{definition}
A \textit{real variation of stability conditions} on $D$ parametrized by $V^0$ and
directed to $V^+$ is the data $(Z, \tau)$, where $Z$ (the central charge) is a polynomial map
$Z : V \to(K^0(D)\otimes\RR)^*$, and $\tau$ is a map from $Alc$ to the set of bounded $t$-structures
on $D$ with finite length hearts, subject to the following conditions.
\begin{enumerate}
  \item For $0\neq M\in \tau(A)$ and $x\in A$, $\langle Z(x), [M]\rangle> 0$.
  \item Suppose $A, A'\in Alc$ share a codimension one face and $A'$ is above $A$.
Let $A_n \subseteq\tau(A)$ be the full subcategory $\{M \in A_n\mid \langle Z(x), [M]\rangle\hbox{ has zero of
order at least }n\}$. Then we require:
\begin{itemize}
  \item The $t$-structure $\tau(A')$ is compatible with the filtration.
  \item The $t$-structure on $\gr_n(D) = D_n/D_{n+1}$
induced by $\tau(A)$ differers from  that of $\tau(A')$ by $[n]$.
\end{itemize}
\end{enumerate}
\end{definition}

Now we can state our main theorem of this paper.
\begin{theorem}\label{thm: main}
Let $Z:H^2(\Hilb^2_{\ZZ_l};\QQ)\to K_\QQ(\Hilb^2_{\ZZ_l})^\vee$ be defined as in \eqref{eq:Z}. Let $\tau$ be the assignment associating each alcove in $H^2(\Hilb^2_{\ZZ_l};\QQ)$ the $t$-structure on $D^b(\Coh_0(\Hilb^2_{\ZZ_l}))$ whose heart is given by $\Mod_0 \null^sH_c$ for some $c$ in this alcove.
Then, the pair $(Z,\tau)$ is a real variation of stability conditions on $D^b(\Coh_0(\Hilb^2_{\ZZ_l}))$.
\end{theorem}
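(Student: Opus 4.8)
The plan is to verify the two axioms in the definition of a real variation of stability conditions directly, for the category $D=D^b(\Coh_0(\Hilb^2_{\ZZ_l}))$, with $V=H^2(\Hilb^2_{\ZZ_l};\RR)$, $\Sigma$ the aspherical hyperplane arrangement, $V^+$ a fixed dominant chamber, and $\tau,Z$ as in the statement. By the theorem of \cite{BFG06}, for $c$ in an open alcove $A$ the value $c$ is spherical and the heart $\tau(A)=\Mod_0\null^sH_c$ is a finite--length abelian category with simple objects indexed by $\hbox{Irrep}(\Gamma_2)$, so the requirement that $\tau$ take values in finite--length bounded $t$--structures and that $Z$ take values in $(K^0(D)\otimes\RR)^*$ is automatic. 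For Axiom~1 (positivity), since every $0\ne M\in\tau(A)$ has class $[M]=\sum_\tau m_\tau[L_c(\tau)]$ with $m_\tau\in\ZZ_{\ge0}$ not all zero, it is enough to prove $\langle Z(x),[L_c(\tau)]\rangle>0$ for every $\tau\in\hbox{Irrep}(\Gamma_2)$ and every $x$ in the open alcove $A$. Each $L_c(\tau)$ is a nonzero finite--dimensional $k$--vector space, so $p^{-n}\dim_kL_{cp}(\tau;p)>0$ for $p\gg0$; letting $p\to\infty$ in \eqref{eq:Z} gives $Z_\tau(c)\ge0$, and since $\dim_kL_{c+\nu}(\tau)=\chi(\sL_c(\tau)\otimes\sE_0\otimes\sO(\nu))$ is a polynomial in $\nu$ (Lemma~\ref{lem: dim_poly}), the inequality $Z_\tau\ge0$ extends to the whole closed alcove $\bar A$. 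For the strict inequality on the open alcove I would invoke the explicit form of the central charges on $\Hilb^2_{\ZZ_l}$ coming from the Chern character computation of Proposition~\ref{prop: chern char} together with the $n=2$ dimension--polynomial computations: their common zero locus is exactly $\Sigma$, so no $Z_\tau$ vanishes on the open alcove $A$, and together with $Z_\tau|_{\bar A}\ge0$ this forces $Z_\tau|_A>0$.

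For Axiom~2, fix alcoves $A,A'$ sharing a codimension--one wall $H$ with $A'$ above $A$. The main preliminary step, carried out in Section~\ref{sec: t-struct_alco}, is to describe $\tau(A')$ in terms of $\tau(A)$; the outcome for $n=2$, $\Gamma_1=\ZZ_l$ is that $H$ is of exactly one of two kinds. In the \emph{tilting} case the simples of $\tau(A)$ whose central charge vanishes on $H$ form a set $\mathcal S$ that is pairwise $\Ext^1$--orthogonal, each member of which vanishes to order one along $H$, and $\tau(A')$ is the Happel--Reiten--Smal{\o} tilt $R_{\mathcal S}\tau(A)$ at the torsion pair whose torsion--free part is generated by $\mathcal S$ (Lemma~\ref{lem: simple} and the Remark following Lemma~\ref{lem: t-struct-coincide}). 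In the \emph{$\PP^2$} case there is a single simple $S$ with $Z_S$ vanishing to order two along $H$, this $S$ is a $\PP^2$--object (so $\Ext^\bullet(S,S)\cong k[x]/(x^3)$ with $x$ in degree $2$; in particular $\Ext^1(S,S)=0$), and $\tau(A')$ is its $\PP^2$--semi--reflection, which by Proposition~\ref{prop: exist_trunc_mutat} equals the iterated tilt $R_{S[1]}R_S\tau(A)$ of Proposition~\ref{prop: inter_tilt}. That $\tau(A')$ really is this tilt is the substance of Section~\ref{sec: t-struct_alco}: as $c$ crosses $H$ the splitting bundle $\sE_c$ of \cite{BFG06} changes by the truncated mutations of Section~\ref{sec: mut}, in the $\PP^2$ case via the Fourier--Mukai transform of Namikawa exactly as in the Example of Section~\ref{subsec: truncate_geom}.

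Granting this classification, the two conditions of Axiom~2 are then read off from Section~\ref{sec: mut}. In the tilting case the defining filtration is $0\subseteq D_1\subseteq D_0=D$, where $D_1$ is the thick subcategory generated by $\mathcal S$; by Lemmas~\ref{lem: simple} and~\ref{lem_simple_R_til} the tilt $R_{\mathcal S}$ restricts to the same $t$--structure as $\tau(A)$ on $D/D_1$ and sends each $S\in\mathcal S$ to $S[1]$, so $\tau(A')$ is compatible with the filtration, agrees with $\tau(A)$ on $\gr_0D$, and differs from it by $[1]$ on $\gr_1D$. In the $\PP^2$ case, since $Z_S$ vanishes to order two on $H$, the filtration is $0\subseteq D_2=D_1\subseteq D_0=D$ with $D_1$ the thick subcategory generated by $S$, so $\gr_1D=0$ and $\gr_2D=D_1$; by Proposition~\ref{prop: inter_tilt} with $i=2$ (its hypotheses holding here by Proposition~\ref{prop: exist_trunc_mutat} and Corollary~\ref{cor:perver-smaller_pervers}) the iterated tilt is again of finite length, restricts to $\tau(A)$ on $D/D_1$, and sends $S$ to $S[2]$; hence $\tau(A')$ is compatible with the filtration, agrees with $\tau(A)$ on $\gr_0D$ and (vacuously) on $\gr_1D$, and differs from it by $[2]$ on $\gr_2D$, Proposition~\ref{prop: Chuang-Rouquier}(3) confirming that these data pin down $\tau(A')$. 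Applying this to every wall of every alcove establishes Axiom~2, and together with Axiom~1 proves the theorem.

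The step I expect to be the real obstacle is the wall--crossing classification: proving that, for $n=2$, the $t$--structure attached to $A'$ by \cite{BFG06} is on the nose the predicted (iterated) tilt of the one attached to $A$. This amounts to controlling how the splitting bundle $\sE_c$ degenerates as $c$ crosses an aspherical wall, and it is here that one cannot avoid the combinatorics special to $n=2$ and the quiver--variety model of $\Hilb^2_{\ZZ_l}$; this is also why the general--$n$ statement stays conjectural. A secondary difficulty is the strict positivity in Axiom~1, for which I see no argument that avoids the explicit $n=2$ evaluation of the central charges.
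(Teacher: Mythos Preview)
Your overall shape---verify Axiom~1 by positivity of dimensions and Axiom~2 by a wall--by--wall classification into ``order~1 tilt'' and ``order~2 $\PP^2$--semi-reflection'' cases---is the same as the paper's. The divergence is in how the wall--crossing identification is actually established, and this is precisely the point you flag as the obstacle.

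The paper does \emph{not} prove that $\tau(A')$ is the predicted iterated tilt of $\tau(A)$ by tracking how the splitting bundle $\sE_c$ deforms or by invoking the quiver/Fourier--Mukai picture. Section~\ref{sec: t-struct_alco} is illustrative: it computes what the iterated tilts of the orbifold $t$--structure look like around the fundamental alcove and checks the numerics, but it does not independently show that these coincide with the $t$--structures supplied by $\Mod_0\null^sH_c$ on the other side of the wall. The actual proof is in Section~\ref{sec: compare_char0} and runs through characteristic zero. One lifts to the category $\sO_c$ for $H_c(\Gamma_2)_\CC$, uses Lemma~\ref{lem: lift_sph} to match aspherical simples in characteristic~$0$ and characteristic~$p$, and then exploits the translation functors $T_{c\to c'}$ and $T_{c\to c_0}$. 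The two cases you anticipate are detected over $\CC$: either some $L_c(\theta;\CC)$ killed by $T_{c\to c_0}$ is finite dimensional (then by Lemma~\ref{lem:char0-charp-max} and Lemma~\ref{lem:char0-charp-max2} it is the unique simple with $Z$ vanishing on $H$, and the vanishing is of order~$2$; one uses that $T_{c\to c'}(L_c(\theta;\CC))$ is concentrated in degree~$2$, reduces mod~$p$, and the factorization $T_{c\to c_0}=T_{c'\to c_0}\circ T_{c\to c'}$ gives the perverse data), or all such simples have support of codimension~$1$ (then the vanishing orders are all~$1$, $T_{c\to c'}$ has amplitude~$\le1$, and the same factorization finishes). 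Lemma~\ref{lem:char0-charp-max}, which is checked from the explicit $n=2$ central charges of Section~\ref{sec: central_charge}, is the only place the restriction to $n=2$ enters.

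So your proposal is not wrong in spirit, but the mechanism you sketch for the hard step (degeneration of $\sE_c$ across a wall via truncated mutation) is not what the paper supplies, and Section~\ref{sec: t-struct_alco} will not carry that weight on its own. What you are missing is the characteristic--zero bridge: Lemmas~\ref{lem: lift_sph}--\ref{lem:char0-charp-max2} and the translation--functor argument are the tools that turn the classification of walls into a statement about the $^sH_c$ $t$--structures themselves.
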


\subsection{Comparison with category $\sO_c$ in characteristic zero}\label{sec: compare_char0}

Let $\Gamma$ be an arbitrary reflection group acting on $\hh$. Let $V=\hh\oplus\hh^*$ with the natural symplectic form and the diagonal $\Gamma$-action.

Let $R$ be a $\ZZ$-subalgebra of $\C$, finitely generated over $\bbZ$, such that $H_c(\Gamma_2)_R$ exists.
Let $\calO_c$ be the category $\calO$ of $H_c(\Gamma)_\C$. For any $\tau\in\hbox{Irrep}(\Gamma)$ let the corresponding irreducible object over $^sH_c(\Gamma)_\C$ be denoted by $L_c(\tau;\CC)$, and its $R$-form be $L_c(\tau;R)$. Let $\overline{L_c(\tau;R)_k}$ be the central reduction of $L_c(\tau;R)\otimes_Rk$. Recall that $e=\sum_{\gamma\in\Gamma_n}\gamma$. Let the simple module over $H_c(\Gamma)_\C$ labeled by the irreducible $\Gamma$-representation $\tau$ be denoted by $^fL_c(\tau;\CC)$. Similar to the simple modules over the spherical Cherednik algebra, we have $^f\overline{L_c(\tau;R)_k}$. Note that $L_c(\tau;\CC)=\null^fL_c(\tau;\CC)^\Gamma$ and $\overline{L_c(\tau;R)_k}=\null^f\overline{L_c(\tau;R)_k}^\Gamma$.

\begin{lemma}\label{lem: lift_sph}\footnote{The author is grateful to Roman Bezrukavnikov for access to his unpublished work where the author learned  this argument.}
For any parameter $c$ and any $\tau\in\hbox{Irrep}(\Gamma)$, we have $^fL_c(\tau;\CC)^{\Gamma}=0$ if and only if $^f\overline{L_c(\tau;R)_k}^{\Gamma}=0$.
\end{lemma}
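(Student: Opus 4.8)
The plan is to transport the statement to the full rational Cherednik algebra and then to compare the characteristic‑zero and characteristic‑$p$ pictures through a common $R$‑form of the contravariant (Shapovalov‑type) pairing on the standard module. Using the identities ${}^fL_c(\tau;\CC)^\Gamma=L_c(\tau;\CC)$ and ${}^f\overline{L_c(\tau;R)_k}^\Gamma=\overline{L_c(\tau;R)_k}$, and the fact that for $p\gg0$ the element $e=\sum_{\gamma\in\Gamma}\gamma$ is $|\Gamma|$ times the averaging idempotent (so that $e\cdot M=M^\Gamma$), the claim becomes: $e\cdot{}^fL_c(\tau;\CC)=0$ if and only if the central reduction at $0$ of $e\cdot\bigl({}^fL_c(\tau;R)\otimes_R k\bigr)$ is zero. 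I would present ${}^fL_c(\tau)$ as the image of the contravariant morphism $S_c\colon\Delta_c(\tau)\to\nabla_c(\tau)$ from the standard to the costandard module; this morphism is $H_c$‑linear, $\Gamma$‑equivariant, homogeneous for the Euler grading, and — after enlarging $R$ to contain an $R$‑form $\tau_R$ of $\tau$ and to invert $|\Gamma|$ — defined over $R$, its component in each Euler degree being a matrix with entries polynomial in $c$, hence in $R$. Since $\Delta_c(\tau;R)=R[\hh]\otimes_R\tau_R$ is a free $\Gamma$‑graded $R$‑module, I may take ${}^fL_c(\tau;R)=\im(S_c)$ over $R$, so that $e\cdot{}^fL_c(\tau;R)=S_c\bigl(e\,\Delta_c(\tau;R)\bigr)$ is governed by the single $R$‑linear map $S_c|_{e\,\Delta_c(\tau;R)}$.

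The ``only if'' direction costs nothing and needs no largeness of $p$: if $e\cdot{}^fL_c(\tau;\CC)=0$, then $S_c|_{e\,\Delta_c(\tau;\CC)}=0$, and being the base change to $\CC$ of a map between free (hence torsion‑free) $R$‑modules, $S_c|_{e\,\Delta_c(\tau;R)}$ vanishes already; reducing modulo $p$ kills it too, so $e\cdot\bigl({}^fL_c(\tau;R)\otimes_R k\bigr)=0$ and, a fortiori, so does its central reduction.

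For the converse, suppose $e\cdot{}^fL_c(\tau;\CC)\neq0$, so $S_c|_{e\,\Delta_c(\tau;\CC)}\neq0$; then in some Euler degree $d_0$ — fixed, depending on $c$ and $\tau$ but not on $p$ — the matrix over $R$ has a nonzero entry $s\in R\smallsetminus\{0\}$. Choosing the reduction map $R\to k$ so as to avoid the proper closed subset where $s$ vanishes — a condition imposed only for the finitely many $s$ produced as $\tau$ ranges over the finite set $\hbox{Irrep}(\Gamma)$, hence harmless for $p\gg0$ — we obtain $\bar s\neq0$ in $k$, so $e\cdot\bigl({}^fL_c(\tau;R)\otimes_R k\bigr)\neq0$. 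The remaining point, which I expect to be the main obstacle, is that this nonvanishing survives the central reduction at $0$. I would handle it geometrically. Write $N=e\cdot\bigl({}^fL_c(\tau;R)\otimes_R k\bigr)=L_c(\tau;R)\otimes_R k$, a nonzero finitely generated module over ${}^sH_c(k)$ and hence over its Frobenius center $k[\hh^{(1)}\oplus\hh^{*(1)}]^\Gamma$. On the one hand, $\hh^*$ (the Dunkl operators) acts locally nilpotently on ${}^fL_c(\tau;\CC)$, hence on the $R$‑lattice and on $N$, so $N$ is supported over $0$ in the $\hh^{*(1)}$‑directions. On the other hand, $N$ is $\Gm$‑equivariant for the Euler action, so its support in the $\hh^{(1)}$‑directions is a nonempty $\Gm$‑stable closed subset and therefore contains the origin. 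Thus $0$ lies in the support of $N$ over the whole Frobenius center, and by Nakayama $N/\mathfrak{m}_0 N={}^f\overline{L_c(\tau;R)_k}^\Gamma$ is nonzero, as desired.

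The structural facts invoked — the local nilpotence of $\hh^*$ on category‑$\mathcal O$ objects over $\CC$ and the description of the Frobenius centers of $H_c$ and ${}^sH_c$ — are those of \cite{BFG06} and \cite{BEG}. Finally, since $\hbox{Irrep}(\Gamma)$ is finite, the finitely many exceptional primes collected along the way (those inverted in choosing $\tau_R$ and the $R$‑lattice ${}^fL_c(\tau;R)$, together with $|\Gamma|$ and the ones needed to make $\bar s\neq0$) can be amalgamated into a single bound; for $p$ beyond it the asserted equivalence holds, which is the meaning of $p\gg0$ in the statement.
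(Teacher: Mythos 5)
Your argument is correct, and for the nontrivial direction it proceeds by a genuinely different mechanism than the paper's. The easy direction is the same in both: an $R$-lattice with zero $\Gamma$-invariants reduces (for $p\nmid|\Gamma|$, so that $e$ commutes with base change) to a module with zero invariants. For the converse, the paper simply fixes a weight $\alpha$ with ${}^fL_c(\tau;\CC)[\alpha]^\Gamma\neq0$ and asserts that for $p\gg0$ the single weight space ${}^fL_c(\tau;\CC)[\alpha]$ maps isomorphically onto ${}^f\overline{L_c(\tau;R)_k}[\alpha]$, leaving that comparison (reduction of the lattice \emph{and} central reduction in a fixed degree) unproved; you instead fix the lattice canonically as the image of the contravariant map over $R$, extract a single nonzero matrix entry in a fixed Euler degree to get nonvanishing of $e\cdot({}^fL_c(\tau;R)\otimes_Rk)$ after reduction, and then handle the central-reduction step separately by a support argument: local nilpotence of $\hh^*$ confines the support over the Frobenius center to the $\hh^{(1)}$-directions, conicality of the (graded) annihilator forces $0$ into the support, and Nakayama (using that ${}^sH_c(k)$, hence your module, is finite over the Frobenius center, as in \cite{BFG06}) gives nonvanishing of the fiber at $0$. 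This buys a self-contained justification of exactly the point the paper takes for granted, at the cost of length; both proofs share the same standard imprecision about reduction, and you should phrase "choosing $R\to k$ to avoid where $s$ vanishes" as further localizing $R$ at the finitely many elements $s$ (one per $\tau\in\hbox{Irrep}(\Gamma)$), since for a fixed large $p$ not every reduction map avoids their zero loci; with that cosmetic fix, and granting (as the paper does) that the statement refers to such a good choice of $R$-form and reduction, your proof is complete.
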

\begin{proof}
If $^fL_c(\tau;\CC)^{\Gamma}=0$ then we can choose $^fL_c(\tau;R)$ so that  $^fL_c(\tau;R)^{\Gamma}=0$. Therefore, clearly we have $^f\overline{L_c(\tau;R)_k}^{\Gamma}=0$.

Conversely, for any weight space $^fL_c(\tau;\CC)[\alpha]$, for $p>>0$ we have an isomorphism $^fL_c(\tau;\CC)[\alpha]\to \null^f\overline{L_c(\tau;R)_k}[\alpha]$. If $^fL_c(\tau;\CC)^{\Gamma}\neq0$, then there is some weight $\alpha$ such that $^fL_c(\tau;\CC)[\alpha]^{\Gamma}\neq0$, and therefore $^f\overline{L_c(\tau;R)_k}[\alpha]^{\Gamma}\neq0$.
\end{proof}

Recall that in the terminology of \cite{BE}, such representations is said to be asperical. The asperical locus in $H^2(\Hilb;\QQ)$ (defined to be the locus where $^sH_c(\Gamma)_\CC$ has infinite global dimension) consists of values $c$ such that $H_c(\Gamma)$ has an asperical module.

For any  aspherical value $c$, define a filtration on $\sO_c$ by Serre subcategories \[\sO_c^{\leq d}:=\angl{L_c(\tau;\C)\mid \codim\supp L_c(\tau;\C)\leq d}.\]
Also we have a filtration on $\Mod_0\null ^{s}H_c(\Gamma_2)_k$ by Serre subcategories \[\Mod_0\null ^{s}H_c(\Gamma_2)_k^{\leq d}:=\angl{L_c(\tau;p)\mid \deg(Z_\tau)\leq d}.\]
These two filtrations are compatible in the following sense.

Let $A$ and $A'$ be two alcoves sharing a wall $H$. Assume $c$ is in alcove $A$, and $c'$ in $A'$. Assume moreover that $c_0$ is on $H$ but not any other walls.

\begin{prop}\label{thm: lift_char0}Let $\Gamma_1=\ZZ_1$. Suppose the codimension of support of $L_c(\tau;\bbC)$ is $d$. Then $\overline{L_c(\tau;R)_k}$ is a nonzero object in \[\Mod \null ^{s}H_c(\Gamma_2)_k^{\leq d}/\Mod\null ^{s}H_c(\Gamma_2)_k^{\leq d+1}.\]
\end{prop}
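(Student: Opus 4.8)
The plan is to prove the statement in two stages: first that $\overline{L_c(\tau;R)_k}$ is nonzero, and then that its image in the associated graded of the filtration $\Mod\null^{s}H_c(\Gamma_2)_k^{\le\bullet}$ lies precisely in the layer indexed by $d$. The first stage is immediate from Lemma~\ref{lem: lift_sph}: the hypothesis presupposes $L_c(\tau;\C)\neq 0$, i.e. $^fL_c(\tau;\C)^{\Gamma_2}\neq 0$, and Lemma~\ref{lem: lift_sph} then gives $^f\overline{L_c(\tau;R)_k}^{\Gamma_2}\neq 0$; since $\overline{L_c(\tau;R)_k}=\null^f\overline{L_c(\tau;R)_k}^{\Gamma_2}$, the module $\overline{L_c(\tau;R)_k}$ is nonzero, with a well-defined finite multiset of composition factors $L_c(\sigma;p)$.

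For the second stage I would first record the numerical dictionary $\deg Z_\sigma=\dim\supp_{\Hilb^{(1)}}\sL_c(\sigma)$, valid for each $\sigma$: by Lemma~\ref{lem: dim_poly} and Hirzebruch--Riemann--Roch on the smooth variety $\Hilb^{(1)}$, the polynomial $\nu\mapsto\chi(\sL_c(\sigma)\otimes\sE_0\otimes\sO(\nu))$ has degree equal to $\dim\supp\sL_c(\sigma)$, and by \eqref{eq:Z} its leading homogeneous part is $Z_\sigma$. Writing $[\overline{L_c(\tau;R)_k}]=\sum_\sigma m_\sigma[\sL_c(\sigma)]$ in $K_0(\Coh_0\Hilb^{(1)})$ with $m_\sigma\ge 0$ and using additivity of $\chi(-\otimes\sE_0\otimes\sO(\nu))$ along triangles,
\[
\chi\bigl(\overline{L_c(\tau;R)_k}\otimes\sE_0\otimes\sO(\nu)\bigr)=\sum_\sigma m_\sigma\,\chi\bigl(\sL_c(\sigma)\otimes\sE_0\otimes\sO(\nu)\bigr).
\]
By Riemann--Roch the top-degree part of each summand on the right is a positive multiple of $\int_{[\supp\sL_c(\sigma)]}c_1(\sO(\nu))^{\dim\supp\sL_c(\sigma)}$, which is strictly positive for $\nu$ in the ample cone of $\Hilb^{(1)}$; hence there is no cancellation of leading terms, so the degree of the left-hand side equals $\max\{\deg Z_\sigma\mid m_\sigma\neq 0\}$, and this equals $\dim\supp_{\Hilb^{(1)}}\overline{L_c(\tau;R)_k}$. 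Consequently both the membership $\overline{L_c(\tau;R)_k}\in\Mod\null^{s}H_c(\Gamma_2)_k^{\le d'}$ and its nonvanishing in the $d'$-th graded layer reduce to the single equality $\dim\supp_{\Hilb^{(1)}}\overline{L_c(\tau;R)_k}=d'$; it remains to identify $d'$ with $d$.

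To prove $\dim\supp_{\Hilb^{(1)}}\overline{L_c(\tau;R)_k}=\codim\supp_\hh L_c(\tau;\C)$ I would argue: (i) over $\C$, the Gordon--Stafford localization of $\sO_c$ carries $L_c(\tau;\C)$ to a complex on $\Hilb^n_\C$ supported on the zero fibre of the Hilbert--Chow morphism, of dimension $\codim\supp_\hh L_c(\tau;\C)$; (ii) both this localization and the localization of \cite{BFG06} admit models over $\Spec R$ which coincide there, so that $\overline{L_c(\tau;R)_k}$ is, up to Frobenius twist, the reduction modulo $p$ of that complex; (iii) for $p\gg 0$, generic flatness of the support over $\Spec R$, together with invariance of the dimension of support under base change and Frobenius twist, yields the equality. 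I expect step (ii)---making the characteristic-zero and characteristic-$p$ localizations literally compatible over the ring $R$---to be the main obstacle, the rest being essentially formal. Since here $\Gamma_1=\ZZ_1$, so that $\Gamma_2=\mathfrak{S}_2$ has only two irreducible representations and $d\in\{0,1\}$, once this compatibility is available one may alternatively verify the composition-factor bookkeeping above explicitly, alcove by alcove.
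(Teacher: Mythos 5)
Your first stage (nonvanishing of $\overline{L_c(\tau;R)_k}$ via Lemma~\ref{lem: lift_sph}) is fine, and that is also how the paper uses that lemma. The second stage, however, has two genuine gaps. First, the quantity indexing the filtration is not the total degree of the polynomial $Z_\sigma$: in the setting of the proposition a wall $H$ is fixed (the alcoves $A,A'$ and the point $c_0\in H$ are fixed immediately before the statement), and the layer of the filtration is determined by the order of vanishing of $Z_\sigma$ along $H$, exactly as in the definition of a real variation of stability conditions and as the paper's own proof uses it (``vanishing order $2$ on the wall $H$'', etc.). With your reading the filtration collapses: by the computations of Section~\ref{sec: central_charge} every $Z_\sigma$ in the $n=2$ case is a quadratic polynomial, so your dictionary $\deg Z_\sigma=\dim\supp_{\Hilb^{(1)}}\sL_c(\sigma)$ would place all simples in one layer and the asserted quotient would be zero. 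Relatedly, your ``no cancellation of leading terms'' step is false as stated: the objects $\sL_c(\sigma)$ are complexes, not sheaves, and their leading Euler characteristics need not be positive on the ample cone --- e.g.\ in the $B_2$ case $Z_4=-\tfrac14(4a^2+4b^2-1)$ has negative definite leading part (Subsection~\ref{subsec: central_charge_B2}), so Riemann--Roch positivity does not give the claimed equality of degrees.

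Second, the step you yourself flag as the main obstacle --- making the characteristic-zero localization and the localization of \cite{BFG06} literally compatible over $R$, so that $\overline{L_c(\tau;R)_k}$ is the reduction of a controlled complex on the Hilbert scheme --- is exactly the content that would need proof, and it is not the route the paper takes. The paper instead argues at the wall: with $c_0\in H$ generic, Lemma~\ref{lem: lift_sph} transfers (a)sphericity of simples between $\CC$ and $k$, and the explicit central charges (Lemmas~\ref{lem:char0-charp-max} and~\ref{lem:char0-charp-max2}) show that order-$2$ vanishing of $Z_\theta$ on $H$ occurs exactly for the unique finite-dimensional $L_c(\theta;\CC)$ killed by the translation functor $T_{c\to c_0}$, while the simples of codimension-one support killed by $T_{c\to c_0}$ account precisely for order-$1$ vanishing; the two cases together give the proposition (and feed into Theorem~\ref{thm: main}) without ever comparing the two localizations over $R$. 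So as written your proposal establishes nonvanishing but not the placement in the correct graded piece; to repair it you would have to (i) replace total degree by vanishing order along $H$, and (ii) actually prove the characteristic-zero/characteristic-$p$ comparison, which in the paper is exactly the role played by Lemma~\ref{lem: lift_sph} together with the explicit central charge computations (note also that the essential Case~1 there concerns the finite-dimensional, codimension-two simples, which your final remark restricting $d$ to $\{0,1\}$ would miss).
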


The following Lemma, which is the only place where we use the condition $n=2$ and $\Gamma_1=\ZZ/l\ZZ$, is checked by explicit description of the central charge polynomials in Section~\ref{sec: central_charge}.

\begin{lemma}\label{lem:char0-charp-max}
Let $\Gamma_1=\ZZ/l\ZZ$ and $n=2$. Let $H$ be a codimension-1 wall on which there is some $\theta\in\hbox{Irrep}(\Gamma_2)$  with $Z_{L_c(\theta;k)}(\nu)$ vanishes of degree 2. Then $L_c(\theta;\CC)$ is a finite dimensional representation of $^sH_c(\Gamma_2)$, and $\theta$ is the only irreducible representation of $\Gamma_2$ such that $Z_{L_c(\theta;k)}(\nu)$ vanishes on $H$.
\end{lemma}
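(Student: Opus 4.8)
The plan is to read both assertions off the explicit list of central-charge polynomials $\{Z_\tau\mid\tau\in\hbox{Irrep}(\Gamma_2)\}$ produced in Section~\ref{sec: central_charge} from the Chern-character computation of Proposition~\ref{prop: chern char}. Recall that for $\Gamma_2=(\ZZ/l\ZZ)^2\rtimes\mathfrak{S}_2$ the irreducibles split into the one-dimensional ones --- for each $i\in\ZZ/l\ZZ$, two of them, on which $(\ZZ/l\ZZ)^2$ acts through the character $(i,i)$, the one on which $\mathfrak{S}_2$ acts by the sign being the representation $\tau(i)$ of the introduction --- and the two-dimensional ones $V_{\{i,j\}}$, one for each unordered pair $\{i,j\}$ with $i\ne j$; each $Z_\tau$ is a polynomial of total degree $\le n=2$ on $H^2(\Hilb^2_{\ZZ_l};\RR)$. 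The first step is the observation, visible already from the Poincar\'e-series formula for $L_m(\tau(i))$ stated in the introduction --- which, after the rescaling $m\mapsto mp$ and the recipe \eqref{eq:Z}, yields $Z_{\tau(i)}(\nu)=\tfrac{1}{2l^{2}}\,\ell_i(\nu)^{2}$ with $\ell_i(\nu)=2\nu_0+l\,\nu_{i+1}+1$ affine-linear --- that $Z_{\tau(i)}$ is a perfect square of an affine-linear form, whereas the central charges of the remaining irreducibles (the trivial-type one-dimensionals and the $V_{\{i,j\}}$) are, from the formulas of Section~\ref{sec: central_charge}, of a shape --- a form of degree $\le1$, a product of two distinct affine forms, or an irreducible quadric --- whose order of vanishing along any affine hyperplane is at most $1$. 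Hence the hypothesis that $Z_{L_c(\theta;k)}$ vanishes to order $2$ along $H$ forces $\theta=\tau(i)$ for some $i$, with $H=\{\ell_i=0\}$.

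Granting this, the finite-dimensionality follows. For $\theta=\tau(i)$ and $c\in H$, the order of vanishing of $Z_\theta$ along $H$ equals $n=2=\dim\hh$; through the comparison of Section~\ref{sec: compare_char0} and Proposition~\ref{thm: lift_char0} (with Lemma~\ref{lem: lift_sph} used to pass between characteristic $p$ and characteristic $0$) this identifies the codimension of $\supp L_c(\theta;\CC)$ with $\dim\hh$, so that $\supp L_c(\theta;\CC)=\{0\}$ and $L_c(\theta;\CC)$ is finite-dimensional. Equivalently, this is the statement --- part of the classification of finite-dimensional irreducibles in category $\mathcal O$ for cyclotomic rational Cherednik algebras --- that $\tau(i)$ labels a finite-dimensional irreducible on the hyperplane $\{\ell_i=0\}$.

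For the uniqueness assertion it remains to exclude any $\tau\ne\tau(i)$ with $Z_{L_c(\tau;k)}$ vanishing on $H=\{\ell_i=0\}$; by the end of the first paragraph it suffices to check that the affine-linear form $\ell_i$ divides no other $Z_\tau$. This is a finite verification against the explicit formulas: the affine hyperplanes occurring in the zero loci of the $Z_\tau$ are those governed by the affine root combinatorics of the cyclic quiver $\widetilde{A}_{l-1}$ underlying Proposition~\ref{prop: chern char}, and one reads off that $\{\ell_i=0\}$ is not among the ones attached to a trivial-type one-dimensional representation or to a $V_{\{j,k\}}$. I expect the genuine work --- and the reason the hypotheses $n=2$ and $\Gamma_1=\ZZ/l\ZZ$ cannot be relaxed --- to be concentrated in this last check and in the degree-and-factorization bookkeeping for the two-dimensional $V_{\{i,j\}}$: for larger $n$ the induction/restriction recursion of the introduction no longer collapses to a closed form, so there is no reason for the order-$2$ vanishing locus to pick out a unique representation, nor even for the central charges to factor so neatly.
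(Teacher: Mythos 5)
Your overall strategy --- read both assertions off the explicit central-charge polynomials of Section~\ref{sec: central_charge} --- is exactly the paper's: the paper offers no further argument beyond this verification. But your case analysis in the first paragraph is wrong, and the error propagates into both conclusions. By the paper's own formulas, \emph{every} one-dimensional representation of $\Gamma_2$, not just the sign-type ones $\tau(i)$, has central charge equal to one half of a perfect square of an affine-linear form: besides $Z_{\sigma,i}=\frac12\bigl((n_0+n_i)-(1+1/l)\bigr)^2$ one also has $Z_0=\frac12\bigl((n_0+\sum_j n_j)-(3-1/l)\bigr)^2$ and $Z_i=\frac12\bigl((n_i-n_0)+(1-1/l)\bigr)^2$; in the $B_2$ computation all four of $Z_0,\dots,Z_3$ are squares and only the two-dimensional $Z_4$ is an irreducible quadric. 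So your assertion that the ``trivial-type one-dimensionals'' vanish to order at most one along any hyperplane contradicts the very formulas you cite, and the walls attached to those representations are genuine instances of the hypothesis of the lemma. For them you verify neither the finite-dimensionality of $L_c(\theta;\CC)$ nor the uniqueness of $\theta$. The correct bookkeeping is that order-two vanishing singles out exactly the $2l$ one-dimensional labels, each on its own affine hyperplane, and the uniqueness check must be run against all of these hyperplanes (it does succeed: the reducible two-dimensional charges factor into the distinct forms $n_i-1/l$ and $\sum_j n_j-(2-1/l)$, and the ones containing $-\frac l2(n_0-1)^2$ are irreducible quadrics, so none of them vanishes identically on any of the $2l$ walls, nor do two distinct squares share a wall).

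Separately, your route to finite-dimensionality is circular relative to the paper: Proposition~\ref{thm: lift_char0} (together with Lemma~\ref{lem:char0-charp-max2} and Theorem~\ref{thm: main}) is proved in Section~\ref{sec: compare_char0} \emph{using} the present lemma, so it cannot be taken as input here. You need an independent argument --- for instance the known classification of finite-dimensional simple modules over the cyclotomic Cherednik algebra of $G(l,1,2)$ at parameters on these hyperplanes, which you only gesture at --- and it must cover all $2l$ one-dimensional labels, not only the $\tau(i)$. A smaller point: the Poincar\'e-series formula quoted from the introduction is only valid for parameters in the fundamental alcove, so extracting the wall $\{\ell_i=0\}$ from it for arbitrary alcoves needs justification; the paper instead obtains all the $Z_\tau$ uniformly from the Chern-character computation of Proposition~\ref{prop: chern char}.
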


If $L_c(\theta,\CC)$ is a finite dimensional irreducible representation of $^sH_c(\Gamma_2)$ with $T_{c\to c_0}(L_c(\theta;\CC))=0$, then for $k$ with large enough characteristic, $L_c(\theta;R)_k$ supported on $0\in \Aff^{4(1)}/{\Gamma_2}$. Therefore we have $L_c(\theta;R)_k\cong \overline{L_c(\theta;R)_k}$, and $\overline{L_c(\theta;R)_k}$ is an irreducible representation  of the same dimension as $\dim_{\CC}L_c(\theta;\CC)$.
By definition of the central charge polynomial, $Z_\theta(\nu)$ vanishes of degree 2 on $H$. In particular, taking into account of Lemma~\ref{lem:char0-charp-max}, we have the following lemma.

\begin{lemma}\label{lem:char0-charp-max2}
The  irreducible representation  $L_c(\theta,\CC)$ of $^sH_c(\Gamma_2)$ is a finite dimensional with $T_{c\to c_0}(L_c(\theta;\CC))=0$ for any $c_0\in H$ if and only if $Z_\theta(\nu)$ vanishes of degree 2 on $H$.\end{lemma}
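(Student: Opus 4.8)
The plan is to treat the two implications separately. The ``only if'' direction is essentially the discussion immediately preceding the statement, while the ``if'' direction rests on Lemma~\ref{lem:char0-charp-max} together with one extra observation about translation to a wall.

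For the ``only if'' direction, suppose $L_c(\theta;\CC)$ is finite dimensional and $T_{c\to c_0}(L_c(\theta;\CC))=0$ for every $c_0\in H$. As explained above, for $p\gg 0$ the reduction $L_c(\theta;R)_k$ is then set-theoretically supported at $0\in\Aff^{4(1)}/\Gamma_2$, so it coincides with its central reduction $\overline{L_c(\theta;R)_k}$, which is irreducible of $k$-dimension $\dim_\CC L_c(\theta;\CC)$. Two things remain to be extracted. First, since $L_c(\theta;\CC)$ has support of codimension $2$ in $\hh/\Gamma_2$, Proposition~\ref{thm: lift_char0} gives $\deg Z_\theta=2$. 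Second, the hypothesis $T_{c\to c_0}(L_c(\theta;\CC))=0$ persists for $p\gg0$, and feeding it into the dimension polynomial of Lemma~\ref{lem: dim_poly}, whose leading term is $Z_\theta$, forces that polynomial to vanish identically along $H$; a polynomial of degree $2$ vanishing on a hyperplane $H$ is a scalar multiple of the square of a defining linear form of $H$, which is precisely the assertion that $Z_\theta$ vanishes of degree $2$ on $H$.

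For the ``if'' direction, assume $Z_\theta$ vanishes of degree $2$ on $H$; the same holds for $Z_{L_c(\theta;k)}$, so Lemma~\ref{lem:char0-charp-max} applies and yields directly that $L_c(\theta;\CC)$ is finite dimensional, and that $\theta$ is the unique irreducible $\Gamma_2$-representation whose central charge vanishes on $H$. It remains to show $T_{c\to c_0}(L_c(\theta;\CC))=0$ for every $c_0\in H$. I would argue by contradiction: were $M:=T_{c\to c_0}(L_c(\theta;\CC))$ nonzero, it would be a nonzero finite-dimensional $\null^sH_{c_0}(\Gamma_2)$-module, so $L_c(\theta;\CC)$ would not be annihilated on the wall; comparing dimension polynomials near $c_0$, after rescaling by $p$, would then show that the dimension polynomial of $L_c(\theta)$, equivalently $Z_\theta$, fails to vanish to order $2$ along $H$, contradicting the hypothesis. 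Equivalently, one can phrase this through the explicit central charges of Section~\ref{sec: central_charge}: combined with the uniqueness from Lemma~\ref{lem:char0-charp-max}, the wall-crossing across $H$ is of $\PP^2$-semi-reflection type, and a finite-dimensional simple which is the sole participant in such a crossing is annihilated by translation to $H$.

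The main obstacle is exactly this last point --- making rigorous the equivalence between ``$L_c(\theta;\CC)$ is killed by $T_{c\to c_0}$ on $H$'' and ``$Z_\theta$ vanishes on $H$''. The surrounding bookkeeping (codimensions of supports, Hirzebruch--Riemann--Roch via Lemma~\ref{lem: dim_poly}, and Proposition~\ref{thm: lift_char0}) is routine, but this equivalence genuinely uses the $\Gm$-equivariant reduction-mod-$p$ comparison together with the explicit shape of the central charge polynomials available only in the case $n=2$, $\Gamma_1=\ZZ/l\ZZ$ --- which is exactly the hypothesis of Lemma~\ref{lem:char0-charp-max}, and hence of the present statement.
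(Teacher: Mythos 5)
Your overall plan follows the same skeleton as the paper (reduction mod $p$ for one direction, Lemma~\ref{lem:char0-charp-max} for the other), but two steps do not hold up. In the ``only if'' direction, the closing argument is false: a degree-two polynomial vanishing on a hyperplane $H$ is merely divisible by a defining linear form of $H$, not necessarily proportional to its square. The explicit central charges of Section~\ref{sec: central_charge} show this is not a pedantic point: $Z_{0,i}$ and $Z_{j,i}$ (for non-adjacent $i,j$) are products of two distinct linear forms and vanish only to order one on each of the two hyperplanes they meet. So ``$\deg Z_\theta=2$ and $Z_\theta|_H=0$'' does not yield order-two vanishing. What the paper actually uses here is the uniqueness statement: by Lemma~\ref{lem: lift_sph}, $\theta$ is the only irreducible whose central charge vanishes on $H$, and this, against the explicit list of central charge polynomials (on each wall either a single $Z$ vanishes to order two, or at least two $Z$'s vanish to order one), forces the order-two vanishing. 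Relatedly, your appeal to Proposition~\ref{thm: lift_char0} to get $\deg Z_\theta=2$ is circular: in the paper that proposition is proved afterwards, its Case 1 being exactly the claim at issue, and its Case 2 explicitly citing Lemma~\ref{lem:char0-charp-max2}.

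In the ``if'' direction, the remaining assertion $T_{c\to c_0}(L_c(\theta;\CC))=0$ is not delivered by your contradiction argument: a nonzero finite-dimensional $T_{c\to c_0}(L_c(\theta;\CC))$ has $\CC$-dimension bounded independently of $p$, so after the rescaling by $p^{-2}$ in the definition of $Z$ it contributes nothing to the leading term and cannot obstruct order-two vanishing of $Z_\theta$ along $H$; the comparison of dimension polynomials via Lemma~\ref{lem: dim_poly} therefore gives no contradiction. Your alternative phrasing (sole participant in a $\PP^2$-type wall-crossing must be killed by translation to the wall) is the right picture but is asserted rather than proved, and you acknowledge this as the obstacle. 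The paper's route to both halves is ultimately the explicit case check in the $n=2$, $\Gamma_1=\ZZ/l\ZZ$ situation encoded in Lemma~\ref{lem:char0-charp-max} together with Lemma~\ref{lem: lift_sph}; as written, your proposal replaces that input with a factorization claim that is false and a rescaling estimate that cannot detect the finite-dimensional module, so the equivalence is not established.
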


Now we are ready to prove Theorem~\ref{thm: main} and Proposition~\ref{thm: lift_char0} in the case when $n=2$. Hopefully part of the proof will generalize to a more general set-up.

There are two cases.

Case 1:
In category $\sO_c$ there is a finite dimensional irreducible object $L_c(\theta;\CC)$ for some $\theta\in\hbox{Irrep}(\Gamma_2)$ such that $T_{c\to c_0}(L_c(\theta;\CC))=0$. In this case, by Lemma~\ref{lem: lift_sph}, the only $\tau \in \hbox{Irrep}(\Gamma_2)$  such that $Z_\tau(\nu)$ that vanishes on $H$ is $\tau=\theta$. This in turn forces $Z_\theta(\nu)$ to have vanishing order 2 on the wall $H$. This proves Proposition~\ref{thm: lift_char0} in this case.

In characteristic zero, $T_{c\to c\rq{}}(L_c(\theta;\CC))$ in concentrated in degree 2 as complex of $^sH_{c\rq{}}(\Gamma_2)$-modules. Therefore, over a field $k$ with characteristic $p>>0$, the complex $T_{c\to c\rq{}}(\overline{L_c(\theta;R)_k})$ has non-trivial cohomology in degree 2, and all cohomologies in degree more than 2. Moreover, on the quotient $\Mod_0\null^sH_c(\Gamma_2)_k/\angl{\overline{L_c(\theta;R)_k}}$ the functor $T_{c\to c\rq{}}$ induces a Morita equivalence, which is fits into a commutative diagram \[\xymatrix{\Mod_0\null^sH_c(\Gamma_2)_k/\angl{\overline{L_c(\theta;R)_k}}\ar[dr]_{T_{c\to c_0}}\ar[rr]^{T_{c\to c\rq{}}}&&\Mod_0\null^sH_{c\rq{}}(\Gamma_2)_k/\angl{\overline{L_{c\rq{}}(\theta;R)_k}}\ar[dl]^{T_{c\rq{}\to c_0}}\\
&\Mod_0\null^sH_{c_0}(\Gamma_2)_k&}.\] So, Theorem~\ref{thm: main} is true in this case.

Case 2:
For any $\tau\in\hbox{Irrep}(\Gamma_2)$ with $T_{c\to c_0}(L_c(\tau;\CC))=0$, the corresponding irreducible object $L_c(\theta;\CC)$ is infinite dimensional. In this case, none of such $\tau$ can have $L_c(\tau;\CC)$ being finite dimensional. This means, for any such $\tau$, the codimension of support of $L_c(\tau;\CC)$ has to be 1. This in turn, by Lemma~\ref{lem:char0-charp-max2}, implies that $Z_\tau(\nu)$ vanishes on $H$ with order 1. Then Lemma~\ref{lem: lift_sph} yields $\overline{L_{c_0}(\tau;R)_k}$ is aspherical. Therefore, $Z_{\overline{L_c(\tau;R)_k}}(\nu)$ vanishes on degree 1 on $H$. This proves Proposition~\ref{thm: lift_char0} in this case.

In order to finish the proof, it only remains to show that for such $\tau$, $T_{c\to c'}(\overline{L_c(\tau;R)_k})$ as a complex in $\Mod_0\null ^sH_{c'}$ is concentrated in degree 1.
Note that $T_{c\to c'}(L_c{\tau;\CC})$ has homological degree no more than 1, therefore so is $T_{c\to c'}(\overline{L_c(\tau;R)_k})$.
Then similar to the previous case, the commutativity of $T_{c\to c_0}=T_{c'\to c_0}\circ T_{c\to c'}$ implies that in homological degree zero $T_{c\to c'}(\overline{L_c(\tau;R)_k})$ vanishes. Also, $T_{c\to c'}$ induces a Morita equivalence when passing to $\Mod_0\null^sH_c(\Gamma_2)_k/\Mod_0\null^sH_c(\Gamma_2)_k^{\leq 1}$. This finishes the proof.

\section{Dimensions of irreducible objects}
Recall that $K_0(\Mod\null  ^{s}\calH_c(\Gamma_n))\cong K_0(X)\cong K_0(\Gamma_n)$, and the irreducible objects in $\Mod_0\null  ^{s}\calH_c$ are labeled by the irreducible representations of $\Gamma$. Recall that for an irreducible representation $\tau$ of $\Gamma$, the corresponding irreducible object in $\Mod_0\null  ^{s}\calH_c(\Gamma_n)$ will be denoted by $L_c(\tau;p)$. As has been seen in Lemma~\ref{lem: dim_poly}, $\dim_k(L_c(\tau;p))$ is a polynomial in $c$, as long as $c$ varies in an alcove in the affine hyperplane arrangement.

\begin{problem}\label{prob: poincare}
Assume $p$ is large enough, compute the graded characters of the irreducible representation $L_c(\tau;p)$.
\end{problem}

A weaker version of this Problem is: compute the Poincar\'{e} polynomial of the irreducible object $L_c(\tau;p)$ for regular parameter $c$.

Note that the Poincar\'{e} polynomial specializes to the dimension polynomial $\dim_k(L_c(\tau;p))$.
When the parameter $c$ lies in the alcove containing $0$, the  irreducible modules $L_c(\tau;p)$ are quotients of the Verma modules, which are the $\tau$-isotypical components in the space of \textsl{$c$-quasi-invariant polynomials}. The Poincar\'e polynomials of the Verma modules have been calculated by Berest, Chalykh, Felder, and Veselov in \cite{BC} and \cite{FV}.

The ring $K:=\oplus_{n\geq 0}K_0(\Gamma_n)$, endowed with the parabolic induction and restriction functors of finite group representations, is a Hopf algebra. The existence of parabolic induction and restriction functors in \cite{BE} show that in order to know the Poincar\'{e} polynomials of any $L_c(\tau;p)$, it suffices to calculate the Poincar\'{e} polynomials of the irreducible $\Gamma_n$-representations which are algebraic generators.

For $\Gamma_1=\bbZ/l\bbZ$ and $c$ lying in the alcove containing $0$, for a particular set of irreducible modules which generates $K$ multiplicatively, we construct the resolutions of those irreducible modules by Verma modules in this section. As a consequence, for such $\tau$, the Poincar\'{e} polynomials of $L_c(\tau;p)$ will be obtained.

\subsection{Trivial representation of $\mathfrak{S}_n$}

We work over a field of characteristic $p>0$.
Let $\mathfrak{h}$ be the reflection representation of $\mathfrak{S}_n$.
Let $m$ be an integer and let $Q_m(\mathfrak{h}):=Q_m$ be the $m$-quasi-invariants on $\mathfrak{h}^*$. Let $\widetilde{Q_m}$ be the quasi-invariants on the Frobenius neighborhood of the origin. 
The space $\widetilde{Q_m}$ carries actions of $\fS_n$ and $eH_me$ which satisfies the Schur-Weyl duality. 
In other words, the multiplicity space on $\widetilde{Q_m}$ corresponds to irreducible representations of $\mathfrak{S}_n$ gives the irreducible representations of the spherical rational Cherednik algebra.

We want to calculate the Poincar\'e series of the isotypical component on $\widetilde{Q_m}$ corresponds to the trivial representation. A resolution of $\widetilde{Q_m}$ is given by the Koszul complex
\[\cdots\to Q_m\otimes \wedge^2\mathfrak{h}^{(1)}\to Q_m\otimes \mathfrak{h}^{(1)}\to Q_m.\]
Note that here $\mathfrak{h}^{(1)}$ has degree $p$.
We decompose $Q_m$ according to the $\mathfrak{S}_n$-$eH_me$ bimodule,
\[ Q_m=\oplus_{\tau\in\widehat{\mathfrak{S}_n}}\tau^*\otimes M_m(\tau)e.\]
Then \cite{BEG} and \cite{FV} give the Poincar\'e series of $M_m(\tau)e$.
\[P_t(M_m(\tau)e)=t^{\xi_m(\tau)}{{K_\tau(t)}\over{\prod_{i=2}^n(1-t^i)}},\]
where ${K_\tau(t)}$ is the Poincar\'e series of the $\tau$-component in the spaces of harmonic polynomials, and $\xi_m(\tau)$ is the integer by which the element $\sum_{s\hbox{ a reflection in }\mathfrak{S}_n}(1-s)$ acts on $\tau$.

It is a classical formula that \[K_\tau(t)=(\prod_{k=1}^n(1-t^k))(\prod_{(i,j)\in\tau}{{t^{l(i,j)}}\over{1-t^{h(i,j)}}}),\] where $l(i,j)$ is the leg length of the box $(i,j)$ in the partition $\tau$, and $h(i,j)$ is the hook length.

So the Poincar\'e series of the irreducible representation of $eH_me$ corresponding to the trivial representation of $\mathfrak{S}_n$ is \[\sum_{s=1}^n(-1)^{s-1}P_t(Q_m\otimes\wedge^{s-1}\mathfrak{h}^{(1)})^{\mathfrak{S}_n}.\]
This is the same as
\begin{eqnarray*}
&&\sum_{s=1}^n(-1)^{s-1}P_t(Q_m\otimes\wedge^{s-1}\mathfrak{h}^{(1)})^{\mathfrak{S}_n}\\
&=&\sum_{s=1}^n(-1)^{s-1}t^{(s-1)(mn+p)+\binom{s}{2}}{{1}\over{\prod_{i=1}^{s-1}(1-t^i)\prod_{i=1}^{n-s}(1-t^i)}}{{1-t}\over{1-t^n}}\\
&=&{{1-t}\over{1-t^n}}\sum_{s=0}^{n-1}{{(-1)^st^{s(mn+p)+\binom{s+1}{2}}}\over{\prod_{i=1}^{s}(1-t^i)\prod_{i=1}^{n-1-s}(1-t^i)}}.
\end{eqnarray*}
Now let us briefly recall an identity proved in \cite{KC}. Define $[n]:=\frac{t^n-1}{t-1}$, $[n!]:=[n][n-1]\cdots[1]$, and $(x+a)^n_t:=(x+a)(x+ta)(x+t^2a)\cdots(x+t^{n-1}a)$. Then, we have the following identity \[(x+a)_t^n=\sum_{j=0}^n{\frac{[n]!}{[j]![n-j]!}}t^{\binom{j}{2}}a^jx^{n-j}.\]
Using notations in \cite{KC}, \[{{1-t}\over{1-t^n}}\sum_{s=0}^{n-1}{{(-1)^st^{s(mn+p)+\binom{s+1}{2}}}\over{\prod_{i=1}^{s}(1-t^i)\prod_{i=1}^{n-1-s}(1-t^i)}}={\frac{1-t}{\prod_{i=1}^n(1-t^i)}}\sum_{s=0}^{n-1}{\frac{[n-1]!}{[s]![n-1-s]!}}t^{\binom{s+1}{2}}(-t^{mn+p})^s.\]
According to the identity from \cite{KC} recalled above, it is equal to \[ {\frac{1-t}{\prod_{i=1}^n(1-t^i)}} (1+(-t^{nm+p}))^{n-1}_t={{1-t}\over{1-t^n}}\prod_{i=1}^{n-1}{{1-t^{mn+p+i}}\over{1-t^i}}.\]

To summarize, we have the following Lemma.
\begin{lemma}
The Poincar\'e series of the irreducible representation of $eH_me$ corresponding to the trivial representation of $\mathfrak{S}_n$ is
\[{{1-t}\over{1-t^n}}\prod_{i=1}^{n-1}{{1-t^{mn+p+i}}\over{1-t^i}}.\]
\end{lemma}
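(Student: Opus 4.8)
The plan is to read off the Poincar\'e series from the Koszul resolution of $\widetilde{Q_m}$ by passing to $\mathfrak{S}_n$-invariants, substitute the known Poincar\'e series of the multiplicity spaces $M_m(\tau)e$, and then sum the resulting alternating series with a $t$-analogue of the binomial theorem. First I would use the Schur--Weyl duality on $\widetilde{Q_m}$: as an $\mathfrak{S}_n$--$eH_me$ bimodule it decomposes with multiplicity spaces the irreducible $eH_me$-modules, so the module attached to the trivial $\mathfrak{S}_n$-representation is the invariant part $\widetilde{Q_m}^{\mathfrak{S}_n}$, whose graded dimension we must compute. Since $p\gg 0$ (so $p\nmid n!$) the functor $(-)^{\mathfrak{S}_n}$ is exact, so applying it to the Koszul resolution $\cdots\to Q_m\otimes\wedge^2\mathfrak{h}^{(1)}\to Q_m\otimes\mathfrak{h}^{(1)}\to Q_m$ gives
\[P_t\big(\widetilde{Q_m}^{\mathfrak{S}_n}\big)=\sum_{s\geq 1}(-1)^{s-1}\,P_t\big((Q_m\otimes\wedge^{s-1}\mathfrak{h}^{(1)})^{\mathfrak{S}_n}\big),\]
keeping in mind that $\mathfrak{h}^{(1)}$ sits in internal degree $p$, so $\wedge^{s-1}\mathfrak{h}^{(1)}$ forces an overall degree shift by $p(s-1)$.

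Next I would evaluate each term. The key point is that $\wedge^{s-1}\mathfrak{h}$ of the reflection representation is the irreducible hook representation $(n-s+1,1^{s-1})$, so from $Q_m=\oplus_\tau\tau^*\otimes M_m(\tau)e$ one reads off $(Q_m\otimes\wedge^{s-1}\mathfrak{h}^{(1)})^{\mathfrak{S}_n}\cong M_m\big((n-s+1,1^{s-1})\big)e$, shifted by $p(s-1)$. Feeding in the formula $P_t(M_m(\tau)e)=t^{\xi_m(\tau)}K_\tau(t)/\prod_{i=2}^n(1-t^i)$ from \cite{BEG} and \cite{FV} together with the hook-length/hook-content expression for $K_\tau$, and computing both statistics on hook partitions — the scalar of $\sum_{\text{refl.}}(1-s)$ on $(n-s+1,1^{s-1})$ comes out to $n(s-1)$, while the leg-length sum contributes $t^{\binom{s}{2}}$ — each summand collapses to $t^{(s-1)(mn+p)+\binom{s}{2}}\cdot\frac{1-t}{(1-t^n)\prod_{i=1}^{s-1}(1-t^i)\prod_{i=1}^{n-s}(1-t^i)}$. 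I expect this step to be the most bookkeeping-heavy: one must track the interplay of $\xi_m$, the content statistic, and the Frobenius shift, and check that all factors cancel down to exactly this expression; but it is routine.

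Finally, after reindexing, the alternating sum becomes
\[\frac{1-t}{\prod_{i=1}^n(1-t^i)}\sum_{s=0}^{n-1}\frac{[n-1]!}{[s]!\,[n-1-s]!}\,t^{\binom{s+1}{2}}(-t^{mn+p})^s.\]
Writing $t^{\binom{s+1}{2}}(-t^{mn+p})^s=t^{\binom{s}{2}}(-t^{mn+p+1})^s$ and applying the $t$-binomial identity $(x+a)^N_t=\sum_{j}\frac{[N]!}{[j]!\,[N-j]!}t^{\binom{j}{2}}a^j x^{N-j}$ of \cite{KC} with $N=n-1$, $x=1$, $a=-t^{mn+p+1}$ identifies the inner sum with $(1-t^{mn+p+1})^{n-1}_t=\prod_{i=1}^{n-1}(1-t^{mn+p+i})$, which yields the claimed formula $\frac{1-t}{1-t^n}\prod_{i=1}^{n-1}\frac{1-t^{mn+p+i}}{1-t^i}$. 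The genuine content is this last identification; everything before it is structural (Schur--Weyl plus the Koszul complex) or a direct substitution of known formulas.
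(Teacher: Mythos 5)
Your argument is the same as the paper's: take $\mathfrak{S}_n$-invariants of the Koszul resolution, identify the summands via the bimodule decomposition of $Q_m$ and the Berest--Etingof--Ginzburg/Felder--Veselov formula with the hook-length expression for $K_\tau$, and sum with the $t$-binomial identity of Kac--Cheung. Your bookkeeping (the hook representation $\wedge^{s-1}\mathfrak{h}$, $\xi_m=mn(s-1)$, leg-sum $\binom{s}{2}$, and the substitution $a=-t^{mn+p+1}$ giving $\prod_{i=1}^{n-1}(1-t^{mn+p+i})$) reproduces the paper's computation correctly.
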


\subsection{Characters of the wreath product $\Gamma_n=(\ZZ/l)^n\rtimes\mathfrak{S}_n$}
Recall that the irreducible representations of $\Gamma_n$ are in one-to-one correspondence with $l$-partitions of $n$, i.e., $\boldsymbol{\lambda}=(\lambda^1,\cdots,\lambda^l)$ where $\lambda^i$'s are partitions such that $\sum_{i=1}^l|\lambda^i|=n$.
More explicitly, for an $l$-partition $\boldsymbol{\lambda}=(\lambda^1,\cdots,\lambda^l)$, let $l_r$ be the numbder of rows in $\lambda^r$, and let $I_\lambda(r)=\{\sum_{i=1}^{r-1}|\lambda^i|+1,\sum_{i=1}^{r-1}|\lambda^i|+2,\cdots,\sum_{i=1}^{r}|\lambda^i|\}$. Let $\mathfrak{S}_\lambda=\mathfrak{S}_{I_\lambda(1)}\times\cdots\mathfrak{S}_{I_\lambda(l)}$. Then, the $l$-partition $\boldsymbol{\lambda}$ corresponds to the irreducible representation of $\Gamma_n$ constructed as $\Ind_{(\ZZ/l\ZZ)^{\{1,2,\dots,n\}}\rtimes\mathfrak{S}_\lambda}^{\Gamma_n}(\phi^1\cdot\lambda^1\otimes\cdots\otimes\phi^l\cdot\lambda^l)$, where $\phi^r$ is the character $\det^r$ of $(\ZZ/l\ZZ)^{I_\lambda(r)}$. (We follow the convention in \cite{GL}.)

In this section let $\mathfrak{h}\cong\CC^n$ be the reflection representation of $\Gamma_n$.
Let $m$ be an integer valued function on the set of reflections in $\Gamma_n$, constant on congugacy classes, and let $Q_m(\mathfrak{h}) := Q_m$ be the $m$-quasi-invariants on $\mathfrak{h}$. Let $\widetilde{Q_m}$
be the quasi-invariants on the Frobenius neighborhood of the origin. Again, the multiplicity space $\widetilde{Q_m}(\tau)$ on $\widetilde{Q_m}$ corresponds
to irreducible representations $\tau$ of $\Gamma_n$ gives the irreducible representations of the spherical
rational Cherednik algebra.
Section~8.2 of \cite{BC} gives the Poincar\'{e} series of $Q_m(\tau')e$ as \[P_t(Q_m(\tau)e)=t^{\xi_m(\tau')}\cdot P_t((k[\mathfrak{h}]\otimes (\tau')^*)\Gamma_n),\]
where $\tau':=kz_{-k}(\tau)$ is the $kz$-twist defined in \cite{O}.

Let $\tau(i)$ be the $l$-partition $\boldsymbol{\lambda}$ whose $\lambda^i=(n)$ and $\lambda^j$ is empty for all $j\neq i$. We want to calculate the Poincar\'e series of the isotypical component on $\widetilde{Q_m}$ corresponds to the $\tau(i)$. A resolution of $\widetilde{Q_m}$ is given by the Koszul complex
\[\cdots\to Q_m\otimes \wedge^2\mathfrak{h}^{(1)}\to Q_m\otimes \mathfrak{h}^{(1)}\to Q_m.\]

The Poincar\'e series of the irreducible representation of $eH_me$ corresponding to the representation $\tau(i)$ of $Gamma_n$ is \[\sum_{s=0}^n(-1)^sP_t((Q_m\otimes\wedge^s\mathfrak{h}^{(1)}\otimes\tau(i)^*)^{\Gamma_n}).\]
The representation $\wedge^s\mathfrak{h}$ corresponds to the $l$-partition $((1)^s,\emptyset,\cdots,\emptyset,(n-s))$.
Hence, $\wedge^s\mathfrak{h}\otimes\tau(i)^*$ corresponds to the $l$-partition $\boldsymbol{\lambda}$ whose $i$-th component is $(n-s)$ and $i+1$-th component is $(1)^s$. By the adjoint of $\Ind$ and $\Res$,  we have the following sequence of isomorphisms of graded modules $\Hom_{\Gamma_n}(\boldsymbol{\lambda}, k[\mathfrak{h}])\cong\Hom_{(\ZZ/l\ZZ)_s\times(\ZZ/l\ZZ)_{n-s}}(\phi^i\cdot(n-s)\otimes\phi^{i+1}\cdot((1)^s),\Res k[\mathfrak{h}])\cong\Hom_{\mathfrak{S}_{n-s}}((n-s), \otimes_{q=1}^{n-s}k[x_q^l]x_q^{i} )\otimes\Hom_{\mathfrak{S}_s}(((1)^s),\otimes_{q=1}^sk[x_q^l]x_q^{i+1})$.
Now, each individual Poincar\'e polynomial can be calculated using the hook-length formula. Note that on this $\boldsymbol{\lambda}$, the value $\xi_m$ is $s(nm_0+lm_{i+1})$.

\begin{eqnarray*}
&&\sum_{s=0}^n(-1)^sP_t((Q_m\otimes\wedge^s\mathfrak{h}^{(1)}\otimes\tau(i)^*)^{\Gamma_n})\\
&=&\sum_{s=0}^n(-1)^st^{\xi_m(\boldsymbol{\lambda})+sp}t^{s(i+1)}\prod_{k=1}^s{\frac{t^{l(k-1)}}{1-t^{lk}}}t^{(n-s)i}\prod_{k=1}^{n-s}{\frac{1}{1-t^{lk}}}\\
&=&{\frac{t^{ni}}{\prod_{k=1}^n(1-t^{kl})}}\sum_{s=0}^n(-t^{m_0n+p+1+lm_{i+1}})^st^{l\binom{s}{2}}{\frac{\prod_{k=1}^n(1-t^{kl})}{\prod_{k=1}^s(1-t^{kl})\prod_{k=1}^{n-s}(1-t^{kl})}}\\
&=&{\frac{t^{ni}\prod_{k=0}^{n-1}(1-t^{lk+m_0n+p+1+lm_{i+1}})}{\prod_{k=1}^n(1-t^{kl})}}.
\end{eqnarray*}

Summarizing the calculation above, we have the following Proposition.
\begin{prop}
Let $\tau(i)$ be the $l$-partition $\boldsymbol{\lambda}$ whose $\lambda^i=(n)$ and $\lambda^j$ is empty for all $j\neq i$. The Poincar\'e series of the irreducible representation of $eH_me$ corresponding to $\tau(i)$ of $Gamma_n$ is
\[{\frac{t^{ni}\prod_{k=0}^{n-1}(1-t^{lk+m_0n+p+1+lm_{i+1}})}{\prod_{k=1}^n(1-t^{kl})}}.\]
\end{prop}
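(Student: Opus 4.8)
The plan is to compute the $\tau(i)$-isotypical Poincar\'e series directly, following the same scheme used just above for the trivial $\mathfrak{S}_n$-representation, and then to collapse the resulting alternating sum with the $q$-binomial theorem. \emph{Step 1 (reduction to an alternating sum).} I would start from the Koszul resolution $\cdots \to Q_m \otimes \wedge^2 \mathfrak{h}^{(1)} \to Q_m \otimes \mathfrak{h}^{(1)} \to Q_m$ of $\widetilde{Q_m}$, remembering that $\wedge^s\mathfrak{h}^{(1)}$ sits in internal degree $sp$. Applying the exact functor $(- \otimes \tau(i)^*)^{\Gamma_n}$ (which extracts the multiplicity space of $\tau(i)$) to this complex, the Poincar\'e series of $L_m(\tau(i))$ becomes $\sum_{s=0}^n (-1)^s P_t\bigl((Q_m \otimes \wedge^s \mathfrak{h}^{(1)} \otimes \tau(i)^*)^{\Gamma_n}\bigr)$. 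Each summand is rewritten via the formula of \cite{BC}, \S 8.2, $P_t(Q_m(\tau')e) = t^{\xi_m(\tau')}\, P_t\bigl((k[\mathfrak{h}] \otimes (\tau')^*)^{\Gamma_n}\bigr)$, where $\tau'$ is the $kz$-twist of \cite{O} and $\xi_m$ is the scalar by which $\sum_{\text{refl.}}(1-s)$ acts; so the problem reduces to graded multiplicities of certain $\Gamma_n$-irreducibles in $k[\mathfrak{h}]$, up to explicit powers of $t$.

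\emph{Step 2 (identifying the representations and their graded multiplicities).} The representation $\wedge^s\mathfrak{h}$ is the $\Gamma_n$-irreducible labelled by the $l$-partition $((1)^s,\emptyset,\dots,\emptyset,(n-s))$, so $\wedge^s\mathfrak{h}\otimes\tau(i)^*$ is labelled by the $l$-partition $\boldsymbol{\lambda}$ with $\lambda^i=(n-s)$, $\lambda^{i+1}=(1)^s$ and all other components empty; on this $\boldsymbol{\lambda}$ one finds $\xi_m(\boldsymbol{\lambda})=s(nm_0+lm_{i+1})$. Using the description $\boldsymbol{\lambda}\cong\Ind_{(\ZZ/l)^n\rtimes(\mathfrak{S}_{n-s}\times\mathfrak{S}_s)}^{\Gamma_n}(\phi^i\cdot(n-s)\otimes\phi^{i+1}\cdot(1)^s)$ from the \cite{GL}-convention together with Frobenius reciprocity, $\Hom_{\Gamma_n}(\boldsymbol{\lambda},k[\mathfrak{h}])$ factors as $\Hom_{\mathfrak{S}_{n-s}}\bigl((n-s),\bigotimes_{q=1}^{n-s}k[x_q^l]x_q^i\bigr)\otimes\Hom_{\mathfrak{S}_s}\bigl((1^s),\bigotimes_{q=1}^{s}k[x_q^l]x_q^{i+1}\bigr)$. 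The first factor is $x_1^i\cdots x_{n-s}^i$ times the symmetric invariants of a polynomial ring in $n-s$ variables each of degree $l$, with Poincar\'e series $t^{(n-s)i}\prod_{k=1}^{n-s}(1-t^{lk})^{-1}$; the second is $x_1^{i+1}\cdots x_s^{i+1}$ times the sign-isotypic part of a polynomial ring in $s$ variables each of degree $l$, which by the Vandermonde (of degree $l\binom{s}{2}$) has Poincar\'e series $t^{s(i+1)}\prod_{k=1}^{s}t^{l(k-1)}(1-t^{lk})^{-1}$.

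\emph{Step 3 (assembling and the $q$-binomial collapse).} Multiplying the $s$-th term by $t^{\xi_m(\boldsymbol{\lambda})}$, by the Frobenius shift $t^{sp}$ from $\wedge^s\mathfrak{h}^{(1)}$, and by the two $\Hom$-series above, the exponents combine so that the $s$-th term equals $\dfrac{t^{ni}}{\prod_{k=1}^n(1-t^{lk})}\bigl(-t^{m_0n+p+1+lm_{i+1}}\bigr)^s (t^l)^{\binom{s}{2}}\binom{n}{s}_{t^l}$. Summing over $s$ and applying the identity $(x+a)^n_q=\sum_j\binom{n}{j}_q q^{\binom{j}{2}}a^jx^{n-j}$ recalled from \cite{KC} with $q=t^l$, $x=1$, $a=-t^{m_0n+p+1+lm_{i+1}}$ turns the alternating sum into $\prod_{k=0}^{n-1}\bigl(1-t^{lk+m_0n+p+1+lm_{i+1}}\bigr)$, which gives the stated formula.

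I expect the main obstacle to be Step 2: getting the $l$-partition label of $\wedge^s\mathfrak{h}\otimes\tau(i)^*$ right in the \cite{GL}-convention, and then bookkeeping every grading shift — the value $\xi_m(\boldsymbol{\lambda})$, the degree $p$ of each Frobenius-twisted exterior factor, the monomial shifts $x_q^i$ and $x_q^{i+1}$ forced by the characters $\phi^i,\phi^{i+1}$, and the Vandermonde degree $l\binom{s}{2}$ — so that the exponents line up precisely into the shape $(-t^{m_0n+p+1+lm_{i+1}})^s (t^l)^{\binom{s}{2}}$ demanded by the $q$-binomial theorem. The invariant-theory computations and the final identity are then routine.
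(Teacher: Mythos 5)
Your proposal is correct and follows essentially the same route as the paper: the Koszul resolution of $\widetilde{Q_m}$, the formula of \cite{BC} with the $kz$-twist, the identification of $\wedge^s\mathfrak{h}\otimes\tau(i)^*$ with the $l$-partition having $\lambda^i=(n-s)$, $\lambda^{i+1}=(1)^s$ together with $\xi_m=s(nm_0+lm_{i+1})$, and the final collapse via the $q$-binomial identity of \cite{KC} with $q=t^l$. Your exponent bookkeeping reproduces the paper's computation exactly, so nothing further is needed.
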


\section{The Chern character map of the resolution}
The central charge map $Z:H^2(\Hilb;\bbQ)\to \Hom_{\ZZ}(K_0(\Hilb),\bbQ)$, which is defined by modifying the dimension polynomials of the irreducible modules over $^sH_c$,  is related to the Chern character map $\ch: K_0(\Hilb)_\bbQ\to H^*(\Hilb;\bbQ)$, as will be explained in more details in this section.
Thanks to the work of Ginzburg and Kaledin, the multiplicative structure of $H^*(\Hilb,\QQ)$ is easily described. The abelian group structure of $K_0(\Hilb)$ is given by the symplectic McKay correspondence.
It is a long-standing question to calculate the Chern character map in terms of the natural bases of the two sides.

\subsection{The central charge and the Chern character map}

Let $k$ be a seperably closed field of characteristic $p>0$. Let $\Gamma$ be an arbitrary symplectic reflection group acting on $\Aff^{2n}_k$ by symplectic reflections. Let $\bfX$ be a symplectic resolution of $\Aff^{2n}/\Gamma$. According to \cite{BK04}, there is a derived equivalence $$D^b(\Coh\bfX)\cong D^b(\Mod W_n^\Gamma)$$ where $W_n$ is the ring of differential operators on $\Aff^n$. This derived equivalence is given by a vector bundle $\sE_0$ on $\bfX$. It is shown in \cite{BK04} that any of such a vector bundle $\sE_0$ lifts to characteristic zero. Therefore, for simplicity in wht follows in this section and the next one, we work over a field of characteristic zero. (This is not essential. One can replace $H^*(\bfX;\QQ)$ by $H^*(\bfX;\QQ_r)$ for $r\neq p$, and all the statements in this section are still true.)
The vector bundle $\calE_0$ in \cite{BK04} is not unique. The non-uniqueness has been studied by Losev in \cite{Los}, together with a preferred choice. Under the derived correspondence $D^b(\Coh\bfX)\cong D^b(\Mod W_n^\Gamma)$, there is a set of vector bundles $\{\sV_\alpha\}$ on the symplectic resolution $\bfX$ corresponding to the indecomposible projective modules over $W_n^\Gamma$, whish are in turn labeled by the irreducible representations of $\Gamma$. The classes of $\{\sV_\alpha\}$ in the Grothendiech group form a basis of $K_{\QQ}(\bfX)$.

On the other hand, the cohomology ring $H^*(\bfX,\bbQ)$ has an explicit description. Let $\bbQ[\Gamma]$ be the group ring. It is filtered by the codimension of the fixed point loci. This filtration induces a filtration on the center $Z\bbQ[\Gamma]$, whose associated graded ring will be denoted by $\gr Z\bbQ[W]$.
\begin{theorem}[\cite{EG} and \cite{GK}]\label{thm: Ginz_Kal}
The algebra $H^*(\bfX,\bbQ)$ is isomorphic to the algebra $\gr Z\bbQ[\Gamma_n]$.
\end{theorem}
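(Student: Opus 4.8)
The plan is to argue as in \cite{EG} and \cite{GK}: first produce an isomorphism of graded vector spaces from the geometry of the semismall resolution $\pi\colon\bfX\to\Aff^{2n}/\Gamma$, and then upgrade it to a ring isomorphism by means of the theory of Poisson deformations.

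For the additive statement, recall that a symplectic resolution is semismall, and that the fixed loci $(\Aff^{2n})^\gamma$ of elements $\gamma\in\Gamma$ are symplectic subspaces, hence of even codimension $d_{[\gamma]}:=\codim(\Aff^{2n})^\gamma$ depending only on the conjugacy class $[\gamma]$. Running the decomposition theorem for $\pi$ over the stratification of $\Aff^{2n}/\Gamma$ and taking stalks at the cone point, or equivalently invoking the symplectic McKay correspondence of \cite{BK04} at the level of $K$-theory together with the purity of $H^*(\bfX)$, one obtains a canonical isomorphism of graded vector spaces $H^*(\bfX;\bbQ)\cong\bigoplus_{[\gamma]}\bbQ\cdot[\gamma]$, with $[\gamma]$ sitting in degree $d_{[\gamma]}$. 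The right-hand side is exactly $\gr Z\bbQ[\Gamma]$ for the codimension filtration: the filtration $F_{\le i}=\mathrm{span}\{[\gamma]\mid d_{[\gamma]}\le i\}$ is multiplicative because $\epsilon=\gamma'\delta'$ forces $(\Aff^{2n})^{\gamma'}\cap(\Aff^{2n})^{\delta'}\subseteq(\Aff^{2n})^{\epsilon}$ and hence $d_{[\epsilon]}\le d_{[\gamma']}+d_{[\delta']}$.

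To promote this to a ring isomorphism I would use deformation theory. By Namikawa's theorem $\bfX$ and $\Aff^{2n}/\Gamma$ carry universal Poisson deformations $\calX\to B$ and $\calY\to B$ over $B=H^2(\bfX;\bbC)$, with $\pi$ deforming to $\tilde\pi\colon\calX\to\calY$ over $B$, and by Etingof--Ginzburg \cite{EG} the family $\calY$ is the family of Calogero--Moser spaces $\calY_c=\Spec Z(H_{0,c})$, where $H_{0,c}$ is the symplectic reflection algebra at $t=0$ with parameter $c\in B\cong\mathrm{span}_{\bbC}\Ref$. The scaling $\Gm$-action on $\Aff^{2n}$ makes the whole picture conical, and the cohomology ring is recovered from these families through the contraction of $\bfX$ onto $\pi^{-1}(0)$ and equivariant formality; this is the technical heart of \cite{GK}, and on the Calogero--Moser side the very same recipe produces $\gr Z\bbC[\Gamma]$ once one uses the PBW filtration $\gr H_{0,c}=\bbC[\Aff^{2n}]\#\Gamma$, with the cup product matched against the convolution product. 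An alternative route I would pursue in parallel is via Hochschild cohomology: the Bezrukavnikov--Kaledin equivalence \cite{BK04} gives $HH^{*}(\bfX)\cong HH^{*}(\bbC[\Aff^{2n}]\#\Gamma)$, the Hochschild--Kostant--Rosenberg isomorphism together with the Hodge-theoretic properties of symplectic resolutions identifies the ``diagonal'' subring of $HH^{*}(\bfX)$ with $H^{*}(\bfX;\bbQ)$, and the corresponding piece of $HH^{*}(\bbC[\Aff^{2n}]\#\Gamma)$ is computed group-theoretically to be $\gr Z\bbQ[\Gamma]$. Either way, the main obstacle is exactly this last step --- the matching of the cup product with the convolution product on $\gr Z\bbQ[\Gamma]$: the additive statement is nearly formal once semismallness is in hand, but the multiplicative one genuinely needs the deformation-theoretic (or Hochschild-cohomological) machinery of \cite{EG,GK} and careful bookkeeping of the $\Gm$-gradings on both sides.
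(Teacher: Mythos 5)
This theorem is quoted background: the paper offers no proof of its own, it simply imports the statement from \cite{EG} and \cite{GK}, so there is no in-paper argument to measure your proposal against. Judged on its own terms, your outline does follow the route of the cited works --- Etingof--Ginzburg compute the cohomology of the generic Calogero--Moser deformation $\Spec Z(H_{0,c})$ using the $\Gm$-action and the PBW filtration $\gr H_{0,c}=\bbC[\Aff^{2n}]\#\Gamma$, and Ginzburg--Kaledin transfer this to the symplectic resolution $\bfX$ via Poisson (twistor) deformations --- and your bookkeeping of the grading is right: each class $[\gamma]$ contributes in degree $\codim(\Aff^{2n})^{\gamma}$, which is even since the fixed spaces are symplectic, and your verification that the codimension filtration on $Z\bbQ[\Gamma_n]$ is multiplicative is correct.

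That said, as a proof your text has genuine gaps rather than arguments. The additive step is not ``nearly formal once semismallness is in hand'': semismallness plus the decomposition theorem does not by itself tell you which IC summands occur with which multiplicities, so the statement that $H^{*}(\bfX;\bbQ)$ has a basis of classes $[\gamma]$ in degree $\codim(\Aff^{2n})^{\gamma}$ is the cohomological McKay correspondence (Batyrev, Denef--Loeser, or Kaledin in the symplectic case), which needs its own input; likewise the variant ``$K$-theory via \cite{BK04} plus purity'' only yields the total rank $\#\{\text{conjugacy classes}\}$, not the grading. More importantly, the multiplicative step --- matching the cup product on $H^{*}(\bfX)$ with the product on $\gr Z\bbQ[\Gamma_n]$ --- is exactly the content of the theorem, and you only name the machinery (universal Poisson deformations, equivariant formality, or alternatively HKR and Hochschild cohomology of $\bbC[\Aff^{2n}]\#\Gamma$) without carrying out the identification; the Hochschild route in particular hides nontrivial issues (identifying $H^{*}(\bfX;\bbQ)$ inside $HH^{*}$ compatibly with products, and computing the relevant subalgebra of $HH^{*}(\bbC[\Aff^{2n}]\#\Gamma)$ as a ring). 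A minor point: attributing the universal Poisson deformation to Namikawa is anachronistic for \cite{GK}, which works with twistor deformations and period maps. So your proposal is a faithful roadmap to the literature's proof, but it is a citation of that proof rather than a reconstruction of it.
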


The following problem is raised by Etingof, Ginzburg, and Kaledin, and is referred to as the Chern character problem.
\begin{problem}[\cite{EG} and \cite{GK}]\label{prob:chern}
Express explicitly the map \[K_0(\Gamma_n)\to \gr Z\bbQ[\Gamma_n]\] induced by the Chern character \[\ch:K_0(\bfX)\to H^*(\bfX;\bbQ).\]
\end{problem}

The character group of $\Gamma$ will be denoted by $\hat{\Gamma}$. We define polynomials on $H^2(\bfX;\QQ)$
$$\mathfrak{l}_{\sL_\alpha}(n_\tau)_{\tau \in \hat{\Gamma}}:=\chi(\sL_\alpha\otimes(\otimes_{\tau\in \hat{\Gamma}}\sV_\tau^{n_\tau}))$$ in the variables $n_\tau$.

The dimension of the irreducible representations over s field of positive characteristic can be calculated by modifying these polynomials. For an explicit illustration of this we refer to \S~\ref{sec: central_charge}.

\begin{prop}\label{prop: chern_dim}
For an arbitrary basis $\{b\}$ of $H^*(\bfX;\QQ)$ with the change of bases given by  $b=\sum_{\alpha\in \hbox{Irrep}\Gamma} h^b_\alpha\ch(\sV_\alpha)$, we have $\mathfrak{l}_{\sL_\alpha}(b)=h^b_\alpha$.
\end{prop}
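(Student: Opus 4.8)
The plan is to read the proposition as a change-of-basis identity relating $K_\QQ(\bfX)$ and $H^*(\bfX;\QQ)$: the only ingredients with genuine content are Hirzebruch--Riemann--Roch and the fact that the Chern character is a $\QQ$-isomorphism $\ch\colon K_\QQ(\bfX)\isom H^*(\bfX;\QQ)$, after which everything follows from the biorthogonality $\Ext^\bullet(\sV_\alpha,\sL_\beta)=\delta_{\alpha\beta}$ recorded in \S\ref{sec: quant_char_p}.

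First I would pin down the meaning of $\mathfrak l_{\sL_\alpha}$. Because $\sL_\alpha$ is set-theoretically supported on the central fibre of $\pi\colon\bfX\to\Aff^{2n}/\Gamma$, which is proper, the assignment $\xi\mapsto\chi(\sL_\alpha\otimes\xi)$ (derived tensor product) is a well-defined \emph{linear} functional on $K_\QQ(\bfX)$ — additive because $\otimes$ is biadditive on $K$-theory, finite-valued because the relevant cohomology is finite-dimensional. By Hirzebruch--Riemann--Roch relative to $\pi$ (equivalently, by pairing the compactly supported Chern character $\ch(\sL_\alpha)$ against bundle classes) this functional equals $\xi\mapsto\int_\bfX\ch(\sL_\alpha)\,\ch(\xi)\,\Td(\bfX)$. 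In particular $\mathfrak l_{\sL_\alpha}$, which on the line-bundle twists $\sO(\nu)$ is the dimension polynomial of Lemma~\ref{lem: dim_poly}, is the restriction to line-bundle classes of the linear functional $\beta\mapsto\int_\bfX\ch(\sL_\alpha)\,\beta\,\Td(\bfX)$ on $H^*(\bfX;\QQ)$; it is this functional — which under $\ch$ is exactly $\xi\mapsto\chi(\sL_\alpha\otimes\xi)$ — that is being evaluated at a general class $b\in H^*(\bfX;\QQ)$ in the statement.

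Next I would invoke that $\ch\colon K_\QQ(\bfX)\to H^*(\bfX;\QQ)$ is a ring isomorphism for the resolutions in play — for $\bfX=\Hilb^n(\widetilde{\Aff^2/\Gamma_1})$, and more generally for a symplectic resolution realised as a Nakajima quiver variety, the natural map $K_0(\bfX)_\QQ\to A^*(\bfX)_\QQ$ is an isomorphism as for any smooth variety, and $A^*(\bfX)_\QQ\to H^*(\bfX;\QQ)$ is one because these varieties carry affine pavings. Consequently $\{\ch(\sV_\tau)\}_\tau$ is a $\QQ$-basis of $H^*(\bfX;\QQ)$, matching the basis $\{[\sV_\tau]\}$ of $K_\QQ(\bfX)$ under $\ch$. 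So if $b=\sum_\tau h^b_\tau\,\ch(\sV_\tau)$, then $\ch^{-1}(b)=\sum_\tau h^b_\tau\,[\sV_\tau]$, and hence
\[
\mathfrak l_{\sL_\alpha}(b)=\chi\Bigl(\sL_\alpha\otimes\sum_\tau h^b_\tau\,[\sV_\tau]\Bigr)=\sum_\tau h^b_\tau\,\chi(\sL_\alpha\otimes\sV_\tau)=\sum_\tau h^b_\tau\,\delta_{\alpha\tau}=h^b_\alpha,
\]
where $\chi(\sL_\alpha\otimes\sV_\tau)=\delta_{\alpha\tau}$ unwinds (via $R\Hom(\sV_\tau,\sL_\alpha)\cong R\Gamma(\bfX,\sV_\tau^\vee\otimes\sL_\alpha)$, and the duality conventions already implicit in the definition of $\mathfrak l$) to the relation $\Ext^\bullet(\sV_\tau,\sL_\alpha)=\delta_{\tau\alpha}$ — itself just the image under $D^b(\Coh\bfX)\cong D^b(\Mod W_n^\Gamma)$ of the tautological orthogonality between indecomposable projectives and simples.

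I expect the only genuine obstacle to be the justification that $\ch$ is an isomorphism $K_\QQ(\bfX)\cong H^*(\bfX;\QQ)$ in this non-proper situation, together with the bookkeeping of supports that makes the relative form of Hirzebruch--Riemann--Roch legitimate; everything else is a change-of-basis tautology built on the biorthogonality already in hand.
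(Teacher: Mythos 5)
Your argument is correct, and its engine is the same as the paper's: the biorthogonality $\Ext^\bullet(\sV_\tau,\sL_\alpha)=\delta_{\tau\alpha}$ forces the linear functional $\mathfrak l_{\sL_\alpha}$ to be the element of the dual basis to $\{\ch(\sV_\tau)\}$, after which evaluating at $b=\sum_\tau h^b_\tau\ch(\sV_\tau)$ is a tautology. The route through the middle differs, though. The paper never invokes that $\ch\colon K_\QQ(\bfX)\to H^*(\bfX;\QQ)$ is an isomorphism; instead it uses that $\Gamma$ is generated by symplectic reflections to produce a basis of $H^*(\bfX;\QQ)$ (and of $K_\QQ(\bfX)$) consisting of Chern characters of line-bundle monomials $\sO(\underline a)=\otimes_\tau\sV_\tau^{a_\tau}$ --- exactly the classes on which $\mathfrak l_{\sL_\alpha}$ is defined as a polynomial --- expands $[\sV_\beta]=\sum_{\underline a}m_\beta^{\underline a}[\sO(\underline a)]$, and reads off from $\sum_{\underline a}m_\beta^{\underline a}\mathfrak l_{\sL_\alpha}(\underline a)=\delta_{\alpha\beta}$ that the values of $\mathfrak l_{\sL_\alpha}$ on this basis are the entries of the inverse matrix; the change to an arbitrary basis $\{b\}$ is then linear algebra. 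You replace this matrix bookkeeping by the global statements that $\chi(\sL_\alpha\otimes-)$ factors through $\ch$ (relative Hirzebruch--Riemann--Roch, legitimate since $\sL_\alpha$ is supported on the proper central fiber) and that $\ch$ is a $\QQ$-isomorphism (GRR to Chow plus an affine paving/Bialynicki--Birula decomposition), so that $\ch^{-1}(b)=\sum_\tau h^b_\tau[\sV_\tau]$ directly. What your version buys is a cleaner and more general argument, and it makes explicit the point the paper leaves implicit, namely why $\mathfrak l_{\sL_\alpha}$ is even well defined and linear on all of $H^*(\bfX;\QQ)$; what it costs is the extra input on $\ch$ being an isomorphism, which the paper avoids by staying inside the span of the line-bundle monomials. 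One shared gloss to keep in mind: the biorthogonality literally gives $\chi(\sV_\tau^\vee\otimes\sL_\alpha)=\delta_{\tau\alpha}$, while $\mathfrak l$ is written with undualized twists, so the identification $\mathfrak l_{\sL_\alpha}(b)=h^b_\alpha$ holds up to the duality convention; the paper's own proof juggles $[\sV_\alpha^*]$ versus $[\sO(\underline a)]$ with the same looseness, so this is not a gap relative to the paper, but it is worth fixing a convention if you write this up carefully.
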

\begin{proof}
To calculate the polynomial $$\mathfrak{l}_{\sL_\alpha}(n_\tau)_{\tau \in \hat{\Gamma}}:=\chi(\sL_\alpha\otimes(\otimes_{\tau\in \hat{\Gamma}}\sV_\tau^{n_\tau}))$$ in the variables $n_\tau$ from the Chern character map. In positive characteristic, the dimension of the irreducible representations can be calculated by modifying these polynomials, as will be done in later sections.

The group $\Gamma$ is generated by the classes of symplectic reflections, therefore, there is a basis of $H^*(\Hilb^2,\QQ)$ given by $\{\ch(\otimes_{\tau\in \hat{\Gamma}}\sV_\tau^{a_\tau})\mid (a_\tau)_\tau\in I\}$ for some set $I\subset \ZZ^{|\hat{\Gamma}|}$. Also $\{\otimes_{\tau\in \hat{\Gamma}}\sV_\tau^{a_\tau}\mid (a_\tau)_\tau\in I\}$ form a basis of $K_{\QQ}(\Hilb^2)$. For simplicity, we write $\sO(\underline{a})$ for the line bundle $\otimes_{\tau\in \hat{\Gamma}}\sV_\tau^{a_\tau}$. Let $$[\sV_\alpha]=\sum_{\underline{a}\in I}m_\alpha^{\underline{a}}[\sO(\underline{a})].$$
Therefor, $[\sV_\alpha^*]=\sum_{\underline{a}\in I}m_\alpha^{\underline{a}}[\sO(\underline{a})^*]$.

The polynomials $\mathfrak{l}_{\sL_\alpha}(n_\tau)_{\tau \in \hat{W}}$, considered as functions on $H^*(\Hilb^2,\QQ)$, are $\QQ$-linear functions, hence, $$\sum_{\underline{a}\in I}m_\beta^{\underline{a}}\mathfrak{l}_{\sL_\alpha}(\underline{a})=\delta_{\alpha,\beta}.$$
In other words, the value of the linear function $\mathfrak{l}_{\sL_\alpha}(n_\tau)_{\tau \in \hat{W}}$ at the basis element $\ch(\sO(\underline{a}))$ is given by $m^{-1,\underline{a}}_\alpha$, where $m^{-1,\underline{a}}_\alpha$ is the $(\underline{a},\alpha)$-entry of the inverse matrix of $(m_\beta^{\underline{a}})$.
Therefore, for an arbitrary basis $\{b\}$ of $H^*(\bfX;\QQ)$ with $b=\sum h^b_\alpha\ch(\sV_\alpha)$, we have $\mathfrak{l}_{\sL_\alpha}(b)=h^b_\alpha$.
\end{proof}

In this section and the next one, we study the Chern character map in some special cases.

\subsection{The topology of the punctual Hilbert scheme}

From this section on we concentrate on the case when $\Gamma_1=\ZZ/l$ and $n=2$. In this section we describe the cohomology ring and the $K$-group of the symplectic resolution, and give a formula of the Chern character map.

We present $\Gamma_2=(\ZZ/l\ZZ)^2\rtimes \mathfrak{S}_2$ as $\langle\xi,\eta,\sigma\mid \xi^l,\eta^l, \sigma^2, \sigma\eta\sigma=\xi\rangle$. Now we look at the conjugacy  classes of elements in $\Gamma_2$.
There are $l$ conjugacy classes in $\Gamma_2$ consists of symplectic reflections. They are represented by $\xi^i$ with $i=1,\cdots,l-1$, and $\sigma$. There are ${{l}\choose{2}}+2(l-1)$ conjugacy classes whose fixed point loci consist of the origin only. They are represented by $\sigma\xi^i$ with $i=1,\cdots,l-1$, $\xi^i\eta^i$ with $i=1,\cdots,l-1$, and $\xi^i\eta^j$ with $i\neq j$.

If we write $[g]$ for $\sum_{h\sim g}h\in \QQ[\Gamma_2]$, the natural basis of $\gr Z\QQ[\Gamma_2]$ is given by $\{[g]\mid g\in W\}$. They satisfies $[\xi^i]\cdot[\xi^j]=[\xi^i\eta^j]$, $[\xi^i]^2=2[\xi^i\eta^i]$, and $[\sigma]\cdot[\xi^i]=2[\sigma\xi^i]$.

Let $\widetilde{\Aff^2/\ZZ_l}\to \Aff^2/\ZZ_l$ be the minimal resolution of Kleinian singularity. Then a symplectic resolution of $\Aff^4/W$ is given by $\Hilb^2=\Hilb^2(\widetilde{\Aff^2/\ZZ_l})$. It fits in the basic diagram
\begin{equation}\label{eqn:diag_res}\begin{xymatrix}{
\Bl_\triangle(\widetilde{\Aff^2/\ZZ_l}\times\widetilde{\Aff^2/\ZZ_l})\ar[r]^{q}\ar[d]^{p}&\widetilde{\Aff^2/\ZZ_l}\times\widetilde{\Aff^2/\ZZ_l}\ar[d]\\
\Hilb^2(\widetilde{\Aff^2/\ZZ_l}\times\widetilde{\Aff^2/\ZZ_l})\ar[r]&\widetilde{\Aff^2/\ZZ_l}\times\widetilde{\Aff^2/\ZZ_l}/\mathfrak{S}_2.
}
\end{xymatrix}\end{equation}

Let $C\subset \widetilde{\Aff^2/\ZZ_l}$ be the exceptional divisor. Recall that $C$ is a chain of $\PP^1$'s, each having self-intersection number -2. We number them as $C_1,\cdots,C_{l-1}$ such that $[C_i][C_{i+1}]=1$ and $[C_i][C_j]=0$ if $|i-j|>1$.

We now describe the cohomology ring of $\Hilb^2$. Since $\Hilb^2$ deformation retracts to the punctual Hilbert scheme $X=\Hilb^2_C(\widetilde{\Aff^2/\ZZ_l})$, we concentrate on the latter. The scheme $X$ has ${{l}\choose{2}}+2(l-1)$ irreducible components, coming from the strict transform of $C_i\times C_j\subset\widetilde{\Aff^2/\ZZ_l}\times\widetilde{\Aff^2/\ZZ_l}$ into $\Hilb^2$ under the maps $p$ and $q$. We now describe these components.

For each irreducible component $C_i\subset C$, there are two irreducible components coming out of the strict transform of $C_i\times C_i$. One component is isomorphic to $\PP^2$ which we will denote by $\PP^2_i$. The other component is isomorphic to the rational ruled surface $\PP(\sO_{\PP^1}\oplus\sO_{\PP^1}(-4))\cong\PP(\sO(2)\oplus\sO(-2))$ which will be denoted by $S_i$. We identify $\sO_{\PP^1}(2)\oplus\sO_{\PP^1}(-2)$ with $\hbox{T}(\hbox{T}^*\PP^1)|_{\PP^1}$ where $\PP^1\subset \hbox{T}^*\PP^1$ is the zero section. The fiber of $S_i\to\PP^1$ over $x\in \PP^1$ is $\PP(\hbox{T}_x\hbox{T}^*\PP^1)$. These two components $\PP^2_i$ and $S_i$ are glued together along a common divisor $\PP^1$. This $\PP^1$ sits inside $\PP^2_i$ as a degree 2 irreducible hypersurface.
In $S_i$ this divisor $\PP^1$ is embedded as a section of this rational ruled surface which corresponds to the subboundle $\sO(2)\subseteq\sO(2)\oplus\sO(-2)$.(Over each point $x$ in the zero section of $\hbox{T}^*\PP^1$, this $\PP^1$ corresponds to the direction of $\hbox{T}_x\PP^1$ in $\hbox{T}_x\hbox{T}^*\PP^1$.)
In fact, the normal bundle $N_{C_i}\widetilde{\Aff^2/\ZZ_l}\cong\sO(C_i)|_{C_i}\cong\sO_{\PP^1}(-2)\cong \hbox{T}^*\PP^1$, and also $N_{\triangle}(\widetilde{\Aff^2/\ZZ_l})^2|_{C_i}\cong N_{\triangle}(N_{C_i}\widetilde{\Aff^2/\ZZ_l})^2$. Therefore, the strict transform of $C_i\times C_i\subset\widetilde{\Aff^2/\ZZ_l}\times\widetilde{\Aff^2/\ZZ_l}$ into $\Hilb^2$ is isomorphic to the strict transform of $\PP^1\times\PP^1\subset \hbox{T}^*\PP^1\times \hbox{T}^*\PP^1$ into $\Bl_\triangle(\hbox{T}^*\PP^1\times \hbox{T}^*\PP^1)/\ZZ_2$, which is obviously $\PP^2\sqcup_{\PP^1}S$ as described above. The common $\PP^1$ in $S_i$ and $\PP^2$ is the strict transform of the diagonal.

For $C_i\neq C_j$, there is an irreducible component of $X$ coming from the strict transform of $C_i\times C_j$, which will be called $ P_{ij}$. We have $\PP_{ij}\cong\PP^1_i\times\PP^1_j$ if $[C_i][C_j]=0$ in $\widetilde{\Aff^2/\ZZ_l}$, and $ P_{ij}\cong\Bl_{*}\PP^1_i\times\PP^1_j$ if $[C_i][C_j]=[*]$ in $\widetilde{\Aff^2/\ZZ_l}$ where $*$ is a point.

Let us write down a basis of the cohomology rings of each of the irreducible components. We take the canonical basis of $H^2(\PP^2_i)$  as $q_i$, and $q_i^2=p_i$. We denote the Poincar\'{e} dual of the zero section of $S_i\cong\PP(\sO_{\PP^1}\oplus\sO_{\PP^1}(-4))$ in $H^2(S_i)$ by $c_i$, the Poincar\'{e} dual of the fiber by $f_i$, and the fundamental class in $H^4(S_i)$ by $s_i$. (Here we follow the convention in Hartshorne and therefore $c_0^2=-degree$.) If $i=j-1$, we denote the Poincar\'{e} dual of the exceptional divisor in $H^2(\Bl_{*}\PP^1_i\times\PP^1_j)$ by $e_i$. No matter whether $i$ and $j$ are adjacent or not, we denote the Poincar\'{e} dual of  $[*\times\PP^1_j]$  by $l_{j,i}$, and the Poincar\'{e} dual of $[\PP^1_i\times *]$ by $l_{i,j}$. The fundamental class will be denoted by $p_{i,j}$.

Besides the basis of $H^*(\Hilb^2,\QQ)$ coming from the natural basis of $\gr^{\hbox{top}}Z\QQ[W]$ described at the beginning of this section, there is another basis of $H^*(\Hilb^2,\QQ)$ coming from the topology of $X$ which will be described here. The basis of $H^2(\Hilb^2,\QQ)$ comes from the divisor classes, which in turn corresponds to conjugacy classes of symplectic reflections in $W$. The basis of $H^4(\Hilb^2,\QQ)$ comes from irreducible component of $X$.

Note that in our case (and many other cases), the  resolution $\Hilb^2$ can be constructed as a Nakajima quiver variety (see, e.g., \cite{Kuz01}).
The basis of $H^4(\Hilb^2,\QQ)$ (resp. $H^{\hbox{mid}}$) coming from irreducible components coincide with the basis given by Nakajima in \cite{Nak94}. It is a natural question to ask what the matrix of transform is between this basis and the one coming from conjugacy classes in $\gr^{\hbox{top}}Z\QQ[\Gamma_2]$. In the case concerned in this paper, we will solve this problem by working out the multiplicative structure of $H^*(\Hilb^2,\QQ)$ under the topological basis.

Now we can describe the basis of $H^2(X)$ coming from symplectic reflections more explicitly. The divisor class coming from the symplectic reflection $\sigma$ is $d_0=\sum q_j+\sum c_j+2\sum f_j+\sum_{j=1}^{l-2} e_j$. The divisor coming from the symplectic reflection $\xi^i$ for $i=1,\cdots,l-1$ is $d_i=q_i+2f_i+\sum_{j\neq i} l_{i, j}$. The non-trivial multiplications of them are given by
\[d_0^2=\sum p_j-\sum p_{j,j+1};\]
\[d_0\cdot d_i=p_i+2s_i;\]
\[d_i^2=p_i;\]
\[d_i\cdot d_j=p_{ij}.\]

Now we study the $K$-theory of $\Hilb^2$.

Fix a primitive $l$-th root of unity $\omega$, the irreducible representations of $W$ can be written as:
\begin{itemize}
  \item the trivial representation, whose corresponding vector bundle on $\Hilb^2$ under the McKay correspondence is denoted by $\sV_0$;
  \item the 1 dimensional representation $k_i$ acted trivially by $\mathfrak{S}_2$ and via $\omega^i$ by $\ZZ_l$,  whose corresponding vector bundle is $\sV_{i}$;
  \item the sign representation of $\mathfrak{S}_2$ tensor with $k_i$, whose corresponding vector bundle is  $\sV_{\sigma,i}$;
  \item the irreducible 2 dimensional representation acted via $\left(\begin{smallmatrix}\omega^i&0\\0&\omega^j\end{smallmatrix}\right)$ by $\ZZ_l$,  whose corresponding vector bundle is $\sV_{i,j}$.
\end{itemize}

The main result of this section is the following proposition.
\begin{prop}\label{prop: chern char}
We have
\begin{eqnarray*}
\ch(\sV_0)&=&1;\\
\ch(\sV_i)&=&1+d_i+p_i/2;\\
\ch(\sV_\sigma)&=&1+d_0+\sum p_j/2-\sum p_{j,j+1}/2;\\
\ch(\sV_{\sigma,i})&=&1+d_i+d_0+\sum p_j/2-\sum p_{j,j+1}/2+3p_i/2+2s_i;\\
\ch(\sV_{0,i})&=&2+d_0+d_i+\sum p_j/2-\sum p_{j,j+1}/2+p_i/2+s_i;\\
\ch(\sV_{i,j})&=&2+d_j+d_i+d_0+\sum p_k/2-\sum p_{k,k+1}/2+p_{i,j}+p_i/2+p_j/2+s_i+s_j.
\end{eqnarray*}
\end{prop}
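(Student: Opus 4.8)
The plan is to realize each McKay bundle $\sV_\tau$ explicitly as a sheaf built out of the basic diagram \eqref{eqn:diag_res} and the rank-one McKay line bundles $\calM_0=\calO,\calM_1,\dots,\calM_{l-1}$ on $S:=\widetilde{\Aff^2/\ZZ_l}$, and then to compute $\ch(\sV_\tau)$ by functoriality of $\ch$ under $q^*$ and $p_*$. The first observation is that $S$ is a surface, so the punctual Hilbert scheme $X$ — onto which $\Hilb^2$ deformation retracts — has cohomology only in degrees $0,2,4$; hence $\ch(\sV_\tau)=\dim\tau+c_1(\sV_\tau)+\ch_2(\sV_\tau)$ with $\ch_2\in H^4$. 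Thus it is enough to produce, for each $\tau$, the rank, the first Chern class, and $\ch_2$, and then to expand everything in the topological basis using the multiplication table recorded above ($d_0^2=\sum p_j-\sum p_{j,j+1}$, $d_0 d_i=p_i+2s_i$, $d_i^2=p_i$, $d_i d_j=p_{i,j}$) together with the fact, also recalled above, that $d_0$ and $d_i$ are the divisor classes attached to the symplectic reflections $\sigma$ and $\xi^i$.

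First I would recall the classical $A_{l-1}$ McKay correspondence on $S$: all irreducible $\ZZ_l$-representations are one-dimensional, $\calM_0=\calO_S$, and $\calM_i$ is the line bundle determined by $c_1(\calM_i)\cdot[C_j]=\delta_{ij}$ (so $c_1(\calM_i)$ is the $i$-th fundamental weight in the lattice of exceptional curve classes). Using the wreath-product structure $\Gamma_2=(\ZZ_l\times\ZZ_l)\rtimes\fS_2$ together with the compatibility of the Procesi/McKay bundle $\sE_0=\bigoplus_\tau\sV_\tau\otimes\tau^*$ with the covers in \eqref{eqn:diag_res}, I would then identify the bundles: $\sV_i$ is the descent along $p$ of the naturally $\ZZ_2$-equivariant line bundle $q^*(\calM_i\boxtimes\calM_i)$, so $p^*\sV_i=q^*(\calM_i\boxtimes\calM_i)$ and $c_1(\sV_i)=d_i$; $\sV_\sigma$ is the sign line bundle $L_\sigma$ of the double cover $p$, normalized so that $c_1(L_\sigma)=d_0$ (equivalently $p^*L_\sigma=\calO(-E)$, where $E$ is the exceptional divisor of the diagonal blow-up and $2d_0$ is the branch divisor), and $\sV_{\sigma,i}=\sV_i\otimes\sV_\sigma$ since these are one-dimensional characters; and $\sV_{i,j}$ (for $i\neq j$, including the case $j=0$) is the rank-two bundle $p_*q^*(\calM_i\boxtimes\calM_j)$, equivalently the descent of $q^*(\calM_i\boxtimes\calM_j)\oplus q^*(\calM_j\boxtimes\calM_i)$ under the involution swapping the summands. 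The verification that these sheaves really are the McKay bundles — via the characterization $\Ext^\bullet(\sE_0,\sV_\tau)\cong\tau$ concentrated in degree $0$, i.e. the compatibility of $\sE_0$ with \eqref{eqn:diag_res} — is the one genuinely non-formal step, since it must match the $\fS_2$-equivariant and canonical-square-root twists against the conventions of \cite{BK04,BFG06,H02}.

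With the bundles identified, the Chern character computation is bookkeeping, organized around the injectivity of $p^*$ on $H^*(\Hilb^2;\QQ)$ — which holds because $p$ is finite flat of degree $2$, so $p_*p^*=2$. The ranks are immediate. For the line bundles $\sV_i,\sV_\sigma,\sV_{\sigma,i}$ one has $\ch=e^{c_1}$, and, having the first Chern classes $d_i$, $d_0$, $d_0+d_i$, one simply expands $\tfrac12 c_1^2$ through the multiplication table: this gives $\ch(\sV_i)=1+d_i+p_i/2$, $\ch(\sV_\sigma)=1+d_0+\sum p_j/2-\sum p_{j,j+1}/2$, and then $\ch(\sV_{\sigma,i})=\ch(\sV_i)\,\ch(\sV_\sigma)$ collapses to the stated expression. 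For the rank-two bundles $\sV_{i,j}$ the cleanest route is Grothendieck--Riemann--Roch for the double cover, $\ch(p_*\cG)=p_*(\ch(\cG)\,\Td(T_p))$ with relative dualizing sheaf $\calO(E)$ so that $\Td(T_p)=1-\tfrac12[E]+\cdots$, applied to $\cG=q^*(\calM_i\boxtimes\calM_j)$; the contribution of $E$ is what produces the $d_0$-summand in $c_1(\sV_{i,j})=d_0+d_i+d_j$, and pushing the degree-four part of $\ch(\cG)\,\Td(T_p)$ down through $p$ and expressing the result via the intersection numbers of $c_1(\calM_i),c_1(\calM_j)$ on $S$ (equivalently, via $d_0 d_i=p_i+2s_i$ and $d_i d_j=p_{i,j}$ on $\Hilb^2$) yields the last two formulas. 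The asymmetry between $\ch(\sV_{0,i})$ and the general $\ch(\sV_{i,j})$ is precisely the presence or absence of the component class $p_{i,j}$: there is no exceptional curve $C_0\subset C$, hence no component $P_{0i}$.

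The main obstacle is twofold. First, pinning down the bundles $\sV_\tau$ with the correct sign and canonical-square-root twists: the McKay correspondence for $\Hilb^2$ of a resolution is not monoidal for the induced two-dimensional representations, so the identification of $\sV_{i,j}$ — in particular the twist responsible for the $d_0$ in its first Chern class — cannot be read off the ring structure of $K_0(\Gamma_2)$ and must instead be extracted from the compatibility of the Procesi bundle with the covers in \eqref{eqn:diag_res}. Second, the pushforward along the ramified double cover $p$: the ramification divisor $E$ (equivalently, the branch divisor $2d_0$) contributes to $p^*$, to $\Td(T_p)$, and to the $\ZZ_2$-descent, and this contribution is exactly what separates, say, $\ch(\sV_{\sigma,i})$ from $\ch(\sV_{0,i})$ — they share $c_1=d_0+d_i$ but differ in rank and in $\ch_2$ — so this is where the bookkeeping must be done with care. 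Once $[E]$ is expressed in the topological basis (it restricts to the exceptional $\PP^1$ inside each $\PP^2_i\sqcup_{\PP^1}S_i$ and to the exceptional curve of each $\Bl_\ast(\PP^1_i\times\PP^1_j)$), everything collapses to the multiplication table already in hand.
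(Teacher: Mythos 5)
Your overall strategy is genuinely different from the paper's, and the difference matters. The paper proves the proposition in three steps: (i) the line-bundle cases are pinned down by restricting to two-dimensional transverse slices of the strata (the argument of \cite[\S4]{BK04}), which gives $c_1(\sV_i)=d_i$, $c_1(\sV_\sigma)=d_0$, $c_1(\sV_{\sigma,i})=d_0+d_i$; (ii) the seed rank-two classes $c_1(\sV_{0,i})$ and $c_2(\sV_{0,i})=s_i+p_i$ are computed from the Nakajima quiver description, where the McKay normalization is fixed by requiring the image of $J$ to generate the trivial summand of the tautological bundle, and the Chern classes are extracted by $\Gm$-equivariant Atiyah--Bott--Berline--Vergne localization at the explicit fixed points; (iii) the general $\sV_{i,j}$ is handled by the K-theoretic identity of Lemma~\ref{lem: k-grp eq}, whose proof uses only the compatibility $Rq_*p^*\sV_{i,j}\cong\sO_i\boxtimes\sO_j\oplus\sO_j\boxtimes\sO_i$ and the projection formula. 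You replace (ii) and (iii) by the much stronger claim that $\sV_{i,j}$ (including $j=0$) \emph{is} $p_*q^*(\calM_i\boxtimes\calM_j)$, and then apply Grothendieck--Riemann--Roch along the ramified double cover $p$. This is the genuine gap: you yourself flag this identification as ``the one genuinely non-formal step,'' but you never establish it, and it is not a cosmetic omission --- it is essentially the content of the proposition. Note that the paper's input is only about $Rq_*p^*\sV_{i,j}$ (pull to the blow-up, push to the product); your identification goes the other way, $Rp_*q^*$ of a box product, and the two differ a priori by corrections supported on the exceptional divisor $E$. Twists by $\calO(E)$ are exactly what shift the $d_0$- and $s_i$-coefficients in the stated formulas, so without pinning down the normalization (which the paper does via the quiver convention and localization) your computation cannot distinguish, e.g., $c_1(\sV_{i,j})=d_i+d_j+d_0$ from $d_i+d_j-d_0$, or $c_2(\sV_{0,i})=p_i+s_i$ from a twisted variant.

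The delicacy is visible in your own normalization. With $\sV_\sigma$ taken to be the anti-invariant summand of $p_*\sO$ (your $p^*L_\sigma=\sO(-E)$), one has $c_1(\sV_\sigma)=-\tfrac12[B]$, so the paper's normalization $c_1(\sV_\sigma)=d_0$ forces the \emph{effective} branch divisor class to be $-2d_0$, not $2d_0$ as your parenthetical asserts; your GRR bookkeeping happens to come out right only after this sign is fixed, which again shows that the conclusion hinges on conventions you have not verified. Likewise ``$\sV_{\sigma,i}=\sV_i\otimes\sV_\sigma$'' needs an argument (the correspondence is not monoidal, as you note for the two-dimensional representations); the paper gets it from the same codimension-two stratification used for the other line bundles. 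If you supplied a proof of the identification of $\sV_{0,i}$ and $\sV_{i,j}$ with the stated pushforwards (with the correct twist), your GRR route would be a clean alternative to the localization computation and to Lemma~\ref{lem: k-grp eq}; as written, the decisive step is assumed rather than proved.
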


The rest of this section is devoted to the proof of this proposition.

\begin{lemma}\label{lem: k-grp eq}
In $K(\Hilb^2)$, we have $\sV_{i,j}=\sV_i\otimes(\sO\oplus\sV_{\sigma}\oplus\sV_{0,j}-\sV_{0,i})$.
\end{lemma}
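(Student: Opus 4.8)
The plan is to realize every bundle appearing in the statement through the birational diagram \eqref{eqn:diag_res} together with the tautological line bundles on $Y:=\widetilde{\Aff^2/\Gamma_1}$, and to reduce the asserted identity in $K(\Hilb^2)$ to an elementary identity in $K(Y)$. Recall first that, since $\Gamma_1=\ZZ/l$ is abelian, each of its irreducible representations is a character $\chi_i$, whose McKay summand on $Y$ is a \emph{line} bundle $\mathcal{R}_i$ with $\mathcal{R}_0=\sO_Y$ and $c_1(\mathcal{R}_i)\cdot[C_j]=\delta_{ij}$. Every irreducible representation of $\Gamma_2=\Gamma_1^{\,2}\rtimes\fS_2$ comes from a character $\chi_i\boxtimes\chi_j$ of $\Gamma_1^{\,2}$: by induction when $i\neq j$ (giving $\tau_{i,j}$), and as one of the two $\fS_2$-extensions when $i=j$ (giving $k_i$ and $\sigma_i=k_i\otimes\mathrm{sgn}$). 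Under the symplectic McKay correspondence this matches the maps in \eqref{eqn:diag_res}, where $p\colon\Bl_\triangle(Y\times Y)\to\Hilb^2$ is the finite flat degree-$2$ quotient by $\fS_2$ and $q$ the blow-down. Writing $[\mathcal{A},\mathcal{B}]:=p_*q^*(\proj_1^*\mathcal{A}\otimes\proj_2^*\mathcal{B})$ (all sheaves in play are locally free, so no deriving is needed), I would use the identifications
\[
\sV_{i,j}=[\mathcal{R}_i,\mathcal{R}_j]\ \ (i\neq j),\qquad \sV_{0,i}=[\sO_Y,\mathcal{R}_i],\qquad [\sO_{\Hilb^2}]+[\sV_\sigma]=[\sO_Y,\sO_Y]=[p_*\sO_{\Bl_\triangle(Y\times Y)}],
\]
together with the fact that $p_*q^*(\mathcal{R}_i\boxtimes\mathcal{R}_i)$ decomposes into its $\fS_2$-isotypic parts as $\sV_i\oplus\sV_{\sigma,i}$ and that $q^*(\mathcal{R}_i\boxtimes\mathcal{R}_i)$ acts trivially on the fixed locus $E$ of $\Bl_\triangle(Y\times Y)$, hence descends to $\Hilb^2$ with descent $\sV_i$, so that $p^*\sV_i=q^*(\mathcal{R}_i\boxtimes\mathcal{R}_i)$. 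This compatibility of the symplectic McKay correspondence with the wreath-product structure is essentially contained in \cite{H02,BK04}, and for $n=2$ can also be read off directly from \eqref{eqn:diag_res}; establishing it cleanly — in particular the twist-free equality $p^*\sV_i=q^*(\mathcal{R}_i\boxtimes\mathcal{R}_i)$, which requires tracking the $\fS_2$-equivariant structures along the ramification locus $E$ — is the one genuinely non-formal point, and I expect it to be the main obstacle.

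Granting this, the computation is short. Since $p$ is finite and flat the projection formula gives, for all $a,b\in K(Y)$,
\[
[\sV_i]\cdot\,p_*q^*\big(\proj_1^*a\otimes\proj_2^*b\big)=p_*\big(p^*\sV_i\otimes q^*(\proj_1^*a\otimes\proj_2^*b)\big)=p_*q^*\big(\proj_1^*(\mathcal{R}_i\otimes a)\otimes\proj_2^*(\mathcal{R}_i\otimes b)\big)
\]
in $K(\Hilb^2)$, using $p^*\sV_i=q^*(\mathcal{R}_i\boxtimes\mathcal{R}_i)$. Feeding in $[\sO_{\Hilb^2}]+[\sV_\sigma]=[\sO_Y,\sO_Y]$, $[\sV_{0,j}]=[\sO_Y,\mathcal{R}_j]$ and $[\sV_{0,i}]=[\sO_Y,\mathcal{R}_i]$, and using additivity of $p_*q^*$ and of $\proj_1^*(-)\otimes\proj_2^*(-)$, the right-hand side of the lemma becomes
\[
[\sV_i]\cdot\big([\sO_{\Hilb^2}]+[\sV_\sigma]+[\sV_{0,j}]-[\sV_{0,i}]\big)=p_*q^*\Big(\proj_1^*\mathcal{R}_i\otimes\proj_2^*\big([\mathcal{R}_i]+[\mathcal{R}_i\otimes\mathcal{R}_j]-[\mathcal{R}_i\otimes\mathcal{R}_i]\big)\Big),
\]
while the left-hand side is $[\sV_{i,j}]=p_*q^*(\proj_1^*\mathcal{R}_i\otimes\proj_2^*\mathcal{R}_j)$. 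Comparing the two arguments of $p_*q^*$, it therefore suffices to prove the identity $[\mathcal{R}_j]=[\mathcal{R}_i]+[\mathcal{R}_i\otimes\mathcal{R}_j]-[\mathcal{R}_i\otimes\mathcal{R}_i]$ in $K(Y)$ — note that no division by $[\sV_i]$ is required.

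Finally, this $K(Y)$-identity rewrites as $\big([\mathcal{R}_j]-[\mathcal{R}_i]\big)\cdot\big([\sO_Y]-[\mathcal{R}_i]\big)=0$, using that $K(Y)$ is a ring with $[\mathcal{R}_i\otimes\mathcal{R}_j]=[\mathcal{R}_i]\cdot[\mathcal{R}_j]$. Both factors are differences of line-bundle classes, hence lie in $\widetilde K(Y)=\ker(\rank\colon K(Y)\to\ZZ)$, so it is enough to see $\widetilde K(Y)\cdot\widetilde K(Y)=0$. But $Y$ deformation retracts onto the one-dimensional exceptional curve $C=C_1\cup\cdots\cup C_{l-1}$, so $H^{\ge3}(Y;\QQ)=0$; since $\ch\colon K(Y)_\QQ\xrightarrow{\ \sim\ }H^*(Y;\QQ)$ is a ring isomorphism carrying $\widetilde K(Y)_\QQ$ into $H^{\ge2}(Y;\QQ)$, we get $\widetilde K(Y)_\QQ\cdot\widetilde K(Y)_\QQ=0$, which suffices for the applications of the lemma in this section. (Integrally the same holds: $\widetilde K(Y)\subseteq F^1K(Y)$ for the codimension-of-support filtration, and $F^2K(Y)\cong\CH_0(Y)=0$ because the smooth rational surface $Y$ has $\CH_0=\ZZ$ by degree in any smooth projective compactification, so every closed point is rationally equivalent to one in the boundary.) Running the reduction of the previous paragraph backwards then proves the lemma.
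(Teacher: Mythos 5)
Your argument is correct, and it is in substance the paper's own proof run in the ``transposed'' direction through the diagram \eqref{eqn:diag_res}. The paper applies $Rq_*p^*$ to both sides and verifies the resulting identity in $K^{\fS_2}(Y\times Y)$ (with $Y=\widetilde{\Aff^2/\ZZ_l}$), namely $\sO_i\boxtimes\sO_j\oplus\sO_j\boxtimes\sO_i=\sO_i\boxtimes\sO_i\otimes\bigl(\sO\oplus\sO_\sigma\oplus(\sO\boxtimes\sO_j\oplus\sO_j\boxtimes\sO)-(\sO\boxtimes\sO_i\oplus\sO_i\boxtimes\sO)\bigr)$, which it asserts without comment and which implicitly requires both the injectivity of $Rq_*p^*$ on $K$-groups (the McKay equivalence) and exactly the vanishing you isolate, $\bigl([\mathcal{R}_j]-[\mathcal{R}_i]\bigr)\bigl([\sO_Y]-[\mathcal{R}_i]\bigr)=0$ in $K(Y)$; the last step of the paper's computation also uses $p^*\sV_i=q^*(\sO_i\boxtimes\sO_i)$, the same non-formal identification you flag. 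You instead transport everything into $K(\Hilb^2)$ via $p_*q^*$ and the projection formula for the finite flat double cover $p$, which buys two things: you never need faithfulness of $Rq_*p^*$ on $K$-theory, and the key elementary input (products of rank-zero classes vanish on $Y$, rationally by $H^{\ge 3}(Y;\QQ)=0$ and integrally by torsion-freeness of $K(Y)$ or $\CH_0(Y)=0$) is made explicit rather than buried in an unexplained equality. The price is the same in both versions: one must know how the McKay summands $\sV_i$, $\sV_\sigma$, $\sV_{0,i}$, $\sV_{i,j}$ are expressed through $p$ and $q$ in terms of the tautological line bundles on $Y$, which you correctly identify as the only genuinely non-formal point and source appropriately.
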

\begin{proof}
We have, on the one hand, in $K^{\mathfrak{S}_2}(\widetilde{\Aff^2/\ZZ_l})$,
\begin{eqnarray*}
Rq_*p^*\sV_{i,j}&=&\sO_i\boxtimes\sO_j\oplus\sO_j\boxtimes\sO_i\\
&=&\sO_i\boxtimes\sO_i\otimes(\sO\oplus\sO_\sigma\oplus(\sO\boxtimes\sO_j\oplus\sO_j\boxtimes\sO)-(\sO\boxtimes\sO_i\oplus\sO_i\boxtimes\sO)),
\end{eqnarray*}
where $\sO_\sigma$ is endowed with the sign representation of $\mathfrak{S}_2$.
On the other hand,
\begin{eqnarray*}\label{eq: divisor-Procesi}
\sO_i\boxtimes\sO_j\oplus\sO_j\boxtimes\sO_i&=&\sO_i\boxtimes\sO_i\otimes(\sO\oplus\sO_\sigma\oplus(\sO\boxtimes\sO_j\oplus\sO_j\boxtimes\sO)-(\sO\boxtimes\sO_i\oplus\sO_i\boxtimes\sO))\\
&=&\sO_i\boxtimes\sO_i\otimes(Rq_*p^*(\sO\oplus\sV_{\sigma}\oplus\sV_{0,j}-\sV_{0,i}))\\
&=&Rq_*(q^*(\sO_i\boxtimes\sO_i)\otimes p^*(\sO\oplus\sV_{\sigma}\oplus\sV_{0,j}-\sV_{0,i}))\\
&=&Rq_*p^*(\sV_i\otimes(\sO\oplus\sV_{\sigma}\oplus\sV_{0,j}-\sV_{0,i})).
\end{eqnarray*}
\end{proof}

\begin{example}
We take a concrete eaxmple, i.e., the case when $\Gamma_2=B_2$.
We present $B_2$ as $\langle s_1,s_2,\sigma \mid s_1^2,\ s_2^2,\ \sigma^2,\ \sigma s_1\sigma=s_2\rangle$.
%We fix a basis $\{a,b,c,d\}$ for $\CC^4$ such that the $B_2$ action can be expressed as $s_1:a\mapsto -a,\ c\mapsto -c$, $s_2:b\mapsto-b,\ d\mapsto-d$, and $\sigma:a \mapsto b,\ c\mapsto d$.
\par
There are 5 irreducible representations of $B_2$: the trivial representation $V_0$, $V_1=k$ with $\sigma=(-1)$, $V_2=k$ with $s_1=s_2=(-1)$, $V_3=k$ with $\sigma=s_1=s_2=(-1)$, and $V_4=\hh$. Let $\sV_i$ be the vector bundle on the symplectic resolution corresponding to the projective object $V_i\times \Aff^4$ under the McKay correspondence. Their corresponding simple object will be denoted by $\sL_i$.
\par
The symplectic resolution of $\Aff^4/B_2$ is given by the Hilbert scheme $\Hilb^2=\Hilb^2(\widetilde{\CC^2/\ZZ_2})$ where $\widetilde{\CC^2/\ZZ_2}\to \CC^2/\ZZ_2$ is the minimal resolution of Kleinian singularity. More concretely, $\widetilde{\CC^2/\ZZ_2}\cong \hbox{T}^*\PP^1$.
 %and it is given as an incidence variety $\widetilde{\CC^2/\ZZ_2}\subset \PP^1\times\Aff^3$.
The Hilbert scheme $\Hilb^2(\widetilde{\CC^2/\ZZ_2})\cong\Bl_\Delta(\hbox{T}^*\PP^1\times \hbox{T}^*\PP^1)/\ZZ_2$.
The punctual Hilbert scheme $X=\Hilb^2_{\PP^1}(\hbox{T}^*\PP^1)\cong\PP^2\sqcup_{\PP^1}S$ where $S\cong\PP(\sO(2)\oplus\sO(-2))\cong\PP(\sO\oplus\sO(-4))$ with the base $\PP^1$ being the zero section of $\hbox{T}^*\PP^1$ and fiber of $x\in \PP^1$ being $\PP(\hbox{T}_x\hbox{T}^*\PP^1)$.
These two surfaces are glued together over a common divisor $\PP^1$ which are the proper transforms of the diagonal.
We write $\pi:\PP^2\sqcup S\to \Hilb^2$ as the natural map.
\par
The cohomology ring $H^*(X)$ of the central fiber, which is canonically isomorphic to the cohomology ring of the entire resolution, has a basis as follows.
We take the basis of $H^2(S)$ as $c_0$ and $f$, where $c_0$ is the Poincar\'{e} dual of the zero section, and $f$ is the Poincar\'{e} dual of the fiber.
We denote the canonical basis of $H^2(\PP^2)$ by $q$. Then the basis for $H^2(X)$ can be chosen as $d_1=q+2f$ and $d_2=q+c_0+2f$. (The common divisor $\PP^1$ is $2q$ in $H^2(\PP^2)$ and $c_0+4f$ in $H^2(S)$.)
We denote the fundamental class of $S$ by $s$ and the fundamental class of $\PP^2$ by $p$.
\par
There are 2 conjugacy classes of symplectic reflections in $B_2$, which give two divisors in $\Hilb^2$, i.e., $D_1$ corresponding to $s_i$ and $D_2$ corresponding to $\sigma$. We can restrict these two divisors to the central fiber and get $\pi_S^*[D_1]=[2F]$, $\pi_S^*[D_2]=[C_0+2F]$, $\pi_{\PP^2}^*[D_1]=[Q]$, and $\pi_{\PP^2}^*[D_2]=[Q]$.
\par
We will identify the sheaves $\pi^*\sV_i$ as better-known sheaves over $\PP^2$ and $S$. To identify the line bundles, we use the stratification of $\Hilb^2$ to reduce to the 2-dimensional situations, as described in Section 4 of \cite{BK04}. The stratification of $\Hilb^2(\hbox{T}^*\PP^1)$ is as the following. One open stratum, two divisors corresponding to the two classes of symplectic reflections, and one codimension 2 stratum which is the punctual Hilbert scheme $X$. We take the 2-dimmensional complimentary $W_1$ to the fixed subspace of $s_1$, and the 2-dimensional complimentary $W_2$ to the fixed subspace of $\sigma$. The restriction of the line bundles $\sV_i$ for $i=1,2,3$ to $W_1$ and $W_2$, we get that $\sV_1=\sO(D_2)$, $\sV_2=\sO(D_1)$, and $\sV_3=\sO(D_1+D_2)$ where $D_1$ is the exceptional divisor coming from the class $s_i$ and $D_2$ from $\sigma$. To identify the rank 2 vector bundle, we use the quiver picture, as will be done in the next subsection.
\end{example}

\subsection{Chern characters via quivers}
To calculate the Chern characters, we want to identify the vector bundle $\sV_\alpha$ as better-known vector bundles. For this, we look at the quiver variety that gives the central fiber of the resolution. According to \cite{Kuz01}, the resolution is the Nakajima variety associated to the quiver affine Dynkin $\widehat{A_{l-1}}$,
$$
\begin{xymatrix}
{
&v_0\ar[dl]^{X_1}&\\
v_1\ar[d]^{X_2}&&v_{l-1}\ar[ul]^{X_l}\\
v_2&\cdots&v_{l-2}\ar[u]^{X_{l-1}}
}
\end{xymatrix}
$$
with dimension vectors $v_j=2$, $w_0=1$, and $w_j=0$ for $j\neq 0$ and stability $(-1,\cdots,-1)$ (hence being stable means no invariant subrepresentations containing the image of $J:W_0\to V_0$). The McKay correspondence is fixed if we ask the sub-bundle of the tautological bundle generated by the image of $J$ to be the trivial representation.

There is a $\Gm$-action on the quiver variety. On the quiver representation level, this action is given by sending $(X,Y,I,J)$ to $(tX,t^{-1}Y,I,J)$.
Clearly the tautological bundles on the quiver variety are equivariant under this group action. And there are only finitely many fixed points. Therefore, we can calculate the second Chern classes of the tautological bundles using Atiyah-Bott-Berline-Vergne localization.

First let us look at the case when $n=2$:
$$\begin{xymatrix}{
v_0\ar@/^/[r]^{X_1}&v_1\ar@/^/[l]^{X_2}
}
\end{xymatrix}$$
with dimension vector $\dim V=(2,2)$ and $\dim W=(1,0)$. The condition $\mu_\CC(X,Y,I,J)=0$ means $X_1Y_1=Y_2X_2$ and $Y_1X_1=X_2Y_2$.
\par
The central fiber is the moduli space of the nilpotent representations of this quiver satisfying these conditions. The rank-2 vector bundle is the summand of the tautological bundle corresponding to $V_2$. There are two possibilities to get such a representation.
\par
Case 1: $\ker X_1=\ker Y_2$ both 1-dim, and do not contain $\im J$ in $V_0$. And $X_1(J)$ and $Y_2(J)$ are linearly independent in $V_1$. (Here and in what follows we don't distinguish between $J$ and $J(1)$.) We can take the basis for $V$ as follows. Take $J=(0,1)$ and any non-zero vector in $\ker X_1=\ker Y_2$ to be $(1,0)$. Take $X_1(J)$ to be $(1,0)$ and $Y_2(J)$ to be $(0,1)$. Thus, under this basis, $X_1=\left(\begin{smallmatrix}0&1\\0&0\end{smallmatrix}\right)$, $Y_2=\left(\begin{smallmatrix}0&0\\0&1\end{smallmatrix}\right)$, $Y_1=\left(\begin{smallmatrix}a&b\\0&0\end{smallmatrix}\right)$, and $X_2=\left(\begin{smallmatrix}c&a\\0&0\end{smallmatrix}\right)$. The homogeneous coordinates $[a,b,c]$ dose not depend on the choice of basis for $\ker Y_2$. Note that here we do not allow $a^2=bc$. Clearly the restriction of $\sV_4$ to this part is trivial.
\par
Case 2: $X_1(J)$ and $Y_2(J)$ are linearly independent in $V_2$, and $Y_1|_{\im X_1}=0$ $X_2|_{\im Y_2}=0$ $Y_2|_{\im X_2}=0$ and $X_1|_{\im Y_1}=0$. We take the basis for $V_1$ as before, $J=(1,0)\in V_0$, and $(0,1)\in V_0$ arbitrary. Under this basis, $X_1=\left(\begin{smallmatrix}1&x\\0&0\end{smallmatrix}\right)$, $Y_2=\left(\begin{smallmatrix}0&0\\1&w\end{smallmatrix}\right)$, $Y_1=\left(\begin{smallmatrix}0&a\\0&b\end{smallmatrix}\right)$, and $X_2=\left(\begin{smallmatrix}c&0\\d&0\end{smallmatrix}\right)$. These coordinates satisfies the relations $a+bx=0$ and $c+dw=0$. The homogeneous coordinates $[b,d]$ does not depend on the choice of basis hence form a $\PP^1$. All such representations form the total space of the bundle $\sO(1)$ over $\PP^1$. Clearly the restriction of $\sV_4$ to this part is also trivial.
\par
There is a copy of $\PP^1$ sitting as the boundaries of both quasi-projective varieties above. It correspond to the common devisor $\PP^1$ in $\PP^2$ and $S$. This type of representations have $X_1$ proportional to $Y_2$ and $\rank X_2$, $\rank Y_1\leq 1$. Hence $X_2$ is proportional to $Y_1$. The restriction of $\sV_4$ to this part is $\sO(1)\oplus\sO(3)$.
\par

The weights of the tautological bundle and the tangent bundle at the fixed points on the irreducible component $\PP^2$ are summarized in the following table.

\begin{tabular}{|c|c|c|c|}

  \hline
  % after \\: \hline or \cline{col1-col2} \cline{col3-col4} ...

  Fixed point &$[1,0,0]$ & $[0,1,0]$ & $[0,0,1]$ \\
  \hline
  $e^{\Gm}(\hbox{T}\PP^2)$ & $8u^2$ & $8u^2$ & $-4u^2$ \\
  $c_2^{\Gm}(\sV)$&$3u^2$ & $3u^2$ & $-u^2$ \\
  \hline
\end{tabular}

The weights of the tautological bundle and the tangent bundle at the fixed points on the irreducible component $S=\PP(\hbox{T}\PP^1\oplus \hbox{T}^*\PP^1)$ are summarized in the following table.

\begin{tabular}{|c|c|c|c|c|}

  \hline
  % after \\: \hline or \cline{col1-col2} \cline{col3-col4} ...

  Fixed point &$[1,0]$ on $\PP(\hbox{T}\PP^1)$ & $[1,0]$ on $\PP(\hbox{T}^*\PP^1)$  & $[0,1]$ on $\PP(\hbox{T}\PP^1)$ &$[0,1]$ on $\PP(\hbox{T}\PP^1)$  \\
  \hline
  $e^{\Gm}(\hbox{T}S)$ & $-8u^2$ & $8u^2$ & $-8u^2$ & $8u^2$ \\
  $c_2^{\Gm}(\sV)$&$3u^2$ & $-u^2$&$3u^2$ & $-u^2$ \\
  \hline
\end{tabular}

Now we can calculate that $c_2(\sV_4)=s+p$ and $c_1(\sV_4)=2q+4f+c_0$ ($c_1(\sV_4)$ can also be obtained by using the stratification and passing to the dimension 2 case).

Summarizing all the discussions above in this section we get.
\begin{lemma}
The chern characters of $\sV_i$'s are as follows:
\begin{eqnarray*}
\ch(\sV_0)&=&1;\\
\ch(\sV_1)&=&1+c_0+2f+q+p/2;\\
\ch(\sV_2)&=&1+2f+q+p/2;\\
\ch(\sV_3)&=&1+c_0+4f+2q+2p+2s;\\
\ch(\sV_4)&=&2+2q+4f+c_0+p+s.
\end{eqnarray*}
\end{lemma}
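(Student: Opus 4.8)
The plan is to split the five bundles into the trivial line bundle, the three other line bundles, and the rank--two bundle $\sV_4$. For $\sV_0$ there is nothing to prove: the normalization of the symplectic McKay correspondence (the sub-bundle of the tautological bundle on the quiver variety generated by the image of the framing is the trivial representation) forces $\sV_0=\sO_{\Hilb^2}$, so $\ch(\sV_0)=1$. For $\sV_1,\sV_2,\sV_3$, which are line bundles because they come from one-dimensional representations, I would first identify them as $\sV_1=\sO(D_2)$, $\sV_2=\sO(D_1)$, $\sV_3=\sO(D_1+D_2)$, where $D_1$, $D_2$ are the Weil divisors on $\Hilb^2$ attached to the conjugacy classes of the symplectic reflections $s_1$ and $\sigma$. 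This uses the standard reduction of \cite{BK04}: the stratification of $\Hilb^2$ (open stratum, the two reflection divisors, the punctual Hilbert scheme) lets one restrict to the two-dimensional transverse slices to the fixed loci of $s_1$ and of $\sigma$, each a minimal resolution of a Kleinian singularity on which the relevant McKay line bundle is $\sO$ of the exceptional curve. Restricting divisor classes to the central fibre $X=\PP^2\sqcup_{\PP^1}S$ gives $D_1|_X=d_1=q+2f$ and $D_2|_X=d_2=q+c_0+2f$; then $\ch(\sO(D))=e^{[D]}=1+[D]+\tfrac12[D]^2$, the higher terms vanishing for degree reasons. Computing squares in $H^*(X)=H^*(\PP^2)\times_{H^*(\PP^1)}H^*(S)$ via $q^2=p$, $c_0^2=-4s$, $c_0f=s$, $f^2=0$ gives $d_1^2=d_2^2=p$ and $d_1d_2=p+2s$, and substitution yields the claimed $\ch(\sV_1)$, $\ch(\sV_2)$, $\ch(\sV_3)$.

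The real content is $\sV_4$. I would realize $\Hilb^2=\Hilb^2(\widetilde{\CC^2/\ZZ_2})$ as the Nakajima quiver variety of the affine quiver $\widehat{A_1}$ with $\dim V=(2,2)$, $\dim W=(1,0)$ and stability $(-1,-1)$, and identify $\sV_4$ with the rank-two tautological bundle at the unframed vertex (the bundle denoted $\sV$ in the fixed-point tables), the normalization of the correspondence again selecting the correct tautological summand. The scaling action $(X,Y,I,J)\mapsto(tX,t^{-1}Y,I,J)$ makes the tautological bundles and the tangent bundle $\Gm$-equivariant with finitely many fixed points, all on the central fibre $X=\PP^2\sqcup_{\PP^1}S$ and located on the two irreducible components $\PP^2$ and $S=\PP(T\PP^1\oplus T^*\PP^1)$ by explicit normal forms for the quiver data. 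Recording the $\Gm$-weights of the tangent space and of the tautological fibre at each fixed point (tabulated as equivariant Euler classes $e^{\Gm}(TX)$ and equivariant classes $c_2^{\Gm}(\sV)$) and feeding them into Atiyah--Bott--Berline--Vergne localization recovers the honest Chern classes $c_1(\sV_4)=2q+4f+c_0=d_1+d_2$ and $c_2(\sV_4)=s+p$. Then $\ch(\sV_4)=2+c_1(\sV_4)+\tfrac12\bigl(c_1(\sV_4)^2-2c_2(\sV_4)\bigr)$, and with $(d_1+d_2)^2=4p+4s$ this collapses to $2+2q+4f+c_0+p+s$.

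The one genuinely delicate step is the fixed-point computation for $\sV_4$: one must pin down the quiver normal forms and the right tautological summand, then track the $\Gm$-weights together with the sign conventions --- Hartshorne's $c_0^2=-\deg$ on the ruled surface, and the gluing of $X$ under which the common $\PP^1$ is $2q$ on $\PP^2$ but $c_0+4f$ on $S$ --- since an error there corrupts the localization sum. As a consistency check I would recompute $c_1(\sV_4)$ independently by the same two-dimensional reduction used for the line bundles; it must agree with the localization output. This in particular reproves the $l=2$ case of Proposition~\ref{prop: chern char}.
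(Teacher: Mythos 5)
Your proposal is correct and follows essentially the same route as the paper: the line bundles $\sV_1,\sV_2,\sV_3$ are identified with $\sO(D_2)$, $\sO(D_1)$, $\sO(D_1+D_2)$ via the stratification/two-dimensional reduction of \cite{BK04} and their Chern characters computed from the restrictions $d_1=q+2f$, $d_2=q+c_0+2f$ to the central fibre, while $c_1(\sV_4)=2q+4f+c_0$ and $c_2(\sV_4)=s+p$ are obtained from the $\widehat{A_1}$ quiver description with the $\Gm$-action and Atiyah--Bott--Berline--Vergne localization, exactly as in the paper. Your intersection-number bookkeeping ($d_1^2=d_2^2=p$, $d_1d_2=p+2s$) and the resulting Chern characters agree with the stated formulas, and your consistency check of $c_1(\sV_4)$ by the dimension-two reduction is the same remark the paper makes.
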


\begin{remark}
In the calculation we use the stability condition $(-1,\dots,-1)$. The Hilbert scheme of point correspondes to Nakajima quiver variety but with a different stability condition. It is not hard to see, by analysing the weights to the tangent spaces, that the fixed points on the Hilbert scheme matche up with the above table.
\end{remark}

The natural embedding of $Hilb^2_0(\hbox{T}^*\PP^1)$ into $\Hilb^2_C(\widetilde{\Aff^2/\ZZ_l})$  as $S_i\coprod\PP^2_i$ can be constructed on the quiver level. For any representation $\xi$ of $\widehat{A_1}$ that lie in $Hilb^2_0(\hbox{T}^*\PP^1)$, we associate a presentation $\eta$ of $\widehat{A_{l-1}}$ as follows. Take $X_1,\cdots,X_{i-1}$ and $X_{i+2},\cdots,X_{l}$ all to be the identity. Identify $V_0,\cdots,V_{i-1},V_{i+1},\cdots,V_{l-1}$ using those $X$ maps which we take to be the identity. Let the $X_i$ map of $\eta$ to be the $X_1$ map of $\xi$, using the identification of $V_0$ and $V_{i-1}$ as above. Similarly, let the $Y_i$ map of $\eta$ to be the $Y_1$ map of $\xi$; the $X_{i+1}$ map of $\eta$ to be the $X_2$ map of $\xi$; the $Y_{i+1}$ map of $\eta$ to be the $Y_2$ map of $\xi$.  Then, the ADHM equation uniquely determines $Y_1,\cdots,Y_{i-1}$ and $Y_{i+2},\cdots,Y_{l}$. Note that this embedding is not equivariant under the $\Gm$-action.

Now we look at the $n=3$ case.

We express the open part of the $P_{1,2}$ component, when $X_1$ and $X_3$ has rank 1, in terms of quiver varieties. In this case, the representations have $\ker X_1=\im Y_1=\ker Y_3=\im X_3$, $\ker X_2=\im Y_2$, and $\im X_2=\ker Y_2$. We choose a basis for $V_0$ to be $J(1)$ and an arbitrary non-zero vector in $\im Y_1$. We take the basis for $V_1$ to be $X_1J(1)$ and $Y_2Y_3J(1)$, and the basis for $V_2$ to be $Y_3J(1)$ and $X_2X_1J(1)$. Let $Y_1=\left(\begin{smallmatrix}0&0\\a&b\end{smallmatrix}\right)$, $X_3=\left(\begin{smallmatrix}0&0\\c&a\end{smallmatrix}\right)$.
Under this choice of basis, the representations are determined by the homogenous coordinates $[a,b,c]$. An open part of this $\PP^2$ is the open set of the component $P_{1,2}$ we are looking for.
There is one fixed point in this open set which corresponds to $([1:0],[0:1])\in \PP^1\times\PP^1$. The other fixed points all lie in the intersection of $P_{1,2}$ with $\PP^2_1$ or $\PP^2_2$, and are not in this open set.

\noindent\begin{tabular}{|c|c|c|c|c|c|}

  \hline
  % after \\: \hline or \cline{col1-col2} \cline{col3-col4} ...

  Fixed point &$[0,1,0]$ on $\PP^2_1$ & $[0,0,1]$ on $\PP^2_1$  & $[1,0,0]$ on $\PP^2_2$ &$[0,0,1]$ on $\PP^2_2$& $([1,0],[0,1])$ on $\PP^1\times\PP^1$ \\
  \hline
  $e^{\Gm}(\hbox{T}P_{1,2})$ & $-8u^2$ & $4u^2$ & $-8u^2$ & $4u^2$ & $-4u^2$\\
  $c_2^{\Gm}(\sV)$&$-2u^2$ & $-2u^2$&$-2u^2$ & $-2u^2$ & $-2u^2$\\
  \hline
\end{tabular}

For $l\geq3$, the natural embedding of $\Hilb^3_C(\widetilde{\Aff^2/\ZZ_3})$ into $\Hilb^2_C(\widetilde{\Aff^2/\ZZ_l})$ as $S_i\coprod S_{i+1}\coprod\PP^2_i\coprod\PP^2_{i+1}\coprod\PP_{i,i+1}$ gives the second Chern classes of the bundle $\sV_{0,k}$ on any of the components $ P_{i,i+1}$.

\begin{lemma}The second Chern class of the rank 2 vector bundle $\sV_{0,i}$ is:
\[c_2(\sV_{0,i})=s_i+p_i.\]
\end{lemma}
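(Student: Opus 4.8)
The plan is to compute $c_2(\sV_{0,i})$ by restricting to the irreducible components of the punctual Hilbert scheme $X=\Hilb^2_C(\widetilde{\Aff^2/\ZZ_l})$. Recall from the discussion above that $H^4(\Hilb^2;\QQ)=H^4(X;\QQ)$ has a basis given by the classes $p_j$ of $\PP^2_j$, $s_j$ of $S_j$, and $p_{jk}$ of $P_{jk}$. Since any two components meet along a curve, whose $H^3$ and $H^4$ vanish, a Mayer--Vietoris argument identifies $H^4(X;\QQ)$ with $\bigoplus_Z H^4(Z;\QQ)$ compatibly with restriction, so the coefficient of $p_j$ (resp.\ $s_j$, $p_{jk}$) in a class $\xi\in H^4(X;\QQ)$ is read off from $\xi|_{\PP^2_j}$ (resp.\ $\xi|_{S_j}$, $\xi|_{P_{jk}}$). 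It therefore suffices to show that $c_2(\sV_{0,i})$ restricts to the point class on $\PP^2_i$ and on $S_i$, and to $0$ on every other component.

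First I would dispose of all components not built from $C_i$, together with the non-adjacent $P_{i,j}$. The relevant point is that the $i$-th tautological line bundle $\sO_i$ on $\widetilde{\Aff^2/\ZZ_l}$ has $\deg(\sO_i|_{C_j})=\delta_{ij}$, so $\sO_i|_{C_j}\cong\sO_{\PP^1}$ for $j\neq i$. Combining the formula $Rq_*p^*\sV_{0,i}=\sO\boxtimes\sO_i\oplus\sO_i\boxtimes\sO$ from the proof of Lemma~\ref{lem: k-grp eq} with the stratification reduction to two-dimensional resolutions of \cite[\S4]{BK04}, one finds that $\sV_{0,i}$ restricts to a trivial rank-$2$ bundle on $\PP^2_j$ and $S_j$ for $j\neq i$, and to $\proj_1^*(\sO_i|_{C_i})\oplus\proj_2^*(\sO_i|_{C_j})$ on $P_{i,j}$ with $j$ not adjacent to $i$; in each case the presence of a trivial factor forces $c_2=0$.

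Next come $\PP^2_i$ and $S_i$, which assemble into the copy of $\Hilb^2_0(\hbox{T}^*\PP^1)=\PP^2\sqcup_{\PP^1}S$ embedded by the quiver-level map $\iota$ described above. I claim $\iota^*\sV_{0,i}\cong\sV_4$, the rank-$2$ McKay bundle of the $B_2$ case: under $\iota$ the tautological bundle at vertex $i$ of $\widehat{A_{l-1}}$ pulls back to the tautological bundle at the unframed vertex of $\widehat{A_1}$, and the construction of the McKay bundles from the (symmetrized) tautological bundles is compatible with this, matching the $2$-dimensional irrep $(\rho_0,\rho_i)$ of $(\ZZ_l)^2\rtimes\fS_2$ with the reflection representation $\hh$ of $B_2$. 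Since $\iota$ is an isomorphism onto $\PP^2_i\cup S_i$ and $c_2(\sV_4)=s+p$ was already computed, this gives $c_2(\sV_{0,i})|_{\PP^2_i\cup S_i}=s_i+p_i$; note $\iota$ is not $\Gm$-equivariant, so the role of this step is to transport an already-known value rather than to redo a localization. Finally, for the adjacent components $P_{i,i+1}$ and $P_{i-1,i}$ the restriction of $\sV_{0,i}$ is genuinely non-split, but these sit inside the image of the embedding $\Hilb^3_C(\widetilde{\Aff^2/\ZZ_3})\hookrightarrow\Hilb^2_C(\widetilde{\Aff^2/\ZZ_l})$, under which $\sV_{0,i}$ becomes the bundle $\sV$ of the $n=3$, $\ZZ_3$ quiver variety; that variety carries its own $\Gm$-action with isolated fixed points, so Atiyah--Bott--Berline--Vergne localization gives $\int_{P_{1,2}}c_2(\sV)=\sum_{\mathrm{fixed}} c_2^{\Gm}(\sV)/e^{\Gm}(\hbox{T}P_{1,2})=\tfrac14-\tfrac12+\tfrac14-\tfrac12+\tfrac12=0$ from the table above, hence $c_2(\sV_{0,i})|_{P_{i,i\pm1}}=0$. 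Collecting the contributions over all components yields $c_2(\sV_{0,i})=s_i+p_i$.

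The step I expect to be the main obstacle is the identification $\iota^*\sV_{0,i}\cong\sV_4$: it requires following the tautological bundles of the $\widehat{A_{l-1}}$ Nakajima variety through the non-equivariant embedding and reconciling this with the symplectic McKay correspondence of \cite{BK04}. The remaining steps are either elementary cohomology or reductions to the two $\Gm$-fixed-point tables already in hand.
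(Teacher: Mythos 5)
Your route is the paper's route: compute on the $\widehat{A_1}$ quiver variety by $\Gm$-localization to get $c_2(\sV_4)=s+p$, transport this to $\PP^2_i\cup S_i$ through the (non-equivariant) quiver-level embedding, and kill the coefficient of $p_{i,i\pm1}$ by the five-fixed-point localization table on $P_{1,2}$ in the $\ZZ_3$ case via the second embedding; the Mayer--Vietoris bookkeeping and the explicit treatment of the components away from $C_i$ are the only additions, and there you are in fact more explicit than the paper, which leaves those components tacit.

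One intermediate claim, however, is wrong as stated: $\sV_{0,i}$ does \emph{not} restrict to a trivial rank-$2$ bundle on $\PP^2_j$ or $S_j$ for $j\neq i$. Its determinant is $\sO(D_0+D_i)$, and by the paper's own divisor computation $d_0$ restricts to $q_j$ on $\PP^2_j$, to $c_j+2f_j$ on $S_j$, and to $e_j$ on $P_{j,j+1}$, so $c_1(\sV_{0,i})$ is nonzero on all of these (the same already happens for $\sV_4$ in the $B_2$ example, where $c_1(\sV_4)=2q+4f+c_0$). What your argument via $Rq_*p^*\sV_{0,i}=\sO\boxtimes\sO_i\oplus\sO_i\boxtimes\sO$ and $\deg(\sO_i|_{C_j})=\delta_{ij}$ actually yields on these components is a splitting with a \emph{trivial summand} (in $K$-theory the class is $[\sO]+[\sO(D_0)]$ there), which is exactly what you need: $c_2$ of a rank-$2$ bundle with a trivial line-bundle factor vanishes, so the coefficients of $p_j$, $s_j$ ($j\neq i$) and of $p_{j,k}$ with $i\notin\{j,k\}$ are still zero. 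So the gap is only in the phrasing ``trivial rank-$2$ bundle''; replace it by ``has a trivial line-bundle summand'' (argued the same way) and the proof goes through, with the identification $\iota^*\sV_{0,i}\cong\sV_4$ remaining, as you say, the step that genuinely needs the compatibility of the tautological bundles with the McKay labelling -- which is also the point the paper itself passes over quickly.
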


\begin{proof}[Proof of Proposition~\ref{prop: chern char}]
By Lemma~\ref{lem: k-grp eq} we have
\begin{eqnarray*}
\ch(\sV_{i,j})&=&\ch(\sV_i)\cdot(\ch(\sV_0)+\ch(\sV_\sigma)+\ch(\sV_{0,i})-\ch(\sV_{0,j}))\\
&=&(1+d_i+p_i/2)\cdot(2+d_0+d_j-d_i+p_j/2-p_i/2+s_j-s_i+1/2(\sum p_i-\sum p_{j,j+1}))\\
&=&2+d_j+d_i+d_0+\sum p_k/2-\sum p_{k,k+1}/2+p_{i,j}+p_i/2+p_j/2+s_i+s_j.
\end{eqnarray*}
\end{proof}

\section{The central charge}\label{sec: central_charge}

As has been mentioned above, the central charge can be obtained from the knowledge of the Chern character map. In this section, we illutrate this in the special case $n=2$ and $\Gamma_1=\ZZ/l$.
\subsection{The $B_2$-case}\label{subsec: central_charge_B2}
%As can be confirmed from $\Ext^\bullet(\sV_i,-)$, $\sL_3\cong \sO_{\PP^2}(-1)$ and $\sL_0\cong\sO_{\PP^2}$.
%\par

By calculation it is easy to know that $$\ch(\sV_4)=1/2\ch(\sV_1)+3/2\ch(\sV_2)+1/2\ch(\sV_1\otimes\sV_2)-1/2\ch(\sV_2^2).$$
\par
For any coherent sheaf $\mathscr{F}$ on $X$, we consider the linear functional on $H^*(\Hilb^2)$ defined as $\mathfrak{l}_\mathscr{F}(\ch(\mathscr{M}))=\chi(\mathscr{F}\otimes\pi^*\mathscr{M})$ for $\mathscr{M}\in Coh(\Hilb^2)$. We now calculate the polynomials $$\mathfrak{l}_{\sL_j}(a,b):=\mathfrak{l}_{\sL_j}(\ch(\sO(aD_1+bD_2))).$$ This polynomial can be written as $$\mathfrak{l}_\mathscr{F}(a,b)=\mathfrak{l}_\mathscr{F}(1)+a \mathfrak{l}_\mathscr{F}(d_1)+b \mathfrak{l}_\mathscr{F}(d_2)+(a^2+b^2+2ab)\mathfrak{l}_\mathscr{F}(p/2)+4ab\mathfrak{l}_\mathscr{F}(s/2).$$ We denote the coefficients by $C_{\mathscr{F}}^0=\mathfrak{l}_\mathscr{F}(1)$, $C_{\mathscr{F}}^1=\mathfrak{l}_\mathscr{F}(d_1)$, $C_{\mathscr{F}}^2=\mathfrak{l}_\mathscr{F}(d_2)$, $C_{\mathscr{F}}^3=\mathfrak{l}_\mathscr{F}(s)$, and $C_{\mathscr{F}}^4=\mathfrak{l}_\mathscr{F}(p)$.
\par
We know that $\chi(\sHom(\sV_i,\sL_j))=\delta_{i,j}$ for $i,j=0,\cdots,4$. This tells us the values of the polynomials $\mathfrak{l}_{\sL_j}$ at the points $(0,0)$, $(-1,0)$, $(0,-1)$, $(-1,-1)$ and $(-2,0)$. Now we plug-in and solve these systems of linear equations to get

$$
  \begin{array}{ccccc}
    C_0^0=1 & C_0^1=3/2 & C_0^2=3/2 & C_0^3=1/2 & C_0^4=0 \\
    C_1^0=0 & C_1^1=1/2 & C_1^2=-1/2 & C_1^3=1/2 & C_1^4=-1/2 \\
    C_2^0=0 & C_2^1=-1/2 & C_2^2=1/2 & C_2^3=1/2 & C_2^4=-1/2 \\
    C_3^0=0 & C_3^1=1/2 & C_3^2=1/2 & C_3^3=1/2 & C_3^4=0 \\
    C_4^0=0 & C_4^1=-1 & C_4^2=-1 & C_4^3=-1 & C_4^4=1/2 \\
  \end{array}
$$
\par
Plug-in these coefficients and we get
\begin{eqnarray*}
\mathfrak{l}_{\sL_0}&=&1/2(a+b+1)(a+b+2);\\
\mathfrak{l}_{\sL_1}&=&1/2(a-b+1)(a-b);\\
\mathfrak{l}_{\sL_2}&=&1/2(a-b-1)(a-b);\\
\mathfrak{l}_{\sL_3}&=&1/2(a+b+1)(a+b);\\
\mathfrak{l}_{\sL_4}&=&-a-b-a^2-b^2.
\end{eqnarray*}
Now we define the dimension polynomials $$P_{\sL_j}(a,b)=\chi(\mathscr{E}_0\otimes\sL_j\otimes\sO(a,b)),$$ where $\mathscr{E}_0$ is the self-dual splitting bundle on $\Hilb^2$ in \cite{BFG06}. (Their reparametrizations $P_{\sL_j}(\alpha,\beta)$ with $\alpha=ap$ and $\beta=bp$ gives the dimension of the irreducible objects.)
They can be computed as $$\chi(\mathscr{E}_0\otimes\sL_j\otimes\sO(a,b))=\sum_i[V_i:(k[x]/(x^p))^{\otimes2}]\chi(\sV_i^*\otimes\sL_j\otimes\sO(a,b)),$$ where $V_i$'s are the irreducible representations of $B_2$.
We plug-in the equality in $K_{\mathbb{Q}}$ that $$\sV_4=1/2\sV_1+3/2\sV_2+1/2\sV_3-1/2\sO(2,0),$$ and get $P_{\sL_j}(a,b)={{p^2-1}\over{8}}\mathfrak{l}_{\sL_j}(a,b)+{{p^2-2p-3}\over{4}}\mathfrak{l}_{\sL_j}(a,b-1)+{{p^2-p}\over{2}}\mathfrak{l}_{\sL_j}(a-1,b)+{{p^2-4p+3}\over{4}}\mathfrak{l}_{\sL_j}(a-1,b-1)-{{p^2-1}\over{8}}\mathfrak{l}_{\sL_j}(a-2,b)$.
%\gufang{Would it be like this, or it should be $P_{\sL_j}(a,b)={{p^2-1}\over{8}}\mathfrak{l}_{\sL_j}(a/p,b/p)+{{p^2-2p-3}\over{4}}\mathfrak{l}_{\sL_j}(a/p,(b-1)/p)+\cdots$ (i.e., the arguments in the polynomials $\mathfrak{l}_{\sL_j}$ are to be divided by $p$), and we take the central charge as $Z_j(a,b)=\lim_{p\to\infty}1/p^2P_{\sL_j}(ap,bp)$. If it should be this, then we would get $Z_j(a,b)=\mathfrak{l}_{\sL_j}(a,b)$.}

Then we define the central charge polynomials as $Z_j(a,b)=\lim_{p\to\infty}1/p^2P_{\sL_j}(a,b)$. An easy calculation shows
\begin{eqnarray*}
Z_0&=&1/8(2a+2b+1)^2;\\
Z_1&=&1/8(-2a+2b-1)^2;\\
Z_2&=&1/8(-2a+2b+1)^2;\\
Z_3&=&1/8(2a+2b-1)^2;\\
Z_4&=&-1/4(4a^2+4b^2-1).
\end{eqnarray*}

One can see that $Z_4$ is an irreducible polynomial and its real zero locus is a circle, and it takes positive values in the region bounded by the zero locus of $Z_0,\cdots, Z_3$.
\par

\subsection{The $\Gamma_1=\ZZ/l$-case}
We solve for the geometric basis of $H^*(\Hilb^2_{\ZZ/l},\QQ)$ in terms of $\ch(\sV_\alpha)$.
\begin{eqnarray*}
s_i&=&\ch(\sV_{0,i})-\ch(\sV_\sigma)-\ch(\sV_i);\\
p_i&=&\ch(\sV_0)+\ch(\sV_{\sigma,i})+\ch(\sV_\sigma)+\ch(\sV_i)-2\ch(\sV_{0,i});\\
p_{i,j}&=&\ch(\sV_\sigma)+\ch(\sV_0)+\ch(\sV_{i,j})-\ch(\sV_{0,i})-\ch(\sV_{0,j});\\
d_i&=&\ch(\sV_i)/2+\ch(\sV_{0,i})-\ch(\sV_{\sigma,i})/2-3/2\ch(\sV_{0})-\ch(\sV_\sigma)/2;\\
d_0&=&-3/2\ch(\sV_0)+1/2\ch(\sV_\sigma)+1/2\ch(\sV_{0,1})+1/2\ch(\sV_{0,l-1})\\
&&-\sum\ch(\sV_{i})/2-\sum\ch(\sV_{\sigma,i})/2+1/2\sum_{j=1}^{l-2}\ch(\sV_{j,j+1});\\
1&=&\ch(\sV_0).
\end{eqnarray*}

We have \[\ch(\sO(n_0D_0+\sum n_iD_i))\]\[=1+n_0d_0+\sum_in_id_i+n_0^2/2(\sum p_i-\sum p_{i,i+1})+\sum n_i^2/2p_i+\sum_{i> j}n_in_jp_{i,j}+\sum_in_0n_i(p_i+2s_i).\]
Hence, the polynomials $\mathfrak{l}_{\sL_\alpha}$ can be calculated as below.
\begin{eqnarray*}
\mathfrak{l}_{\sL_0}&=&1/2(n_0+\sum n_i-1)(n_0+\sum n_i-2);\\
\mathfrak{l}_{\sL_\sigma}&=&1/2(n_0-\sum n_i+1)(n_0-\sum n_i);\\
\mathfrak{l}_{\sL_i}&=&1/2(n_i-n_0)(n_i-n_0+1);\\
\mathfrak{l}_{\sL_{\sigma,i}}&=&1/2(n_0+n_i)(n_0+n_i-1);\\
\mathfrak{l}_{\sL_{0,i}}&=&1/2n_0(1-n_0)+n_i(1-\sum_{j=1}^{l-1} n_j)\hbox{ if $i=1$ or $l-1$};\\
\mathfrak{l}_{\sL_{0,i}}&=&n_i(1-\sum_{j=1}^{l-1} n_j)\hbox{ if $i\neq1$ or $l-1$};\\
\mathfrak{l}_{\sL_{j,i}}&=&1/2n_0(1-n_0)+n_in_j\hbox{ if $i-j=\pm 1$};\\
\mathfrak{l}_{\sL_{j,i}}&=&n_in_j\hbox{ otherwise}.
\end{eqnarray*}

Also, there is a basis given by Chern character of line bundles, which can be chosen as $\ch\sV_i$, $\ch\sV_{\sigma,i}$, $\ch\sV_{i}^2$, and $\ch(\sV_i)\cdot\ch(\sV_j)$.
As can be easily checked,
\[\ch(\sV_i^2)=1+2d_i+2p_i,\]
\[\ch(\sV_i)\cdot\ch(\sV_j)=1+d_i+d_j+1/2(p_i+p_j)+p_{i,j}.\]
In the group $K_{\QQ}$, we have the change of basis
\[\sV_{0,i}=3/2\sV_i+1/2\sV_\sigma+1/2\sV_{\sigma,i}-1/2\sV_i^2,\]
\[\sV_{i,j}=\sV_i\cdot\sV_j+1/2\sV_{\sigma,i}-1/2\sV_i^2+1/2\sV_i+1/2\sV_{\sigma,j}-1/2\sV_j^2+1/2\sV_j.\]

We will decompose $(k[x]/x^p)^2$ into isotypical components. But in the decomposition, we only care about the behavior for $p$ large enough. Therefore, we will keep only highest order terms with respect to $p$ in the multiplicities of the irreducible representations. the multiplicity of irreducibles is  $[(k[x]/x^p)^2:V_{i,j}]=(p/l)^2$ for $i\neq j$, and $[(k[x]/x^p)^2:V_i]=[(k[x]/x^p)^2:V_{\sigma,i}]=1/2(p/l)^2$.

The dimensional polynomials are, forgetting terms involving $p$'s power less than or equal to 2, \begin{eqnarray*}P_{\sL_{\alpha}}(n_0,n_1,\dots,n_{l-1})&=&(p/l)^2(1/2\mathfrak{l}_\alpha(n_0,n_1,\dots,n_{l-1})+l/2\mathfrak{l}_\alpha(n_0-1,n_1,\dots,n_{l-1})\\
&+&\sum_{i=1}^{l-1}(l+2)/2\mathfrak{l}_\alpha(n_0,n_1,\dots,n_i-1,\dots,n_{l-1})\\
&+&\sum_{i=1}^{l-1}l/2\mathfrak{l}_\alpha(n_0-1,n_1,\dots,n_i-1\dots,n_{l-1})\\
&+&\sum_{i=1}^{l-1}-(l-1)/2\mathfrak{l}_\alpha(n_0,n_1,\dots,n_i-2,\dots,n_{l-1})\\
&+&\sum_{i>j}\mathfrak{l}_\alpha(n_0,n_1,\dots,n_j-1,\dots,n_i-1,\dots,n_{l-1})).
\end{eqnarray*}

So the central charge polynomials are
\begin{eqnarray*}
Z_0&=&1/2((n_0+\sum n_i)-(3-1/l))^2;\\
Z_\sigma&=&1/2((n_0-\sum n_i)+(1-1/l))^2;\\
Z_i&=&1/2((n_i-n_0)+(1-1/l))^2;\\
Z_{\sigma,i}&=&1/2((n_0+n_i)-(1+1/l))^2\\
Z_{0,i}&=&-l/2(n_0-1)^2-(n_i-1/l)(\sum n_j-(2-1/l))\hbox{ if $i=1$ or $l-1$};\\
Z_{0,i}&=&-(n_i-1/l)(\sum n_j-(2-1/l))\hbox{ if $i\neq1$ or $l-1$};\\
Z_{j,i}&=&-l/2(n_0-1)^2+(n_i-1/l)(n_j-1/l)\hbox{ if $i-j=\pm 1$};\\
Z_{j,i}&=&(n_i-1/l)(n_j-1/l)\hbox{ otherwise}.
\end{eqnarray*}

\section{The $t$-structures associated to alcoves}\label{sec: t-struct_alco}
In this section we study the $t$-structures associated to the alcoves in the affine hyperplane arrangement defined in Section~\ref{sec: quant_char_p}. 

In the alcove containing the origin, we associate the $t$-structure  coming from the derived equivalence $$D^b(\Coh\Hilb^n)\cong D^b(\Coh_{\Gamma_n}\Aff^{2n}).$$
We need to find out the $t$-structures associated to other alcoves. We start with the case when $\Gamma_n\cong B_2$.

\subsection{The $B_2$-case}
We need to calculate the Ext's among the simple objects in $Coh_{B_2}\hbox{-}\CC^4$. The result is summarized as follows: $$\Ext^\bullet(\sL_i,\sL_j)=$$
\begin{tabular}{|c|c|c|c|}

  \hline
  % after \\: \hline or \cline{col1-col2} \cline{col3-col4} ...
  & \hbox{$\sL_i=\sL_j$ both rank 1} & \hbox{$\sL_i\neq\sL_j$ both rank 1}  &  \hbox{$\sL_i\neq\sL_j$ one of them has rank 2} \\
  \hline
  $\deg=0$&$\CC$ & 0& 0 \\
  $\deg=1$&0 & 0 & $\CC^2$ \\
  $\deg=2$&$\CC$ & $\CC$ or $\CC^3$& 0 \\
  $\deg=3$&0 & 0 & $\CC^2$ \\
  $\deg=4$&$\CC$ & 0 & 0 \\
  \hline
\end{tabular}

In the case that both $\sL_i$ and $\sL_j$  have rank 1,  $\Ext^2(\sL_i,\sL_j)$ is $\CC^3$ only in the following cases:  One of $i$, $j$ is 0 and the other is 3, or one of them is 1 and the other is 2.
The $\Ext^\bullet(\sL_4,\sL_4)$ is $\CC$ in degree 0 and 4, $\CC^6$ in degree 2, and zero otherwise.

The bilinear pairing $\langle [A],[B]\rangle:=\sum_i(-1)^i\Ext^i(A,B)$ can be expressed under the basis $\{\sL_j\}$ as $$\left(\begin{matrix}
3&1&1&3&-4\\
1&3&3&1&-4\\
1&3&3&1&-4\\
3&1&1&3&-4\\
-4&-4&-4&-4&8
\end{matrix}\right)$$

Let us look at what happens when cross the wall defined by $Z_0=0$. The new abelian category has the same Grothendieck group. The simple objects have classes $[\sL_0]$, $[\sL_1]-[\sL_0]$, $[\sL_2]-[\sL_0]$, $[\sL_3]-3[\sL_0]$, and  $[\sL_4]+2[\sL_0]$.
One can calculate the dual basis to find the classes of their projective covers in the Grothendieck group. They are $[\sV_0]+[\sV_1]+[\sV_2]+3[\sV_3]-2[\sV_4]$, $[\sV_1]$, $[\sV_2]$, $[\sV_3]$, and $[\sV_4]$ respectively.
One can also see, through the bilinear pairing under the new basis, that the new abelian category is not Morita equivalent to the original one, as the bilinear form $$\left(\begin{matrix}
3&-2&-2&-6&2\\
-2&4&4&4&-4\\
-2&4&4&4&-4\\
-6&4&4&12&-4\\
2&-4&-4&-4&4

\end{matrix}\right)$$ does not differ from the original one by a permutation.

The central charge polynomials corresponding to the simple objects in this abelian heart can be obtained from the original ones. More explicitly, they are, respectively,
\begin{eqnarray*}
Z_0&=&1/8(2a+2b+1)^2;\\
Z_1-Z_0&=&-(2a+1)b;\\
Z_2-Z_0&=&-a(2b+1);\\
Z_3-3Z_0&=&1/8((2a+2b-1)^2-3(2a+2b+1));\\
Z_4+2Z_0&=&1/2(2a+1)(2b+1).
\end{eqnarray*}
One can see that $Z_3-3Z_0$ takes positive values in the region bounded by the other 4 polynomials.

In fact, we can do iterated (right) tilting with respect to the simple object $\sL_0$. The intermediate $t$-structure $R_{\sL_0}\catA$ has simple objects $\sL_0[1]$, $\sL_j$ for $j=1,2,3$, and $\sL_4^1$ fitting into the short exact sequence $$0\to \Ext^1(\sL_4,\sL_0)^*\otimes\sL_0\to \sL_4^1\to \sL_4\to 0.$$
The classes of their potential projective covers are $-[\sV_0]+2[\sV_4]$, $[\sV_1]$, $[\sV_2]$, $[\sV_3]$, and $[\sV_4]$ respectively. We can try to do the truncated mutation with respect to $\sV_0$. We get $\sV_j^1=\sV_j$ for $j\neq 0$, and $\sV_j^0$ being the cokernel of $\sV_0\to\Hom(\sV_0,\sV_4)^*_1\otimes\sV_4$. The injectivity of the map $\sV_0\to\Hom(\sV_0,\sV_4)^*_1\otimes\sV_4$ can be checked generically on $\CC^4$. Proposition~\ref{prop: exist_trunc_mutat} tells us that the abelian heart $R_{\sL_0}\catA$ is derived  equivalent to the original category.

If we do tilting again, we get $R_{\sL_0[1]}R_{\sL_0}\catA$ whose simple objects are $\sL_0[2]$, $\sL_j^2$ fitting into the short exact sequence $$0\to \Ext^2(\sL_j,\sL_0)\otimes\sL_0[1]\to\sL_j^2\to\sL_j\to0$$ for $j=1,2,3$, and $\sL_4^2=\sL_4^1$. This is exactly the new abelian category we obtained cross the wall $Z_0=0$.

Similarly, one can start from the initial region and go across the other walls.
As an example, let us look at the $t$-structure associated to the region across the wall defined by $Z_1=0$. The simple objects have classes in the Grothendieck group $[\sL_0]-[\sL_1]$, $[\sL_1]$, $[\sL_2]-3[\sL_1]$, $[\sL_3]-[\sL_1]$, and $[\sL_4]+2[\sL_1]$. Their corresponding projective covers have classes $[\sV_0]$, $[\sV_1]+[\sV_0]+3[\sV_2]+[\sV_3]-2[\sV_4]$, $[\sV_2]$, $[\sV_3]$, and $[\sV_4]$.

We can do iterated tilting to find out the complexes in the original abelian category represents these simple objects. The simple object $\sL_1^2$ corresponding to $\sL_1$ is $\sL_1[2]$. For $i=0,2,3$, the simple object $\sL_i^2$ fits into short exact sequences $$0\to \Ext^2(\sL_i,\sL_1)^*\otimes\sL_1[1]\to \sL_i^2\to \sL_i\to0.$$
And $\sL_4^2$ fits into the short exact sequence $$0\to\Ext^1(\sL_4,\sL_1)^*\otimes\sL_1\to\sL_4^2\to\sL_4\to0.$$

Now we look at the $t$-structure associated to the region across the wall defined by $Z_3=0$. The simple objects have classes in the Grothendieck group $[\sL_0]-3[\sL_3]$, $[\sL_1]-[\sL_3]$, $[\sL_2]-[\sL_3]$, $[\sL_3]$, and $[\sL_4]+2[\sL_3]$. Their corresponding projective covers have classes $[\sV_0]$, $[\sV_1]$, $[\sV_2]$, $[\sV_3]+3[\sV_0]+[\sV_1]+[\sV_2]-2[\sV_3]$, and $[\sV_4]$.

If we do iterated tilting, we can find that the simple object $\sL_3^2$ corresponding to $\sL_3$ is $\sL_3[2]$. For $i=0,1,2$, the simple object $\sL_i^2$ fits into short exact sequences $$0\to \Ext^2(\sL_i,\sL_3)^*\otimes\sL_3[1]\to \sL_i^2\to \sL_i\to0.$$
And $\sL_4^2$ fits into the short exact sequence $$0\to\Ext^1(\sL_4,\sL_3)^*\otimes\sL_3\to\sL_4^2\to\sL_4\to0.$$

Now it is turn to look at the $t$-structure associated to the region across the wall defined by $Z_2=0$. The simple objects have classes in the Grothendieck group $[\sL_0]-[\sL_2]$, $[\sL_1]-3[\sL_2]$, $[\sL_2]$, $[\sL_3]-[\sL_2]$, and $[\sL_4]+2[\sL_2]$. Their corresponding projective covers have classes $[\sV_0]$, $[\sV_1]$, $[\sV_2]+[\sV_0]+3[\sV_2]+[\sV_3]-2[\sV_4]$, $[\sV_3]$, and $[\sV_4]$.

Similarly we can find that the simple object $\sL_2^2$ corresponding to $\sL_2$ is $\sL_2[2]$. For $i=0,1,3$, the simple object $\sL_i^2$ fits into short exact sequences $$0\to \Ext^2(\sL_i,\sL_2)^*\otimes\sL_2[1]\to \sL_i^2\to \sL_i\to0.$$
And $\sL_4^2$ fits into the short exact sequence $$0\to\Ext^1(\sL_4,\sL_2)^*\otimes\sL_2\to\sL_4^2\to\sL_4\to0.$$

Note that the region bounded by the walls $a=\pm1/2$ and $b=\pm1/2$ is a fundamental domain of the $H^2(\Hilb^2(\hbox{T}^*\PP^1),\ZZ)\cong\ZZ^2$ action on $H^2(\Hilb^2(\hbox{T}^*\PP^1),\RR)$. Denote this domain by $D_0$.
Summarizing the discussion in this subsection, we get the following proposition.

\begin{prop}
There is a real variation of stability conditions on $D_0$ (hence any translation of it by $H^2(\Hilb^2(\hbox{T}^*\PP^1),\ZZ)$) whose $t$-structure at the origin is $Coh_{B_2}(\CC^4)$ with central charge polynomials given by $Z_i$ defined in Subsection~\ref{subsec: central_charge_B2}.
\end{prop}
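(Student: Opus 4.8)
The plan is to verify directly the two axioms of a real variation of stability conditions for the finitely many alcoves and codimension-one walls meeting $D_0$, and then to propagate the statement to every $H^2(\Hilb^2(\hbox{T}^*\PP^1),\ZZ)$-translate of $D_0$. For the latter, recall that translating the Cherednik parameter by an integral class $\lambda$ is realised on $D^b(\Coh_0\Hilb^2)$ by the autoequivalence $(-)\otimes\sO(\lambda)$, under which both $\tau$ and $Z$ are equivariant: the dimension polynomials of Lemma~\ref{lem: dim_poly}, and hence the central charges $Z_\tau$, are periodic up to a relabelling of the simple objects. So it is enough to work inside $D_0$, where the four lines $\{Z_i=0\}$ ($i=0,1,2,3$) give the central alcove containing the origin — whose $t$-structure is $Coh_{B_2}(\CC^4)\cong\Mod_0\null^sH_0(\Gamma_2)$ — together with four adjacent alcoves at the corners, the central charge being the tuple $(Z_0,\dots,Z_4)$ worked out in Subsection~\ref{subsec: central_charge_B2}. (The circle $\{Z_4=0\}$ is not a wall: $\sL_4$ is a simple of the heart only on the central alcove, and there $Z_4=-\tfrac14(4a^2+4b^2-1)>0$ because that alcove is strictly inscribed in the circle.)

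\emph{Axiom (1) (positivity on alcoves).} On the central alcove the $Z_i$ for $i\le 3$ are squares of nonvanishing linear forms and $Z_4>0$, as just noted. For a corner alcove, say across $\{Z_0=0\}$, the classes of the simple objects of the new heart and their central charges were computed explicitly — $[\sL_0]$, $[\sL_1]-[\sL_0]$, $[\sL_2]-[\sL_0]$, $[\sL_3]-3[\sL_0]$, $[\sL_4]+2[\sL_0]$, with charges $Z_0$, $-(2a+1)b$, $-a(2b+1)$, $Z_3-3Z_0$, $\tfrac12(2a+1)(2b+1)$ — and each of these factors into linear forms (or the quadratic $Z_3-3Z_0$) which one checks, on the coordinates of that alcove, to be strictly positive; the other three walls are symmetric. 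Since only the central alcove and its four neighbours occur up to $\ZZ^2$-translation, this is a finite verification.

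\emph{Axiom (2) (perverse filtration and shift).} Fix a wall $\{Z_\theta=0\}$ with $\theta\in\{0,1,2,3\}$ and the adjacent alcove $A'$. The $\Ext$-table shows $\Ext^\bullet(\sL_\theta,\sL_\theta)$ is one-dimensional in degrees $0,2,4$ and zero otherwise; in particular $\sL_\theta$ is a $\PP^2$-object and $\Ext^1(\sL_\theta,\sL_\theta)=0$, so we may filter $\catA:=Coh_{B_2}(\CC^4)$ by $0\subseteq\langle\sL_\theta\rangle\subseteq\catA$ and iterate the right tilt with respect to $\sL_\theta$. As in the subsection, the truncated mutation with respect to $\sV_\theta$ exists — the relevant map $\sV_\theta\to\bigoplus_{\alpha\ne\theta}\Hom(\sV_\theta,\sV_\alpha)_{t_\alpha}^*\otimes\sV_\alpha$ is injective, checkable generically on $\CC^4$ — and Proposition~\ref{prop: exist_trunc_mutat} then guarantees the second truncated mutation too, since the twofold iterated tilt is again of the form $\Mod_0\null^sH_{c'}(\Gamma_2)$, a finite–global–dimension algebra whose projective generator $\sE_{c'}$ is a bundle, hence concentrated in degree zero. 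Matching Grothendieck classes identifies the nested $t$-structures and yields $R_{\sL_\theta[1]}R_{\sL_\theta}\catA=\tau(A')$ (cf.\ Lemma~\ref{lem: t-struct-coincide}), and by Corollary~\ref{cor:perver-smaller_pervers} the resulting equivalence $D^b(\Coh_0\Hilb^2)\cong D^b(\tau(A'))$ is perverse relative to $0\subseteq\langle\sL_\theta\rangle\subseteq\catA$ with perversity $p(1)=0$, $p(2)=2=n$. Finally, $Z_\theta$ is a nonzero perfect square, so it vanishes to order exactly $2$ on the wall while no $Z_\tau$ with $\tau\ne\theta$ vanishes there; hence the filtration of $\tau(A)$ by order of vanishing of $Z$ along the wall is $0\subseteq\langle\sL_\theta\rangle\subseteq\langle\sL_\theta\rangle\subseteq\catA$, so $\gr_0=\catA/\langle\sL_\theta\rangle$, $\gr_1=0$, $\gr_2=\langle\sL_\theta\rangle$, and the perverse data says $\tau(A')$ induces $\tau(A)[0]$ on $\gr_0$ and $\tau(A)[2]$ on $\gr_2$. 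Choosing $V^+$ so that ``above'' corresponds to the shift $+[2]$, this is exactly axiom (2); with axiom (1) and the $\ZZ^2$-equivariance above, the proposition follows (and one recovers the $l=2$ case of Theorem~\ref{thm: main}).

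\emph{Main obstacle.} The delicate step is, at each of the four walls, identifying the heart $\tau(A')$ coming from the localization at the parameter $c'$ with the iterated tilt $R_{\sL_\theta[1]}R_{\sL_\theta}\catA$ of the origin heart: these are two a priori unrelated constructions, and reconciling them is precisely what forces one to have the full $\Ext$-computation, the Grothendieck-group bookkeeping, and the existence of the truncated mutations (Proposition~\ref{prop: exist_trunc_mutat}) in hand at once. Everything else reduces either to the finite positivity check of axiom (1) or to a direct appeal to Corollary~\ref{cor:perver-smaller_pervers}.
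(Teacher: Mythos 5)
Your overall route tracks the paper's: you check axiom (1) by the explicit positivity of the five central charges on the central diamond and on each corner alcove of $D_0$, you produce the wall-crossed hearts by the double right tilt at the $\PP^2$-object $\sL_\theta$ (using $\Ext^1(\sL_\theta,\sL_\theta)=0$ from the $\Ext$-table and the order-two vanishing of the perfect square $Z_\theta$ on the wall), and you propagate by line-bundle twists exactly as the paper does. The genuine gap is the step you yourself flag as the main obstacle: the identification $R_{\sL_\theta[1]}R_{\sL_\theta}\catA=\tau(A')$ with the localization heart $\Mod_0\null^sH_{c'}(\Gamma_2)$. ``Matching Grothendieck classes'' of simple objects does not identify two $t$-structures, and the paper never argues this way: where it needs that identification (Theorem~\ref{thm: main}), it gets it in Section~\ref{sec: compare_char0} from the characteristic-zero comparison --- $T_{c\to c'}$ applied to the relevant finite-dimensional simple is concentrated in homological degree $2$, $T_{c\to c'}$ is a Morita equivalence modulo $\langle\sL_\theta\rangle$, and two perverse data with the same perversity coincide (Proposition~\ref{prop: Chuang-Rouquier}(3)) --- not from $K$-theory bookkeeping. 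Moreover your appeal to Proposition~\ref{prop: exist_trunc_mutat} is circular: to obtain the second truncated mutation you assume the twofold tilt ``is again of the form $\Mod_0\null^sH_{c'}(\Gamma_2)$'' with projective generator the bundle $\sE_{c'}$, i.e.\ you assume exactly the identification you subsequently claim to deduce.

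Note, though, that the Proposition only asserts the existence of a real variation of stability conditions on $D_0$ whose heart at the alcove containing the origin is $\Coh_{B_2}(\CC^4)$ with the stated central charge; it does not require the hearts at the other alcoves to be the localization hearts. The paper's proof (the discussion it summarizes) therefore needs neither that identification nor, strictly speaking, the truncated mutations: $\Ext^1(\sL_\theta,\sL_\theta)=0$ together with Proposition~\ref{prop: inter_tilt} gives finite-length hearts for the double tilts, whose simples are the universal extensions with the classes read off from the $\Ext$-table; positivity of the resulting charges on each corner alcove is the explicit check; and compatibility with the vanishing-order filtration, with shift $[2]$ on $\langle\sL_\theta\rangle$ and $[0]$ on the quotient, holds by construction of the double tilt. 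If you restructure your argument accordingly --- define $\tau$ on the corner alcoves by the double tilts and either drop the claim about $\Mod_0\null^sH_{c'}$ or prove it by the translation-functor argument of Section~\ref{sec: compare_char0} --- the remainder of your verification stands.
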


To summarize the description of the hyperplane arrangement and the $t$-structures associated to alcoves in this case, the following picture is part of the hyperplane arrangement.

The $t$-structure associated to the alcove labeled by $t_0$ is the so called orbifold (or BKR) $t$-structure, whose tilting generator can be chosen to be the Haiman\rq{}s Procesi bundle. The four $t$-structure associated to alcoves adjacent to $t_0$ are obtained from $t_0$ by $\PP^2$-semi-reflections. The tilting generators for these $t$-structures are obtained from the truncated mutations described in Section~\ref{sec: mut}.

The rest of the alcoves can be obtained from them by a shifting. For example, the alcove $t_2$ and $t_2\rq{}$ differ by a translation, hence the corresponding $t$-structures differs by twisting by a line bundle.

\begin{center}
\begin{tikzpicture}[scale=2.5]
 \coordinate (y) at (0,1.7);
 \coordinate (x) at (0.7,0);
 \draw[axis] (y)--(0,0)--(x);
 \draw[thick] (0,-0.5)--(0.5,0);
 \draw[thick] (0,-0.5)--(-0.5,0);
 \draw[thick] (0.5, 1)--(-0.5,0);
 \draw[thick] (0.5, 0)--(-0.5,1);
 \draw[thick] (-0.5, 0.5)--(0.5,0.5);
 \draw[thick] (0.5, -0.5)--(0.5,1);
 \draw[thick] (-0.5, -0.5)--(-0.5,1);
 \draw[thick] (-0.5, 1)--(0,1.5);
 \draw[thick] (0.5, 1)--(0,1.5);
 \draw[thick] (-0.5,-0.5)--(0.5,-0.5);
 \draw (0.35,0.35) node {$t_1$};
  \draw (0.35,-0.35) node {$t_2\rq{}$};
 \draw (0,0) node[anchor=east] {$t_0$};
 \draw (0.35,0.65) node {$t_2$};
 \draw (0,1) node[anchor=east] {$t_3$};
 \draw (y) node[anchor=east] {$b$};
 \draw (x) node[anchor=north] {$a$};
 \end{tikzpicture}
\end{center}

\begin{prop}
The functor $F=-\otimes\sV_2: D^b(\Hilb^2(\hbox{T}^*\PP^1))\to D^b(\Hilb^2(\hbox{T}^*\PP^1))$ with the source endowed with the $t$-structure coming from $Coh_{B_2}(\CC^4)$ is a perverse equivalence for suitable filtration and perversity function.
\end{prop}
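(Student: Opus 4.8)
The plan is to present $F=-\otimes\sV_2$ as a composition of the elementary wall-crossing functors studied in Section~\ref{sec: t-struct_alco} and then to conclude by the multiplicativity of perverse equivalences (Proposition~\ref{prop: Chuang-Rouquier}(2)). Since $\sV_2\cong\sO(D_1)$ is a line bundle on $\Hilb^2=\Hilb^2(\hbox{T}^*\PP^1)$, the functor $F$ is an autoequivalence of $D^b(\Coh(\Hilb^2))$, and under the localization picture of \cite{BFG06} it is isomorphic to the translation functor attached to the integral shift of the parameter space by $D_1\in H^2(\Hilb^2;\ZZ)$. As recalled in Section~\ref{sec: quant_char_p}, a translation functor between two parameters in a common alcove induces the identity on $t$-structures; hence $F$ carries $\tau(A)$ to $\tau(A+D_1)$ for every alcove $A$, and in particular the target $t$-structure $F(\tau(t_0))$ is the one attached to the alcove $t_0+D_1$, i.e. to the translate of $t_0$ by $(a,b)\mapsto(a+1,b)$ in the coordinates of Subsection~\ref{subsec: central_charge_B2}. (If this translate happened to coincide with $t_0$ the statement would be trivial, so we may assume the contrary.)

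First I would choose a generic straight segment $\gamma$ in $H^2(\Hilb^2;\RR)$ from a point in the interior of $t_0$ to its translate in $t_0+D_1$, meeting the aspherical hyperplane arrangement transversally and away from all faces of codimension $\geq 2$; this is possible because the arrangement is locally finite. Then $\gamma$ crosses a finite chain of alcoves $t_0=A_0,A_1,\dots,A_N=t_0+D_1$, with $A_{j-1}$ and $A_j$ sharing a codimension-one wall $H_j$, and $F$ factors as $T_{A_{N-1}\to A_N}\circ\cdots\circ T_{A_0\to A_1}$ by the functoriality of the translation functors along a path. By the real variation of stability conditions established for the $B_2$ case in this section (Theorem~\ref{thm: main} and the explicit analysis of Section~\ref{sec: t-struct_alco}), each step $T_{A_{j-1}\to A_j}$ is a perverse equivalence: the heart of $\tau(A_{j-1})$ carries the filtration by order of vanishing of the central charge along $H_j$, the $t$-structure $\tau(A_j)$ is compatible with it, and on the $n$-th graded piece the two differ by $[n]$; concretely each step is either a tilt with respect to a torsion theory (the simple-pole case) or a $\PP^2$-semi-reflection, both of which are perverse equivalences by the results of Section~\ref{sec: mut} (Lemma~\ref{lem: t-struct-coincide} and Proposition~\ref{prop: exist_trunc_mutat}). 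Applying Proposition~\ref{prop: Chuang-Rouquier}(2) inductively, the composite $F$ is then a perverse equivalence, with perversity function the sum of those of the $T_{A_{j-1}\to A_j}$ and with filtration a common refinement of the pulled-back filtrations.

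The main obstacle is the bookkeeping in this last step: one must verify that the filtrations occurring at the successive crossings assemble into a single filtration of $\Coh_{B_2}(\CC^4)$ by Serre subcategories, so that Proposition~\ref{prop: Chuang-Rouquier}(2) applies verbatim. A priori the filtration used at the $j$-th crossing lives on the heart of $\tau(A_{j-1})$ and is adapted to $H_j$; one pulls it back through the earlier wall-crossings and must check that the resulting chains of Serre subcategories of $\Coh_{B_2}(\CC^4)$ are totally ordered, hence admit a common refinement. In the $B_2$ case this is entirely explicit: there are only five simple objects, and the central-charge polynomials $Z_0,\dots,Z_4$ of Subsection~\ref{subsec: central_charge_B2} pin down, for each wall met by $\gamma$, exactly which simples degenerate and to what order, so the refinement can be written down by hand. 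A lesser point to make precise is the identification of $F$ with the composite of translation functors along $\gamma$, which is standard for localization functors but should be cited.
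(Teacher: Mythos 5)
Your overall strategy --- factor $F=-\otimes\sV_2$ as a chain of wall-crossings from $t_0$ to its translate and invoke Proposition~\ref{prop: Chuang-Rouquier}(2) --- differs from the paper, which proves the statement by a direct computation: it writes down the filtration ($\catA_1$ generated by two specific simples, $\catA_2$ by adding the simple attached to the two-dimensional representation, $\catA_3=\catA$) and the perversity $p(1)=2$, $p(2)=1$, $p(3)=0$, and then explicitly computes $F(\sV_i)$ for all five projective generators via the projection formula on $(\hbox{T}^*\PP^1)^2$, reading off the perversity from the shapes of the resulting complexes. The difficulty with your route is exactly the step you flag and then wave away: Proposition~\ref{prop: Chuang-Rouquier}(2) composes two perverse equivalences only when the \emph{same} filtration is used on the intermediate category, whereas the filtration relevant to crossing the wall $H_j$ is by order of vanishing of the central charge on $H_j$, and these filtrations change from wall to wall. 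A composition of perverse equivalences with respect to different filtrations is in general not perverse at all, and passing to a ``common refinement'' does not repair this: an equivalence perverse for a filtration need not be perverse for a refinement (the shift can fail to be constant on the finer graded pieces), and the cited proposition says nothing about refinements. So the crucial step --- that this particular composite of two or three wall-crossings is again perverse, for one explicit filtration of $Coh_{B_2}(\CC^4)$ --- is precisely the content of the proposition, and your argument assumes it rather than proves it. The remark that ``in the $B_2$ case this is entirely explicit'' is true, but the explicit check is the proof, and you have not carried it out; once you do, you are essentially forced into the kind of computation the paper performs (tracking the images of the simples or of the projectives $\sV_i$ across the walls, as in Section~\ref{sec: t-struct_alco}).

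A smaller point: the identification of $-\otimes\sO(D_1)$ with the translation functor, and hence the statement that $F$ carries $\tau(A)$ to $\tau(A+D_1)$, is only asserted informally in the paper (``the corresponding $t$-structures differ by twisting by a line bundle''), so if you rely on it you should either prove it or cite a precise reference; but this is not the main obstruction. To turn your outline into a proof you would need, at minimum, to exhibit the walls actually crossed by a segment from $t_0$ to $t_0+D_1$, record the filtration and the simples degenerating at each wall using the explicit $Z_0,\dots,Z_4$ of Subsection~\ref{subsec: central_charge_B2}, and verify by hand that the composite equivalence is perverse for a single three-step filtration --- at which point the argument coincides in substance with the paper's computation rather than bypassing it.
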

\begin{proof}
Define a filtration on $\catA=Coh_{B_2}(\CC^4)$ as follows: $\catA_1$ is the Serre subcategory generated by the simple objects $\sL_0$ and $\sL_1$; $\catA_2$ is generated by $\sL_{0,1}$ in addition to $\catA_1$; and $\catA_3=\catA$. Define the perversity function to be $p(1)=2$, $p(2)=1$, $p(3)=0$.

Using the notations as in Diagram~\ref{eqn:diag_res}, the bundle $\sV^2$ comes from $(\hbox{T}^*\PP^1)^2$. Using projection formula, the complexes $F(\sV_i)$ can be  calculated in $Coh_{\mathfrak{S}_2}((\hbox{T}^*\PP^1)^2)$. Let $\sO_r$ be the trivial line bundle on $(\hbox{T}^*\PP^1)^2$ endowed with the reflection representation. We have
\[F(\sV_0)=\sL\boxtimes\sL\cong\sV_1;\]
\[F(\sV_1)=(\sL^2)\boxtimes(\sL^2)\cong(\sO\to2\sL)\boxtimes(\sO\to2\sL)\cong(\sO\to2\sV_{0,1}\to4\sV_1);\]
\[F(\sV_2)=\sO_r\otimes(\sL\boxtimes\sL)\cong\sV_3;\]
\[F(\sV_3)=(\sL\boxtimes\sL)\otimes(\sO_r\otimes(\sL\boxtimes\sL))=(\sV_2\to2\sV_{0,1}\to4\sV_3);\]
\[F(\sV_4)=(\sL^2\boxtimes\sL)\oplus(\sL\boxtimes\sL^2)\cong(\sV_4\to4\sV_1).\]
This proves the statement.
\end{proof}

\subsection{The $\Gamma_1=\ZZ_l$-case}
In the initial alcove which is bounded by the walls
\begin{eqnarray*}
1/2((n_0+\sum n_i)-(3-1/l))^2&=&0\\
1/2((n_0-\sum n_i)+(1-1/l))^2&=&0\\
1/2((n_i-n_0)+(1-1/l))^2&=&0\hbox{ for all $i=1,\cdots,l-1$}\\
1/2((n_0+n_i)-(1+1/l))^2&=&0\hbox{ for all $i=1,\cdots,l-1$},
\end{eqnarray*}
we associate to it the $t$-structure coming from the derived equivalence $D^b(\Hilb^2_{\ZZ/l})\cong D^b(\Coh_{\Gamma_2}(\Aff^{2n}))$. The simple objects are labeled by the irreducible representations.

If we go across the wall defined by $1/2((n_0+\sum n_i)-(3-1/l))^2=0$, the $t$-structure is the one coming from $R_{\sL_0[1]}R_{\sL_0}\Coh_{\Gamma_2}(\Aff^{2n})$. By Section~\ref{subsec: tilting}, we know the heart of this new $t$-structure is also a finite length category. The simple objects are $\sL_\alpha''$ as defined in Section~\ref{subsec: tilting}. The classes of the simple objects in the Grothendieck group are $[\sL_0]$, $[\sL_\sigma]-3[\sL_0]$, $[\sL_i]$, $[\sL_{\sigma,i}]$, $[\sL_{0,1}]+[\sL_0]$, $[\sL_{0,l-1}]+[\sL_0]$, $[\sL_{0,i}]$ for $i\neq 2,l-2$, and $[\sL_{i,j}]$. The new alcove is bounded by the walls
\begin{eqnarray*}
\sum n_j-2+1/l&=&0\\
1/2((n_0+\sum n_i)-(3-1/l))^2&=&0\\
1/2((n_i-n_0)+(1-1/l))^2&=&0\hbox{ for all $i=1,\cdots,l-1$}
\end{eqnarray*}
where on the first wall, both $Z_{\sL_{0,1}''}$ and $Z_{\sL_{0,l-1}''}$ vanish, and both of order 1.

If we go across the wall $\sum n_j-2+1/l=0$, the new $t$-structure is obtained by doing tilting with respect to $\sL_{0,1}''$ and $\sL_{0,l-1}''$. The classes of the simple objects in the Grothendieck group are $[\sL_0]$, $[\sL_\sigma]-3[\sL_0]$, $[\sL_i]$ for $i=2,\cdots,l-2$, $[\sL_1]+[\sL_{0,1}]+[\sL_0]$, $[\sL_{l-1}]+[\sL_{0,l-1}]+[\sL_0]$, $[\sL_{\sigma,i}]$, $-[\sL_{0,1}]-[\sL_0]$, $-[\sL_{0,l-1}]-[\sL_0]$, $[\sL_{0,i}]$ for $i\neq 2,l-2$, and $[\sL_{i,j}]$. The new alcove is bounded by the walls
\begin{eqnarray*}
1/2((n_0-1)+\sum_{j=2}^{l-1}(n_j-1/l)-1)^2&=&0\\
1/2((n_0-1)+\sum_{j=1}^{l-2}(n_j-1/l)-1)^2&=&0\\
1/2((n_0-\sum n_i)+(1-1/l))^2&=&0\\
\sum_{j=1}^{l-1} n_j-2+1/l&=&0\\
1/2((n_i-n_0)+(1-1/l))^2&=&0\hbox{ for all $i=1,\cdots,l-1$}
\end{eqnarray*}

Then similar to the $B_2$-case, symmetry gives the $t$-structures associated to the other alcoves.

\newcommand{\arxiv}[1]
{\texttt{\href{http://arxiv.org/abs/#1}{arXiv:#1}}}
\newcommand{\doi}[1]
{\texttt{\href{http://dx.doi.org/#1}{doi:#1}}}
\renewcommand{\MR}[1]
{\href{http://www.ams.org/mathscinet-getitem?mr=#1}{MR#1}}


\begin{thebibliography}{00}


\bibitem[ABM11]{ABM}
R.~Anno, R.~Bezrukavnikov, and I.~Mirkovi\'{c},
{\em A thin stringy moduli space for slodowy slices},
preprints, \arxiv{1108.1563v2}
2011.

\bibitem[BC11]{BC} Yu. Berest and O. Chalykh, {\em Quasi-invariants of complex reflection groups}, Compositio Math. \textbf{147} (2011), 965-1002.

\bibitem[BE09]{BE} R.~Bezrukavnikov and P.~Etingof, {\em Parabolic induction and restriction functors for rational Cherednik algebras},
Selecta Math. (N.S.) \textbf{14} (2009), no. 3-4, 397-425. \MR{2511190}


\bibitem[BEG01]{BEG} Yu. Berest, P. Etingof, V. Ginzburg, {\em Cherednik algebras and differential
operators on quasi-invariants}, \arxiv{0111005}.


\bibitem[BF12]{BF12} R.~Bezrukavnikov, M.~Finkelberg,
{\em Wreath Macdonald polynomials and categorical McKay correspondence}, preprints \arxiv{1208.3696}

\bibitem[BFG06]{BFG06} R.~Bezrukavnikov, M.~Finkelberg, and V.~Ginzburg,
{\em Cherednik algebras and Hilbert schemes in characteristic p}, Represent. Theory \textbf{10} (2006), 254-98. \MR{2219114}

\bibitem[BK04]{BK04} R.~Bezrukavnikov, D.~Kaledin,
{\em McKay equivalence for symplectic resolutions of singularities}, preprints, \arxiv{0401002}, 2004.

\bibitem[BL13]{BL} R.~Bezrukavnikov, I.~Losev, {\em Etingof conjecture for quantized quiver varieties}, preprints, 2013, available at \arxiv{}1309.1716

\bibitem[BV02]{BV02} Bondal,~A., van den Bergh,~M.,
{\em Generators and representability of functors in commutative and noncommutative geometry},
Mosc. Math. J. 3 (2003), no. 1, pp1-36, 258. \MR{1996800}

\bibitem[BH03]{CMRings} W.~Bruns, J.~Herzog, {\em Cohen-Macaulay rings},
Cambridge Stud. Adv. Math., 39. Cambridge University Press, Cambridge, 1993.\MR{1251956}


\bibitem[Bou98]{Bou} N.~Bourbaki,
{\em Commutative algebra}, Chapters 1-7, Reprint of the 1989 English translation, Elements of Mathematics (Berlin), Springer-Verlag, Berlin, 1998.\MR{1727221}

\bibitem[BO02]{BO} A.~Bondal, D.~Orlov,
{\em Derived Categories of Coherent Sheaves}, Proceedings of the International Congress of Mathematicians, Vol. II, Beijing, 2002, pp. 47-56.
\arxiv{0206295}, 2002. \MR{1957019}


\bibitem[Bri06]{Bri}
T.~Bridgeland,
{\em Spaces of stability conditions},
preprints, \arxiv{0611510v1}
2006.


\bibitem[CR]{CR}
J.~Chuang and R.~Rouquier,
{\em Calabi-Yau algebras and perverse Morita equivalences}, in preparation.

\bibitem[EG02]{EG} P.~Etingof and V.~Ginzburg, {\em Symplectic reflection algebras, Calogero-Moser space, and deformed Harish-Chandra homomorphism}, Invent. Math. \textbf{147} (2002), no. 2, 243--348. \MR{1881922}

\bibitem[FV01]{FV} G. Felder and A. Veselov, {\em Action of Coxeter groups on $m$-harmonic polynomials
and KZ equations}, preprint 2001, \arxiv{0108012}.


\bibitem[GK04]{GK} V.~Ginzburg, D.~Kaledin, {\em Poisson deformations of symplectic quotient singularities}, Adv. Math. {\bf 186} (2004), 1-57.

\bibitem[GL11]{GL} I. Gordon and I. Losev, {\em On category $\mathcal{O}$ for cyclotomic rational Cherednik algebras}, J. Eur. Math. Soc., to appare, \arxiv{1109.2315}.

\bibitem[Hai02]{H02} M.~Haiman, {\em Vanishing theorems and character formulas for the Hilbert scheme of points in the plane}, Invent. Math. \textbf{149} (2002), no. 2, 371-407. \MR{1918676}


\bibitem[KC02]{KC} V. Kac and P. Cheung, {\em Quantum calculus}, Universitext, Springer-Verlag, New York, 2002.

\bibitem[Kuz01]{Kuz01} A.~Kuznetzov, {\em Quiver varieties and Hilbert schemes}, preprints, 2001. \arxiv{0111092}

\bibitem[Los13]{Los} I.~Losev, {\em On Procesi bundles}, preprint, 2013. \arxiv{1303.4617}

\bibitem[Nak94]{Nak94} H.~Nakajima, {\em Instantons on ALE spaces, quiver varieties, and
 Kac-Moody algebras}, Duke Math. {\bf 76} (1994), 365-416.

\bibitem[Nam03]{Nam} Y. Namikawa Mukai flops and derived categories. J. Reine Angew. Math.
{\bf 560} (2003), 65-76.

\bibitem[O95]{O} E.M.~Opdam, {\em A remark on the irreducible characters and fake degrees of finite real reflection groups}, Invent. Math. 120 (1995), no. 3, 447--454. \MR{1334480}

\bibitem[TU10]{TU} Y.~Toda, H.~Uehara, {\em Tilting generators via ample line bundles}. Adv. Math. {\bf 223} (2010), no. 1, 1-29.

\bibitem[vdB04]{vdB} M.~van den Bergh,
{\em non-commutative crepant resolutions}, The legacy of Niels Henrik Abel, Springer, Berlin, 2004, pp. 749-770. \arxiv{0211064}\MR{2077594}.


\bibitem[W10]{W10} J.~Woolf, {\em Stability conditions, torsion theories and tilting}, preprints, 2010. \arxiv{0909.0552v2}

\bibitem[WZ12]{WZ} J.~Weyman and G.~Zhao, {\em Noncommutative desingularization of orbit closures for some representations of $GL_n$}, preprint, 2012. \arxiv{1204.0488}

\end{thebibliography}
\end{document}